\documentclass[onefignum,onetabnum]{siamonline250211}

\usepackage[english]{babel}

\usepackage{amssymb}
\usepackage{graphicx}
\usepackage{subfig}
\usepackage{cleveref}

\title{Inverse  scattering beyond Born approximation via rotation-equivariance-aware neural network and low-rank structure     
}
\author{Yuyuan Zhou\thanks{Academy of Mathematics and Systems Science, Chinese Academy of Sciences, Beijing, 100190, 
China and School of Mathematical Sciences, University of Chinese Academy of Sciences, Beijing 100049, China
(\texttt{zhouyuyuan@amss.ac.cn})}
\and
Shixu Meng\thanks{Department of Mathematical Sciences, The University of Texas at Dallas, 75025 Richardson,  USA. Corresponding author. (\texttt{smeng@utdallas.edu})} }

\begin{document}
\maketitle

\begin{abstract}
This work proposes a hybrid method (ULR) which integrates a rotation-equivariance-aware neural network and a low-rank structure to solve the two dimensional inverse medium scattering problem. 
The neural network is to model the data corrector which maps the full data to the Born data, and the   low-rank structure is to design an inverse Born  solver that finds a regularized solution from the perturbed Born data. The proposed rotation-equivariance-aware neural network naturally incorporates the  reciprocity relation and the rotation-equivariance  in inverse scattering, while the low-rank structure effectively filters high-frequency noise in the output of the neural network and leads to a regularized method supported by theoretical stability  in the Born region. For a comparative study, we  replace the low-rank inverse Born solver by another rotation-equvariance-aware neural network to propose a two-step neural network (UU). Furthermore, we extend the proposed methods (ULR and UU) to tackle the more challenging case with only limited aperture data.  A variety of numerical experiments are conducted to compare the proposed ULR, UU, and a black-box neural network.
\end{abstract}

\section{Introduction}

This work is concerned with the inverse scattering problem where the goal is to determine a penetrable object from far field measurements. It merits potential applications to medical imaging, geophysical exploration, ocean acoustics, and non-destructive testing. For an introduction to the field of inverse scattering, we refer to \cite{2014A,cakoni2022inverse,cakoni2011linear,colton2012inverse,kirsch2007factorization}. 

Inverse scattering is a challenging problem due to its nonlinear and ill-posedness nature.  The well-known Born approximation leads to a Born unknown-to-data map given by a restricted Fourier transform. A new explicit a priori estimate in the spirit of increasing stability was obtained  in \cite{meng23data} based on the dual property of the disk prolate spheroidal wave functions (disk PSWFs), which are eigenfunctions of both the restricted Fourier transform and a particular Sturm-Liouville differential operator. One salient feature of the restricted Fourier transform is that its dominating eigenvalues have numerically the same magnitude and the eigenvalues decay to zeros exponentially fast, which poses challenges for the conventional eigensolver solely based on the restricted Fourier transform. Based on the dual property, a low-rank-structure-assisted computational framework was further developed to solve the inverse scattering problem for weak scatterers in \cite{zhou2024exploring}. The above development allows a deep understanding of the ill-posedness, however the strong non-linearity remains a bottleneck to address, particularly for high contrast media. In recent years, deep learning  has achieved remarkable success in  diverse fields such as computer vision, pattern recognition, natural language processing and others \cite{goodfellow2016deep}. In particular, machine learning has been applied to  the electric impedance tomography and inverse scattering problem \cite{chen2026deep,chen2026data,2020A,desai2025neural,gao2022artificial,Khoo2019,li2024reconstruction,meng2024kernel,ning2024direct,2022Fast,zhang2025back} recently.  In this work, we propose to tackle the nonlinearity and the ill-posedness separately. This idea shares a similar spirit to the decomposition methods  \cite{kirsch1986integral,kirsch1987numerical,kirsch1987optimization}. The recent work \cite{desai2025neural} proposes a neural network to model the data corrector which maps the full far field data to the Born far field data, and an inverse Born solver is then applied to find the unknown from the Born data. Fueled by this idea, our work proposes a rotation-equivariance-aware neural network for the data corrector, and integrates this neural network with an intrinsic low-rank structure associated with the inverse Born solver. This work also tackles the more challenging case when only limited aperture data are available.

To explore neural networks that are suitable for inverse scattering, we propose to take advantage of the mathematical properties in wave scattering. Particularly, the reciprocity relation and rotation-equivariance, though fundamental in inverse scattering theory, haven't received enough attention in computational algorithms. We take advantage of the reciprocity relation to introduce a processed dataset
that is equivalent to the original far field dataset. With this data processing, each processed datum is defined on a unit disk and can be regarded as a double-channel (i.e., real and imaginary) image which enjoys a rotation-equivariance property: if the unknown rotates a certain angle, the corresponding datum (viewed as an image) rotates the same angle. To incorporate this rotation-equivariance property, we propose a rotation-equivariance-aware neural network based on a U-Net with periodic padding in polar coordinates. However, unnecessary high-frequency noise may still be present in the output Born data, we further propose to apply an intrinsic low-rank structure to filter the high-frequency noise in the imperfect Born data (i.e., output of the data corrector) and to find a regularized solution in a low-rank space.
In particular, the low-rank structure is based on the above-mentioned disk PSWFs  which forms an eigenbasis for the Born unknown-to-data map. By projecting the solution onto a low-rank space spanned by finitely many disk PSWFs, the low-rank structure leads to a robust inverse Born solver that is capable of filtering high-frequency noise and is supported by theoretical stability in the Born region. For a comparative study, we also  propose a two-step neural network (UU) which replaces the low-rank inverse Born solver by another rotation-equvariance-aware neural network for comparison. We further compare the proposed ULR and UU, against a black-box neural network. 

We also extend our proposed methods to tackle the limited aperture problem. The limited aperture problem is more challenging since less information is known (cf. \cite{meng23data}); specifically the limited processed data become only a subset of the full processed data.  In this limited apeture,  the rotational-equivariance property still holds and we propose to learn a corresponding data corrector which maps the limited processed (nonlinear) data to the full processed Born data. This allows to extend the proposed ULR and UU to the limited aperture case.

The remaining of the paper is organized as follows. We introduce the mathematical formulation of the inverse scattering problem in Section \ref{sec: intro} and discuss the intrinsic mathematical properties on reciprocity relation and rotation-equivariance in Section \ref{sec: math property}. We then propose the hybrid method ULR and a two-step neural network UU in Section \ref{sec: neural network}.  We conduct a variety of numerical examples to illustrate the potential of our proposed methods in Section \ref{sec: numerical example}. Finally we extend the proposed methods to the limited aperture case in Section \ref{sec: limited aperture}.

{  
\section{Mathematical formulation of the inverse scattering problem} \label{sec: intro}
We consider a two dimensional inverse scattering problem due to a  plane incident wave given by
$u^i(x)=e^{ik\hat{\theta}\cdot x}$, where the wavenumber is $k>0$ and the propagation direction is $\hat{\theta}\in \mathbb{S}:=\{x\in \mathbb{R}^2:|x|=1\}$. In the model of transverse magnetic or electric waves propagation due to an infinitely long cylindrical scatterer, one can introduce a contrast function $q\in L^\infty(\mathbb{R}^2)$  given by the magnetic permeability and electric permittivity. The support of the medium $\Omega:=\mbox{ supp } q$ is an open and bounded set
with Lipschitz boundary $\partial \Omega$ such that $\mathbb{R}^2\setminus \Omega$ is connected. Without the loss of generality, we assume that $\Omega \subset B$ where $B:=\{x\in\mathbb{R}^2:|x|\leq1 \}$ denotes the unit disk. The direct scattering problem is to find the scattered field $u^s\in H^1_{loc}(\mathbb{R}^2)$ that satisfies
    \begin{eqnarray}\label{eq: Helmholtz equation}
        \Delta u^s+k^2(1+q)u^s&=&-k^2qu^i~\mbox{ in } \mathbb{R}^2,\\
        \lim_{|x|\to\infty}|x|^{\frac{1}{2}}\left(\nabla u^s\cdot \frac{x}{|x|}-ik u^s\right)&=&0. \label{eq: sommerfeld radiation condition}
    \end{eqnarray}
    where the last equation is the sommerfeld radiation condition. It can be proved that there exists a unique solution to \eqref{eq: Helmholtz equation}-\eqref{eq: sommerfeld radiation condition} (cf. \cite{2014A} or \cite{colton2012inverse}) and the solution can be obtained by the Lippmann-Schwinger integral equation
$$ u^s(x)=k^2\int_\Omega \frac{i}{4}H_0^{(1)}\left(k|x-y|)(u^i(y)+u^s(y)\right)q(y){\rm d}y$$
where $i$ denotes the imaginary unit and $H_0^{(1)}$  is the Hankel function of first kind of order zero (cf. \cite{colton2012inverse}).

The radiating scattered field $u^s$ has the following asymptotic behaviour 
$$u^s(x)=\frac{e^{ik\frac{\pi}{4}}}{\sqrt{8k\pi}}\frac{e^{ik|x|}}{\sqrt{|x|}}\left(u^\infty(\hat{x};\hat{\theta};k
)+\mathcal{O}( |x|^{-1})\right),~\mbox{ as }|x|\to \infty$$
where $\hat{x}=x/|x|$ denotes the observation direction and $u^\infty(\hat{x};\hat{\theta};k)$ is called the far-field pattern. 
The inverse scattering problem aims to determine the contrast $q$ from the multi-static far-field data
$$
\{u^\infty(\hat{x};\hat{\theta};k):\hat{x},\hat{\theta}\in \mathbb{S}\}.
$$
The inverse scattering problem is  challenging  due to its  non-linearity and ill-posedness nature. Indeed, from the Lippmann-Schwinger integral equation and the asymptotic of the Hankel function,
one can obtain the following representation of the far-field pattern  
\begin{equation} \label{eqn: far field integral representation}
u^\infty(\hat{x};\hat{\theta};k)=k^2\int_\Omega e^{-ik\hat{x}\cdot y}(u^s(y)+e^{ik\hat{\theta}\cdot y})q(y) {\rm d}y,
\end{equation}
and the Born far field pattern $u_b^\infty$ (by neglecting the scattered wave field on the right hand side) 
$$
u_b^\infty(\hat{x};\hat{\theta};k)=k^2\int_\Omega e^{ik(\hat{\theta}-\hat{x})\cdot y}q(y) {\rm d}y.
$$
Since $\hat{x}$ and $\hat{\theta}$ belong to the unit circle $\mathbb{S}$, then  $p= \frac{\hat{\theta} - \hat{x}}{2} \in \overline{B}$ where $B$ denotes the unit disk $B(0,1)$; correspondingly the knowledge of the   Born far field data gives the knowledge of 
$$
u_b(p;c) := \frac{1}{k^2} u_b^\infty (\hat{x};\hat{\theta};k), \qquad p := \frac{\hat{\theta}- \hat{x}}{2}, \qquad c := 2k,
$$
where $u_b(p;c)$ is given by the restricted Fourier transform of the unknown $q$, 
\begin{equation}\label{Born model: inverse problem}
    u_b(p;c)=\int_{B} e^{icp\cdot y}  q(y) {\rm d} y, \quad p\in B.
\end{equation}
The above Born unknown-to-data map provides insights on how to explore unique scattering properties, particularly the reciprocity relation and rotation-equivariance, which will be used later on to propose a suitable  neural network for the inverse scattering problem. 
}

{  
\section{Intrinsic mathematical properties in inverse scattering: reciprocity relation and rotation-equivariance } \label{sec: math property}

\subsection{Reciprocity relation and reformulation}
For the Born far field pattern given by
$
u_b^\infty(\hat{x};\hat{\theta};k)=k^2\int_\Omega e^{ik(\hat{\theta}-\hat{x})\cdot y}q(y) {\rm d}y
$,
one can directly see that  
$$
u_b^\infty(\hat{x};\hat{\theta};k) = u_b^\infty(-\hat{\theta};-\hat{x};k), \quad \forall \hat{x},\hat{\theta} \in \mathbb{S}.
$$
Indeed, this property also holds for the full far field pattern due to the following reciprocity relation  (cf. \cite{2014A}).
\begin{lemma}
Let $u^s(x;\hat{\theta};k)$ be the unique radiating solution to \eqref{eq: Helmholtz equation}-\eqref{eq: sommerfeld radiation condition} and  $u^\infty(\hat{x};\hat{\theta};k)$ be its far-field pattern. Then the following reciprocity relation holds
$$
u^\infty(\hat{x};\hat{\theta};k) = u^\infty(-\hat{\theta};-\hat{x};k), \quad \forall \hat{x},\hat{\theta} \in \mathbb{S}.
$$
\end{lemma}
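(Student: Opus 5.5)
The plan is the classical argument via Green's second identity (cf. Colton--Kress), adapted to the penetrable medium. Write $u(\cdot;\hat{\theta}) = u^i(\cdot;\hat{\theta}) + u^s(\cdot;\hat{\theta})$ for the total field. It satisfies $\Delta u + k^2(1+q)u = 0$ in $\mathbb{R}^2$ in the weak sense, with $u \in H^1_{loc}$ and, by elliptic regularity, $u \in H^2_{loc}$. The starting point is the representation \eqref{eqn: far field integral representation}:
$$
u^\infty(\hat{x};\hat{\theta};k) = k^2\int_\Omega e^{-ik\hat{x}\cdot y}\,u(y;\hat{\theta})\,q(y)\,{\rm d}y = k^2\int_\Omega u^i(y;-\hat{x})\,u(y;\hat{\theta})\,q(y)\,{\rm d}y .
$$
Likewise, $u^\infty(-\hat{\theta};-\hat{x};k) = k^2\int_\Omega u^i(y;\hat{\theta})\,u(y;-\hat{x})\,q(y)\,{\rm d}y$. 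The goal is therefore to show that the bilinear quantity $I(a,b) := k^2\int_\Omega u^i(\cdot;a)\,u(\cdot;b)\,q$ is symmetric in $(a,b)$.

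\textbf{Step 1 (symmetrize).} Since $u = u^i + u^s$, write
$$
I(a,b) = k^2\int_\Omega q\,u^i(a)\,u^i(b) + k^2\int_\Omega q\,u^i(a)\,u^s(b).
$$
The first term is symmetric. It therefore suffices to show
$$
\int_\Omega q\,u^i(a)\,u^s(b) = \int_\Omega q\,u^i(b)\,u^s(a).
$$

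\textbf{Step 2 (use the PDE).} Use $k^2 q\,u^i(b) = -\bigl(\Delta u^s(b) + k^2(1+q)u^s(b)\bigr)$, and add and subtract the term $k^2 q\,u^s(a)u^s(b)$, which is itself symmetric. This reduces the claim to
$$
\int \bigl[u(a)\,(\Delta + k^2(1+q))u^s(b) - u(b)\,(\Delta + k^2(1+q))u^s(a)\bigr] = 0 .
$$
Equivalently, the claim is
$$
\int_{B_R}\bigl(u^s(a)\,\Delta u^s(b) - u^s(b)\,\Delta u^s(a)\bigr) + \int_{B_R}\bigl(u^i(a)\,\Delta u^s(b) - u^s(b)\,\Delta u^i(a)\bigr) - (a\leftrightarrow b) = 0,
$$
where the potential terms $k^2(1+q)u\,u$ cancel pairwise. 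Outside $\Omega$ the integrands vanish, because both $u^s$ and $u^i$ solve the Helmholtz equation there. So I integrate over a large disk $B_R \supset \Omega$ and apply Green's second identity, which turns everything into boundary integrals on $|x| = R$.

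\textbf{Step 3 (boundary terms).} Three kinds of terms arise.
\begin{itemize}
\item The term $\int_{|x|=R}\bigl(u^s(a)\,\partial_\nu u^s(b) - u^s(b)\,\partial_\nu u^s(a)\bigr)$ tends to $0$ as $R\to\infty$. This follows from the Sommerfeld condition \eqref{eq: sommerfeld radiation condition} by replacing $\partial_\nu u^s$ with $ik u^s + o(R^{-1/2})$ and using $u^s = O(R^{-1/2})$.
\item The incident--incident term vanishes identically.
\item The mixed terms $\int_{|x|=R}\bigl(u^i(a)\,\partial_\nu u^s(b) - u^s(b)\,\partial_\nu u^i(a)\bigr)$ equal, by the standard far-field representation (Green's formula against a plane wave), a constant multiple of $u^\infty(-a;b)$.
\end{itemize}
These mixed terms are exactly where care is needed. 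Rather than recompute constants, I would organize the computation so that the mixed boundary terms reproduce $I(a,b) - I(b,a)$ itself, yielding an identity of the form $I(a,b) - I(b,a) = -\bigl(I(a,b) - I(b,a)\bigr)$ or $0$. Alternatively, I would use the cleaner route: apply Green's identity on $B_R$ directly to the two total fields, $\int_{\partial B_R}\bigl(u(a)\,\partial_\nu u(b) - u(b)\,\partial_\nu u(a)\bigr) = 0$, because the zeroth-order terms cancel inside. Expanding this into incident and scattered parts, the $ii$ part is zero, the $ss$ part tends to $0$, and each $is$ part equals a fixed constant times a far-field pattern. This gives $u^\infty(-a;b) = u^\infty(-b;a)$. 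Setting $a = -\hat{x}$ and $b = \hat{\theta}$ yields the claim.

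\textbf{Main obstacle.} The main obstacle is the asymptotic evaluation of the mixed terms $\int_{|x|=R}\bigl(u^i(a)\,\partial_\nu u^s(b) - u^s(b)\,\partial_\nu u^i(a)\bigr)$. I would handle it by noting that this integral is independent of $R$ (by Green's identity in the annulus, where both fields are Helmholtz solutions). I would then identify it via the Green representation of $u^s$ with the fundamental solution, taking the asymptotics of $H_0^{(1)}$. This gives a normalization constant times $u^\infty(-a;b)$, with the same constant for both mixed terms, so the constants cancel in the symmetry statement. Regularity at the Lipschitz interface $\partial\Omega$ poses no issue, since the equation is applied weakly on all of $B_R$ with $u \in H^2_{loc}$.
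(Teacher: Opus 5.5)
Your ``cleaner route'' is correct. Green's second identity for the two total fields on $B_R$ gives zero. In the expansion, the incident--incident term vanishes and the scattered--scattered term is $o(1)$ by the Sommerfeld condition. The two mixed terms are $R$-independent and equal, up to one common normalization constant, $-u^\infty(-a;b)$ and $+u^\infty(-b;a)$, which gives $u^\infty(-a;b)=u^\infty(-b;a)$. The paper gives no proof of its own and only cites \cite{2014A}. Your cleaner route is the standard Green's-identity argument one expects to find there, so in substance you match the intended proof.

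Your Steps 1--2, however, contain a bookkeeping error and miss a shortcut that your own Step 1 sets up.

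\emph{The error.} In $\int_{B_R}\bigl[u(a)(\Delta+k^2(1+q))u^s(b)-u(b)(\Delta+k^2(1+q))u^s(a)\bigr]$ the potential terms do not cancel pairwise. Only the $u^su^s$ terms cancel. The $k^2$ part of the $u^iu^s$ terms is what you rewrote as $-u^s\Delta u^i$. The $q$-weighted part, $k^2\int_\Omega q\,(u^i(a)u^s(b)-u^i(b)u^s(a))$, is left over, and it is exactly the quantity you are trying to show vanishes. Consequently, the display you call ``equivalent'' depends on how $(a\leftrightarrow b)$ is read. Under one reading it is the trivially true identity $\int_{B_R}(u(a)\Delta u(b)-u(b)\Delta u(a))=0$. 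Under the other it is equivalent to the claim only through the identity below, which already proves the lemma.

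\emph{The shortcut.} Substitute $k^2qu^i=-(\Delta+k^2(1+q))u^s$ directly into the target of Step 1, without adding and subtracting anything. Then the potential terms really do cancel, and you get
\[
k^2\int_\Omega q\bigl(u^i(a)u^s(b)-u^i(b)u^s(a)\bigr)\,{\rm d}y=\int_{B_R}\bigl(u^s(a)\Delta u^s(b)-u^s(b)\Delta u^s(a)\bigr)\,{\rm d}y=\int_{\partial B_R}\bigl(u^s(a)\partial_\nu u^s(b)-u^s(b)\partial_\nu u^s(a)\bigr)\,{\rm d}s .
\]
The left side does not depend on $R$, and the right side is $o(1)$ as $R\to\infty$. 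Starting from the volume representation \eqref{eqn: far field integral representation}, this proves the lemma using only the scattered--scattered term. The mixed terms, the far-field Green representation and its normalization constant (your ``main obstacle'') never enter.

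The Green's-identity route you settled on is more portable: it needs no volume representation and works verbatim for impenetrable obstacles. The direct route is shorter given what the paper has already set up.
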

The above reciprocity relation allows to define   a processed dataset uniquely, cf. \cite{zhou2024exploring}; more precisely, for any point $p \in B$ and $p\not=0$, there exist only two incident-observation pairs $(\hat{x}_j,\hat{\theta}_j)_{j=1}^2$ such that
    $
    p = \frac{\hat{\theta}_j - \hat{x}_j}{2}$ for $j =1,2$,
    where $\hat{\theta}_2 = - \hat{x}_1$ and $\hat{x}_2 = - \hat{\theta}_1$. Note the reciprocity relation, it follows that
    $$
    u^\infty(\hat{x}_2;\hat{\theta}_2;k) = u^\infty(-\hat{\theta}_1;-\hat{x}_1;k) = u^\infty(\hat{x}_1;\hat{\theta}_1;k).
    $$
   Let $c=2k$, then one can introduce  the following unique processed datum
\begin{equation} \label{eqn: far field to processed}
 u(p;c) = \frac{1}{k^2} u^\infty(\hat{x};\hat{\theta};k), \, p \not= 0,  \mbox{ where }   p = \frac{\hat{\theta} - \hat{x}}{2}   \mbox{ for some } (\hat{\theta},\hat{x}).   
\end{equation}
The processed data set $\{u(p): p \in B\}$ is uniquely defined almost everywhere. For convenience, we define the unknown-to-data operator $\mathcal{F}:L^\infty(B) \to L^2(B)$ by
\begin{equation} \label{eqn: full unknown-to-data}
 u =  \mathcal{F} q   
\end{equation}
for any $q \in L^\infty(B)$, where the processed data $\{u(p): p \in B\}$ is uniquely defined via \eqref{eqn: far field to processed} by the far field patterns $\{u^\infty(\hat{x};\hat{\theta};k):\hat{x},\hat{\theta}\in \mathbb{S}\}$ due to the unknown contrast $q$. In the Born region, the above unknown-to-data operator becomes explicitly
\begin{equation}\label{eqn: Born unknown-to-data}
    (\mathcal{F}_b\,q)(x)=\int_{B} e^{icp\cdot y}  q(y) {\rm d} y, \quad p\in B.
\end{equation}
The above formulation takes advantage of the reciprocity relation explicitly, and additionally, allows to explore the equivariance properties in the next section.

\subsection{Property of rotation-equivariance}
We first introduce the following rotation group 
\begin{equation*}
    \mathcal{R}:=\left\{R_\phi:R_\phi=
    \begin{bmatrix}
    \cos\phi & -\sin\phi\\
    \sin\phi & \cos\phi
    \end{bmatrix}, \forall \phi\in [0,2\pi] \right\},
\end{equation*}
where $x'=R_\phi x$ denotes the  rotation of $x\in \mathbb{R}^2$ by angle $\phi$ counterclockwise. The rotated function of $f\in L^2(B)$ is defined by $R_\phi f(x):=f(R_{-\phi}x)$.

Now we are ready to present the following rotation-equivariance property in inverse scattering.
\begin{theorem}\label{thm: equivariance}
For any $R_\phi\in \mathcal{R}$, it holds the property of rotation-equivariance where
$$
\mathcal{F} (R_\phi q)   = R_\phi(\mathcal{F} q)
$$ 
for all $q \in L^\infty (B)$.
\end{theorem}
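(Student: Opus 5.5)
The plan is to reduce the statement to a rotation-covariance property of the forward scattering problem at the level of total fields, pass it to the far field pattern, and then carry it through the change of variables $p=(\hat{\theta}-\hat{x})/2$ that defines the processed data in \eqref{eqn: far field to processed}.

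\emph{Step 1: rotation of the scattered field.} Fix $\phi$ and write $\tilde q=R_\phi q$, i.e.\ $\tilde q(x)=q(R_{-\phi}x)$. Let $u^s(\cdot;\hat{\theta})$ denote the radiating solution of \eqref{eq: Helmholtz equation}--\eqref{eq: sommerfeld radiation condition} for $q$ with incident direction $\hat{\theta}$. I claim that
$$
v(x):=u^s(R_{-\phi}x;R_{-\phi}\hat{\theta})
$$
is the radiating solution for $\tilde q$ with incident direction $\hat{\theta}$. The Laplacian is invariant under orthogonal changes of variables, so $\Delta v(x)=(\Delta u^s)(R_{-\phi}x;R_{-\phi}\hat{\theta})$. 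Moreover
$$
e^{ik R_{-\phi}\hat{\theta}\cdot R_{-\phi}x}=e^{ik\hat{\theta}\cdot x}.
$$
Hence $v$ satisfies \eqref{eq: Helmholtz equation} with $q$ replaced by $\tilde q$ and incident wave $e^{ik\hat{\theta}\cdot x}$. The Sommerfeld condition is preserved because $|R_{-\phi}x|=|x|$ and $\nabla v\cdot \hat{x}=(\nabla u^s)(R_{-\phi}x)\cdot R_{-\phi}\hat{x}$. The uniqueness of the radiating solution (the well-posedness quoted after \eqref{eq: sommerfeld radiation condition}) then identifies $v$ with the scattered field for $\tilde q$. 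One could instead argue with the Lippmann--Schwinger equation: substitute $y\mapsto R_{-\phi}y$ and use $|R_{-\phi}x-R_{-\phi}y|=|x-y|$.

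\emph{Step 2: far field.} From the asymptotic expansion, or directly from \eqref{eqn: far field integral representation} after the substitution $y=R_\phi z$ (which has unit Jacobian), we obtain
$$
\tilde u^\infty(\hat{x};\hat{\theta};k)=u^\infty(R_{-\phi}\hat{x};R_{-\phi}\hat{\theta};k),
$$
where $\tilde u^\infty$ denotes the far field pattern generated by $\tilde q$.

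\emph{Step 3: processed data.} Take $p\in B$ with $p\neq0$ and a pair $(\hat{\theta},\hat{x})$ satisfying $p=(\hat{\theta}-\hat{x})/2$. Then
$$
(\mathcal{F}\tilde q)(p)=k^{-2}\tilde u^\infty(\hat{x};\hat{\theta};k)=k^{-2}u^\infty(R_{-\phi}\hat{x};R_{-\phi}\hat{\theta};k).
$$
The rotated pair satisfies $(R_{-\phi}\hat{\theta}-R_{-\phi}\hat{x})/2=R_{-\phi}p$, so by the definition \eqref{eqn: far field to processed} this quantity equals $(\mathcal{F}q)(R_{-\phi}p)=R_\phi(\mathcal{F}q)(p)$. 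Since the processed datum is well defined, independent of which of the two admissible pairs is used thanks to the reciprocity relation, this holds for almost every $p$, which gives the identity in $L^2(B)$.

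I expect the main obstacle to be Step 1: verifying carefully that the radiation condition and the $H^1_{loc}$ regularity transfer under the rotation, and tracking the direction of the rotation consistently throughout. The remaining steps are routine changes of variables.
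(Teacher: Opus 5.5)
Your proof is correct and follows essentially the same route as the paper: you rotate the scattered field and identify it through uniqueness of the radiating solution, transfer this to the far field by a change of variables in \eqref{eqn: far field integral representation}, and then pass to the processed data via $p=(\hat{\theta}-\hat{x})/2$. The only cosmetic difference is that the paper treats the Born case first and splits the far field into $I_1+I_2$, whereas you handle the full far field pattern in one step.
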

\begin{proof}
In this proof, we adapt the notation where the Born far-field data for contrast $q$   is given by 
    $$
    u_b^{\infty}(\hat{x};\hat{\theta};k;q)=k^2\int_B e^{i2k\frac{\hat{\theta}-\hat{x}}{2}y}q(y){\rm d}y.
    $$
    We first prove the result for the Born model. Consider the Born far-field data for the rotated contrast $R_\phi q$ with some rotation angle $\phi \in [0, 2\pi]$, one can obtain via a change of variable $\eta=R_{-\phi}y$ that
    \begin{eqnarray*}
    u_b^{\infty}(R_\phi\hat{x};R_\phi\hat{\theta};k;R_\phi q) &=& k^2\int_B e^{i2k\frac{R_\phi\hat{\theta}-R_\phi\hat{x}}{2} \cdot y}(R_{\phi}q)(y) {\rm d}y =k^2\int_B e^{i2k\frac{\hat{\theta}-\hat{x}}{2} \cdot (R_{-\phi}y)}q(R_{-\phi}y){\rm d}y\\
    &=&k^2\int_B e^{i2k \frac{\hat{\theta}-\hat{x}}{2}\cdot \eta }q(\eta) {\rm d}\eta = u_b^{\infty}(\hat{x};\hat{\theta}; k; q),
    \end{eqnarray*}
and the corresponding processed data obey the relation $u_b(R_\phi p;c;R_\phi q)=u_b(p;c;q)$ with $p = \frac{\hat{\theta}-\hat{x}}{2}$. This  yields that $[\mathcal{F}_b (R_\phi q)](R_\phi p) = [\mathcal{F}_b q](p)$ for almost every $p\in B$, which can be written as $[\mathcal{F}_b (R_\phi q)] (p)  = (\mathcal{F}_b q) (R_{-\phi}p )$ for almost every $p\in B$. This shows that $\mathcal{F}_b (R_\phi q)   = R_\phi(\mathcal{F}_b q)$.
    
     Now we prove the result for the full far-field data. Let $u^s(\cdot;\hat{\theta};k;q)$ denote the scattered field for the contrast $q$ due to the incident plane wave field $e^{ik x\cdot \hat{\theta}}$. One can obtain from the integral representation \eqref{eqn: far field integral representation} of the far field pattern that
     \begin{eqnarray*}
    &&u^{\infty}(\hat{x};\hat{\theta};k;q)=k^2\int_B e^{-ik\hat{x}\cdot y}(u^s(y;\hat{\theta};k;q)+e^{ik\hat{\theta}\cdot y})q(y) {\rm d}y\\
    &=& k^2\int_B e^{i2k\frac{\hat{\theta}-\hat{x}}{2}\cdot y}q(y){\rm d}y+ k^2\int_B e^{-ik \hat{x}\cdot y}u^s(y;\hat{\theta};k;q)q(y){\rm d}y := I_1(\hat{x};\hat{\theta};q)+I_2(\hat{x};\hat{\theta};q),
 \end{eqnarray*}
 where the first and second quantity is denoted by $I_1$ and $I_2$, respectively. It was already shown in the Born case that  $I_1(R_\phi\hat{x};R_\phi\hat{\theta};R_\phi q) = I_1(\hat{x};\hat{\theta};q)$ and  we  now focus on $I_2$.
Note that $u^s(\cdot;\hat{\theta};k;q)$  satisfies the following equation
    $$
    \Delta u^s(x;\hat{\theta};k;q)+k^2(1+q(x))u^s(x;\hat{\theta};k;q)=-k^2 q(x)e^{ikx \cdot \hat{\theta}},
    $$
    then one can obtain the equation corresponding to the rotated incident direction $R_\phi\hat{\theta}$ and  rotated contrast $R_\phi q$ by
    $$\Delta u^s(x;R_\phi \hat{\theta};k;R_\phi q)+k^2(1+ (R_\phi q)(x) ))u^s( x;R_\phi \hat{\theta};k;R_\phi q)=-k^2 (R_\phi q)(x)e^{ik x\cdot (R_\phi\hat{\theta})}.
    $$
    By the change of variable $\eta=R_{-\phi} x $ and note that $(R_\phi q)(x) = q(R_{-\phi} x)= q(\eta)$, the above equation can be written as 
    $$
    \left[\Delta_\eta + k^2(1+q(\eta))\right] u^s(R_{\phi}\eta; R_{\phi}\hat{\theta};k; R_{\phi} q) =-k^2 q(\eta)e^{ik \eta \cdot \hat{\theta}},
    $$
    which yields that   $u^s(R_{\phi}\eta;R_\phi \hat{\theta};k; R_{\phi} q) = u^s(\eta;\hat{\theta};k; q)$ due to uniqueness of the scattering problem.
    Together with a change of variable $\eta=R_{-\phi}y$, one can obtain that  
    \begin{eqnarray*}
    &&I_2(R_\phi\hat{x};R_\phi\hat{\theta};R_\phi q)=k^2\int_B e^{-ik (R_\phi \hat{x})\cdot y}u^s(y;R_\phi\hat{\theta};k;R_\phi q)q(R_{-\phi}y) {\rm d } y \\ 
     &=& k^2\int_B e^{-ik \hat{x} \cdot \eta}u^s(R_\phi \eta;R_\phi\hat{\theta};k;R_\phi q)q(\eta){\rm d}\eta =  k^2\int_B e^{-ik \hat{x} \cdot \eta} u^s(  \eta; \hat{\theta};k;q)q(\eta) {\rm d}\eta =  I_2(\hat{x};\hat{\theta};q),
 \end{eqnarray*}
 and thereby $u^{\infty}(R_\phi \hat{x};R_\phi\hat{\theta};k;R_\phi q) = u^{\infty}(\hat{x};\hat{\theta};k;q)$. Following exactly the Born case, this shows that $\mathcal{F} (R_\phi q)   = R_\phi(\mathcal{F} q)$ which completes the proof for the rotation-equivariance.
\end{proof}

\begin{figure}[htbp]
\centering
 \includegraphics[width=0.30\textwidth]{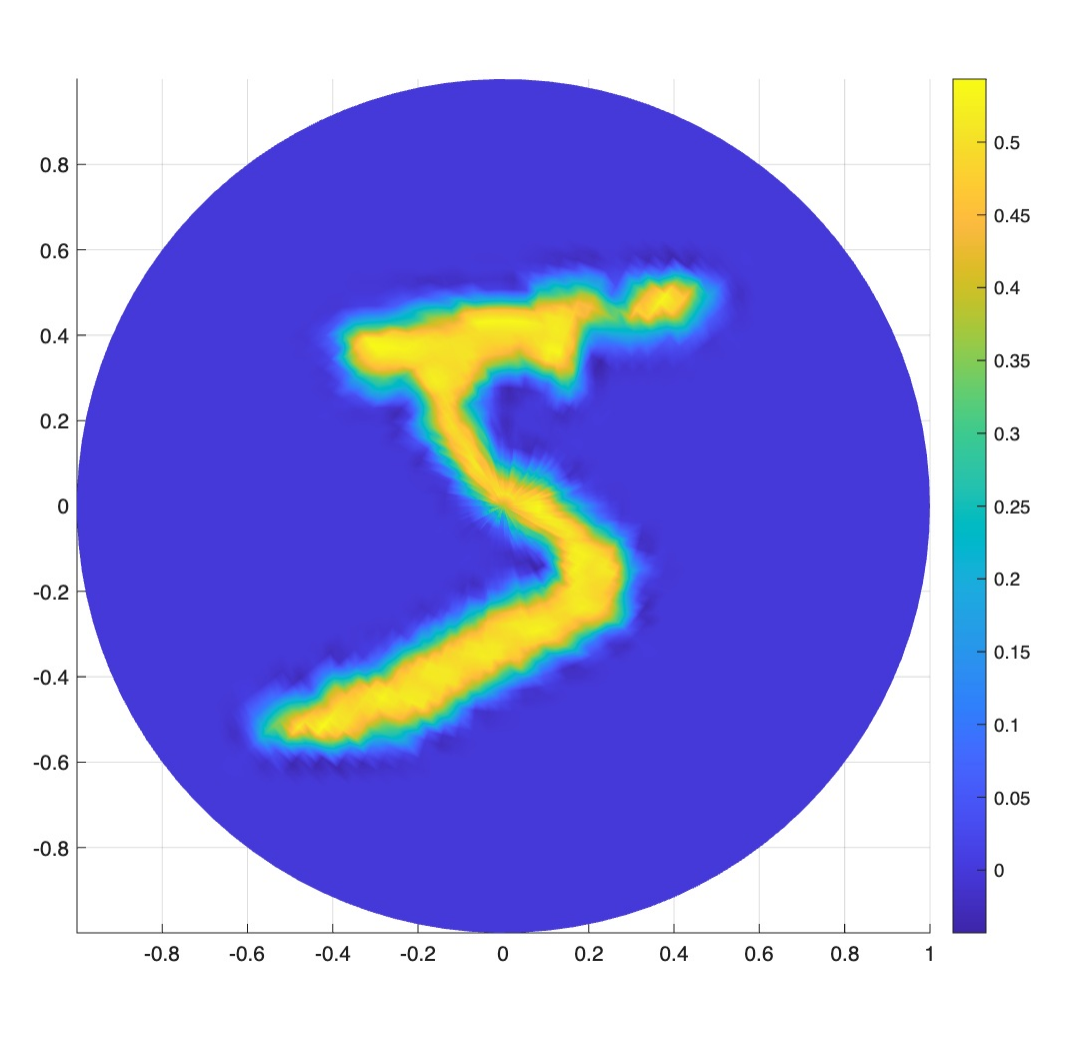} 
 \hspace{0.8em}
\includegraphics[width=0.30\textwidth]{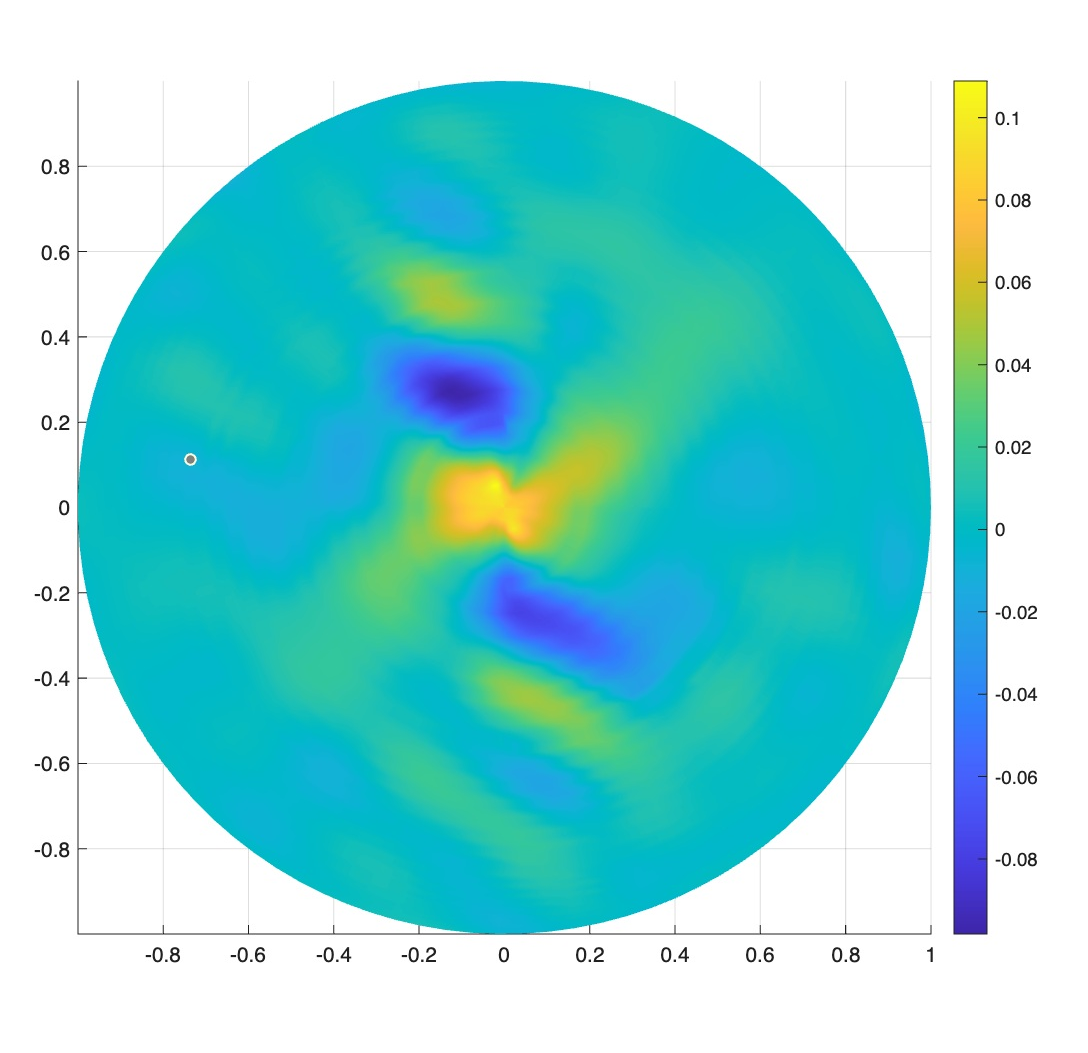} %
 \hspace{0.8em}
\raisebox{0.4cm}{\includegraphics[width=0.27\textwidth]{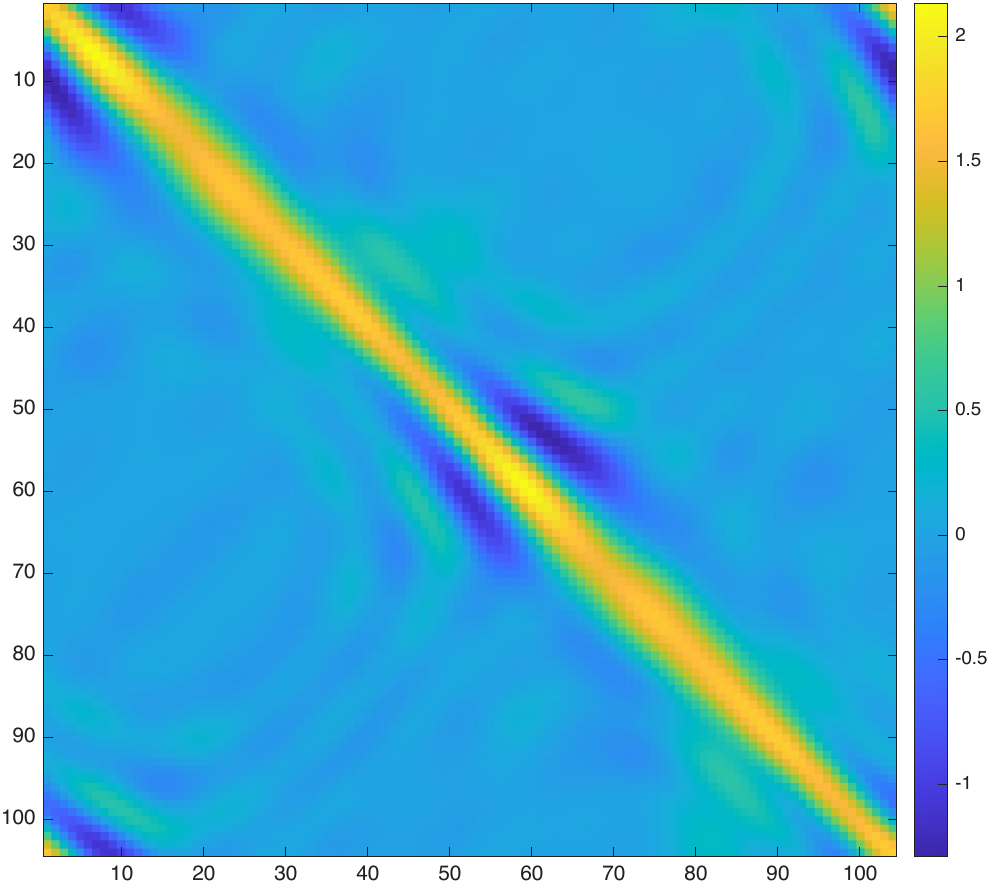}}\\
\includegraphics[width=0.30\textwidth]{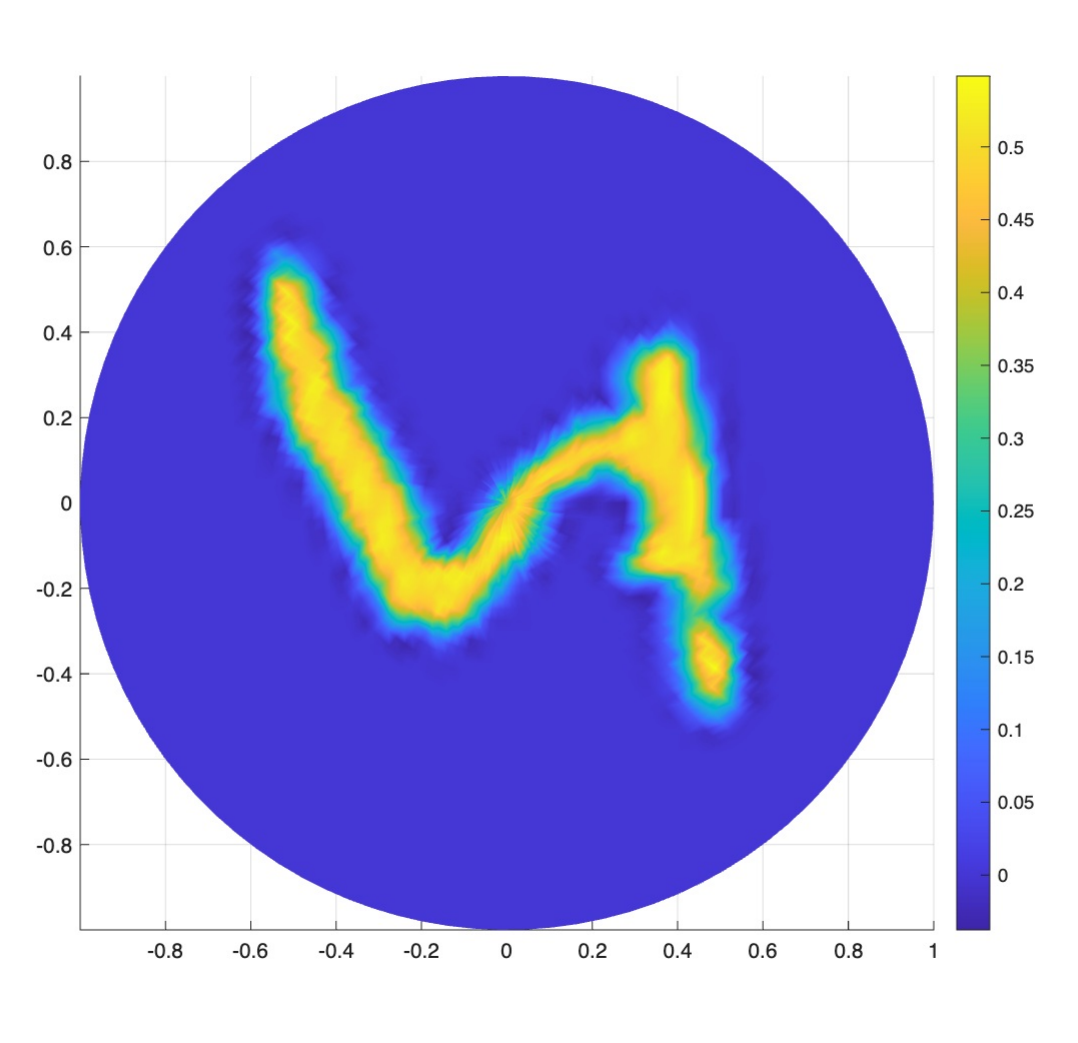} 
 \hspace{0.8em}
\includegraphics[width=0.30\textwidth]{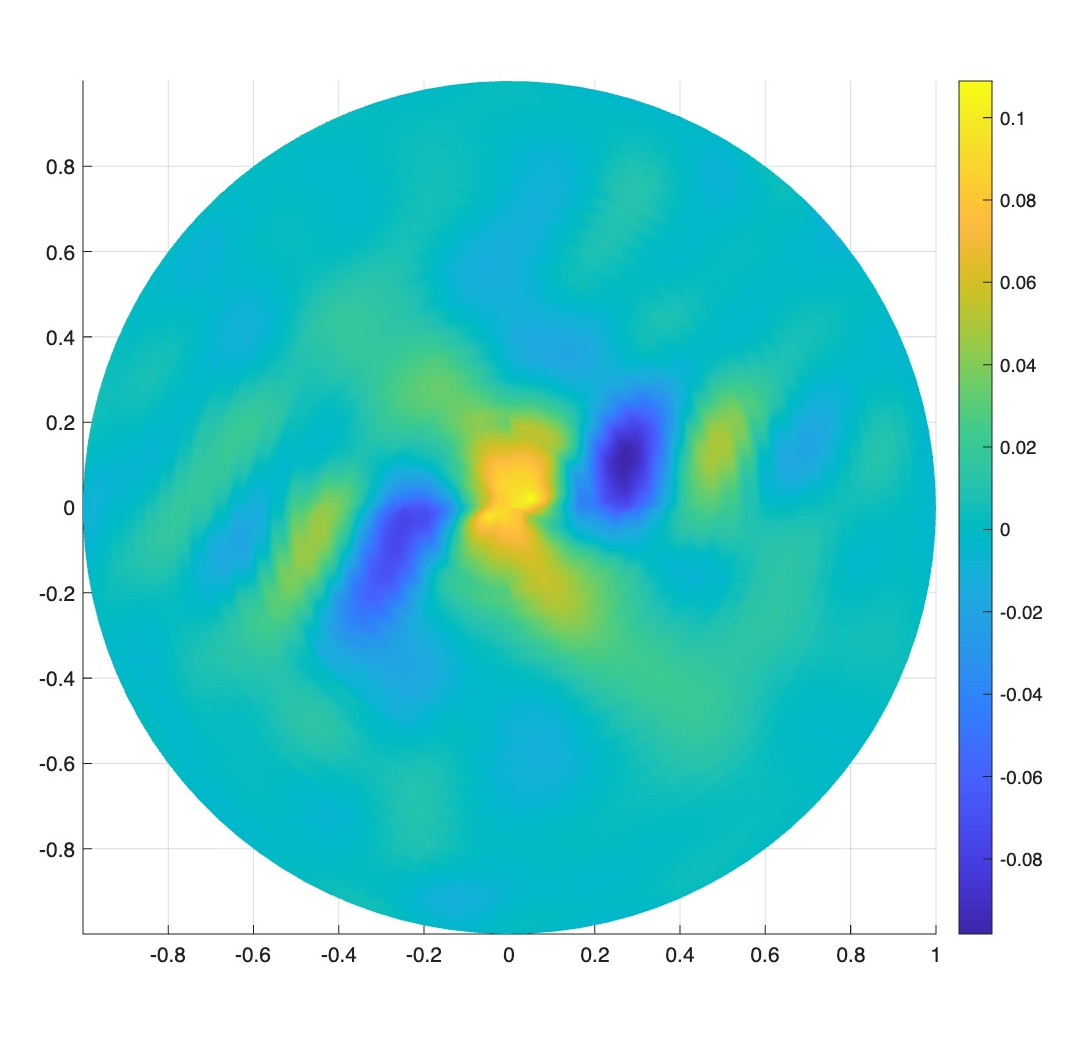} %
 \hspace{0.8em}
\raisebox{0.4cm}{\includegraphics[width=0.27\textwidth]{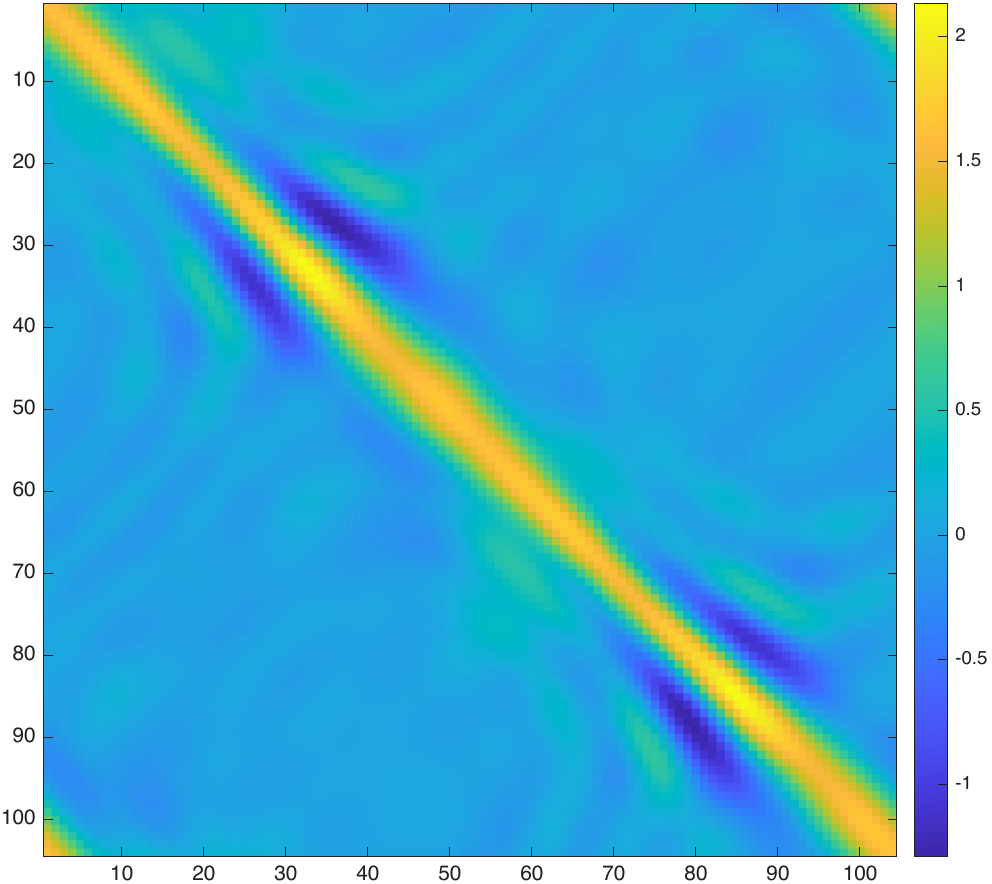}}
   \caption{Illustration of rotation-equivariance.  First column: contrast $q$; second column, imaginary  part of the processed datum \eqref{eqn: far field to processed}; third column, imaginary part of the far field datum. In the first row, we plot the number ``5'' and the corresponding  data. In the second   row, we plot the    data for a $\frac{3\pi}{2}$ rotated contrast.  }  \label{figure: equivariance of data}
\end{figure}

We demonstrate the rotation-equivariance  in \Cref{figure: equivariance of data}. The first row plots the original contrast $q$, the imaginary  part of the processed datum $\Re(u)$, and the imaginary part of the far field datum $\Re(u^\infty)$. The second row plots the rotated contrast and the associated data, it is observed that the processed datum rotates accordingly. It is worth noting that any rotation   of a processed datum corresponds to a physical contrast (i.e., a rotation of the original contrast). In contrast, if one attempts to naively rotate or translate  the far-field pattern  $u^\infty(\hat{x};\hat{\theta};k)$, it becomes unclear whether the transformed datum is a far-field pattern of a physical contrast. Furthermore,  a direct application of a translation-equivariant convolutional neural network seems less justified. Compared with the original far-field patterns, the processed data take explicit advantage of the intrinsic mathematical properties that eventually motivate us to explore appropriate neural networks in the next section.

}

{  
\section{Integrating neural network and   low-rank structure} \label{sec: neural network}
The recent work \cite{desai2025neural} proposes a neural network to model the data corrector which maps the full far field data to the Born far field data, and an inverse Born solver is then applied to find the unknown from the Born data. Fueled by this idea, our work proposes a rotation-equivariance-aware neural network for the data corrector, and integrates this neural network with an intrinsic low-rank structure associated with the inverse Born solver. This proposed method is referred to as the hybrid method (ULR). For comparison, we are also motivated to propose a two-step neural network (UU) which replaces the low-rank inverse Born solver by another rotation-equvariance-aware neural network.  

\subsection{Low-rank structure for the inverse Born solver and encoder-decoder architecture}\label{sec: low-rank inverse Born}
\subsubsection{Intrinsic low-rank structure for the  inverse Born solver}
To find a regularized  solution from the Born unknown-to-data map $\mathcal{F}_b$  \eqref{eqn: Born unknown-to-data}, we follow \cite{meng23data} and \cite{zhou2024exploring} based on  the intrinsic low-rank structure given by the disk  PSWFs. The PSWFs and their generalizations were studied in a series of work  \cite{Slepian64,slepian1961prolate} in the 1960s. We refer to \cite{osipov2013prolate} for a comprehensive introduction to the one dimensional PSWFs and to \cite{greengard2024generalized,GreengardSerkh18,ZLWZ20} for more recent studies on multidimensional generalizations of the PSWFs. 
 It was known \cite{Slepian64} that there exist real-valued  eigenfunctions $\{\psi_{m,n,l}(x;c)\}^{l\in\mathbb{I}(m)}_{m,n\in \mathbb{N}}$ of the restricted Fourier transform with parameter c such that
    \begin{align}\label{eigen_R_Fourier}
        [\mathcal{F}_{b}  \psi_{m,n,l}](x;c)=\int_{B}e^{ic x\cdot y}\psi_{m,n,l}(y;c) {\rm d} y =\alpha_{m,n}(c)\psi_{m,n,l}(x;c),\quad x\in B,
    \end{align}
    where $\mathbb{N}=\{0,1,2,3,\dots\}$ and
    \begin{eqnarray*}
        \mathbb{I}(m)=\left\{
            \begin{array}{cc}
                \{1\} & m=0 \\
                \{1,2\} & m \geq 1
            \end{array}\right..
    \end{eqnarray*}
We refer to $\psi_{m,n,l}(x;c)$   as the disk PSWFs and to $\alpha_{m,n}(c)$  as the prolate eigenvalues.

One of the most important properties of the disk PSWFs is  the so-called dual property. A direct calculation using \cite{Slepian64} shows that the disk PSWFs are also eigenfunctions of a Sturm-Liouville operator, i.e.,
\begin{equation}\label{sturm-liouvill}
    \mathcal{D}  [\psi_{m,n,l}](x)=\chi_{m,n} \psi_{m,n,l}(x),\quad x\in B,
\end{equation}
where
\begin{align*}
    \mathcal{D}  := -(1-r^2)\partial_r^2-\frac{1}{r}\partial_r+3r\partial_r-\frac{1}{r^2}\Delta_0+c^2r^2
\end{align*}
and the Laplace–Beltrami operator $\Delta_0= \partial^2_\theta $ is the spherical part of Laplacian $\Delta$. We further refer to $\chi_{m,n}(c)$  as the Sturm-Liouville eigenvalue.

For a complete paper, we state the following preliminaries and we refer the details to \cite{greengard2024generalized,GreengardSerkh18,ZLWZ20} and to \cite{zhou2024exploring} for its application to inverse scattering.

\begin{lemma} \label{lemma: SL}
Let $c>0$ be a positive real number.
\begin{itemize}
\item[]
(a)~$\{\psi_{m,n,l}(x;c)\}^{l\in\mathbb{I}(m)}_{m,n\in \mathbb{N}}$ forms a complete and orthonormal system of $L^2(B)$, i.e., for any $m,~n,~m',~n'\in\mathbb{N},~l\in\mathbb{I}(m)$ and $l'\in\mathbb{I}(m')$, it holds that
   \begin{equation*}
       \int_{B(0,1)}\psi_{m,n,l}(y;c)\psi_{m',n',l'}(y;c) {\rm d} y=\delta_{m m'}\delta_{n n'}\delta_{l l'},
   \end{equation*}
where $\delta$ denotes the Kronecker delta. 
\item[]
(b)~The corresponding  Sturm-Liouville eigenvalues $\{\chi_{m,n}\}_{m,n\in \mathbb{N}}$ in \eqref{sturm-liouvill} are real positive which  are ordered for fixed $m$ as follows
       \begin{equation*}
       0<\chi_{m,0}(c)<\chi_{m,1}(c)<\chi_{m,2}(c)<\cdots .
   \end{equation*}
\item[]
(c)~Every prolate eigenvalue $\alpha_{m,n}(c)$ is  non-zero, and $\lambda_{m,n} = |\alpha_{m,n}(c)|$ can be arranged for fixed $m$ as
    \begin{equation*}
       \lambda_{m,n_1}(c)>\lambda_{m,n_2}(c)>0, \quad \forall n_1<n_2.
   \end{equation*}
   Moreover $\lambda_{m,n}(c)\longrightarrow 0$  as  $m,n\longrightarrow +\infty$.
\end{itemize}
\end{lemma}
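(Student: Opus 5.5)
The plan is to treat the restricted Fourier transform $\mathcal{F}_b$ on $L^2(B)$ as a compact operator commuting with a self-adjoint Sturm--Liouville operator, and to extract (a)--(c) from spectral theory together with the separation of variables in polar coordinates. The first step is to record the basic properties of $\mathcal{F}_b$. It has a smooth kernel $e^{icx\cdot y}$ on $B\times B$, so it is Hilbert--Schmidt and hence compact. It is also normal: $\mathcal{F}_b^*$ has kernel $e^{-icx\cdot y}$, which equals $\mathcal{F}_b$ composed with the reflection $x\mapsto -x$, and this reflection commutes with $\mathcal{F}_b$. Moreover $\mathcal{F}_b$ is injective. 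If $\int_B e^{icx\cdot y}q(y)\,{\rm d}y=0$ for $x\in B$, then the Fourier transform of the compactly supported $q$, which is entire by Paley--Wiener, vanishes on an open set and therefore vanishes identically, so $q=0$. This injectivity will give the nonvanishing in (c).

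The second step, which carries the dual property, is to show that $\mathcal{D}$ commutes with $\mathcal{F}_b$. Writing $\mathcal{D}$ in Cartesian form as $-\nabla\cdot((1-|x|^2)\nabla)+\ldots$, I would check directly that $\mathcal{D}_x e^{icx\cdot y}=\mathcal{D}_y e^{icx\cdot y}$. Integrating by parts in $y$ then moves $\mathcal{D}_y$ onto $q$, and the boundary terms vanish because the leading coefficient $(1-|y|^2)$ vanishes on $\partial B$. This is the classical Slepian computation and I expect it to be the main technical obstacle. The difficulty is the bookkeeping of the first-order terms $-\frac1r\partial_r+3r\partial_r$, which must exactly match the divergence-form structure so that $\mathcal{D}$ is symmetric on $L^2(B)$ with the natural domain.

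The third step is the spectral analysis of $\mathcal{D}$. In polar coordinates, expand in $\cos m\theta$ and $\sin m\theta$; the indices $l=1,2$ give the two real angular parts for $m\ge1$, while $m=0$ has only one. For each $m$, the radial operator is a singular Sturm--Liouville problem on $(0,1)$: $r=0$ is a regular singular point where $r^m$ behaviour is selected, and $r=1$ is singular with a bounded-solution condition. Standard singular SL theory, via a compact self-adjoint resolvent, then gives a complete orthonormal set of real radial eigenfunctions with simple eigenvalues $\chi_{m,0}<\chi_{m,1}<\cdots\to\infty$. Positivity follows from the quadratic form $\int_B (1-|x|^2)|\nabla\psi|^2+c^2|x|^2|\psi|^2>0$, once the form is verified to match $\mathcal{D}$ up to the lower-order terms. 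Combining the radial and angular bases yields the complete orthonormal system in (a) and the ordering in (b).

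The final step returns to $\mathcal{F}_b$. Since $\mathcal{F}_b$ commutes with $\mathcal{D}$ and with rotations (Theorem \ref{thm: equivariance}), it preserves each eigenspace of $\mathcal{D}$ restricted to angular mode $m$. Those eigenspaces are one-dimensional for fixed $m$ and $l$, so each $\psi_{m,n,l}$ is an eigenfunction of $\mathcal{F}_b$, with $\alpha_{m,n}$ independent of $l$ by rotation symmetry. Each $\alpha_{m,n}$ is nonzero by the injectivity established in the first step. Because $\mathcal{F}_b$ is compact, its eigenvalues satisfy $\lambda_{m,n}\to0$. The strict monotonicity in $n$ for fixed $m$ is the delicate point. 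Here I would follow Slepian and use a continuity/oscillation argument in $c$: as $c\to0$ the ordering holds explicitly, the number of zeros of the radial eigenfunction is fixed by $n$, and the eigenvalues cannot cross because of the simplicity of the commuting SL spectrum. Alternatively, I would simply cite \cite{Slepian64,ZLWZ20} for this step.
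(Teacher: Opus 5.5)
\textbf{Comparison with the paper's route.} The paper does not prove this lemma. It imports (a)--(c) from the disk/ball PSWF literature. Your spectral-theoretic route is therefore genuinely different, and much of it is sound:
\begin{itemize}
\item Paley--Wiener injectivity gives $\alpha_{m,n}\neq 0$.
\item Compactness of $\mathcal{F}_b$ gives $\lambda_{m,n}\to 0$.
\item Singular Sturm--Liouville theory for $-r^{-1}(r(1-r^2)R')'+m^2r^{-2}R+c^2r^2R$ on $L^2((0,1),r\,{\rm d}r)$ gives (a) and (b).
\end{itemize}

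Two repairs are needed along the way.
\begin{itemize}
\item \emph{The form of $\mathcal{D}$.} The bookkeeping you flag resolves differently than you expect. The radial terms match exactly; in fact $\mathcal{D}=-\nabla\cdot((I-xx^T)\nabla)+c^2|x|^2$. This differs from your $(1-|x|^2)I$ version by $-\partial_\theta^2$. That term is not lower order, but it is harmless: it commutes with $\mathcal{F}_b$, is nonnegative, and only shifts $\chi_{m,n}$ by $m^2$. With this form the conormal derivative $x^T(I-xx^T)\nabla\psi$ vanishes on $\partial B$. The quadratic form $\int_B(|\nabla\psi|^2-|x\cdot\nabla\psi|^2+c^2|x|^2|\psi|^2)$ then gives positivity.
\item \emph{Separating $l=1$ from $l=2$.} Rotation-equivariance alone does not make $\mathcal{F}_b$ preserve the $l$-subspaces. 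On $\mathrm{span}\{R\cos m\theta,R\sin m\theta\}$, a rotation-commuting operator may act with different eigenvalues on $Re^{\pm im\theta}$ (think of $\partial_\theta$). You also need invariance of the kernel under the reflection $(x_1,x_2)\mapsto(x_1,-x_2)$. Alternatively, use the Jacobi--Anger identity $\mathcal{F}_b(R\cos m\phi)=2\pi i^m\cos(m\theta)\int_0^1J_m(crs)R(s)s\,{\rm d}s$ directly.
\end{itemize}

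\textbf{The gap: strict monotonicity in (c).} Simplicity of the spectrum of $\mathcal{D}$ constrains the $\chi_{m,n}$, not the $\alpha_{m,n}$. On each mode, $\mathcal{F}_b$ is merely some function of $\mathcal{D}$, and nothing forces that function to be injective in modulus. For example, $\mathrm{diag}(1,c)$ commutes with $\mathrm{diag}(1,2)$ for every $c$, yet its eigenvalues cross at $c=1$. So two SL eigenfunctions with different zero counts could carry equal $|\alpha|$ at some $c$. After that point your small-$c$ ordering would flip, and continuity in $c$ does not rule this out.

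Note also that $\alpha_{m,n}=2\pi i^m\gamma_{m,n}$, where the $\gamma_{m,n}$ are real eigenvalues of the finite Hankel operator above and need not share a sign. So even simplicity of the spectrum of $\mathcal{F}_b$ would not suffice. What is needed is:
\begin{itemize}
\item simplicity of $\mathcal{F}_b^*\mathcal{F}_b$ on each $(m,l)$-mode, and
\item identification of its $n$-th largest eigenvalue with the SL eigenfunction having $n$ interior zeros.
\end{itemize}
That is the nontrivial part of Slepian's theory. For it, your fallback of citing \cite{Slepian64,ZLWZ20} is exactly what the paper does, and the paper does so for the whole lemma.
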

A direct evaluation of the leading disk PSWFs solely based on the restricted Fourier transform is not reliable as the leading prolate eigenvalues have numerically the same amplitude, cf. \cite{zhou2024exploring}. Instead, we follow \cite{Slepian64} to evaluate the disk PSWFs  using the Sturm-Liouville differential operator to ensure stability and efficiency. More precisely in polar coordinates, each disk PSWF  $\psi_{m,n,l} (x;c)$ can be obtained by separation of variables by (cf. \cite{Slepian64} or \cite{meng23data})
\begin{equation*}
    \psi_{m,n,l}(x;c)={r^m}\varphi_{m,n}(2{ r}^2-1;c)Y_{m,l}(\hat{x}),\quad x\in B(0,1),
\end{equation*}
where $x=r\hat{x}=(r\cos{\theta},r\sin{\theta})^T$ and the spherical harmonics $Y_{m,l}(\hat{x})$ are given by
\begin{align}\label{spherical_harmonic}
    Y_{m,l}(\hat{x})=\left\{
            \begin{array}{cc}
                \frac{1}{\sqrt{2\pi}}, & m=0,l=1 \\
                \frac{1}{\sqrt{\pi}}\cos{m\theta}, & m\geq 1,l=1 \\
                \frac{1}{\sqrt{\pi}}\sin{m\theta},& m\geq 1,l=2
            \end{array}\right. .
\end{align}
An efficient method to evaluate the disk PSWFs is to expand $\varphi_{m,n}(\eta;c)$ by normalized Jacobi polynomials $\{P^{(m)}_{j}(\eta)\}^{j\in\mathbb{N}}_{\eta\in (-1,1)}$,
\begin{equation}\label{expansion}
    \varphi_{m,n}(\eta;c)=\sum_{j=0}^{\infty}\beta_j^{m,n}(c) P^{(m)}_j(\eta).
\end{equation}
The  coefficients $\{\beta_j^{m,n}(c)\}$ can be solved via a tridiagonal linear system, see for instance, \cite{ZLWZ20} and \cite{greengard2024generalized}. Here the normalized Jacobi polynomials $\{P_n^{(m)}(x)\}_{x\in (-1,1) }$ can be obtained through the three-term recurrence relation
\begin{align*}
         P^{(m)}_{n+1}(x)&=\frac{1}{a_n} [ (x-b_n) P_n^{(m)}(x)-a_{n -1}P^{(m)}_{n-1}(x) ],\quad n\geq 1\\
    P^{(m)}_{0}(x)&=\frac{1}{h_0 },\quad P^{(m)}_{1}(x)=\frac{1}{2h_1  }[(m+2)x-m],
\end{align*}
where $h_{0}= \frac{1}{\sqrt{2(m+1)}}$, $h_{1}= \frac{1}{\sqrt{2(m+3)}}$, and
\begin{align*}
   \left\{
            \begin{array}{cc}
                a_{n}=&\frac{2(n+1)(n+m+1)}{(2 n+m+2)\sqrt{(2 n+{m+1})(2 n+m+3)}} \\
                b_{n}=&\frac{m^{2}}{(2 n+m)(2 n+m+2)} 
            \end{array}\right. , \qquad n\in\mathbb{N}.
\end{align*}
One of the advantages of the disk PSWFs is that one can obtain the following explicit a priori estimate, in terms of computable prolate and Sturm-Liouville eigenvalues, cf. \cite{meng23data} and \cite{zhou2024exploring}. Let $\langle ,\rangle_{L^2(B)}$ denote the inner product in $L^2(B)$.
\begin{lemma} \label{lemma: priori estimate}
Let  $u_b = \mathcal{F}_b q$ given by \eqref{eqn: Born unknown-to-data} and $u^{\delta}\in L^2(B)$ is such that $\|{   u^{\delta}} - {u_b }\|_{L^2(B)} \le \delta$.
\begin{itemize}
    \item[(a)] If  $q \in  H^s(B)$ with $0<s<1/2$,  let
 \begin{eqnarray*}
\quad q^{\delta,\alpha}
= \sum_{\chi_{m,n}(c)<\alpha^{-1}}   \frac{1}{ \alpha_{m,n}(c) } \left\langle {   u^{\delta}},  \psi_{m,n,\ell}(\cdot;c) \right\rangle_{B}  \psi_{m,n,\ell}(\cdot;c),
\end{eqnarray*}
and $\beta(\alpha)= \min_{ \chi_{m,n}(c)<\alpha^{-1}} \left\{   \left|\alpha_{m,n}(c)\right| \right\}$. Then
\begin{eqnarray*}
\|{q^{\delta,\alpha}} - q\|_{L^2(B)}  \le  { \frac{\delta}{{\beta(\alpha)}}} + { (4\alpha C) ^{s/2} (1+c^2)^{s/2}   \| q \|_{H^s\left(B\right)}},
\end{eqnarray*}
where $C\ge\sqrt{3}$ is a positive constant independent of $\delta$, $\alpha$, $s$ and $c$.
\item[(b)] 
   If  $q \in \mbox{span}\{ \psi_{m,n,\ell}(\cdot;c): {|\alpha_{m,n}(c)|} > \eta\}$, {let
$$
q^\eta = \sum_{|\alpha_{m,n}(c)|>\eta}   \frac{1}{ \alpha_{m,n}(c) } \left\langle {   u^{\delta}},  \psi_{m,n,\ell}(\cdot;c) \right\rangle_{B}  \psi_{m,n,\ell}(\cdot;c),
$$
}
then $\|{ q^\eta} - q\|_{L^2(B)}  \le  { {\delta}/{\eta}}$.
\end{itemize}
\end{lemma}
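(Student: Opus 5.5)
The plan is a spectral-truncation argument in the orthonormal basis of disk PSWFs from Lemma~\ref{lemma: SL}(a), with the error split into a noise term and a truncation bias. Expand $q=\sum q_{m,n,\ell}\psi_{m,n,\ell}$ with $q_{m,n,\ell}=\langle q,\psi_{m,n,\ell}\rangle$. By \eqref{eigen_R_Fourier} and the real-valuedness of the $\psi$'s, $\langle u_b,\psi_{m,n,\ell}\rangle=\alpha_{m,n}q_{m,n,\ell}$. Indeed, $\mathcal{F}_b$ is diagonal in this basis: it is complex-symmetric with kernel $e^{icx\cdot y}$, so applying it to the expansion of $q$ gives the claim.

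For part (a), write
\[
q^{\delta,\alpha}-q=\sum_{\chi_{m,n}<\alpha^{-1}}\frac{\langle u^\delta-u_b,\psi_{m,n,\ell}\rangle}{\alpha_{m,n}}\psi_{m,n,\ell}-\sum_{\chi_{m,n}\ge\alpha^{-1}}q_{m,n,\ell}\psi_{m,n,\ell}.
\]
By Parseval, the first sum is bounded in $L^2$ by $\beta(\alpha)^{-1}\|u^\delta-u_b\|\le\delta/\beta(\alpha)$. Note that the index set $\{\chi_{m,n}<\alpha^{-1}\}$ is finite, because $\chi_{m,n}\to\infty$; hence $\beta(\alpha)>0$ by Lemma~\ref{lemma: SL}(c).

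The second sum is the tail, and bounding it is the main obstacle. It equals
\[
\sum_{\chi\ge\alpha^{-1}}|q_{m,n,\ell}|^2\le\alpha^{s}\sum\chi_{m,n}^{s}|q_{m,n,\ell}|^2=\alpha^s\|\mathcal{D}^{s/2}q\|^2.
\]
So I need the norm equivalence $\|\mathcal{D}^{s/2}q\|_{L^2}\le (C(1+c^2))^{s/2}\|q\|_{H^s(B)}$ for $0<s<1/2$. My plan is to prove it at $s=1$ and interpolate.

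At $s=1$, integrate by parts. Writing $\mathcal{D}$ in divergence form, $\mathcal{D}=-\nabla\cdot((I-x x^T)\nabla)+c^2|x|^2$, the quadratic form is
\[
\langle\mathcal{D}q,q\rangle=\int_B\bigl(|\nabla q|^2-(x\cdot\nabla q)^2\bigr)+c^2|x|^2|q|^2 .
\]
There are no boundary terms because the coefficient $1-|x|^2$ vanishes on $\partial B$. This gives $\|\mathcal{D}^{1/2}q\|^2\le(1+c^2)\|q\|_{H^1}^2$.

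Interpolation between $L^2$ and this form domain then gives the $H^s$ bound. The restriction $s<1/2$ ensures that $H^s(B)=H^s_0(B)$, so no boundary compatibility issues arise. The constant comes out as $(4\alpha C)^{s/2}(1+c^2)^{s/2}$ after tracking the factors; the precise constant $C\ge\sqrt3$ will follow from the interpolation norm equivalence, and that is where I would fall back on \cite{meng23data}.

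Part (b) is simpler. Since $q$ lies in the span, $q=\sum_{|\alpha|>\eta}\alpha_{m,n}^{-1}\langle u_b,\psi_{m,n,\ell}\rangle\psi_{m,n,\ell}$ exactly. Hence $q^\eta-q$ is only the noise term, and Parseval with $|\alpha_{m,n}|>\eta$ gives the bound $\delta/\eta$.
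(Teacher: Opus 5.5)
Your proof is correct. The paper gives no proof of this lemma; it quotes it from \cite{meng23data} and \cite{zhou2024exploring}. Your split is the natural argument: a noise term bounded by $\delta/\beta(\alpha)$ via Parseval over the finite set $\{\chi_{m,n}<\alpha^{-1}\}$, plus a spectral tail controlled by $\sum\chi_{m,n}^s|q_{m,n,\ell}|^2$. Part (b) is handled exactly right.

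Two points in the tail estimate need a little more than you give them. Write $j=(m,n,\ell)$, $q_j=\langle q,\psi_j\rangle$, and let $q_N$ be a finite partial sum.

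\textbf{The quadratic-form identity.} You write it as if $q$ were in the domain of $\mathcal D$. For a general $q\in H^1(B)$, argue as follows.
\begin{itemize}
\item Each $\psi_j=\alpha_{m,n}^{-1}\mathcal F_b\psi_j$ is the restriction of an entire function.
\item Hence Green's formula applies, and the conormal $x^{T}(I-xx^{T})$ vanishes on $\partial B$. This gives $a(q,\psi_j)=\chi_j\langle q,\psi_j\rangle$, where $a(u,v)=\int_B \nabla u\cdot(I-xx^{T})\overline{\nabla v}+c^2|x|^2u\bar v$.
\item Then $0\le a(q-q_N,q-q_N)=a(q,q)-\sum_{j\le N}\chi_j|q_j|^2$.
\end{itemize}
This yields $\sum_j\chi_j|q_j|^2\le a(q,q)\le(1+c^2)\|q\|_{H^1(B)}^2$, which is the only direction you need.

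\textbf{The constant.} You do not need to fall back on \cite{meng23data}. The map $F\mapsto(\langle F|_B,\psi_j\rangle)_j$ has norm at most $1$ from $L^2(\mathbb R^2)$ to $\ell^2$. It has norm at most $(1+c^2)^{1/2}$ from $H^1(\mathbb R^2)$ to $\ell^2$ weighted by $1+\chi_j$. Exact interpolation of weighted $L^2/\ell^2$ spaces then applies, using $[L^2(\mathbb R^2),H^1(\mathbb R^2)]_s=H^s(\mathbb R^2)$ isometrically on the Fourier side. It gives $\sum_j(1+\chi_j)^s|q_j|^2\le(1+c^2)^s\|q\|_{H^s(B)}^2$ for the restriction norm. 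So the tail is bounded by $\alpha^{s/2}(1+c^2)^{s/2}\|q\|_{H^s(B)}$, which is stronger than the stated bound for every $C\ge\sqrt3$.

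Your route also never uses $s<1/2$. You interpolate between $L^2(B)$ and $H^1(B)$, not $H^1_0(B)$, so the remark about $H^s=H^s_0$ plays no role. The argument gives the estimate for all $0<s\le 1$. The restriction $s<1/2$ in the quoted statement is natural for piecewise-constant contrasts, but the argument does not force it.
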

For a general contrast and setting $\beta(\alpha) = \delta ^{\gamma}$ with $\gamma \in (0,1)$, the a priori estimate of Lemma \ref{lemma: priori estimate}(a) is in the spirit of increasing stability or H\"{o}lder-Logarithmic stability, since $\alpha_{m,n}$ decays exponentially to zero while $\chi_{m,n}$ grow polynomially to infinity; for a contrast in a low-rank space, the stability estimate is of Lipschitz, cf. Lemma \ref{lemma: priori estimate}(b). Intrinsically, only the reconstruction in the low-rank space is stable since otherwise it is always possible to find a contrast (i.e., $\psi_{m,n,\ell}$ with very small prolate eigenvalue $|\alpha_{m,n}|$) whose reconstruction is unstable. As a result, the high-frequency noise in the imperfect data can be filtered out by projecting the data onto a low-rank space $\mbox{span}\{ \psi_{m,n,\ell}(\cdot;c): {|\alpha_{m,n}(c)|} > \eta\}$ with a suitable spectral cutoff  parameter $\eta$.
\begin{figure}[htbp]
{\centering  
\includegraphics[width=1\linewidth]{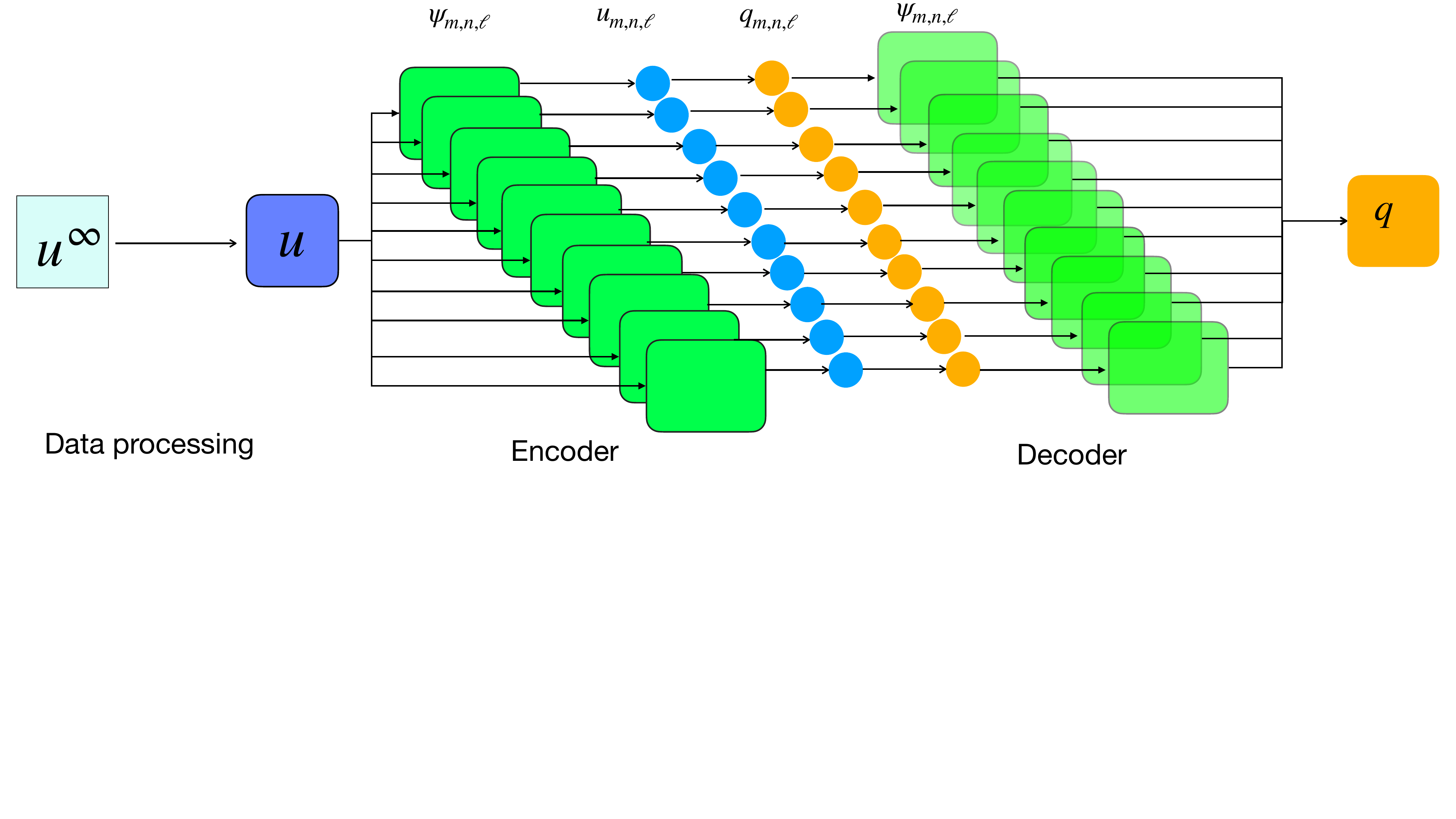} 
} 
\vspace{-8\baselineskip}
\caption{Illustration of low-rank inverse Born solver from the viewpoint of encoder-decoder.}
\label{figure: encoder_decoder}
\end{figure}
\subsubsection{A training-free encoder-decoder}
The above low-rank-structure regularized approach can be reinterpreted as a training-free encoder-decoder. Specifically, the low-rank regularized solution is obtained as follows.
\begin{itemize}
    \item The far-field dataset $\{u^\infty(\hat{x},\hat{\theta};k): \hat{x},\hat{\theta} \in \mathbb{S}\}$ is processed to the processed dataset  $\{u(p;c): p \in B \}$ according to \eqref{eqn: far field to processed}.
    \item Information of   $u$ is encoded into projections $u_{m,n,\ell}=\langle u,\psi_{m,n,\ell}\rangle_{L^2(B)}$ for $\{ (m,n,\ell): |\alpha_{m,n}(c)| > \eta, \,\ell \in \mathbb{I}(m)\}$ with regularization parameter $\eta$.
    \item Information about the contrast is decoded as
    $$q_{m,n,\ell}^\eta = \frac{u_{m,n,\ell}}{\alpha_{m,n}} \mbox{ for }  (m,n,\ell) \mbox{ where }|\alpha_{m,n}(c)| > \eta, \,\ell \in \mathbb{I}(m).
    $$
    \item The contrast is then reconstructed as
$$
q^\eta(x)=\sum_{|\alpha_{m,n}|>\eta}q_{m,n,\ell}^\eta \psi_{m,n,\ell}(x).
$$
\end{itemize}
The  encoder-decoder interpretation of the low-rank structure is illustrated in \Cref{figure: encoder_decoder}. This low-rank structure is intrinsic to the Born unknown-to-data map, leading to a robust method  and high-frequency noise filtering. We propose to integrate this low-rank structure with a suitable neural network for the data corrector in the next section. 
}

\subsection{Rotation-equivariance-aware neural network for the data corrector} \label{sec: neural data corrector}

\begin{figure}[htbp]
\centering 
\subfloat[Ground truth $q$]{\includegraphics[width=0.3\linewidth]{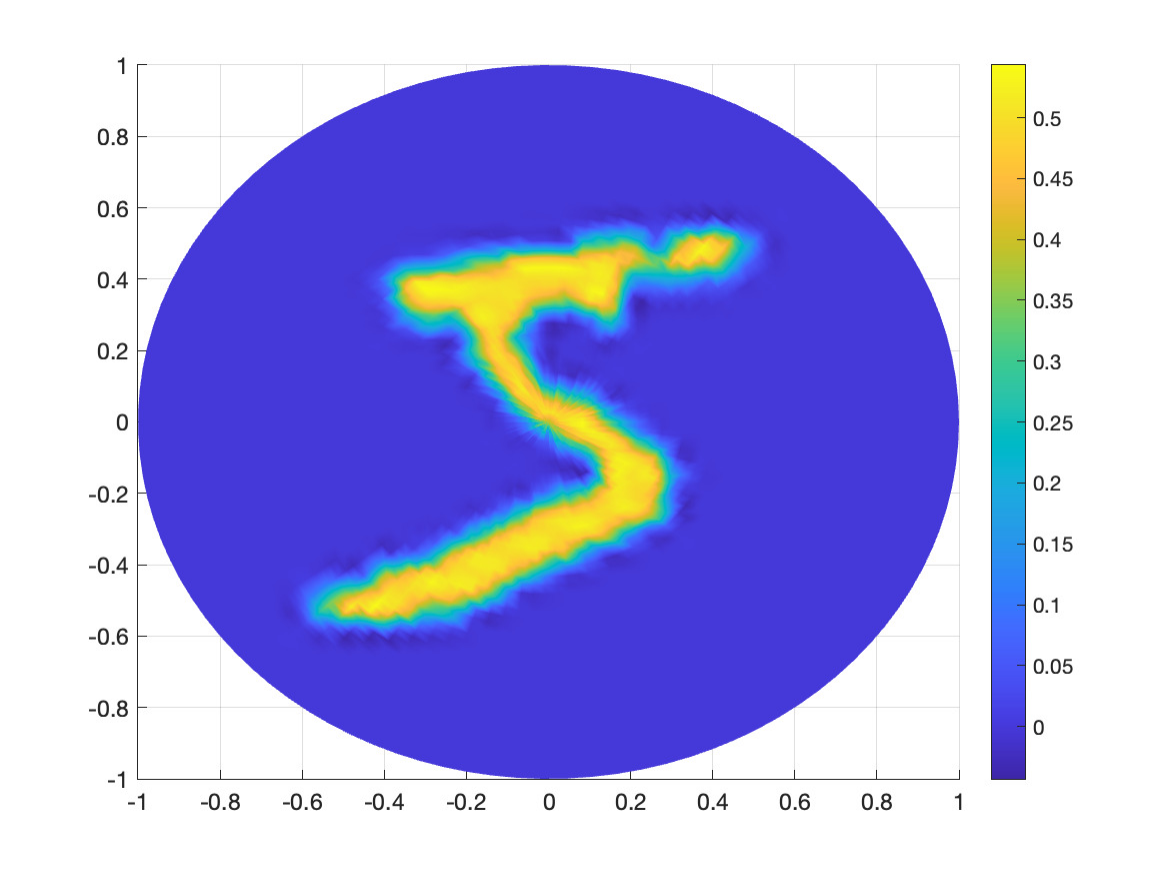}}
\subfloat[$\Re u$]{\includegraphics[width=0.3\linewidth]{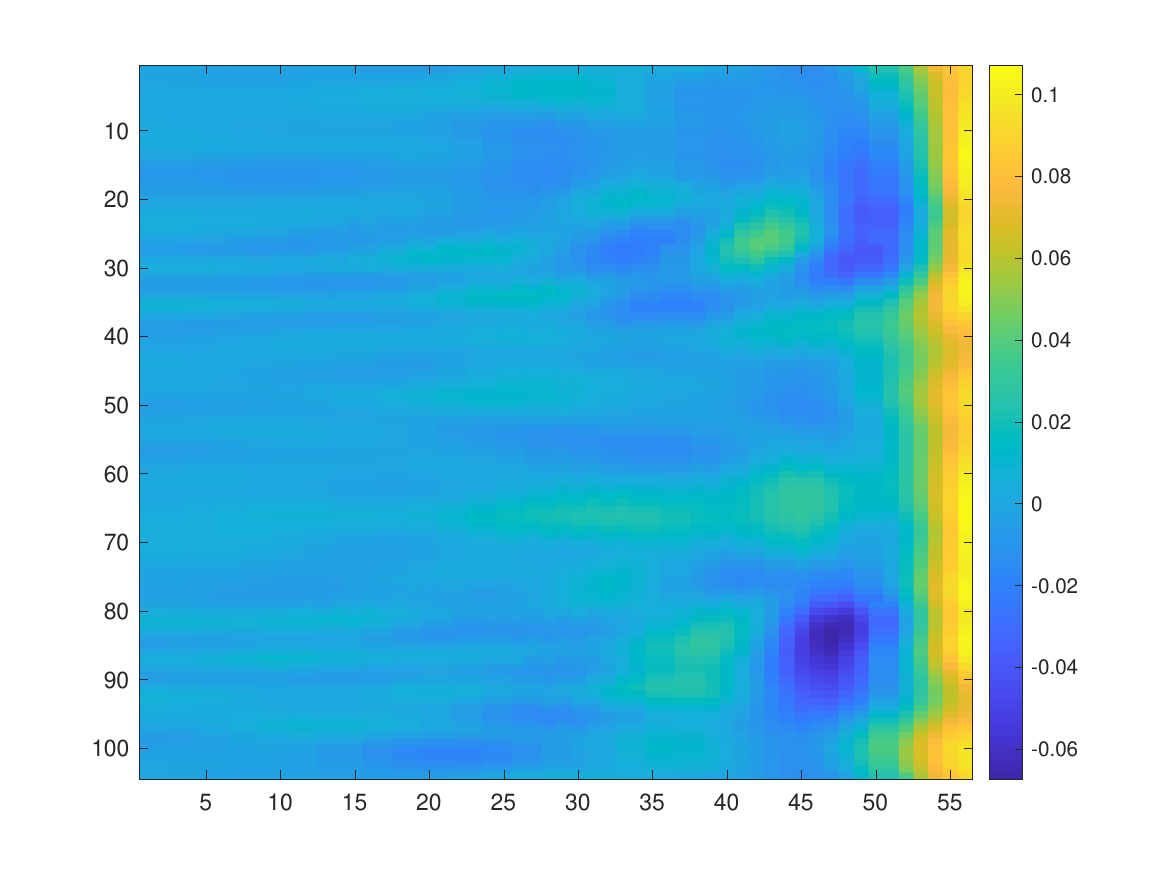}}
\subfloat[$\Im u$]{\includegraphics[width=0.3\linewidth]{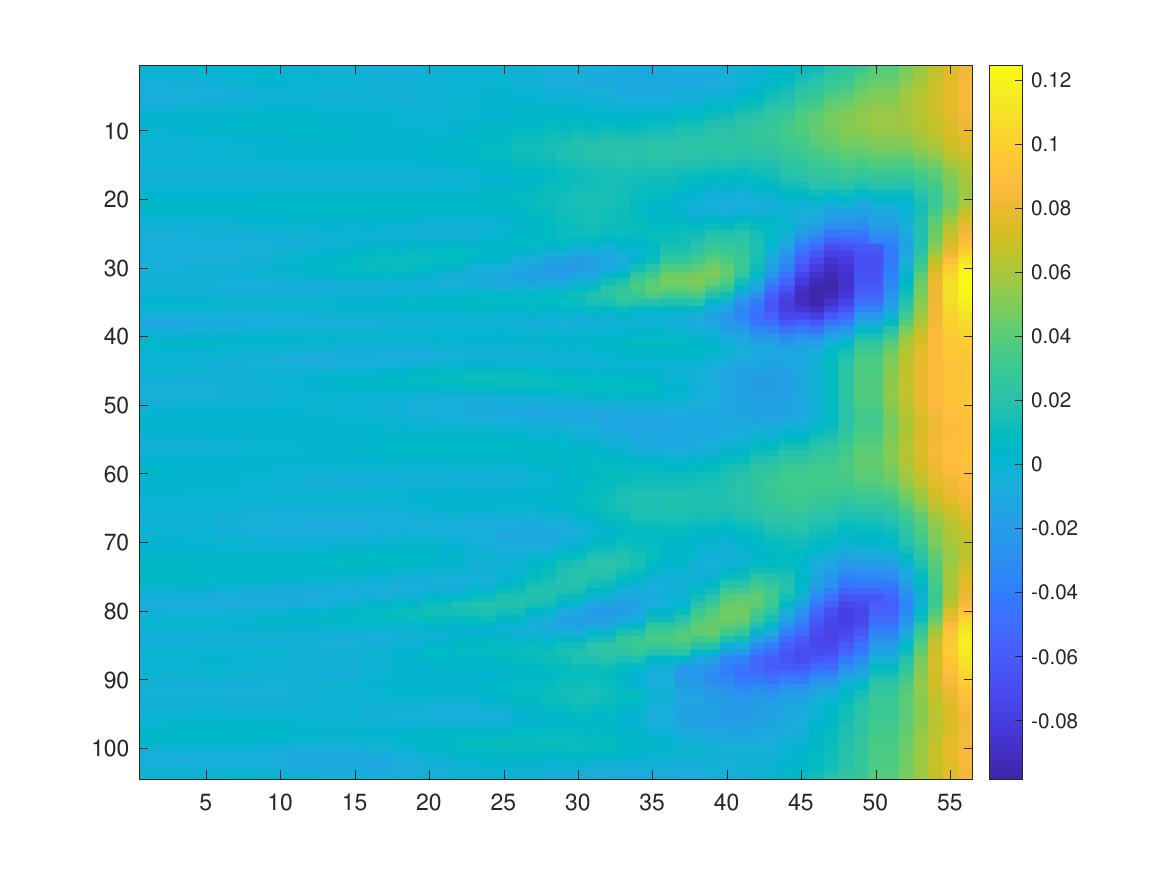}}
\\
\subfloat[New rectangular image of $q$]{\includegraphics[width=0.3\linewidth]{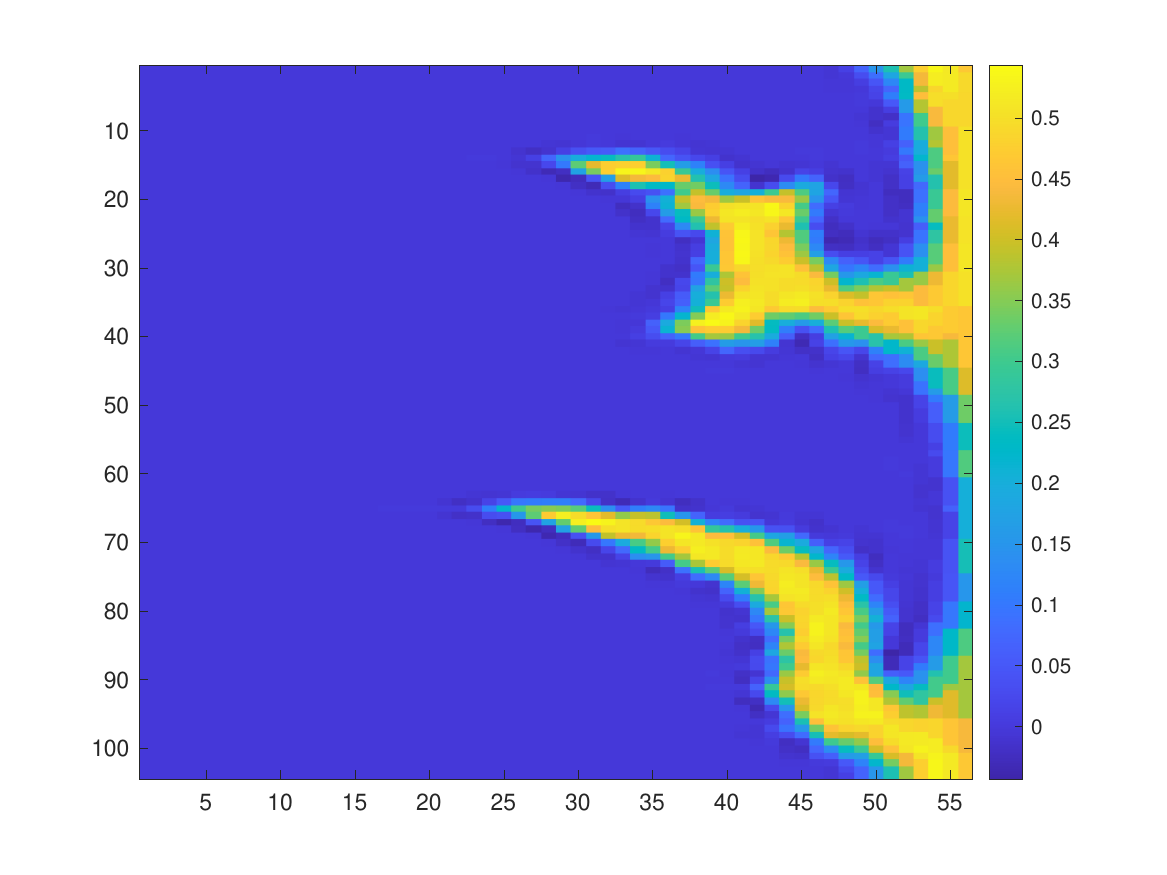}}
\subfloat[$\Re u_b$]{\includegraphics[width=0.3\linewidth]{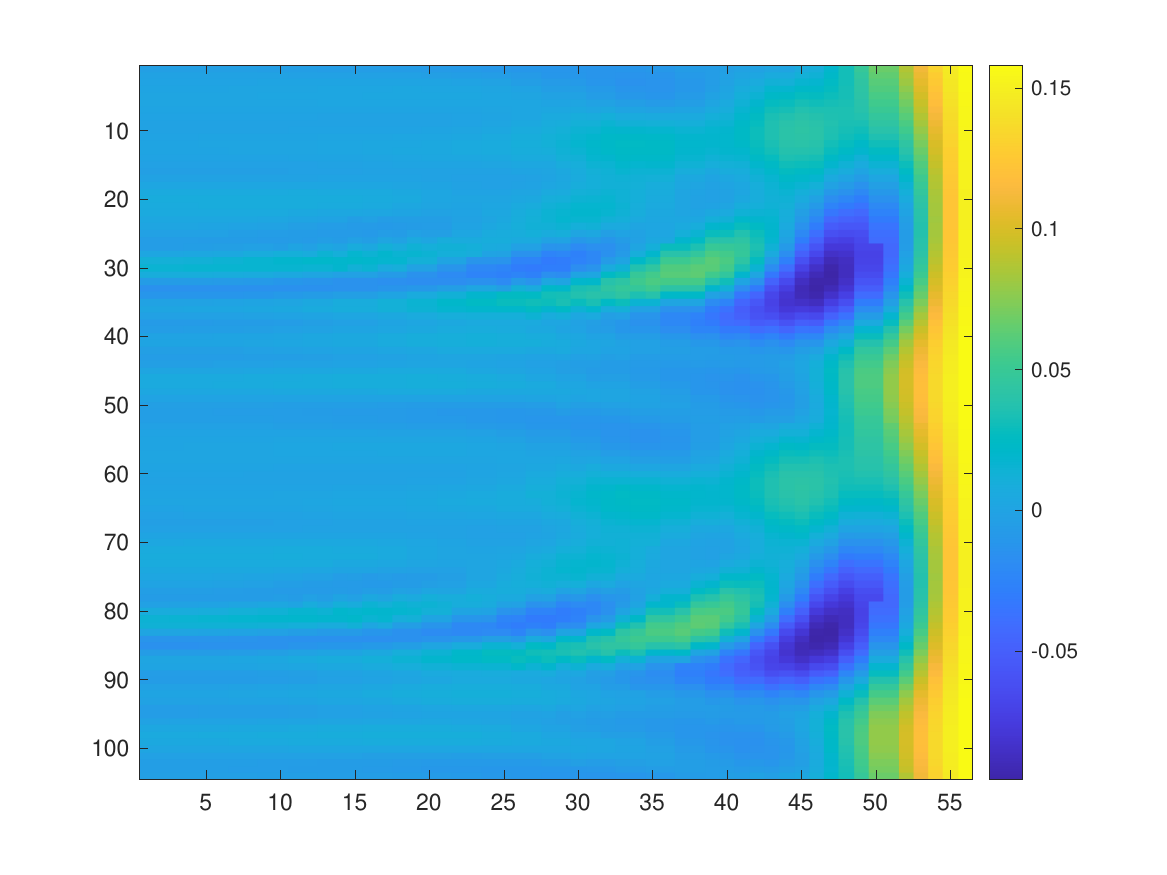}}
\subfloat[$\Im u_b$]{\includegraphics[width=0.3\linewidth]{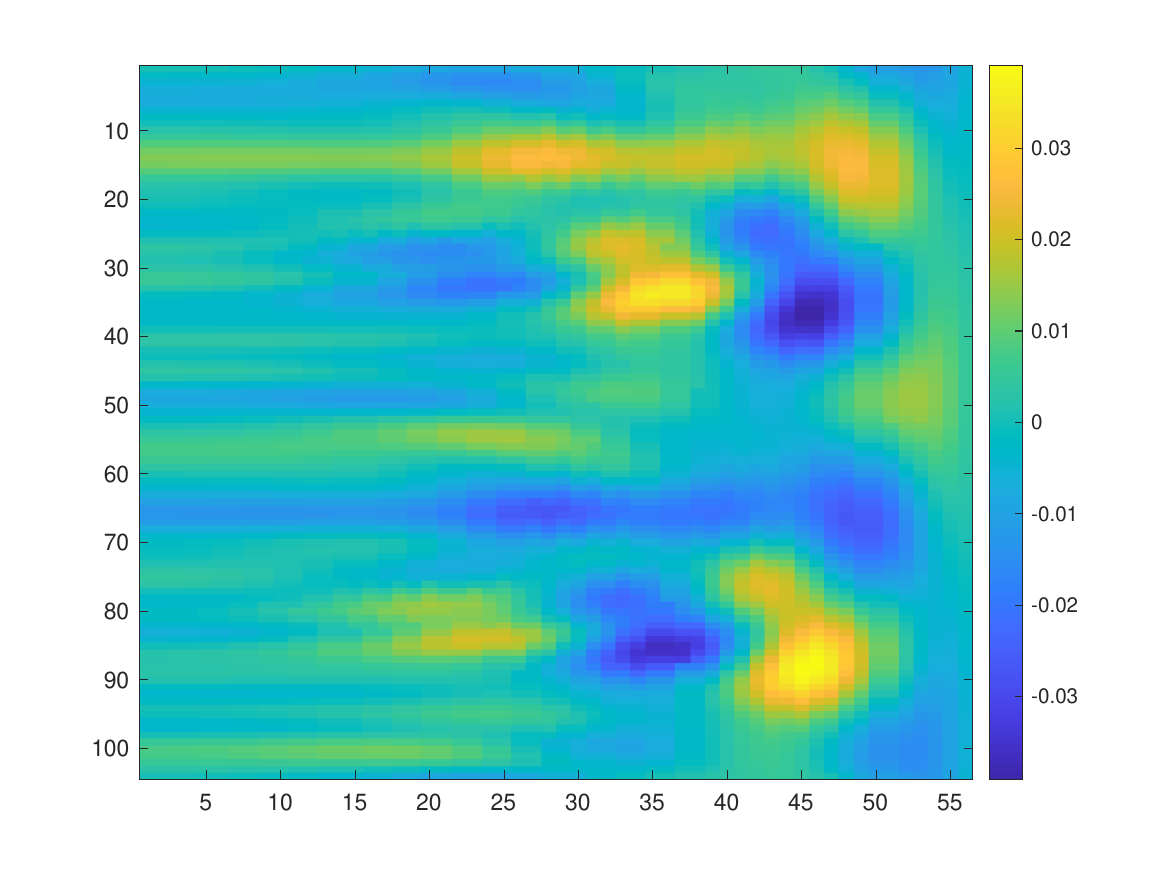}}

  \caption{Illustration of training data.
  Input of neural network $U_1$: (b) and (c); output of neural network $U_1$: (e) and (f). Input of neural network $U_2$: (e) and (f); output of neural network $U_2$: (d).  }  \label{figure: training sample 1}
\end{figure}
{  
We propose a suitable neural network for the data corrector inspired by the reciprocity relation and the rotation-equivariance in Section \ref{sec: math property}.  To begin with, we simply regard each contrast $q$ (or $\Re(u)$ and $\Im(u)$) as an image, and each circular image in $B$ will be transformed to a new (rectangular) image in polar coordinates $(r,\theta)$. We plot in Figure \ref{figure: training sample 1}(a) the circular image of the contrast and in Figure \ref{figure: training sample 1}(d) the new rectangular image (with horizontal axis in $r$ and  vertical axis in $\theta$). Correspondingly, a rotation of the circular image in $B$ corresponds to a translation in $\theta$-variable of the new rectangular image; moreover, the new rectangular image is periodic in $\theta$-variable, which has to be taken consideration of the neural network design. The same principle applies to the new rectangular image of the processed datum in polar coordinates, cf. Figure \ref{figure: training sample 1}(b)(c).

Note that  the rotation-equivariance in \Cref{figure: equivariance of data}  corresponds to translation-equivariance in rectangular images, this motivates us to consider convolution operations that respect translation-equivariance \cite{2015U}.
To further respect the periodicity of the new rectangular image and to design the data corrector as an image-to-image neural network, we propose a U-Net with an appropriate circular padding in the $\theta$-variable.  The main ingredients of U-Net include convolutional layers, batch normalization, ReLU (Leacky ReLU), max pooling, transposed convolution and skip connection, cf. \Cref{figure: procedures}(b) for an illustration of the neural network structure.  For a more  comprehensive description of the U-Net, we refer to \cite{2015U}.

Specifically, in each convolutional layer, we utilize a (3 × 3) convolution with circular-padding defined by \eqref{circular padding}, which adapts  the periodicity in $\theta$-variable. Numerically, this choice of circular padding improves the continuity of the reconstructed contrast in the vicinity of $\theta=0$.

\begin{equation}\label{circular padding}
    \text{Circular Pad}\left(\begin{bmatrix}
 
    a_{1,1}&  a_{1,2} & \cdots & a_{1,M} \\
   a_{2,1}&  a_{2,2} & \cdots & a_{2,M} \\
 \vdots&  \vdots &\ddots  &   \vdots\\
  a_{N,1}&  a_{N,2} & \cdots & a_{N,M} 
  \end{bmatrix} \right)
  =
  \begin{bmatrix}
  0&  a_{N,1}&  a_{N,2} & \cdots & a_{N,M} & 0\\
  0&  a_{1,1}&  a_{1,2} & \cdots & a_{1,M} & 0\\
 0&  a_{2,1}&  a_{2,2} & \cdots & a_{2,M} & 0\\
 \vdots&  \vdots & \vdots & \ddots & \vdots & \vdots\\
 0&  a_{N,1}&  a_{N,2} & \cdots & a_{N,M} & 0\\
  0&  a_{1,1}&  a_{1,2} & \cdots & a_{1,M} & 0\\
\end{bmatrix}.
\end{equation}
}

\subsection{A hybrid method ULR and a two-step neural network UU}
{  
In this work, we propose a hybrid method by integrating the neural network model for the data corrector in Section \ref{sec: neural data corrector} and  a low-rank-structure-assisted  inverse Born solver in Section \ref{sec: low-rank inverse Born}. This hybrid approach accommodates the nonlinearity and ill-posedness of the inverse scattering problem,  incorporates the properties of rotation-equivariance and reciprocity relation, and facilitates high-frequency noise filtering and robustness. This hybrid method is illustrated in Figure \ref{figure: procedures}(a).

Furthermore, we compare our proposed algorithm against two alternative methods. The first method replaces the low-rank-structure-assisted   inverse Born solver by another U-Net, leading to a two-step neural network denoted as UU; particularly, the second neural network $\mbox{U}_2$, referred to as the ``neural inverse Born solver'', is trained to map the processed Born data to the unknown contrast.  The second alternative, denoted as U, is a black-box neural network, consisting of a single U-Net that maps the far-field pattern (instead of processed data) to the contrast directly. For this black-blox neural network, we adopt a ``naive training strategy" -- training without any data processing -- under the hypothetical scenario when mathematical insights on inverse scattering are not taken into consideration. All three methods (our algorithm ULR, UU, and the black-box U) are schematically illustrated in \Cref{figure: procedures}(a). 
We use Figure \ref{figure: training sample 1} to illustrate the input and output of these neural networks, where  a sample contrast (resp. its polar representation) is plotted in (a) (resp. (d)),  the real (resp. imaginary) part of the processed datum is plotted in (b)(resp. (c)),  and the real (resp. imaginary) part of the processed Born datum is plotted in (e)(resp. (f)), respectively. The input of the neural network $\mbox{U}_1$ is a tensor of dimension $N_1 \times N_2 \times 2$ (illustrated by Figure \ref{figure: training sample 1}(b)(c)) and the corresponding output is still a   tensor  of dimension $N_1 \times N_2 \times 2$ (illustrated by Figure \ref{figure: training sample 1}(e)(f)); the input of the neural network $\mbox{U}_2$ a   tensor  of dimension $N_1 \times N_2 \times 2$ (illustrated by Figure \ref{figure: training sample 1}(e)(f)) and the output is a tensor of dimension $N_1 \times N_2$ (illustrated by Figure \ref{figure: training sample 1}(d); the input of the neural network $\mbox{U}$ the far field pattern  which is a   tensor  of dimension $N_{\rm obs} \times N_{\rm inc} \times 2$  and the output is a tensor of dimension $N_1 \times N_2$ (illustrated by Figure \ref{figure: training sample 1}(a); here $N_{\rm obs}$ and $N_{\rm inc}$ denotes the number of observation and incident directions, respectively.
 }

\begin{figure}[htbp]
{\centering  
\includegraphics[width=1\linewidth]{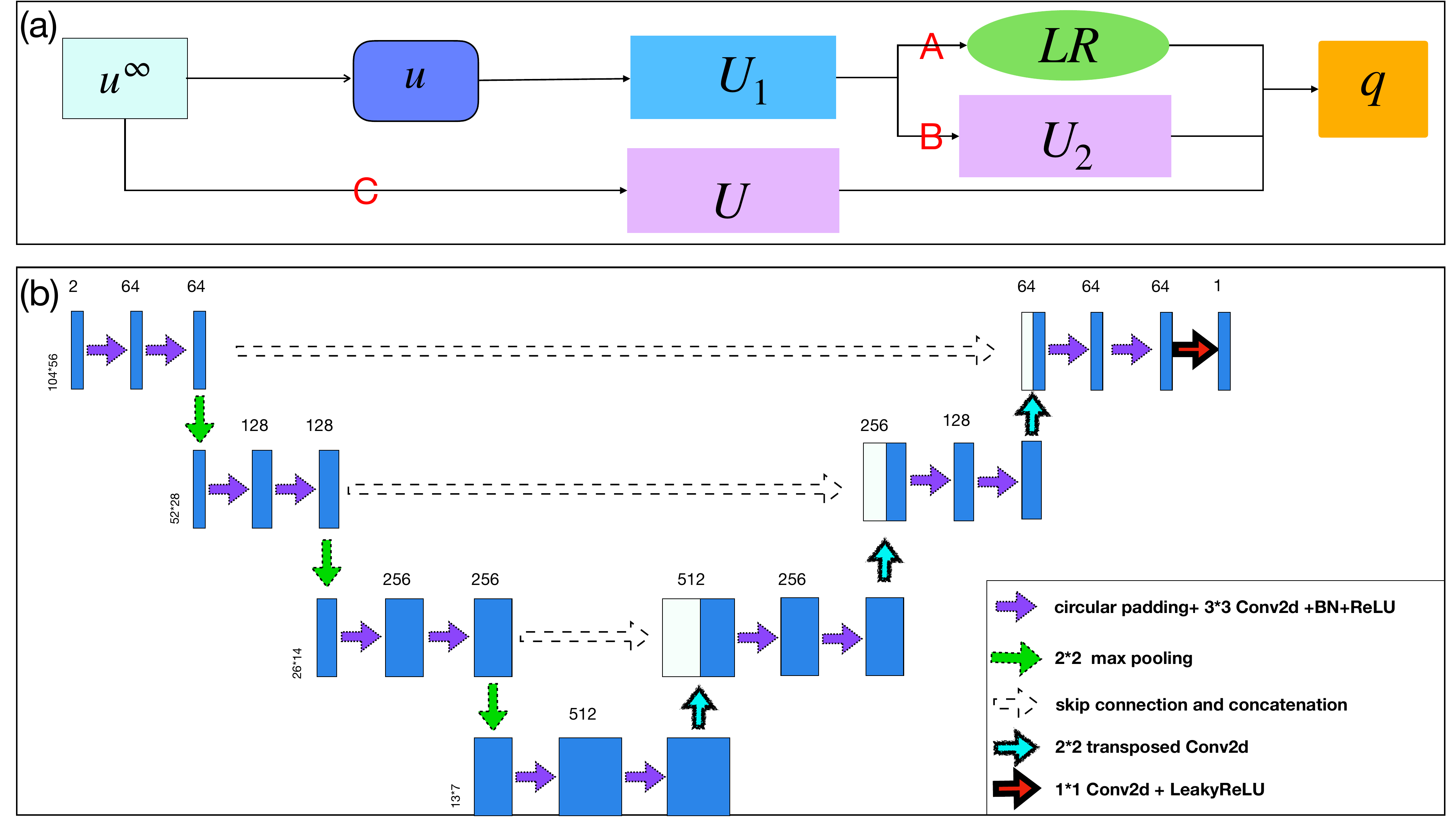} 
}
\caption{Illustration of three algorithms. (a)~A: proposed ULR; B: proposed UU; C: a black-box approach. (b) Structure of  the U-Net: each blue or white volume represents multi-channel feature maps whose width, length, and depth are displayed; the operation of each arrow  is illustrated at the bottom right.}\label{figure: procedures}
\end{figure}

\subsection{Training of Neural networks}
\subsubsection{Preparation of training dataset}
 
{  
A dataset consisting of the input-output $\{(u,u_b)\}$ will be used to train the neural network $\mbox{U}_1$; similarly, a dataset consisting of the input-output $\{(u_b,q)\}$ is to train $\mbox{U}_2$ and a dataset consisting of  the input-output $\{(u^\infty,q)\}$ is to train the black-box neural network $\mbox{U}$. The full far-field data and the Born far-field data are generated by IPscatt \cite{burgel2019algorithm}, an open-source Matlab toolbox, on a desktop computer (Apple M1 Max chip, 64 GB of RAM).  The  wavenumber is $k=16$, and there are $N_{\rm inc}=104$ incident directions and $N_{\rm obs}=104$ observation directions  uniformly distributed on the unit circle. The training data are noise-free and the degree of nonlinearity $\mbox{rel}(k)$ ranges from $20\%$ to $160\%$ approximately. 
Here the degree of nonlinearity is proposed to quantify the degree of nonlinearity  according to
$$
\mbox{rel}(k)=\frac{\|U^s-U_b^s\|_{\rm F}}{\|U^s\|_{\rm F}}$$
where $U^s=(u^s(x_i;\hat{\theta}_j))_{N_p\times N_{\rm inc}}$ and $U_b^s=(u_b^s(x_i;\hat{\theta}_j))_{N_p\times N_{\rm inc}}$ are two matrices: $u^s(x_j;\hat{\theta}_j)$ is the scattered field evaluated at $x_i$ due to the incident wave $e^{ik\hat{\theta}_j\cdot x}$ and $u_b^s(x_j;\hat{\theta}_j)$ is its Born approximation.  Here $\|\cdot\|_{\rm F}$ represents the Frobenius norm and $N_p=104^2$ is the number of equispaced grids in the computational domain $\Omega=[-1,1]\times [-1,1]$. 
The processed data are given by the far field data according to
\begin{eqnarray*}
       \widetilde{u}_b(p_{m,n})&\approx&\dfrac{1}{k^2}u_b^{\infty}(\hat{x}_{i^*};\hat{\theta}_{j^*}) \quad \mbox{and} \quad 
       \widetilde{u}(p_{m,n})\approx\dfrac{1}{k^2}u^{\infty}(\hat{x}_{i^*};\hat{\theta}_{j^*}) \quad \mbox{where}\\
       (i^*,j^*) &=& \arg\min_{1\leq i\leq N_{\rm obs},\, 
       1\leq j\leq N_{\rm inc}} \left\| p_{m,n} - \dfrac{\hat{\theta}_j - \hat{x}_i}{2} \right\|_2
\end{eqnarray*}
 where $p_{m,n}=r_m \big(\cos\theta_n,~\sin \theta_n\big)^T$ and the transformed Clenshaw-Curties quadrature points $r_m$ and trapezoidal quadrature points $\theta_n$ are given by
 \begin{eqnarray*}
     r_m=\sqrt{\frac{\cos(m\pi/N_2)+1}{2}},~0\leq m\leq N_2-1 \mbox{ and }%
 \theta_n=\frac{2\pi(n-1)}{N_1},~1\leq n\leq N_1, \mbox{ respectively}.
 \end{eqnarray*} 
 Here $N_1=104,~N_2=56$. 
 }

\subsubsection{ Training method and settings}
The neural networks $\mbox{U}_1,~\mbox{U}_2,~\mbox{U}$ in this work were  trained using PyTorch, using the AdamW optimizer with a learning rate $10^{-3}$ and weight decay  $10^{-5}$. A mini-batch size of $40$ samples was used for each iteration, and the Mean Squared Error (MSE) loss function was minimized over $100$ training epochs. To improve convergence stability, the ReduceLROnPlateau scheduler was employed to dynamically adjust the learning rate, reducing it by a factor of $0.5$ when the validation loss plateaued for $4$ consecutive epochs. 

\section{Numerical examples} \label{sec: numerical example}
In this section, we conduct a variety of numerical experiments to illustrate the effectiveness of our proposed methods.

\subsection{A first comparison of three methods}\label{sec: A first comparison of three methods}
To begin with, we first generate a dataset of $20000$ samples of the input-output $\{(u,u_b)\}$ to train the neural network $\mbox{U}_1$; similarly, a dataset of $20000$ samples of the input-output $\{(u_b,q)\}$ is generated to train $\mbox{U}_2$ and a dataset of $20000$ samples of the input-output $\{(u^\infty,q)\}$ is generated to train the black-box neural network $\mbox{U}$.  The first $10000$ samples of contrasts are based on the MNIST dataset \cite{lecun2010mnist}, and the other $10000$ contrasts are generated by $n$ piece-wise constant disks where $n$ is randomly selected from $\{1,~2,~3\}$. The radius and origins of the disks are sampled according to uniform distribution $\mathcal{U}(0.1,0.3)$ and $\mathcal{U}(-0.4,0.4)$, respectively; the maximum magnitude is sampled according to $\max_B |q|\sim \mathcal{U}(0.1,0.8)$. Among the $20000$ samples, $16000$ are used for training and $4000$ samples are reserved for validation. Later on, we will discuss other ways to generate efficient training dataset.

Unless otherwise noted, we add  noise to the far-field data for the purpose of testing according to
 $$
 u^{\infty,\delta}(\hat{x};\hat{\theta})=u^\infty(\hat{x};\hat{\theta})(1+\delta \xi)
 $$
 where $\delta=20\%$, $\xi$ is a complex number such that $\Re \xi \sim\mathcal{N}(0,1/2)$ and $\Im \xi\sim\mathcal{N}(0,1/2)$ with independent $\Re \xi$ and $\Im \xi$.

We take a sample, number ``4'', from MNIST dataset that is not included in the training dataset, to illustrate our proposed method. In \Cref{figure: 4_ULR_UU_U}, we plot the reconstructions of the ground truth using ULR, UU, and U, respectively. All three methods give reasonable reconstruction, and  ULR is observed to be more robust as the top of the reconstructed ``4'' were blurred when using UU and U. We further test a sample consisting of three disks in \Cref{figure: 3_disks}. It is observed again that the ULR and UU are more robust than the black-box neural network U.
\begin{figure}[htbp]
\subfloat[Ground truth]{\includegraphics[width=0.25\linewidth]{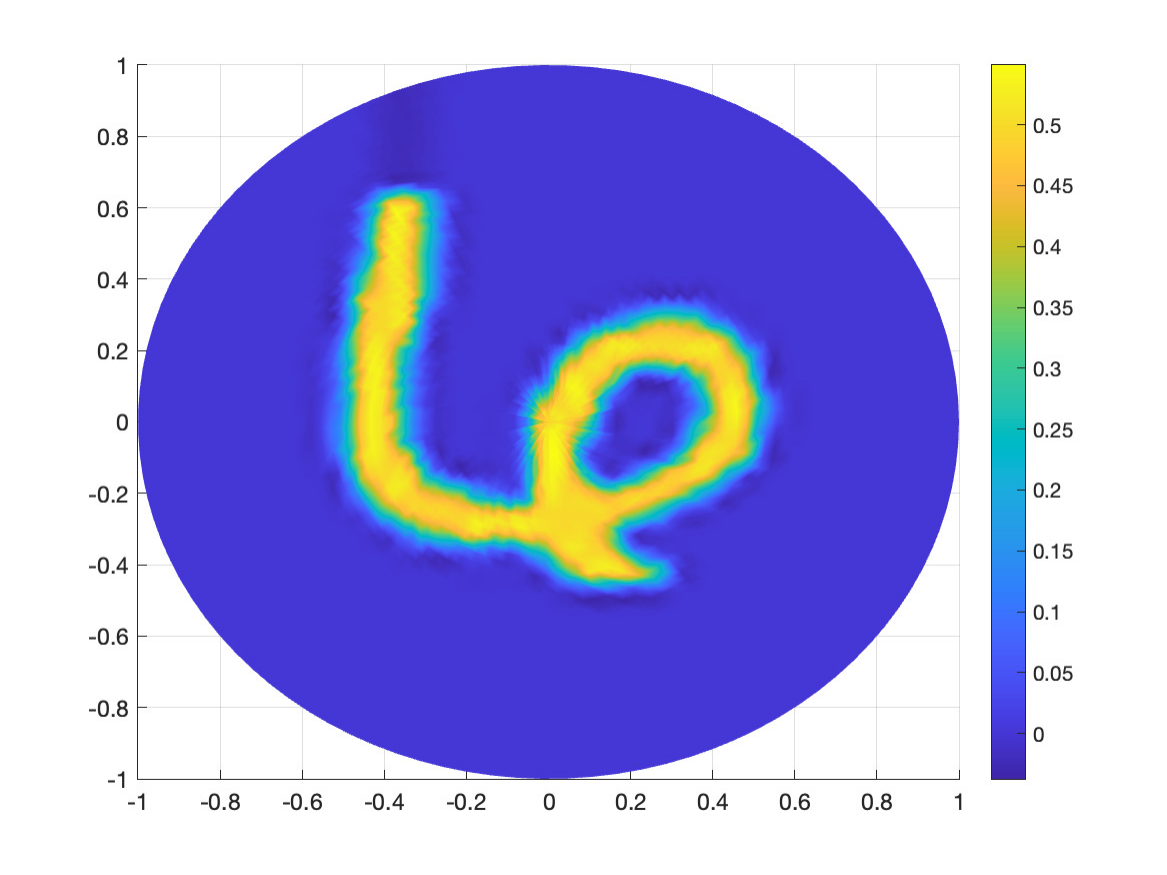}}
\subfloat[ULR]{\includegraphics[width=0.25\linewidth]{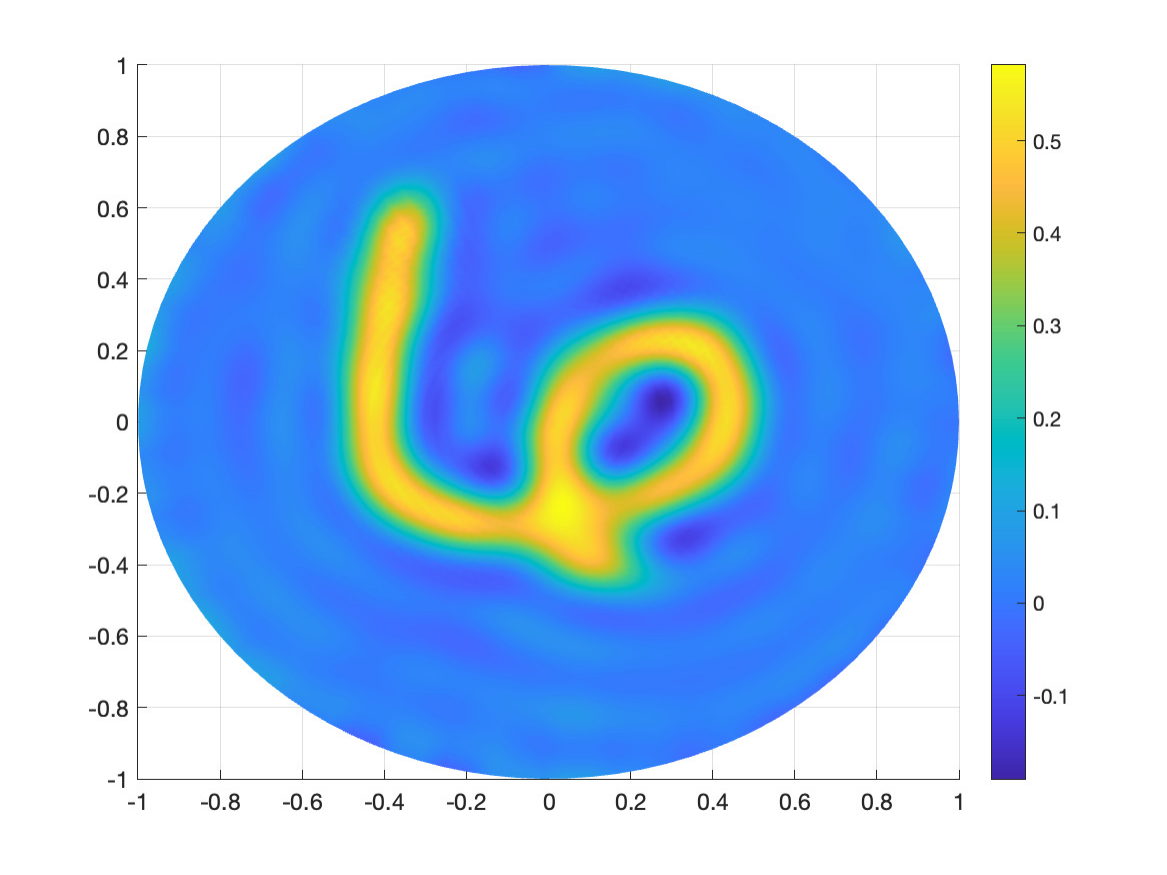}}
\subfloat[UU]{\includegraphics[width=0.25\linewidth]{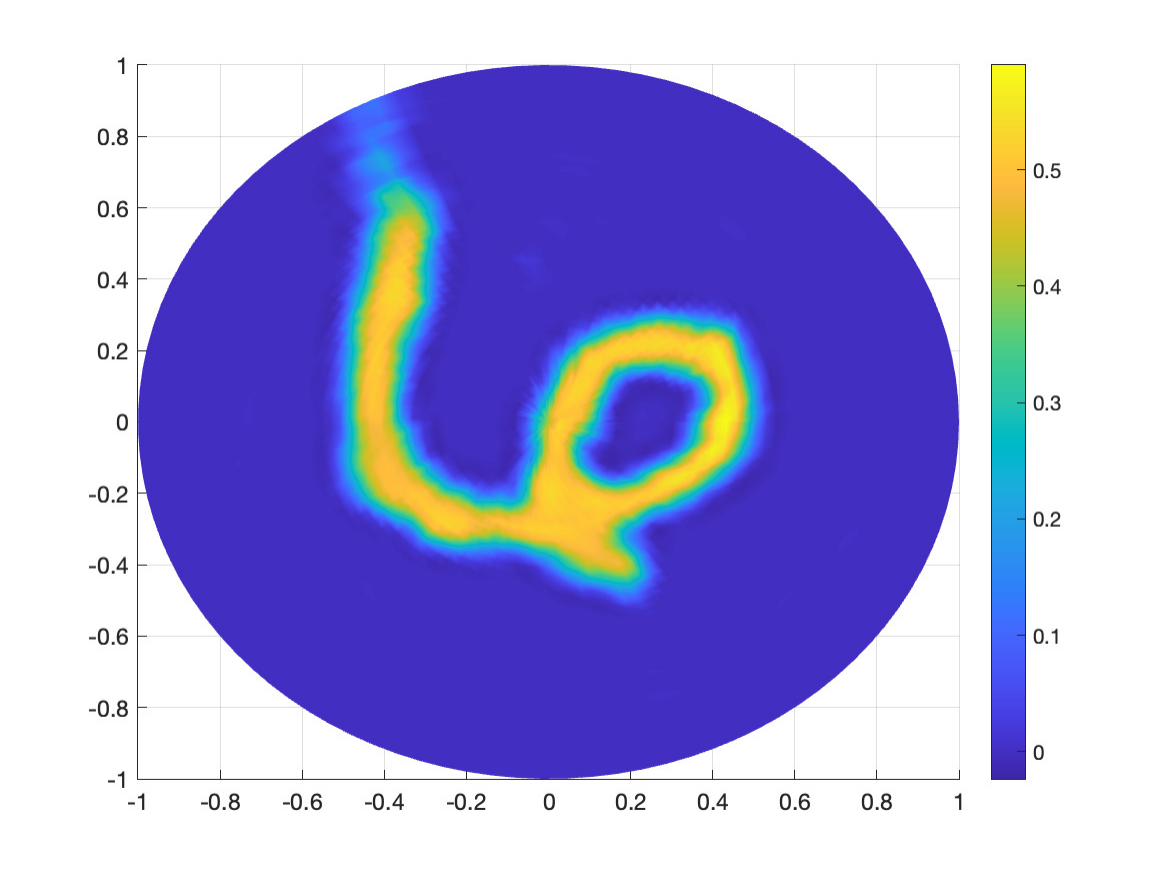}}
\subfloat[U]{\includegraphics[width=0.25\linewidth]{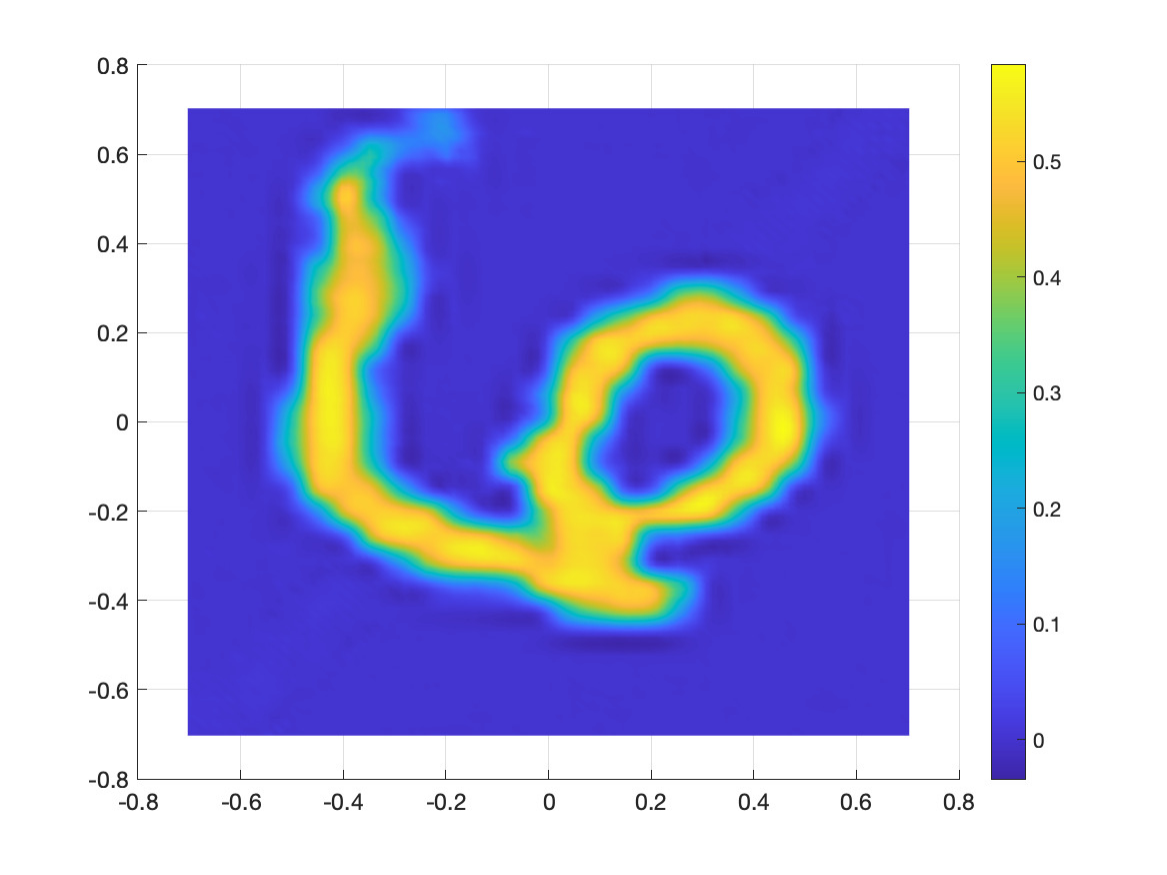}}
  \caption{Reconstruction of number ``4''. Left to right:  ground truth, reconstructions by ULR, UU, and U, respectively.}  \label{figure: 4_ULR_UU_U}
\end{figure}

\begin{figure}[htbp]
\subfloat[Ground truth]{\includegraphics[width=0.25\linewidth]{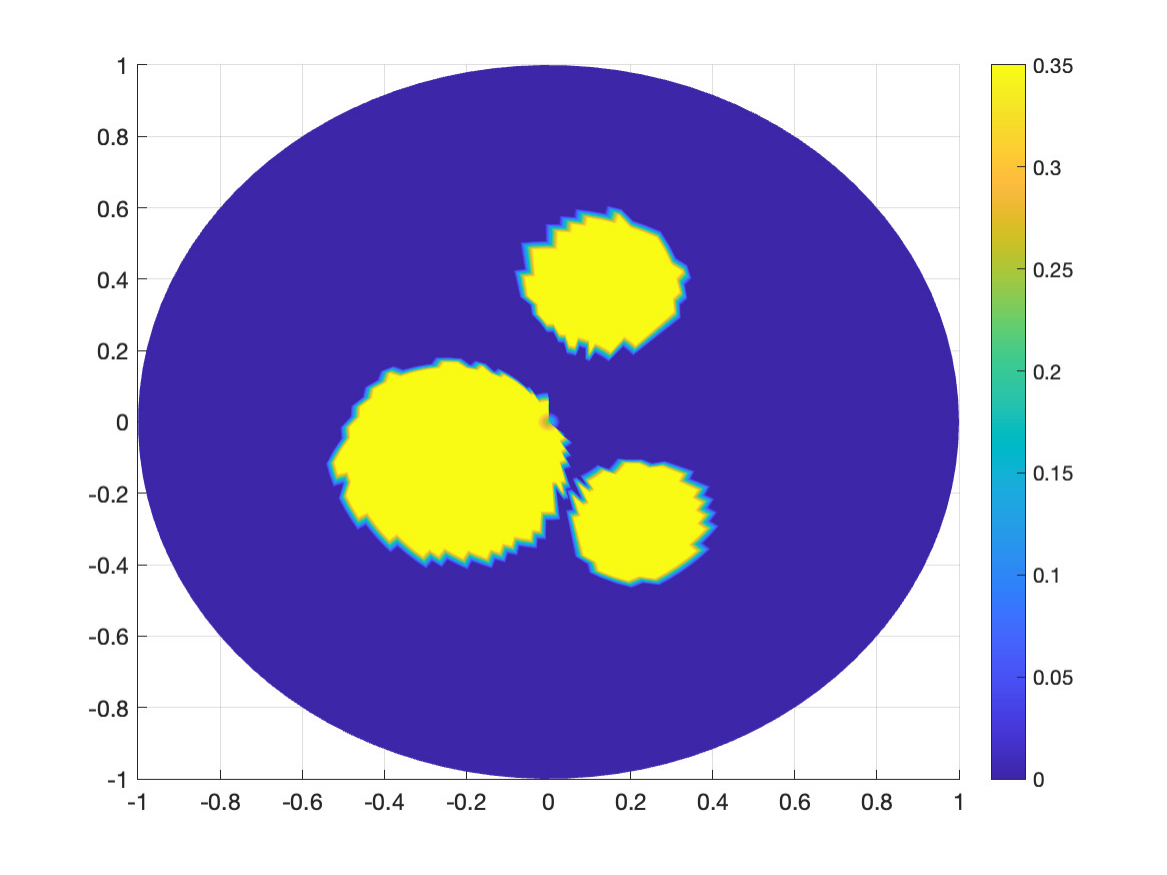}}
\subfloat[ULR]{\includegraphics[width=0.25\linewidth]{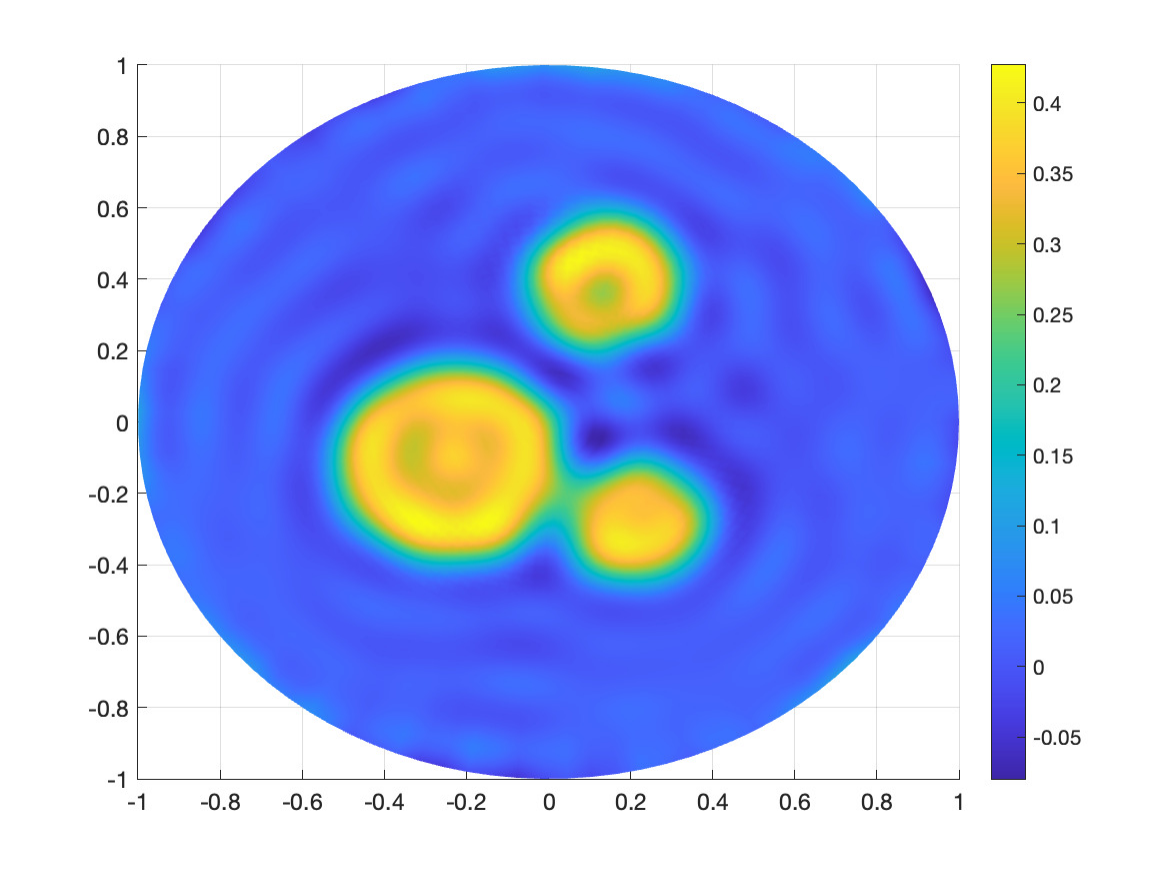}}
\subfloat[UU]{\includegraphics[width=0.25\linewidth]{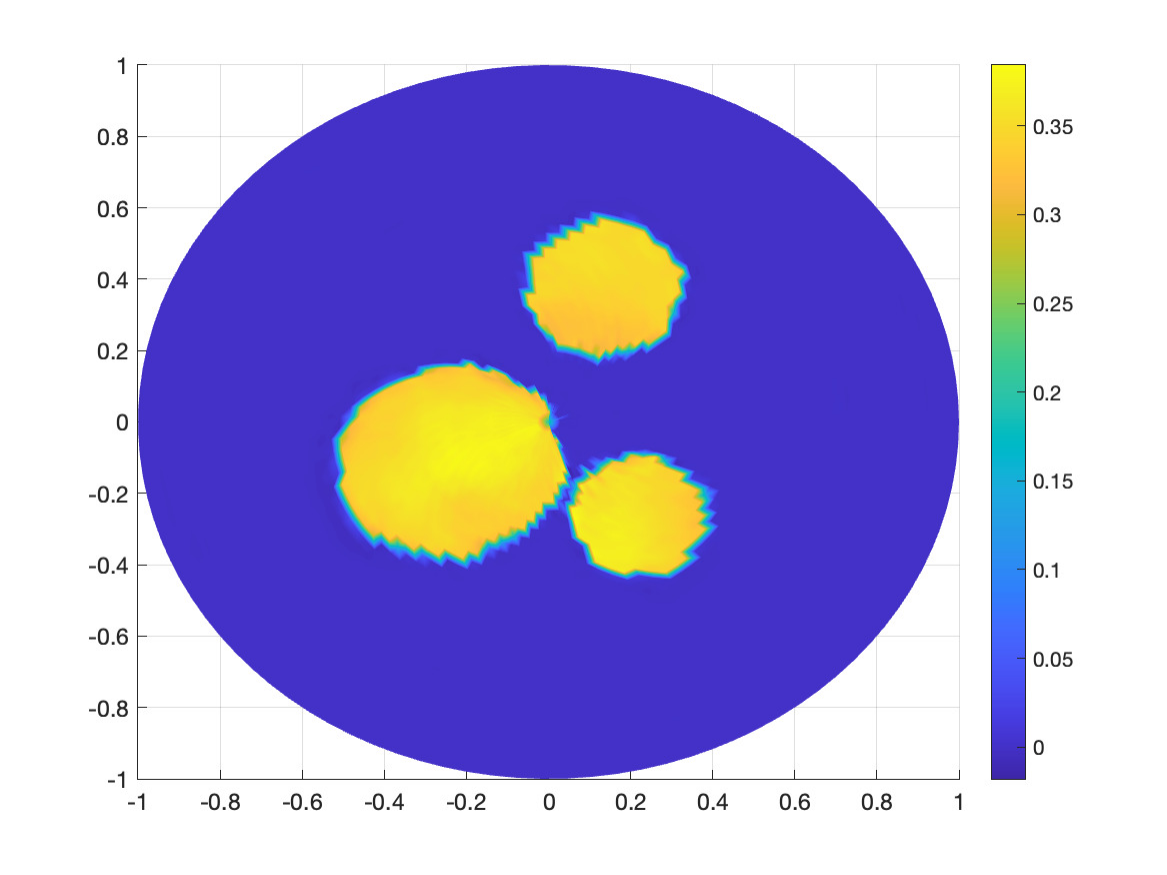}}
\subfloat[U]{\includegraphics[width=0.25\linewidth]{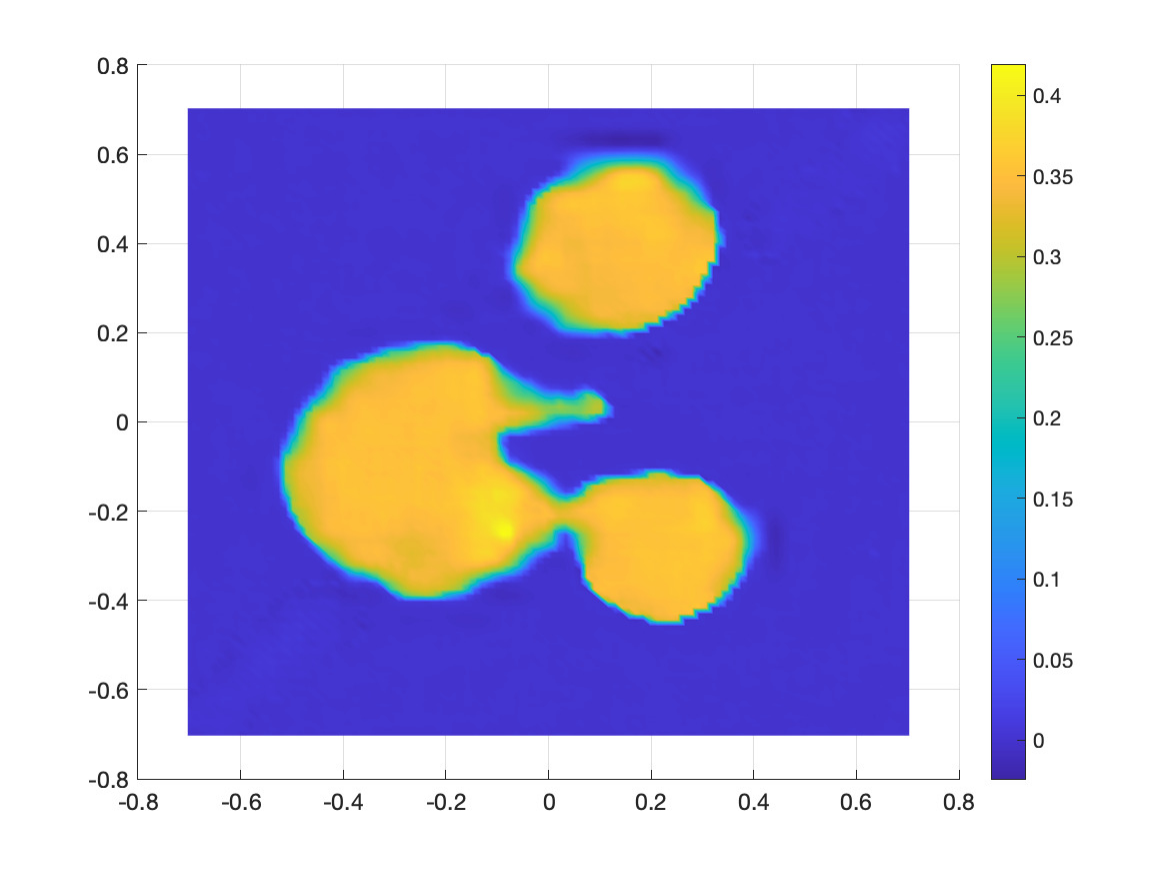}}
  \caption{Reconstruction of three disks. Left to right:  ground truth, reconstructions by ULR, UU, and U, respectively. }  \label{figure: 3_disks}
\end{figure}

We further test the proposed methods to reconstruct the rotations of the number ``4''. We plot in \Cref{figure: full rotation-equivariance} the reconstructions using ULR, UU, and U in the second, third, and fourth column, respectively. We note that ULR and UU are more robust to rotations while the black-box neural network U is more sensitive to rotations. One major reason is that the neural networks ULR and UU take advantage of the rotation-equivariance property while the neural network U does not. This example also indicates that ULR and UU has the potential generalization capability to reconstruct any rotated contrast $\mathcal{R}_\phi q$ provided that it works well for $q$.

\begin{figure}[htbp]
{\includegraphics[width=0.24\linewidth]{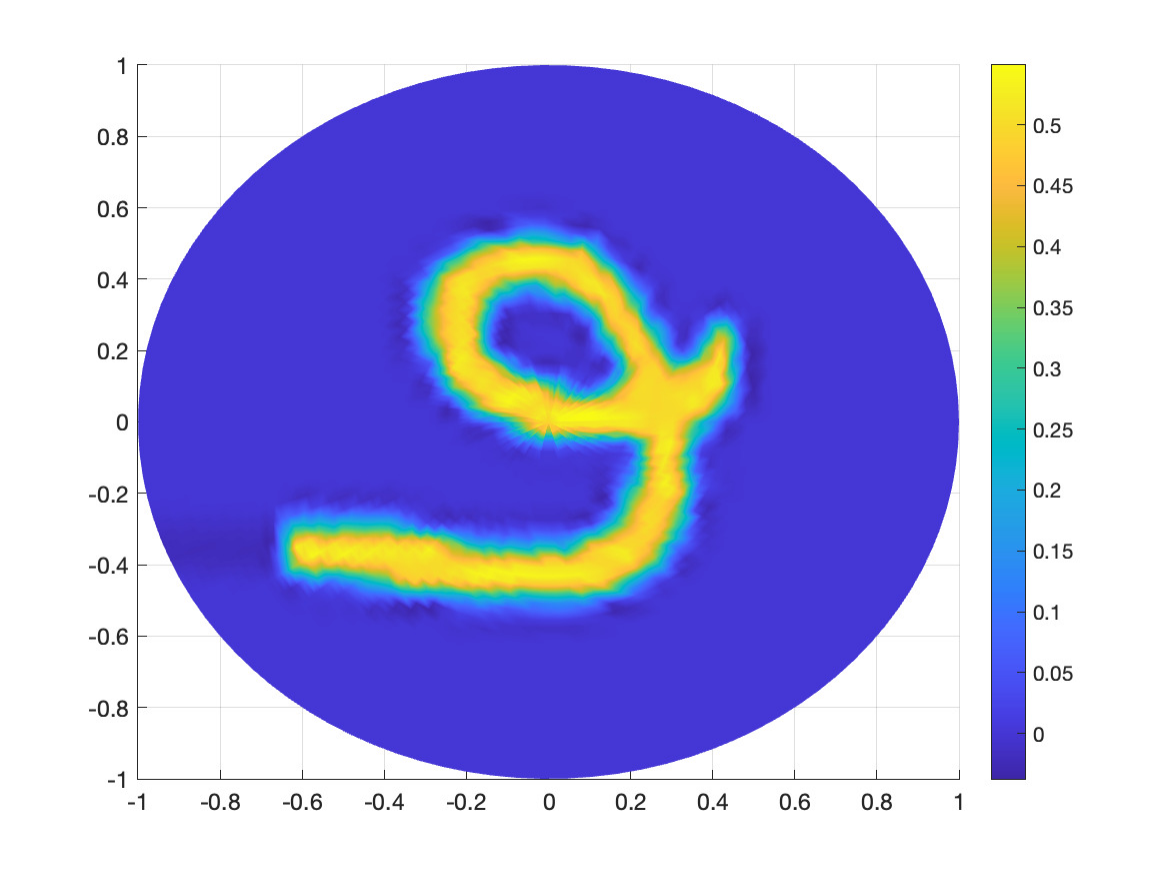}}
{\includegraphics[width=0.24\linewidth]{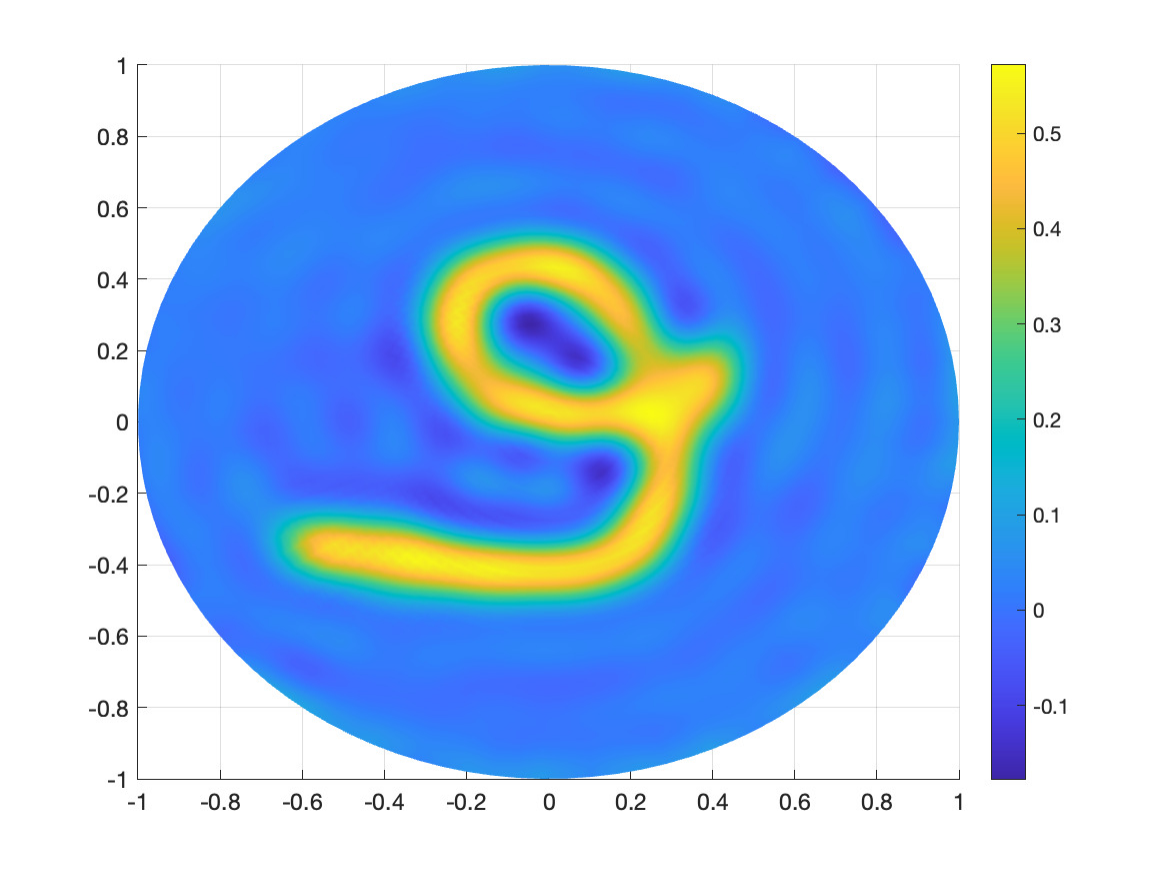}}
{\includegraphics[width=0.24\linewidth]{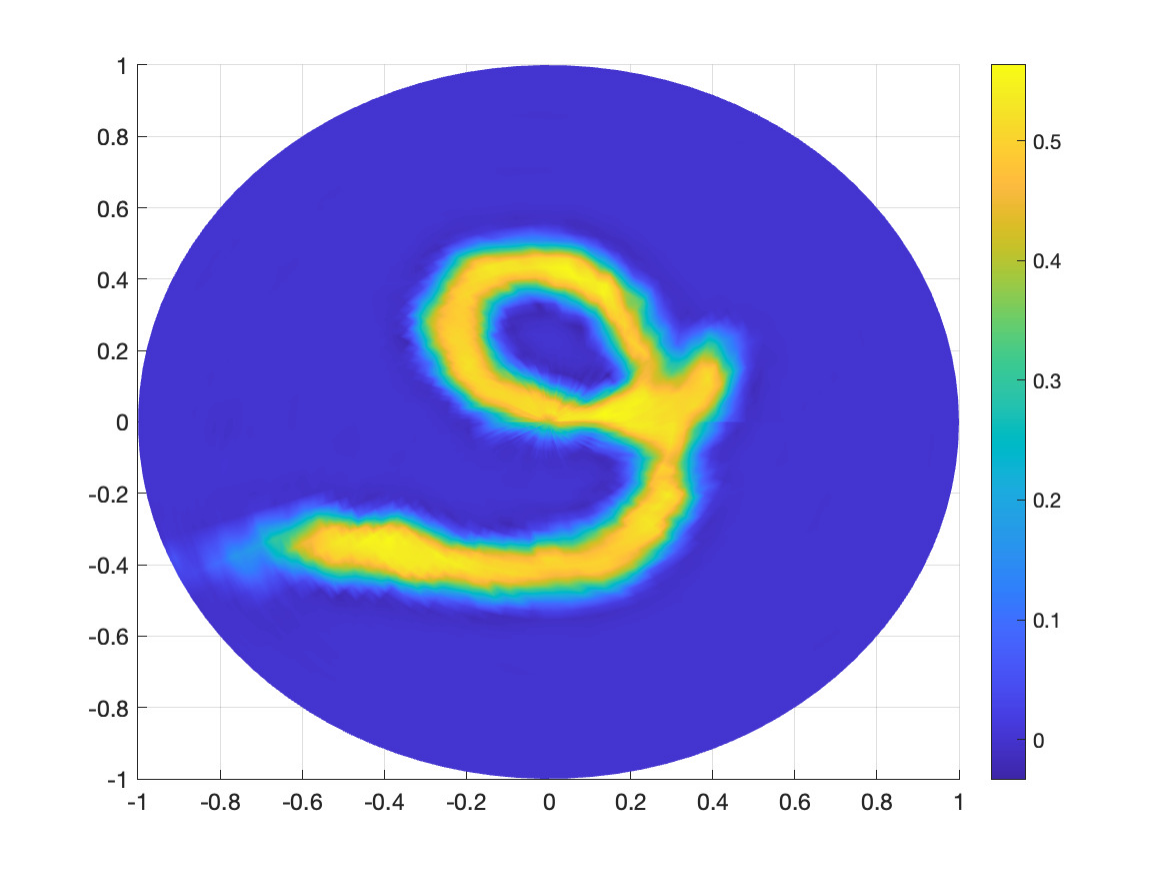}}
{\includegraphics[width=0.24\linewidth]{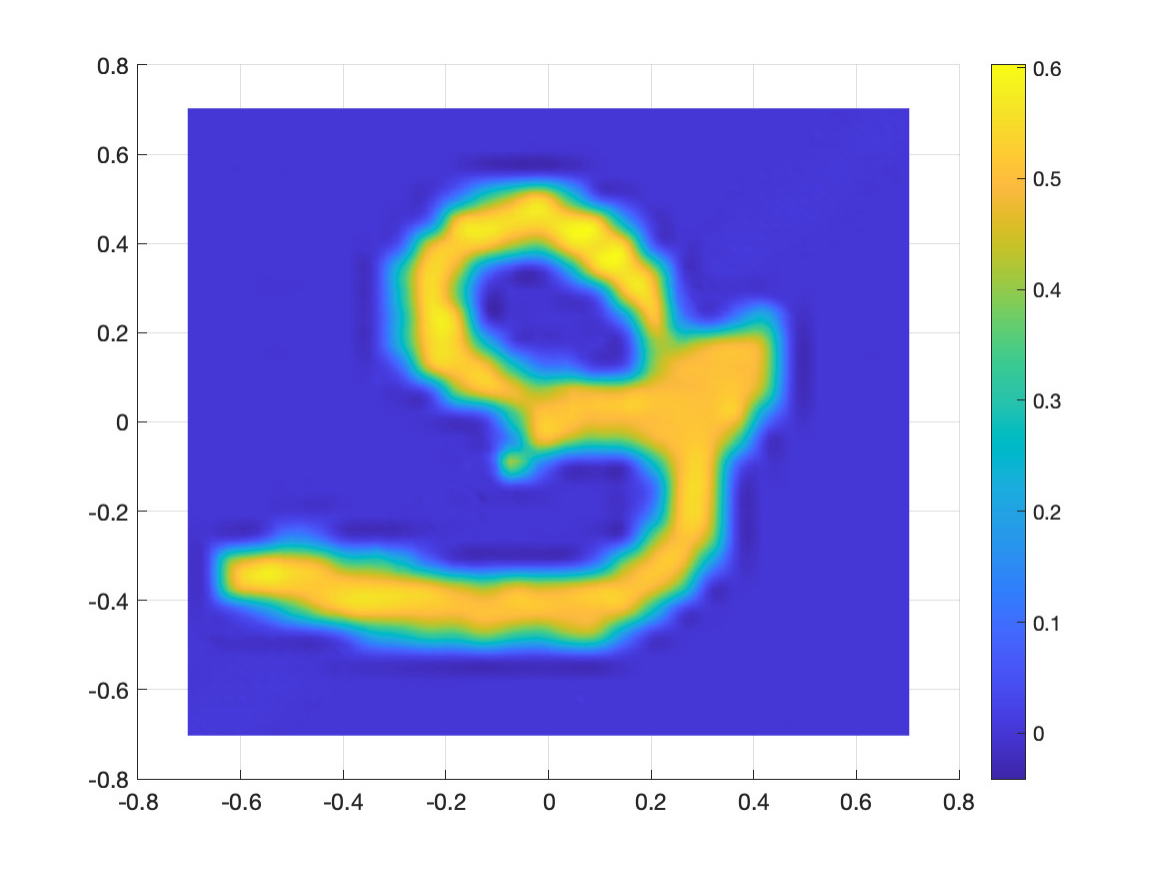}}\\
{\includegraphics[width=0.24\linewidth]{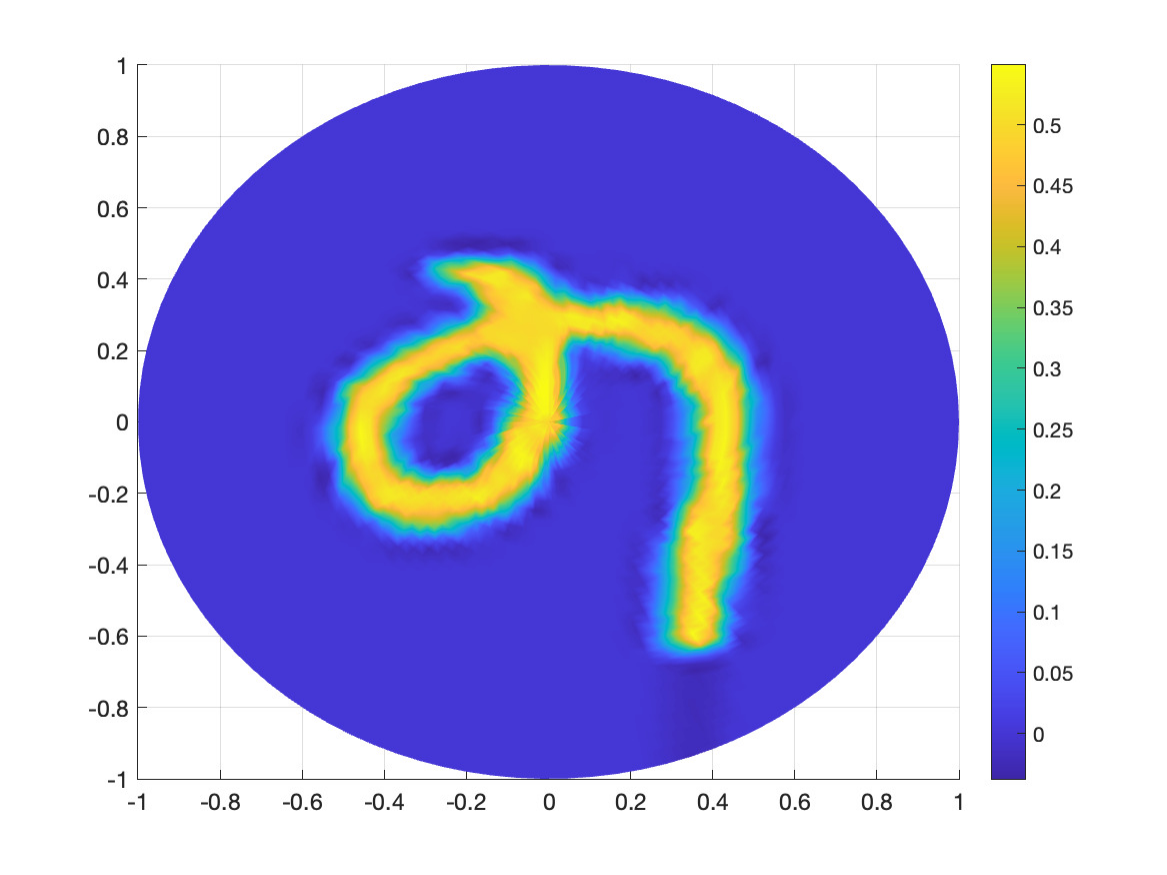}}
{\includegraphics[width=0.24\linewidth]{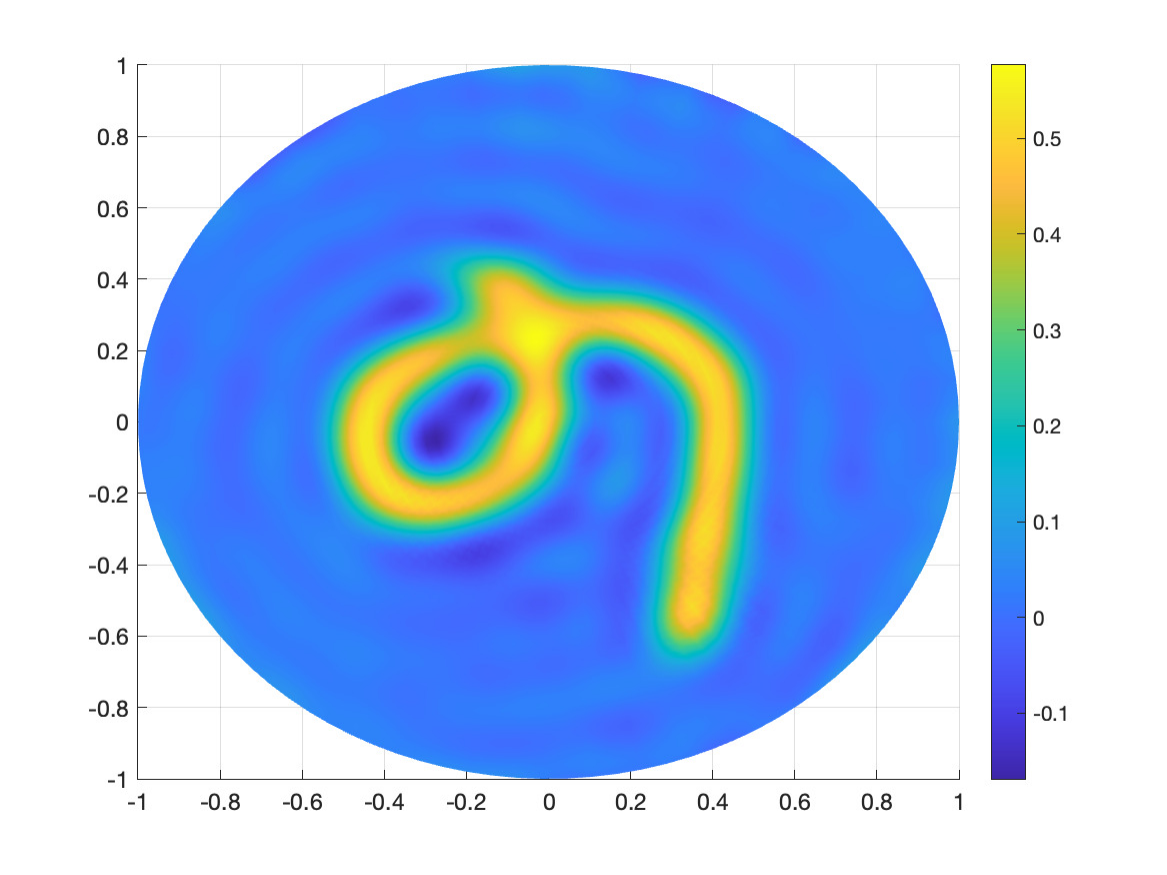}}
{\includegraphics[width=0.24\linewidth]{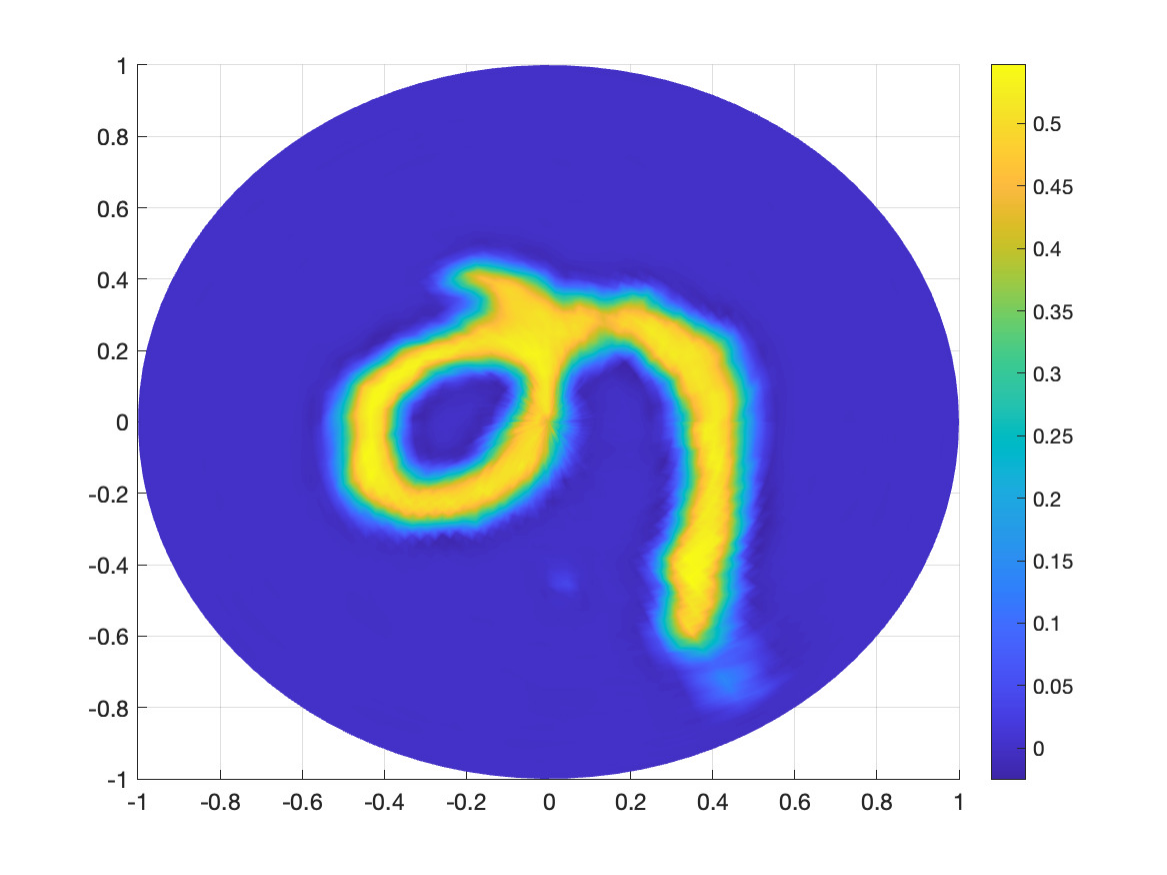}}
{\includegraphics[width=0.24\linewidth]{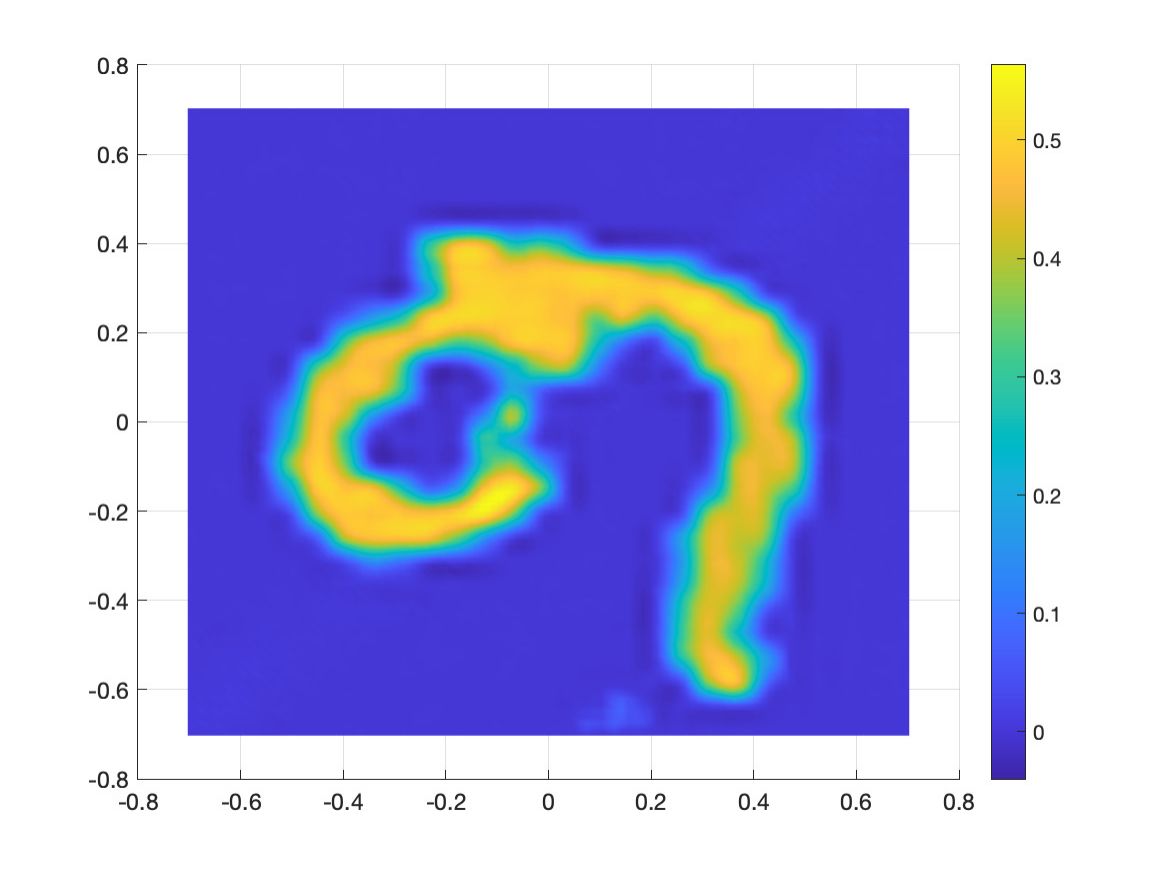}}
\\
{\includegraphics[width=0.24\linewidth]{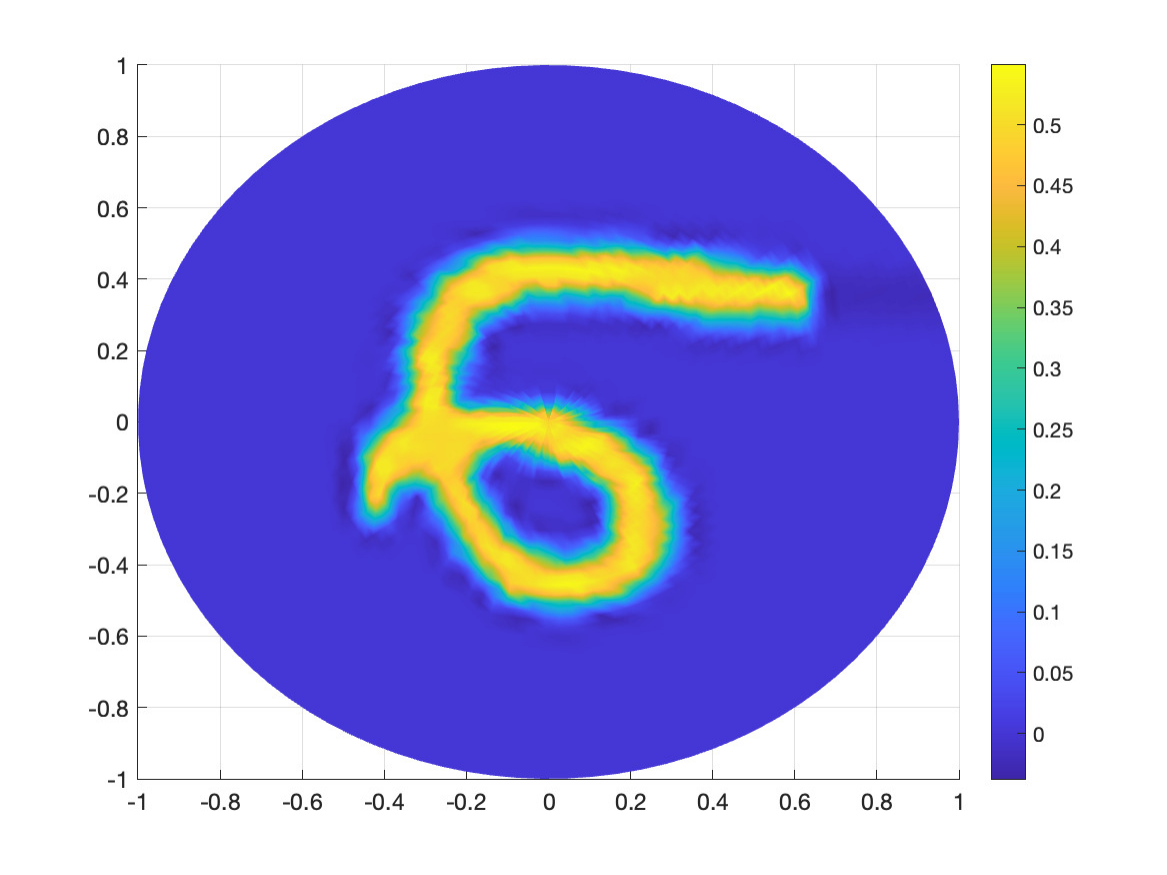}}
{\includegraphics[width=0.24\linewidth]{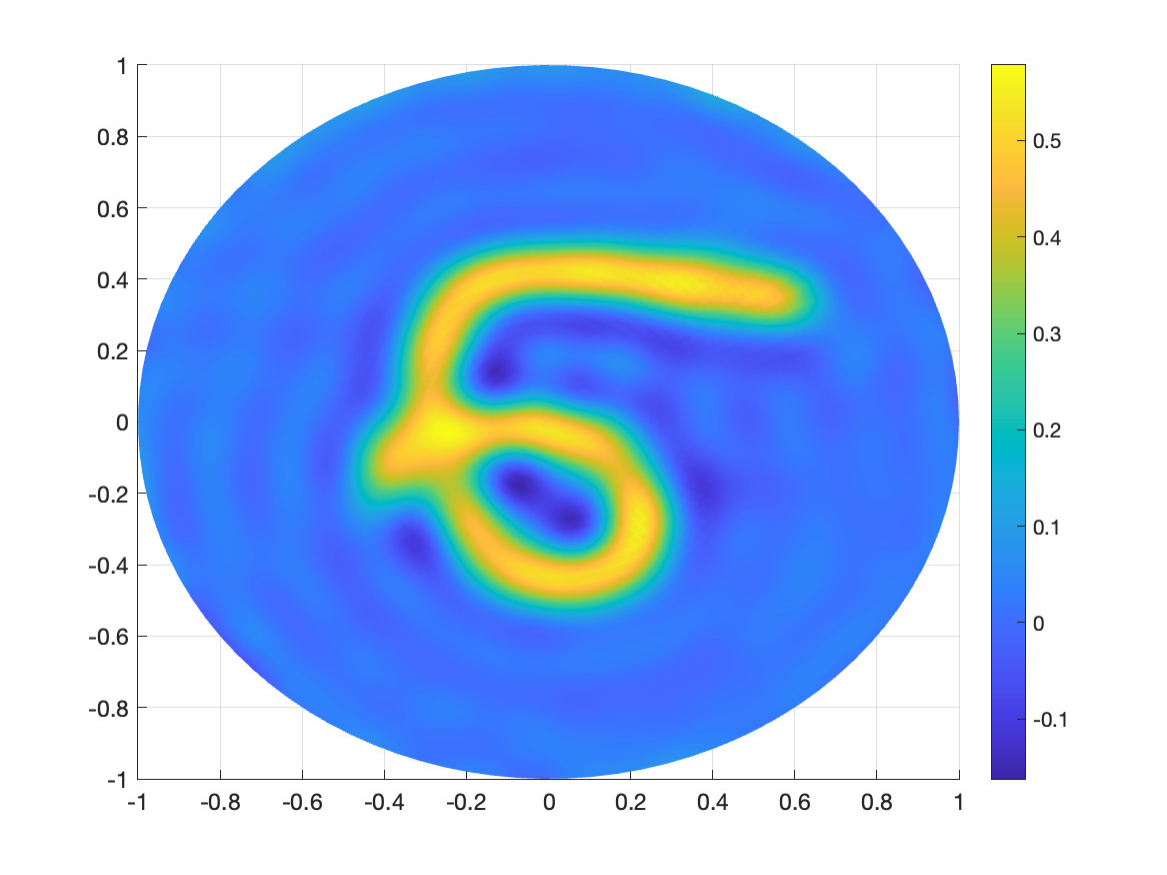}}
{\includegraphics[width=0.24\linewidth]{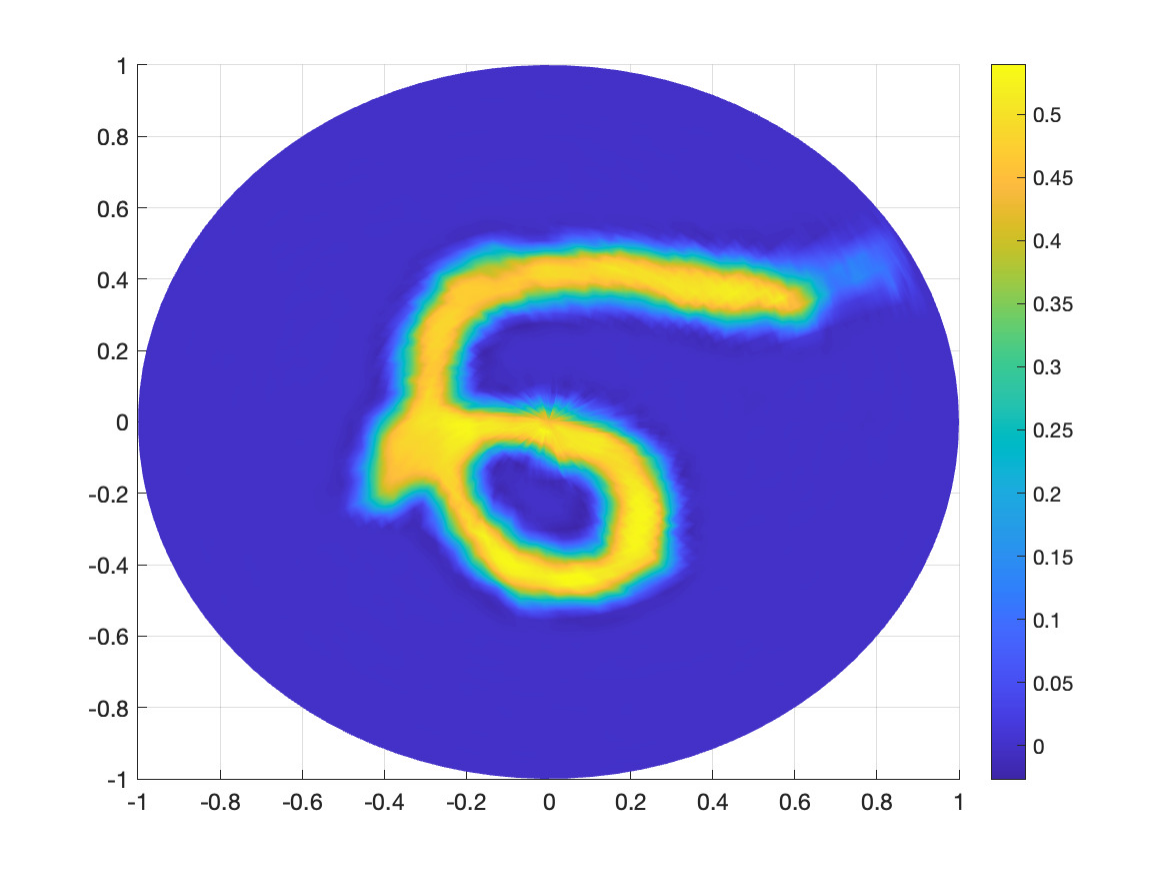}}
{\includegraphics[width=0.24\linewidth]{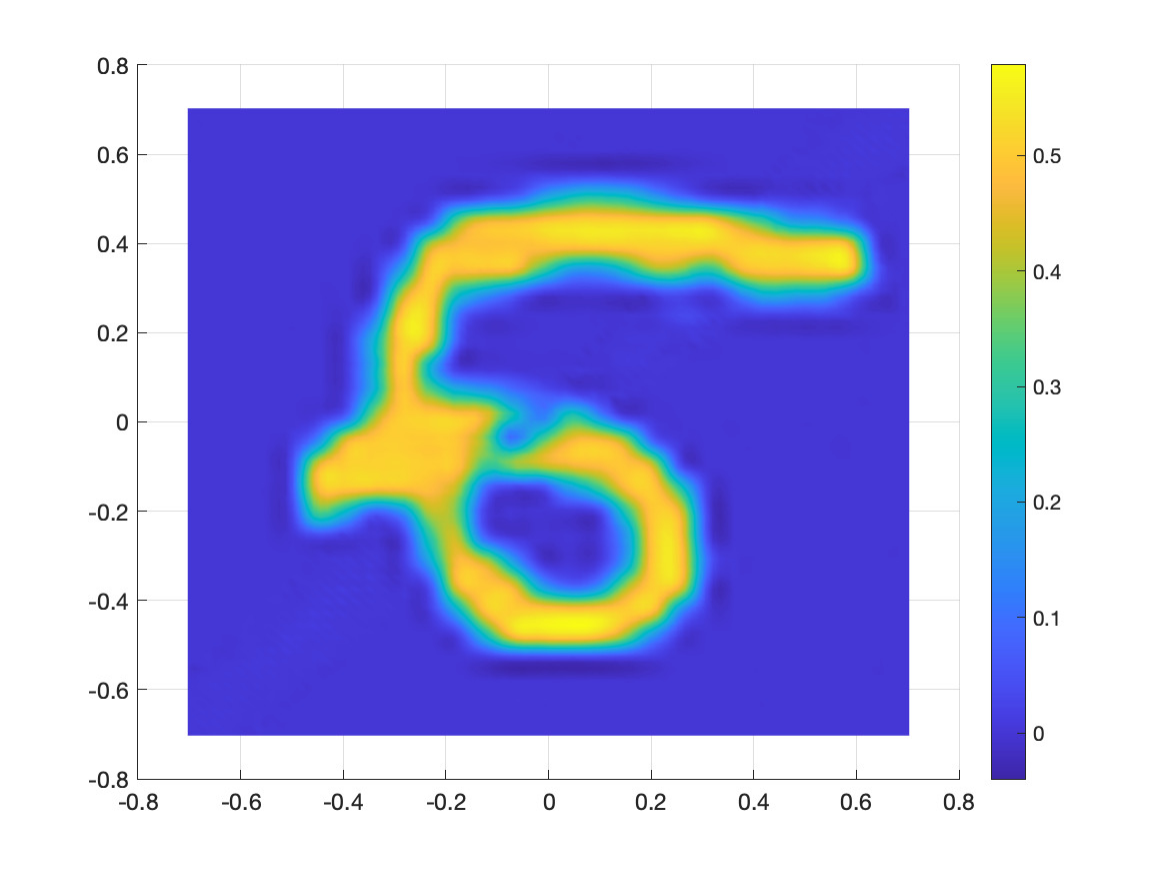}}
  \caption{Reconstruction of number ``4'' and its rotations. Top to bottom: rotation  $\pi/2$, $\pi$, and $3\pi/2$ of the number ``4''; left to right: ground truth, reconstructions by ULR, UU, and U, respectively.}  \label{figure: full rotation-equivariance}
\end{figure}

Note that the value of $q$ in the three disks is $0.35$ in \Cref{figure: 3_disks},  we continue to test the proposed methods for the three disks case by increasing the contrast value to $0.7$ and $1$ (corresponding to degree of nonlinearity $2.3230$ and $3.2166$), respectively.  
With reference to \Cref{figure: generalization 3 balls}, the reconstruction (in second column) by the low-rank regularized inverse Born solver is poor due to the large degree of nonlinearity. The reconstructions using ULR (third column) and UU (fourth column) greatly improve the inverse Born solution, and they are more robust than the black-box neural network (fifth column).  

\begin{figure}[htbp]
\includegraphics[width=0.19\linewidth]{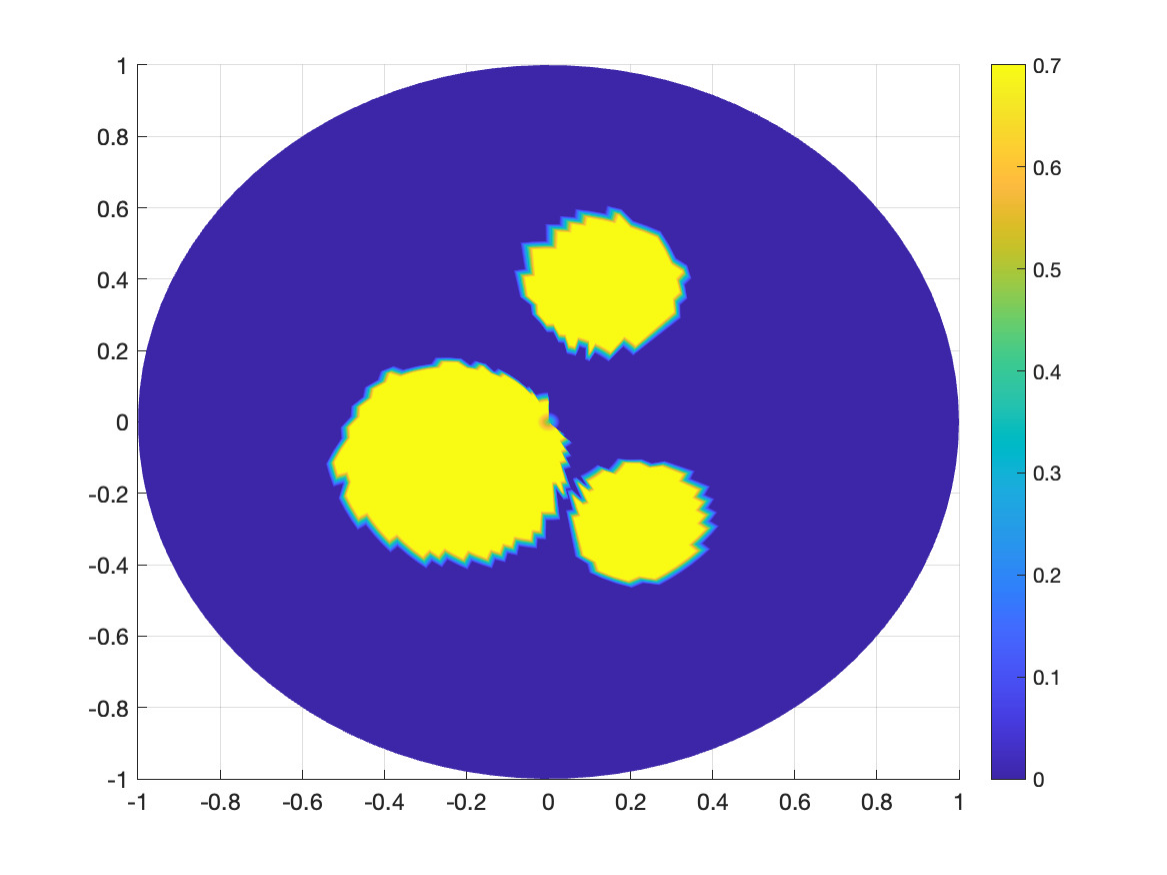}
\includegraphics[width=0.19\linewidth]{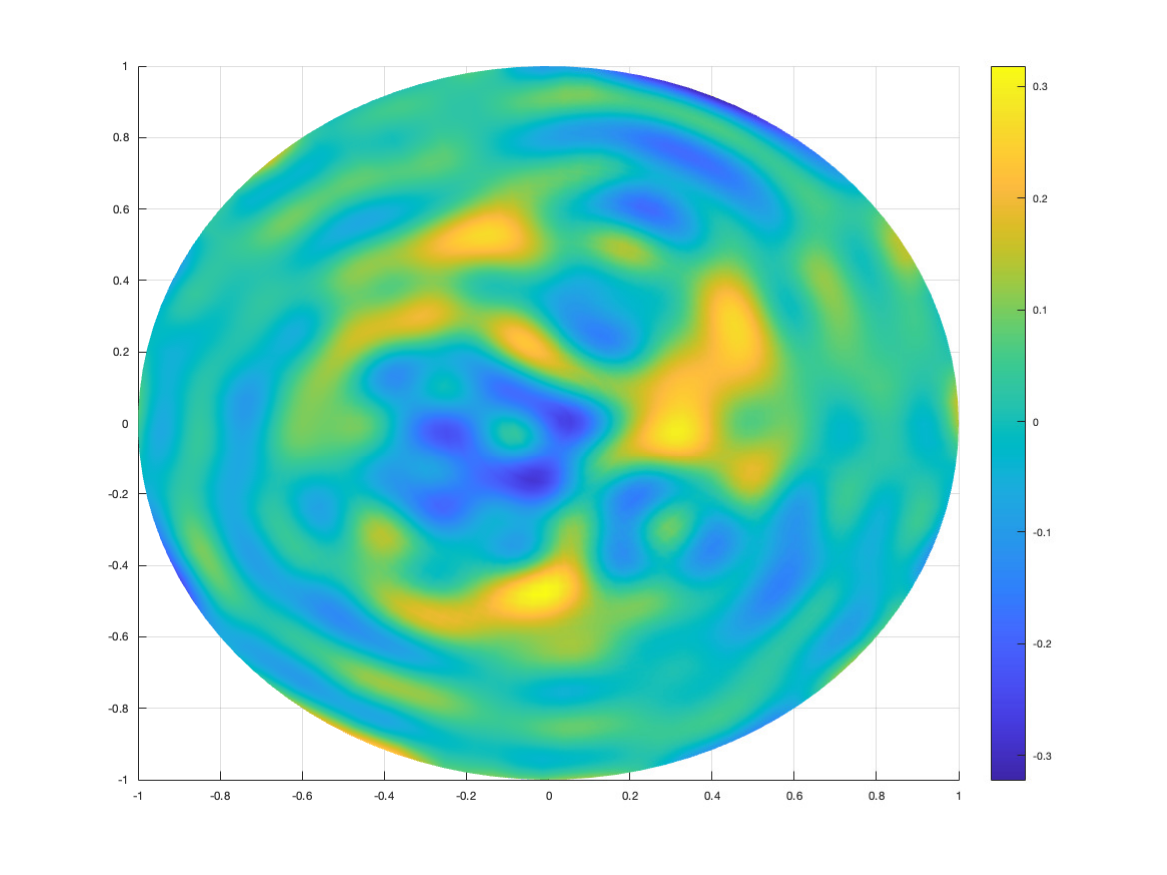}
\includegraphics[width=0.19\linewidth]{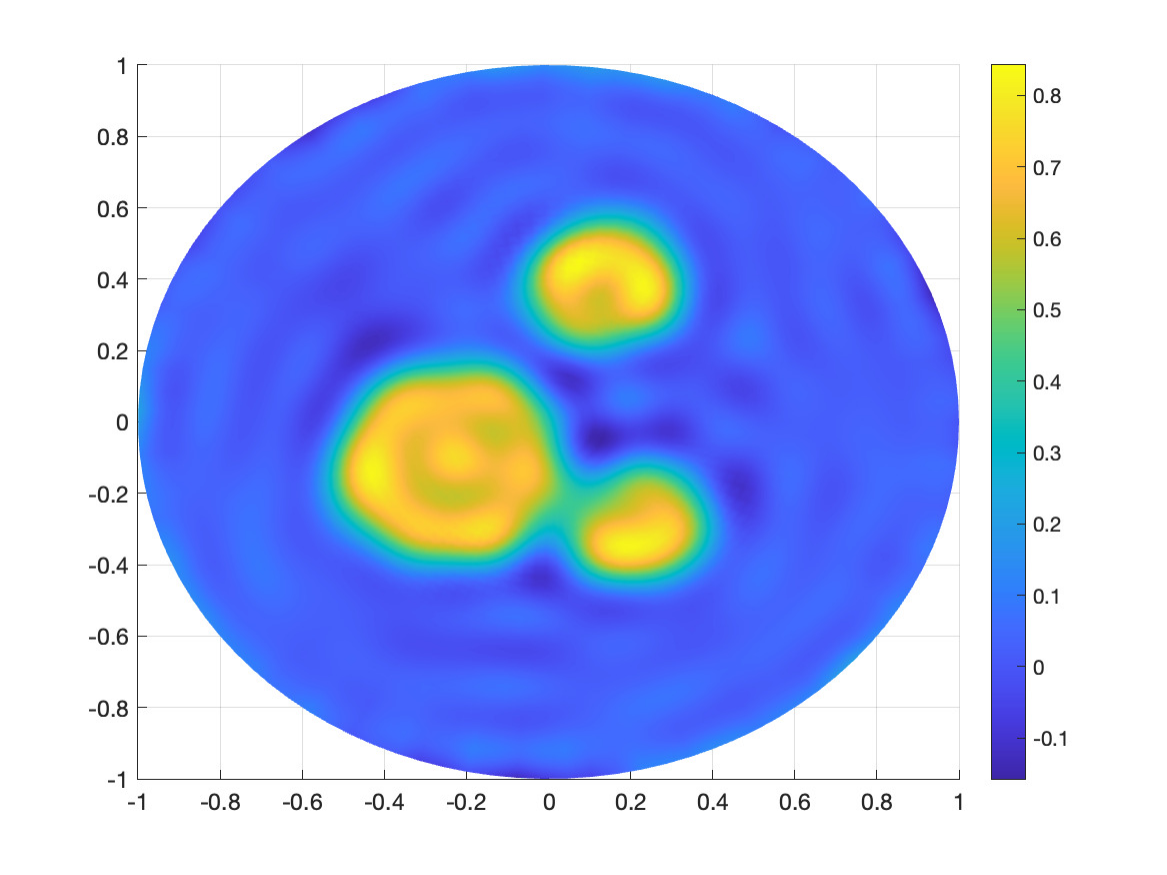}
\includegraphics[width=0.19\linewidth]{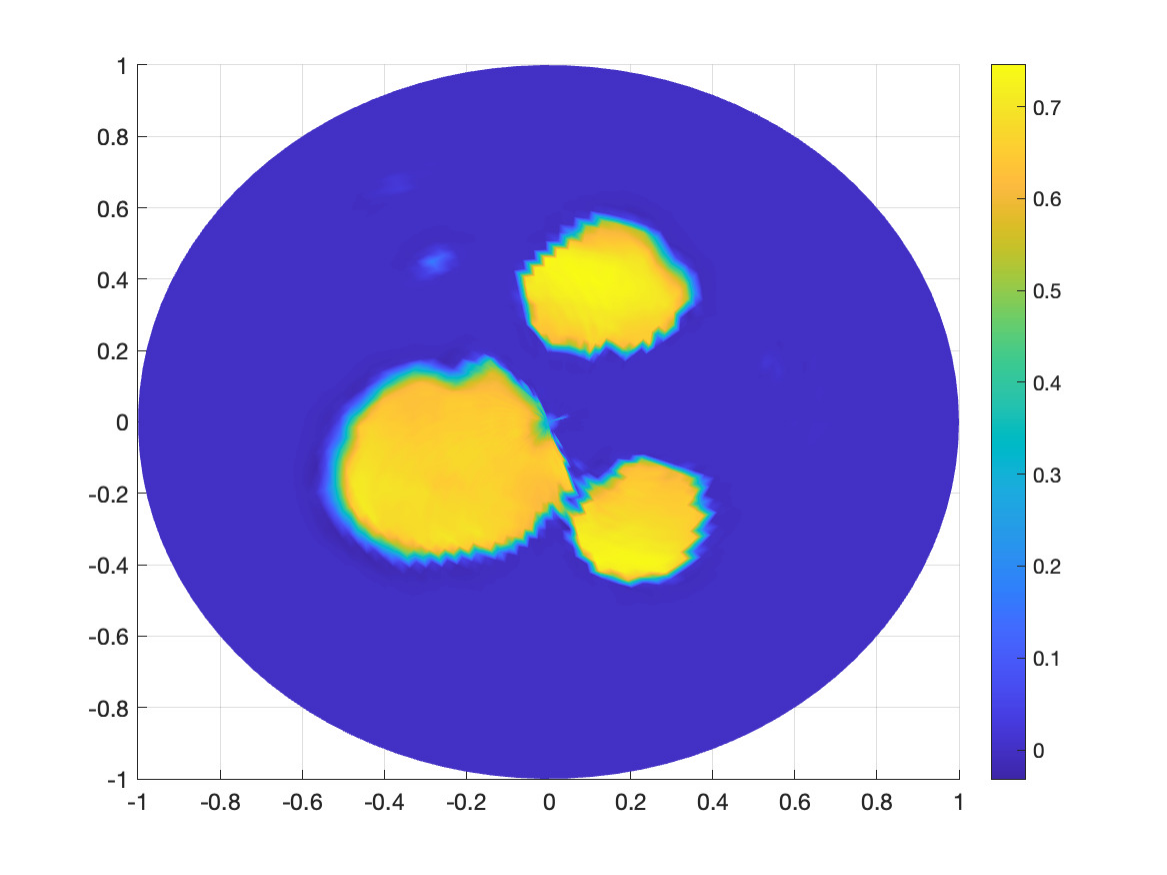}
\includegraphics[width=0.19\linewidth]{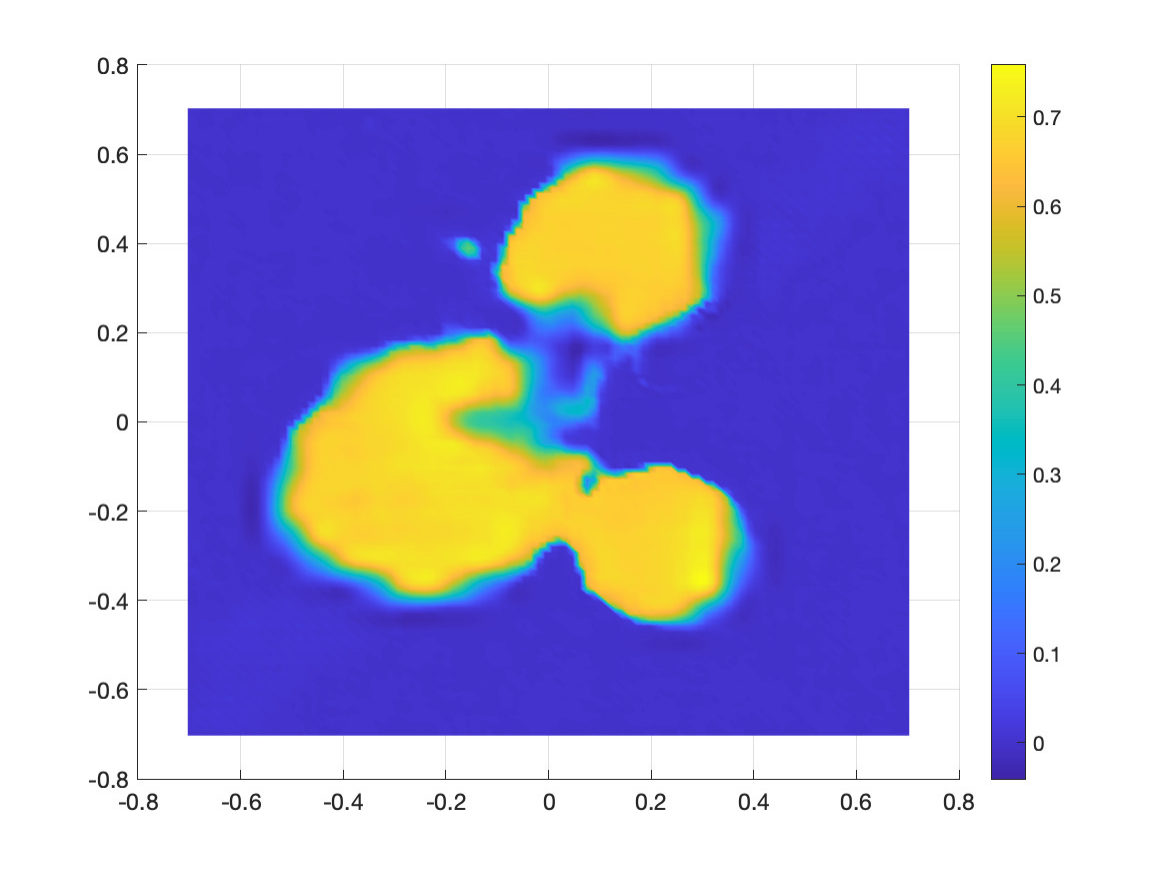}\\
\includegraphics[width=0.19\linewidth]{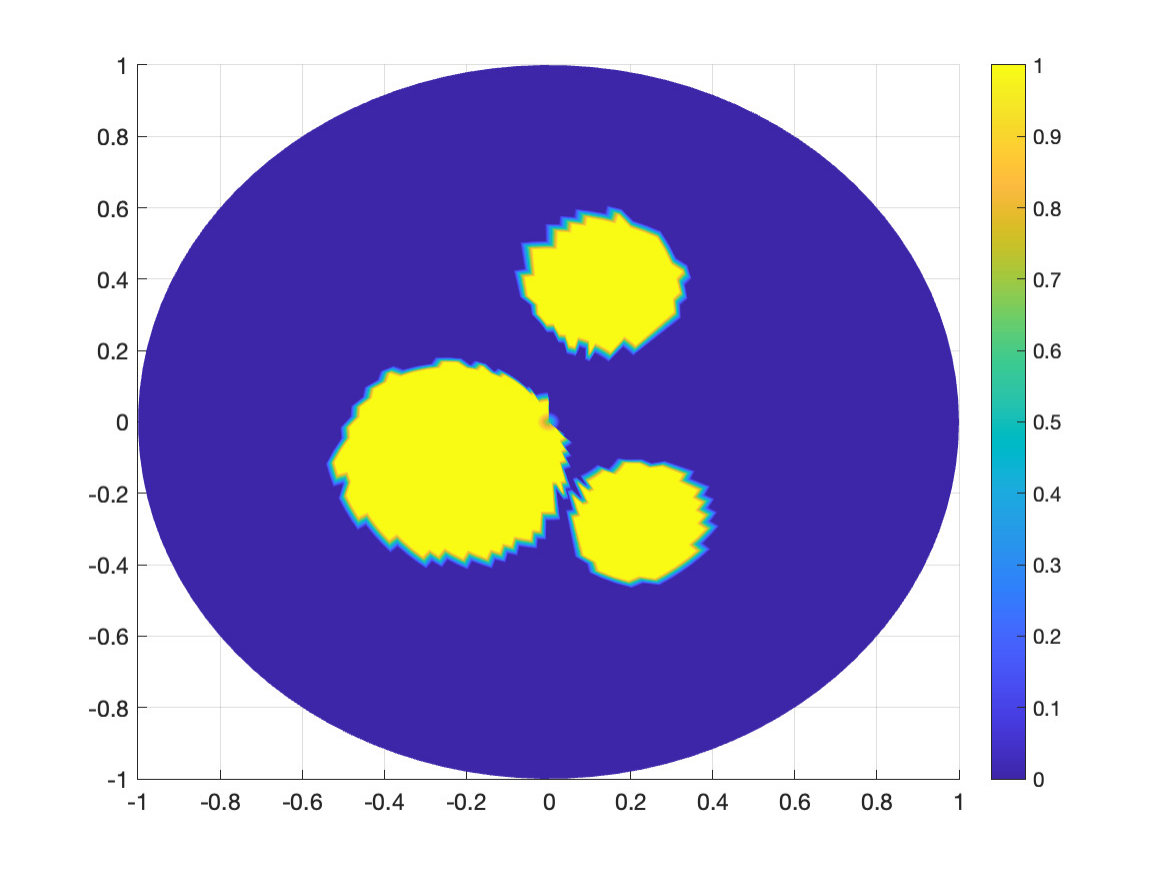}
\includegraphics[width=0.19\linewidth]{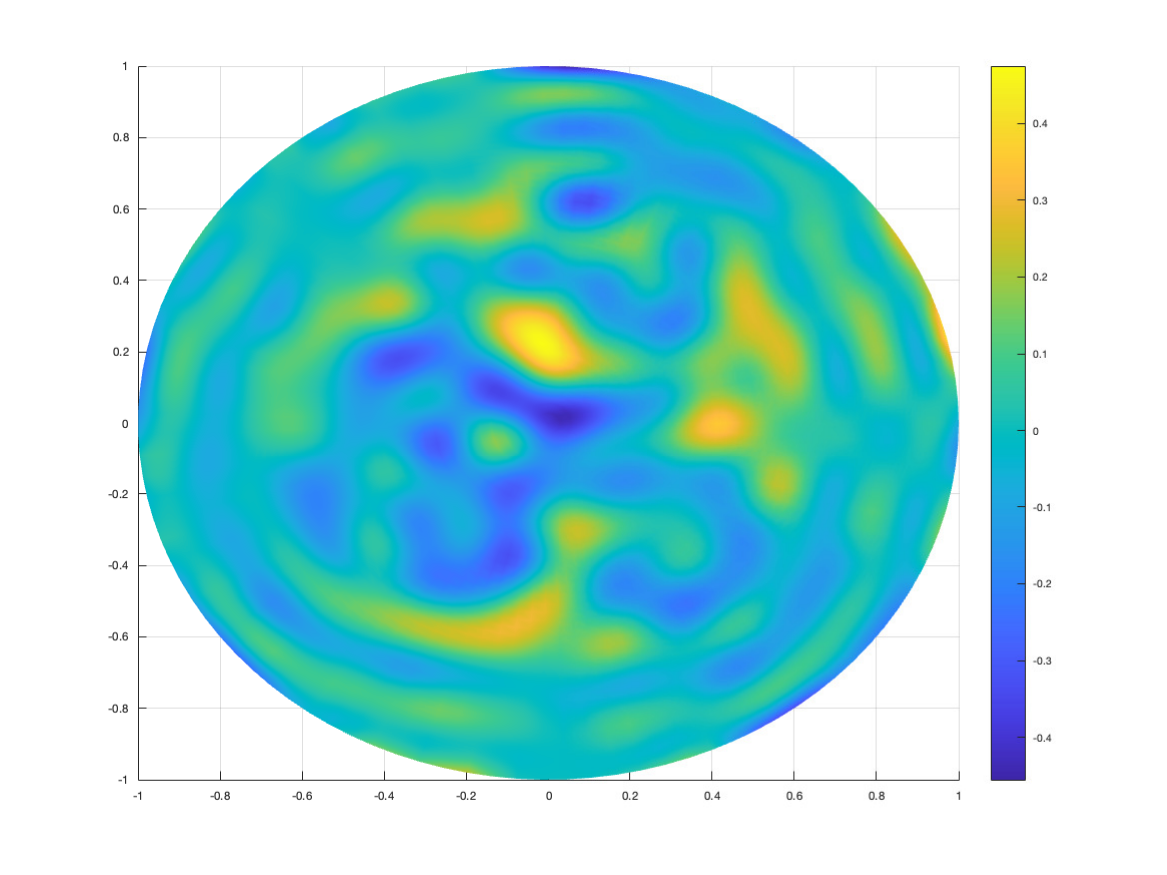}
\includegraphics[width=0.19\linewidth]{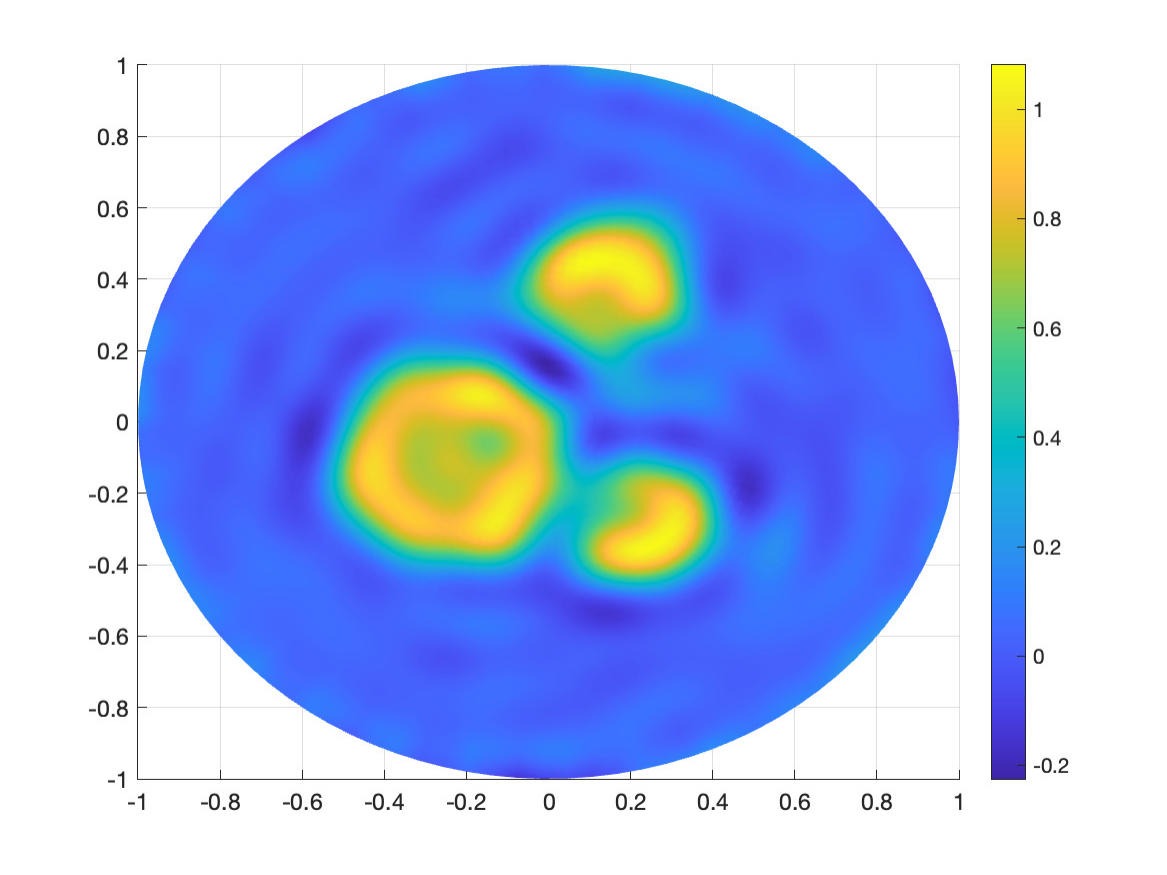}
\includegraphics[width=0.19\linewidth]{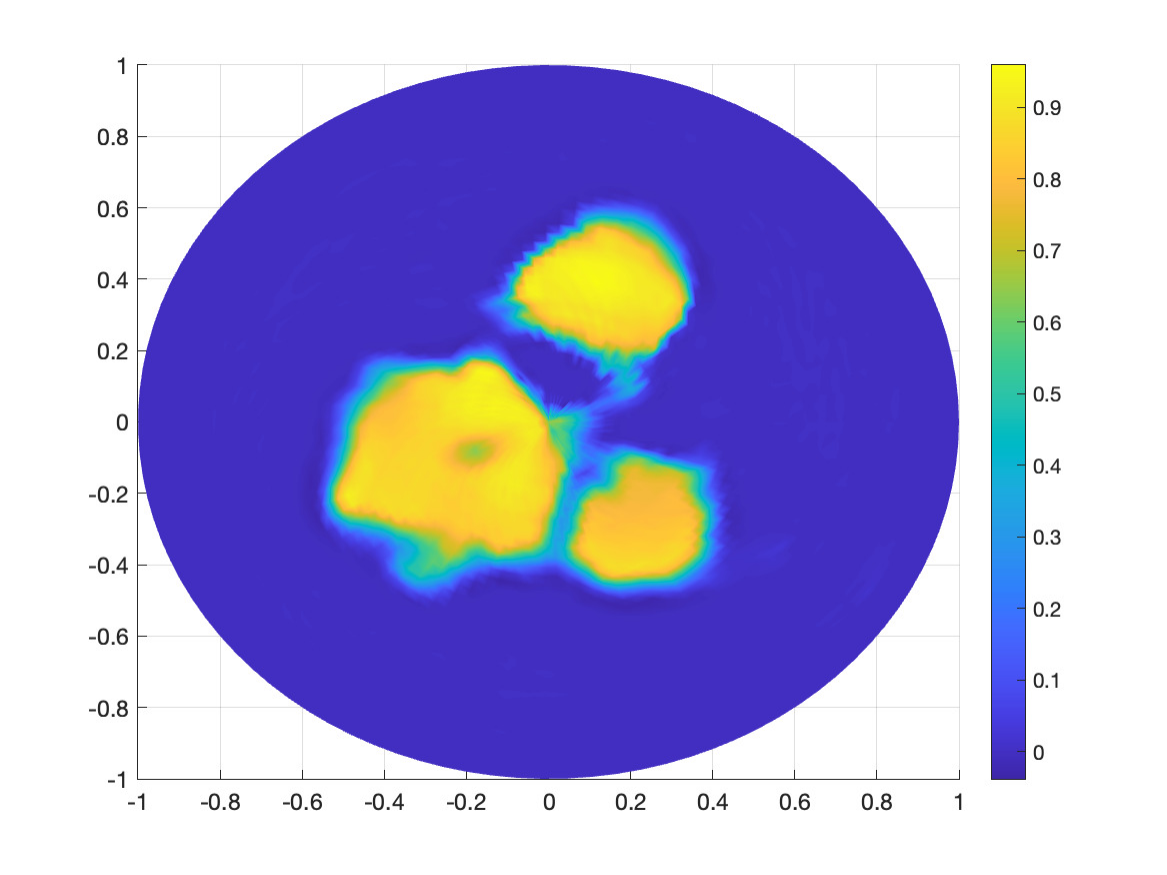}
\includegraphics[width=0.19\linewidth]{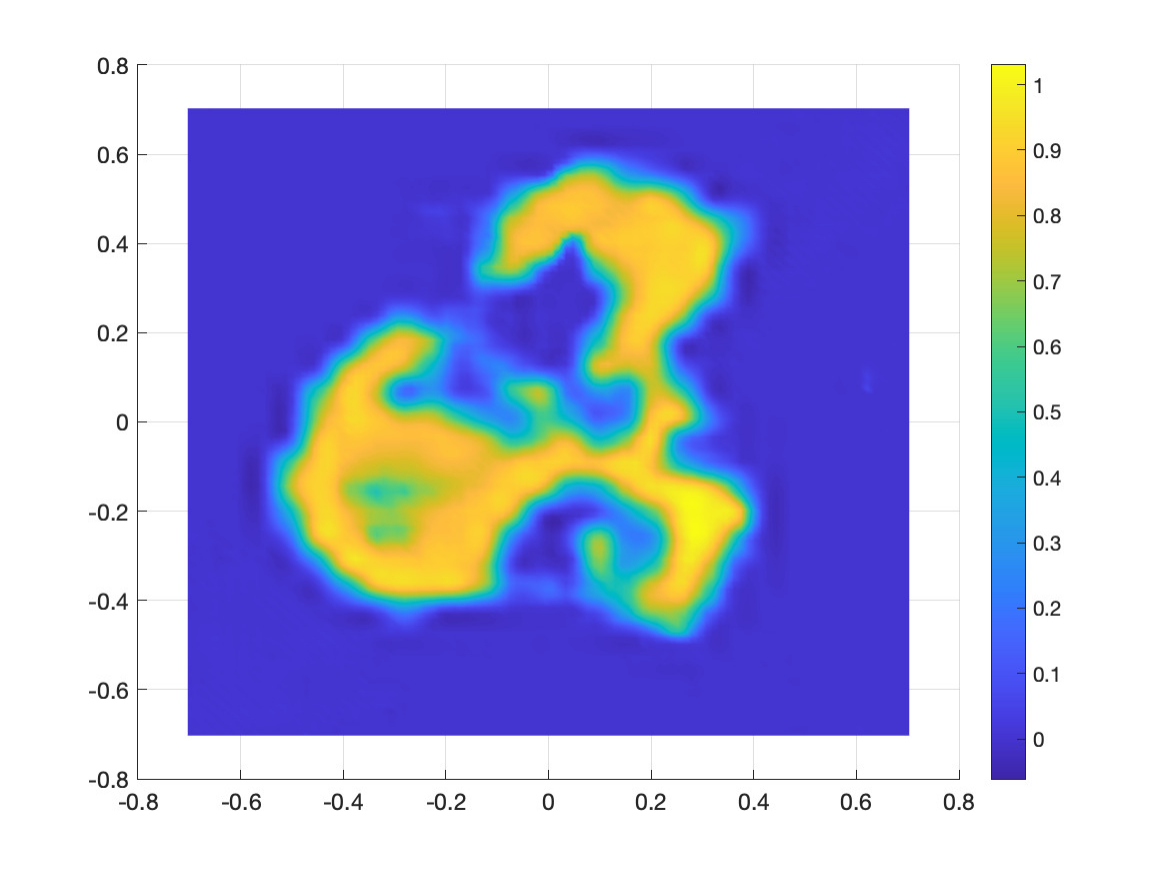}
  \caption{Reconstruction of three disks with   $q=0.7$ (top) and $q=1$ (bottom), corresponding to degree of nonlinearity  $2.3230$ (top) and $3.2166$ (bottom). Left to right: ground truth, reconstruction by the low-rank structure, ULR, UU, and U, respectively. }  \label{figure: generalization 3 balls}
\end{figure}

\subsection{A deeper insight on the differences} \label{sec: comparison of inverse Born solvers}
To get a deeper understanding of the proposed methods, we first focus only on the inverse Born solver by comparing the low-rank regularized method in Section \ref{sec: low-rank inverse Born}, the rotation-equivariance aware neural network $\mbox{U}_2$, and another black-box neural network. The ground truth is plotted in the first row of \Cref{figure: High amplitude reconstruction}, where $\|q\|_\infty=2,5,10,30$ from left to right.  Specifically, given Born far field data $\{u_b^\infty(\hat{x}_i;\hat{\theta}_j): 1\le i \le N_{\rm obs}, \, 1\le j \le N_{\rm inc}\}$, we   obtain the Born processed data and apply  the low-rank regularized method and the rotation-equivariance aware NN; on the other hand, we directly use the Born far field data as the input for the black-box neural network. With reference to \Cref{figure: High amplitude reconstruction}, it is observed that the rotation-equivariance-aware  $\mbox{U}_2$ generalize well to high contrast; however, the black-box neural network fails for these high contrast. Therefore, the proposed methods ULR and UU are expected to generalize well to high contrast if the data corrector $\mbox{U}_1$ has good generalization capability.
\begin{figure}[htbp]
\centering
{\includegraphics[width=0.24\linewidth]{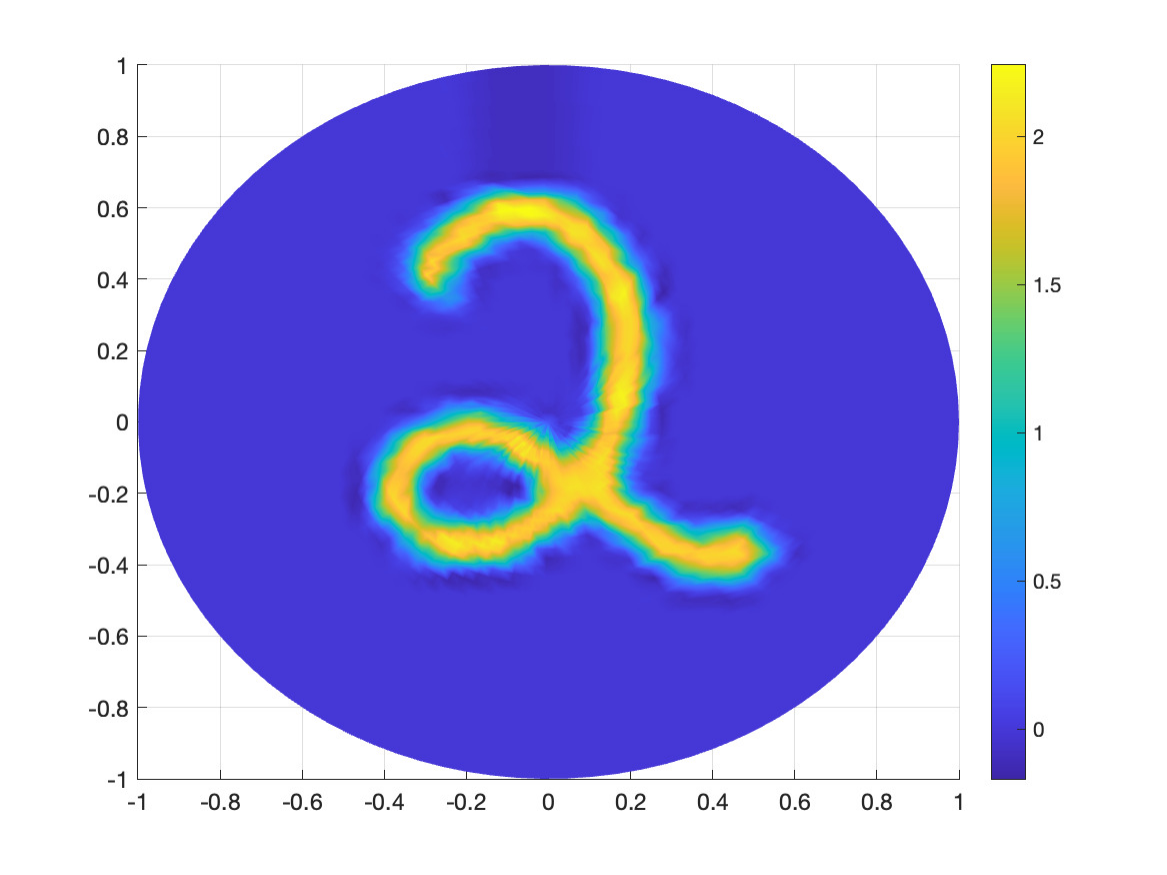}}
{\includegraphics[width=0.24\linewidth]{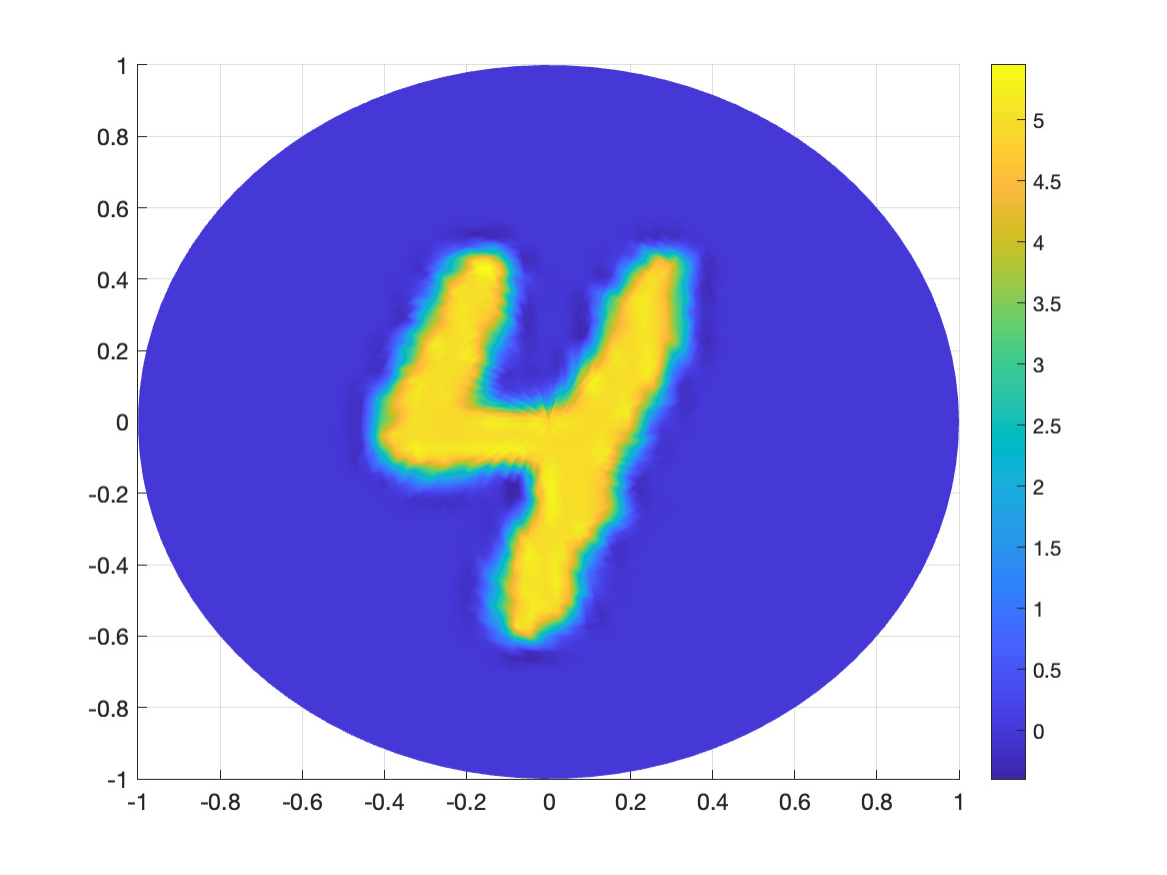}}
{\includegraphics[width=0.24\linewidth]{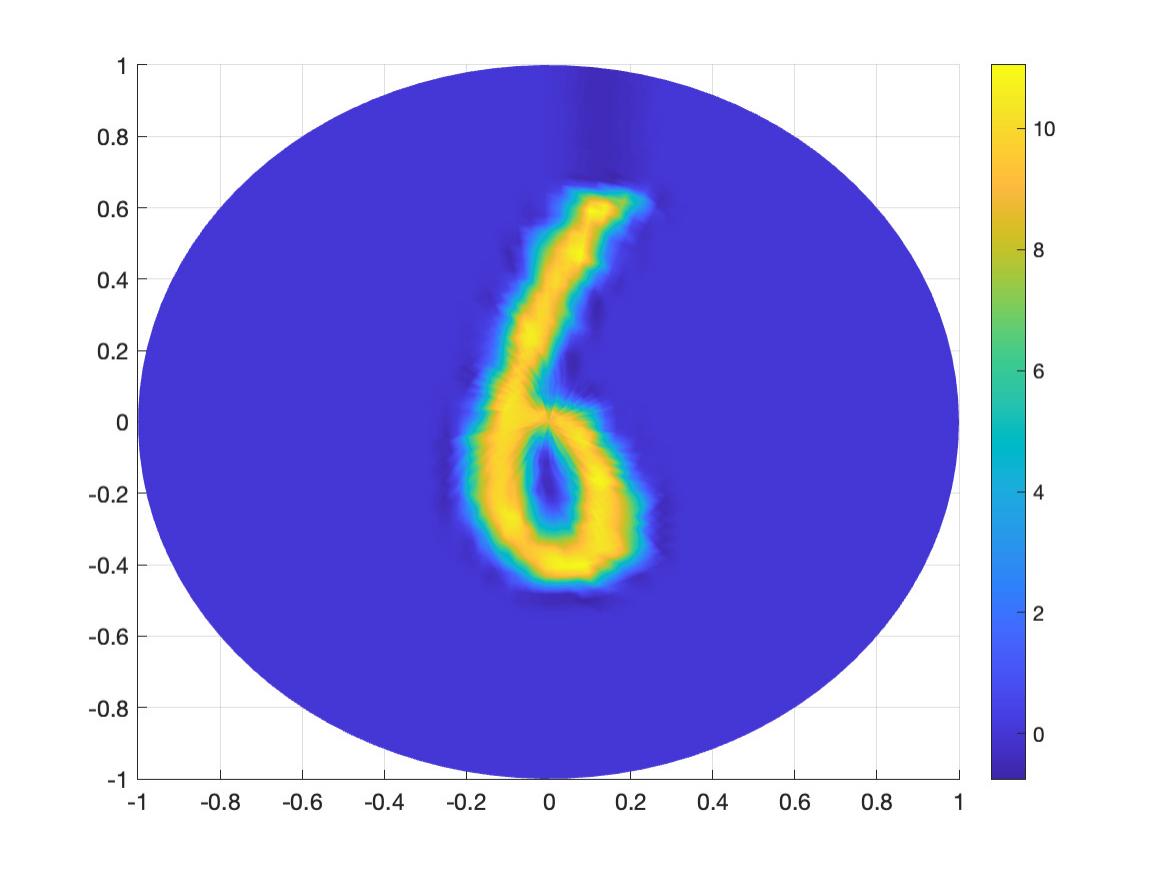}}
{\includegraphics[width=0.24\linewidth]{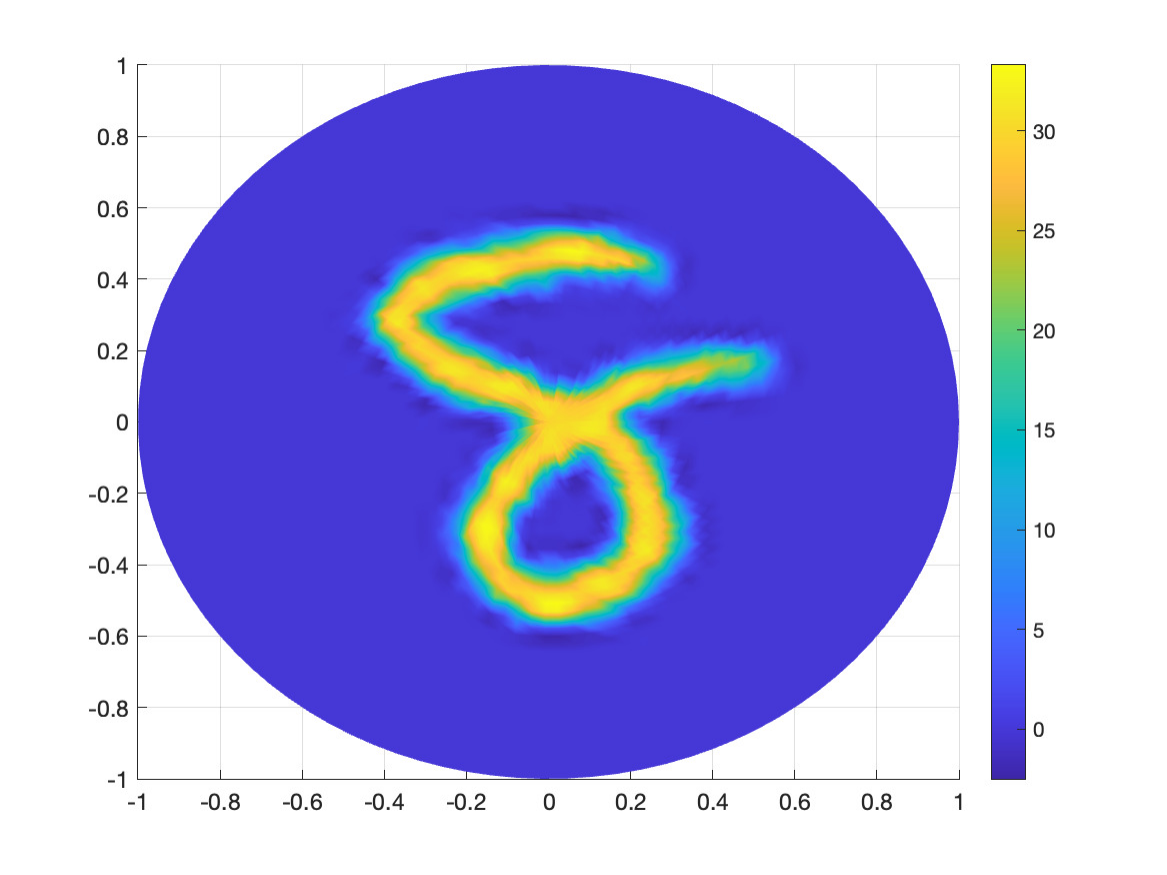}}\\
{\includegraphics[width=0.24\linewidth]{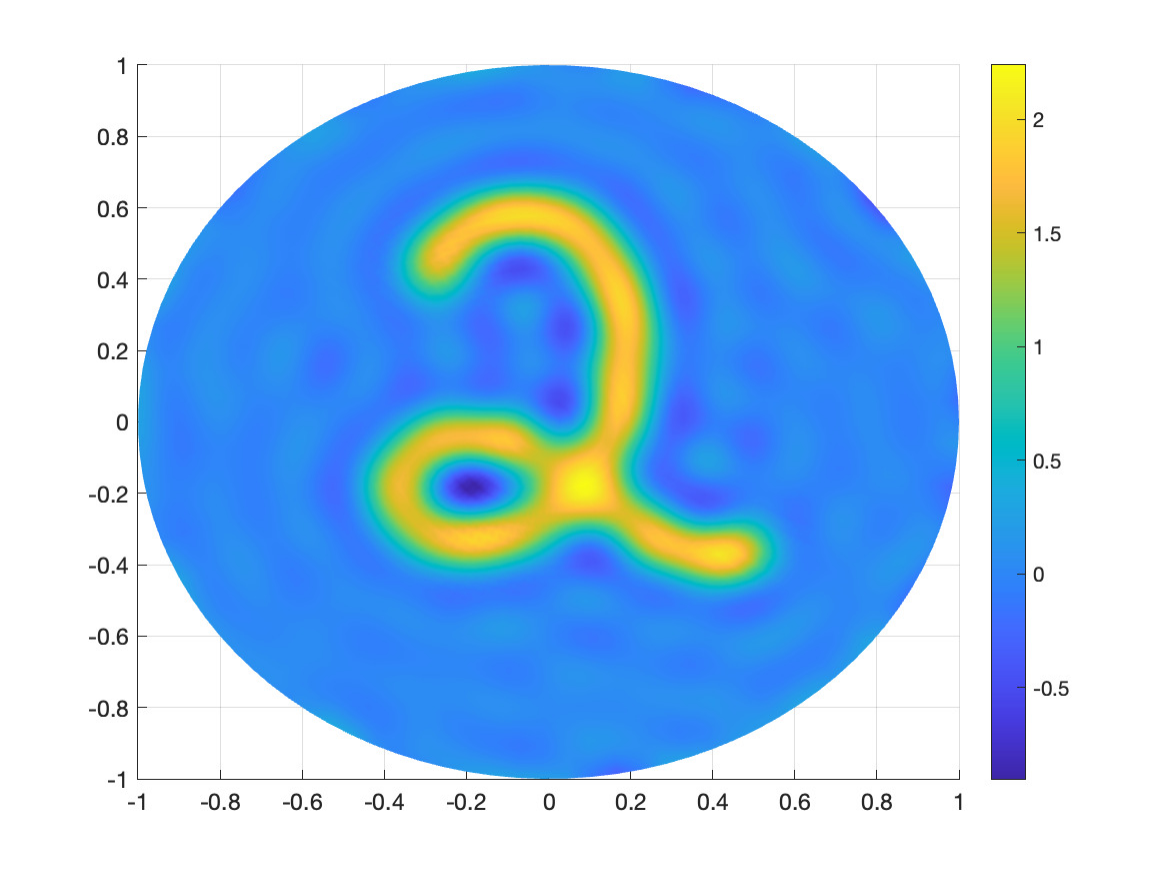}}
{\includegraphics[width=0.24\linewidth]{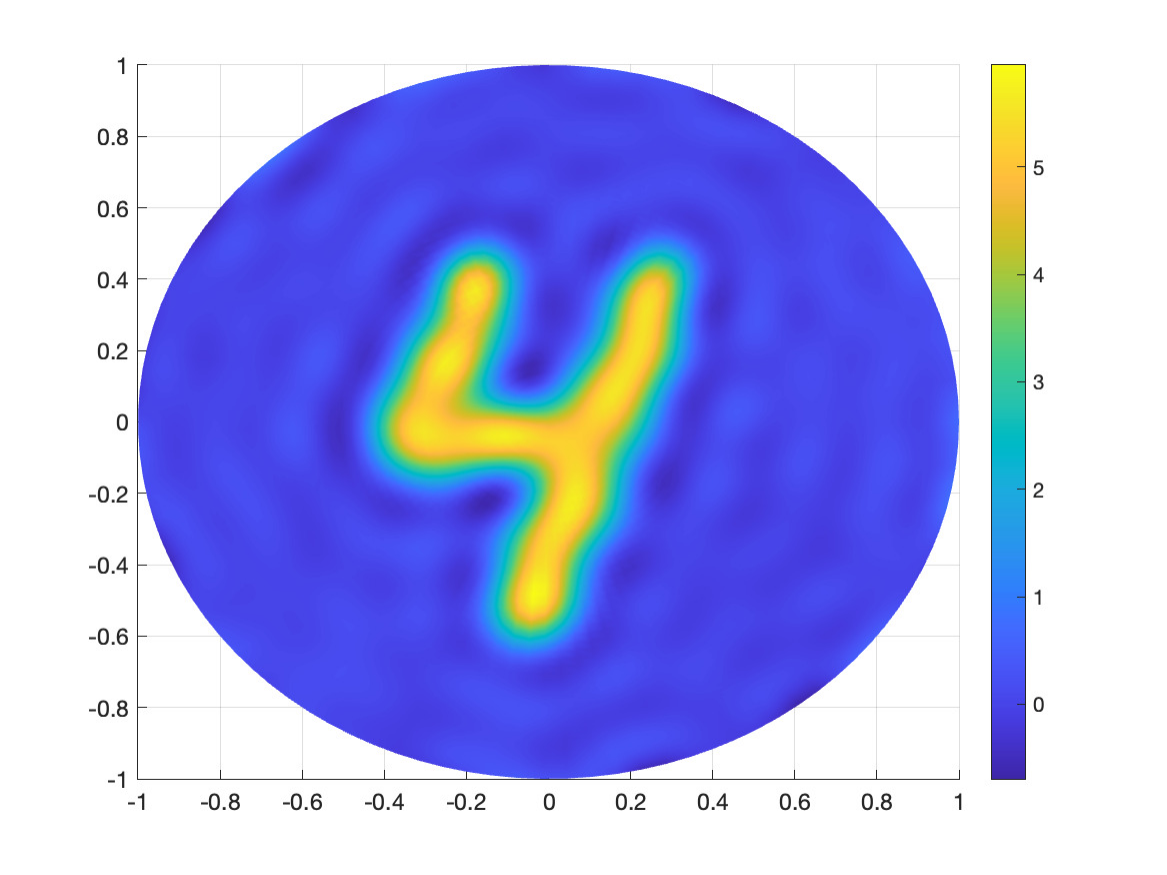}}
{\includegraphics[width=0.24\linewidth]{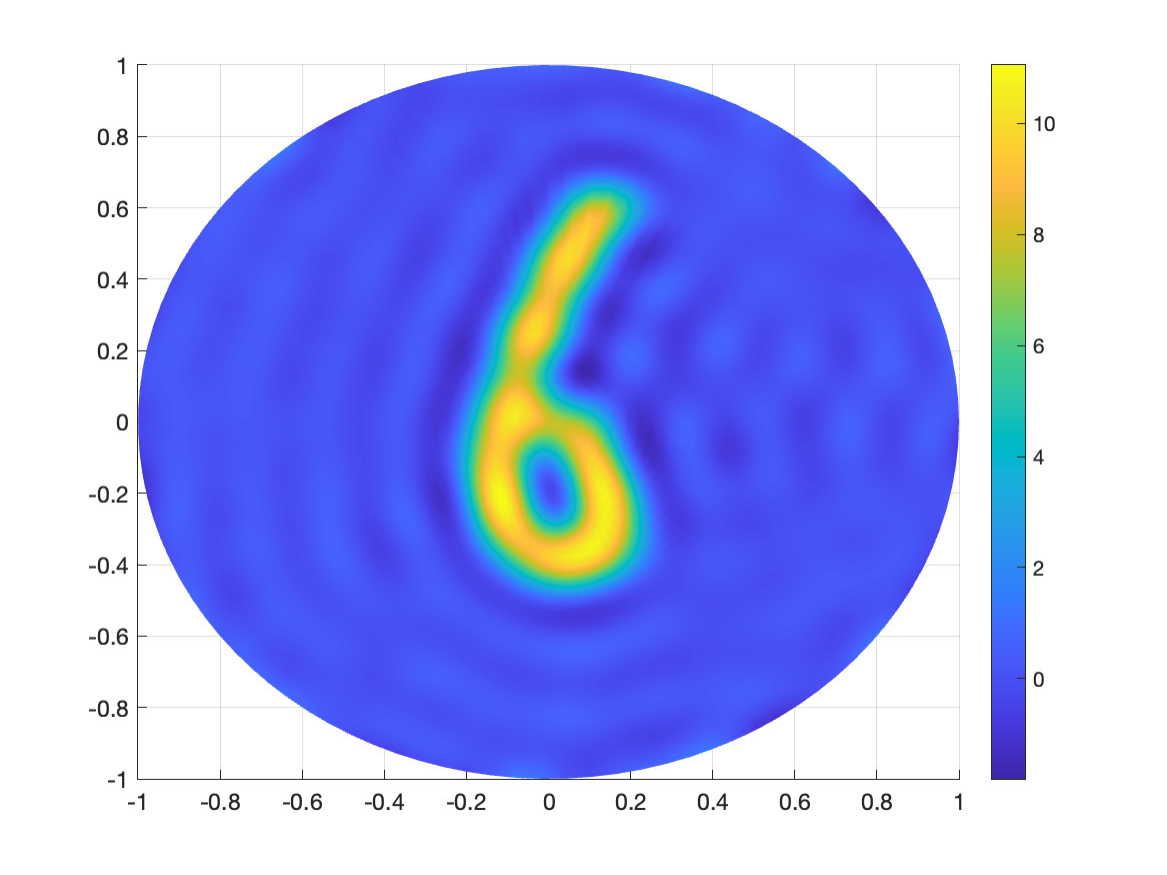}}
{\includegraphics[width=0.24\linewidth]{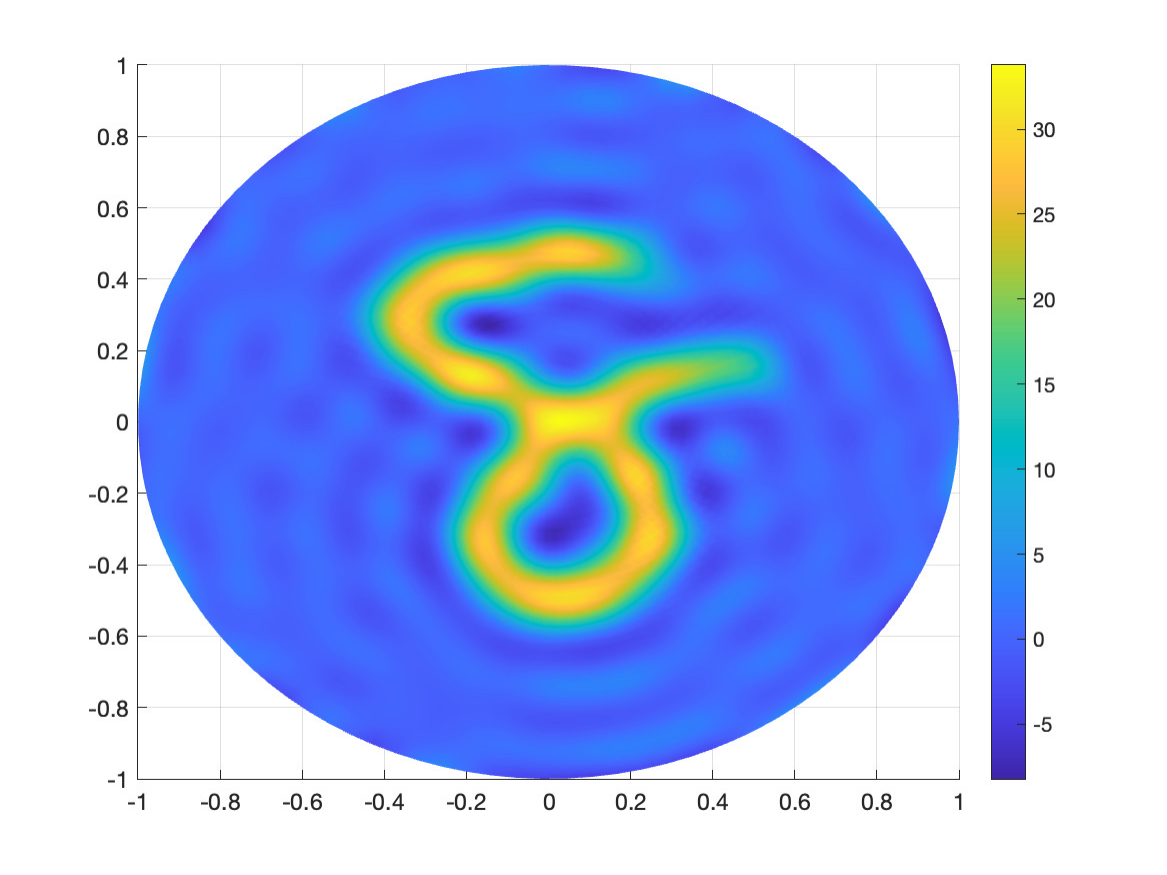}}\\
{\includegraphics[width=0.24\linewidth]{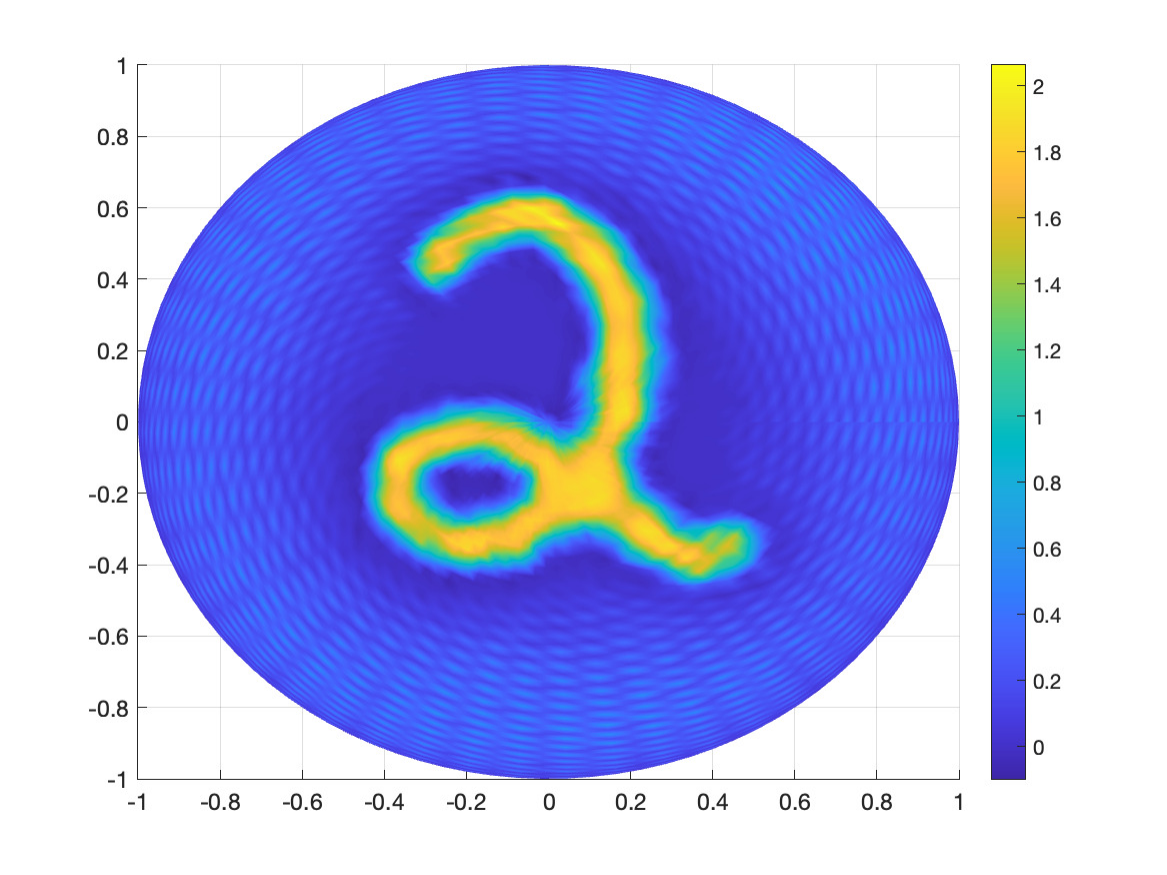}}
{\includegraphics[width=0.24\linewidth]{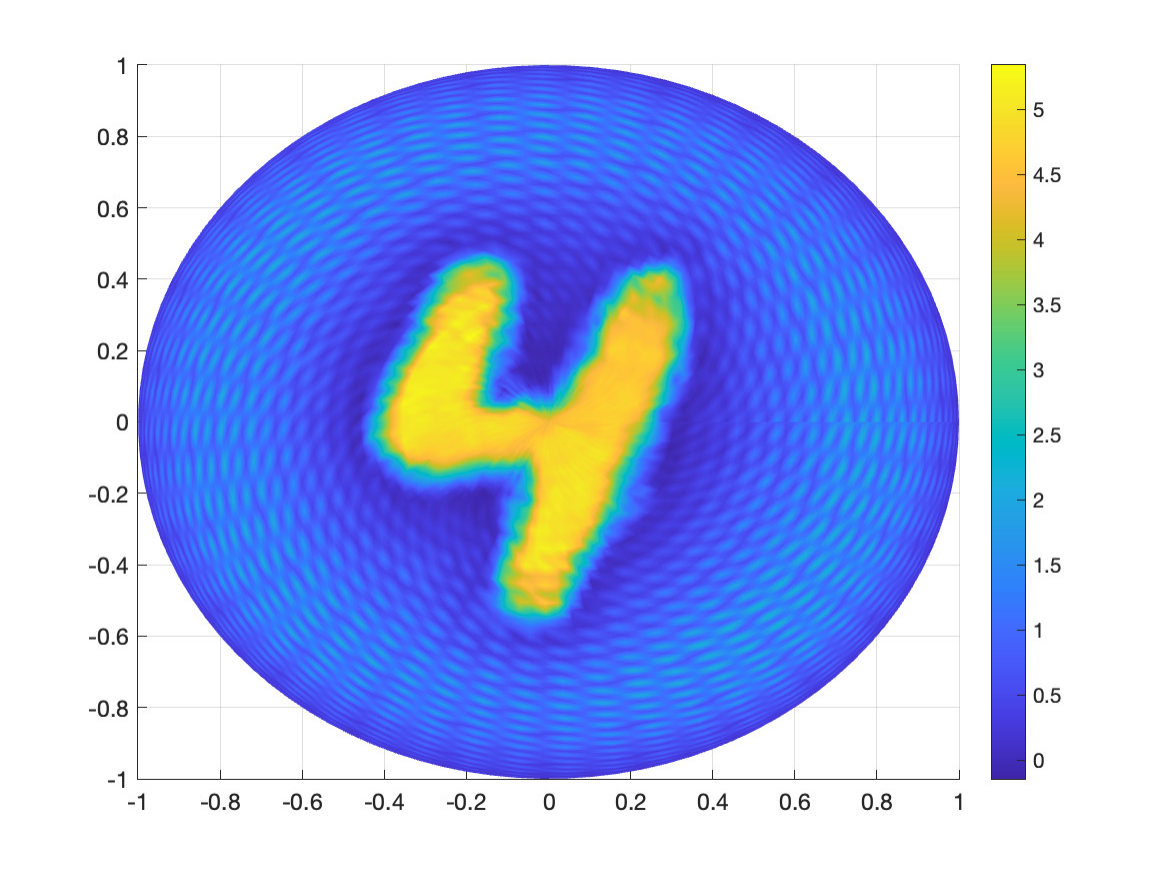}}
{\includegraphics[width=0.24\linewidth]{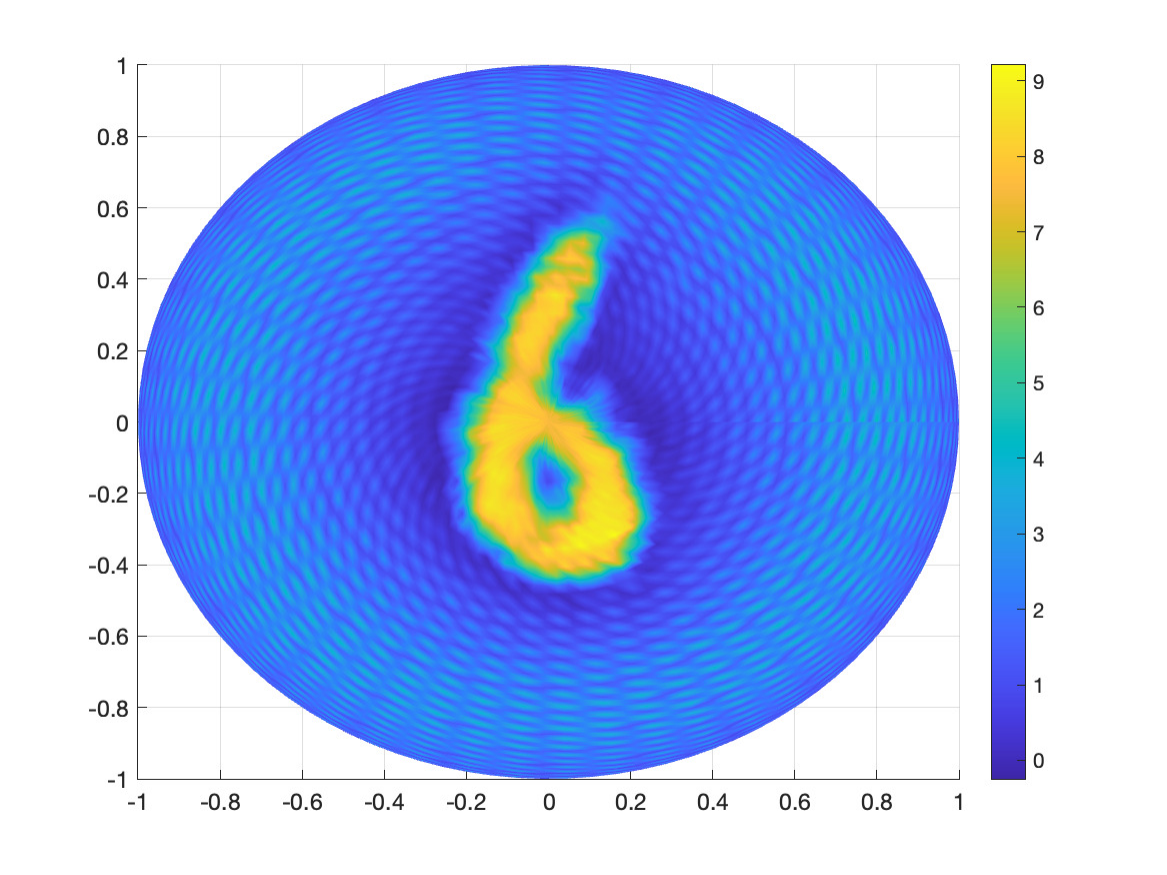}}
{\includegraphics[width=0.24\linewidth]{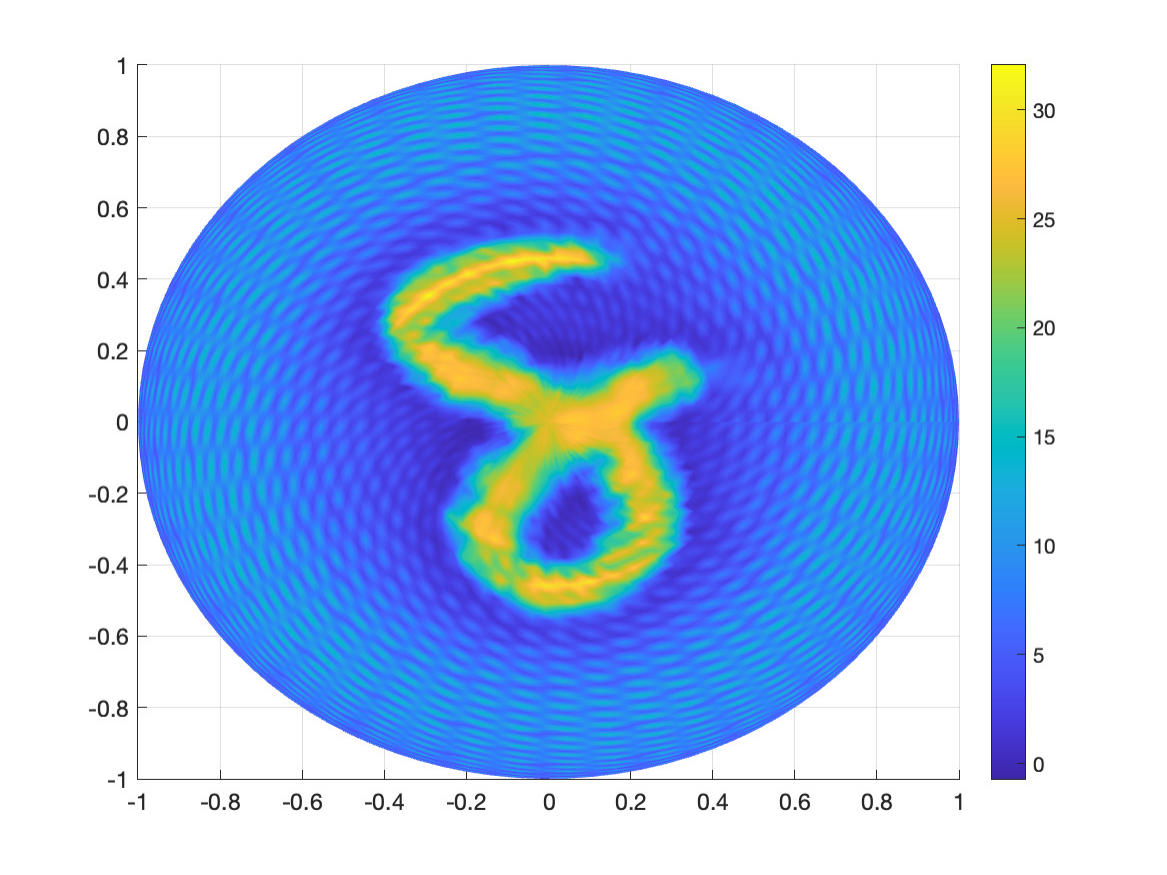}}\\
{\includegraphics[width=0.24\linewidth]{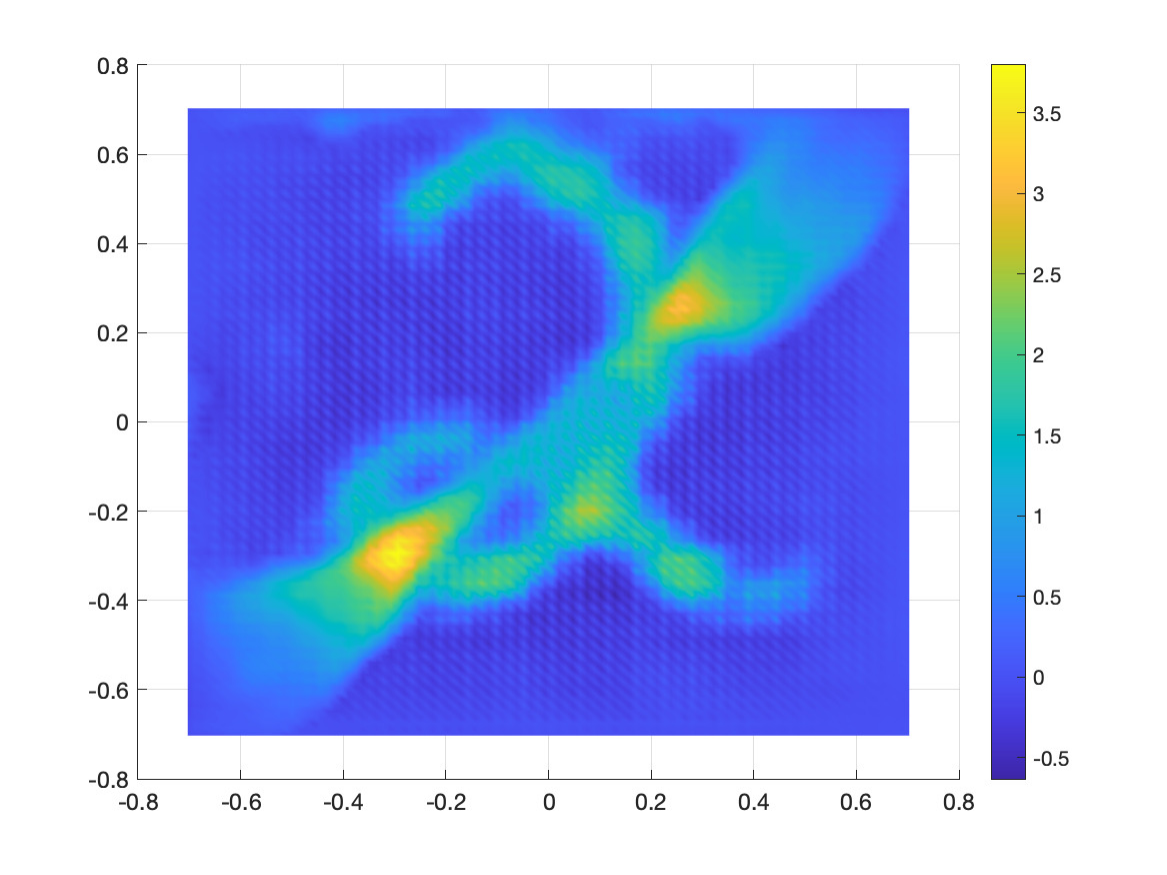}}
{\includegraphics[width=0.24\linewidth]{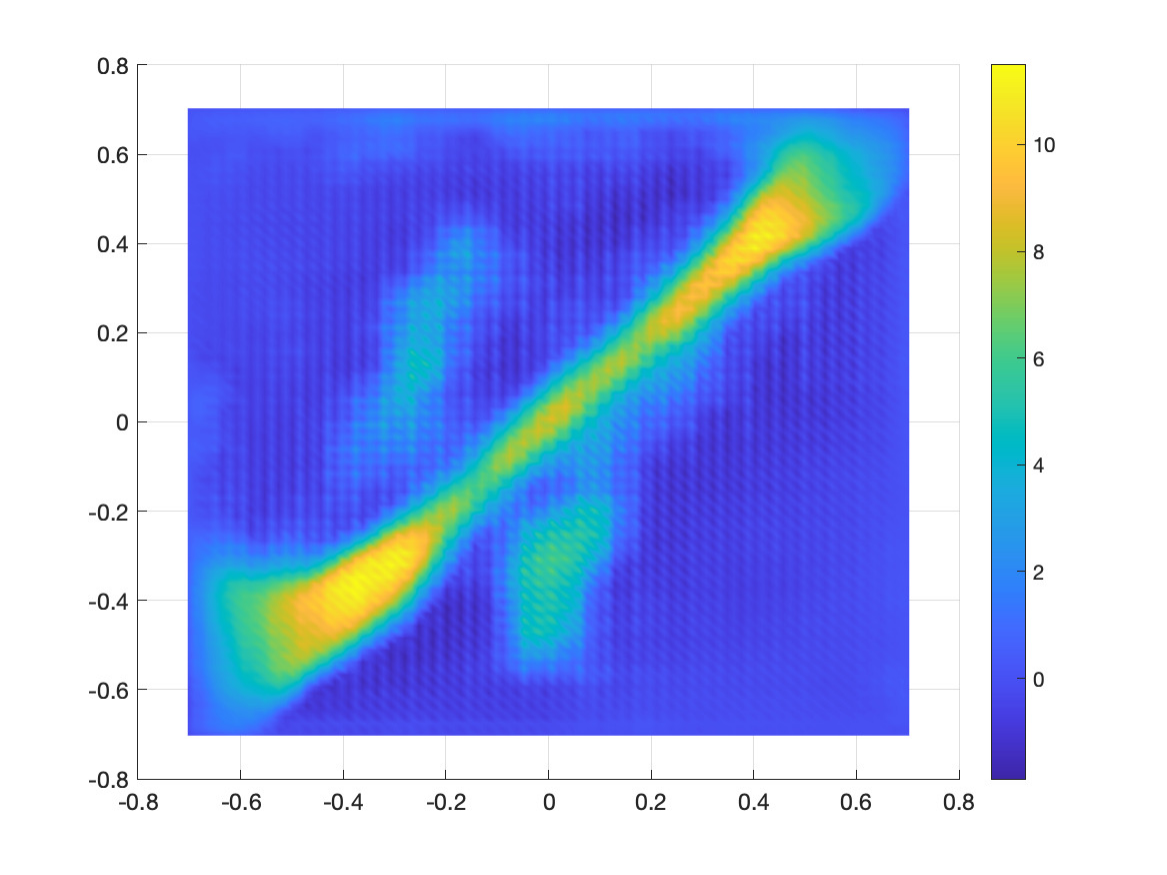}}
{\includegraphics[width=0.24\linewidth]{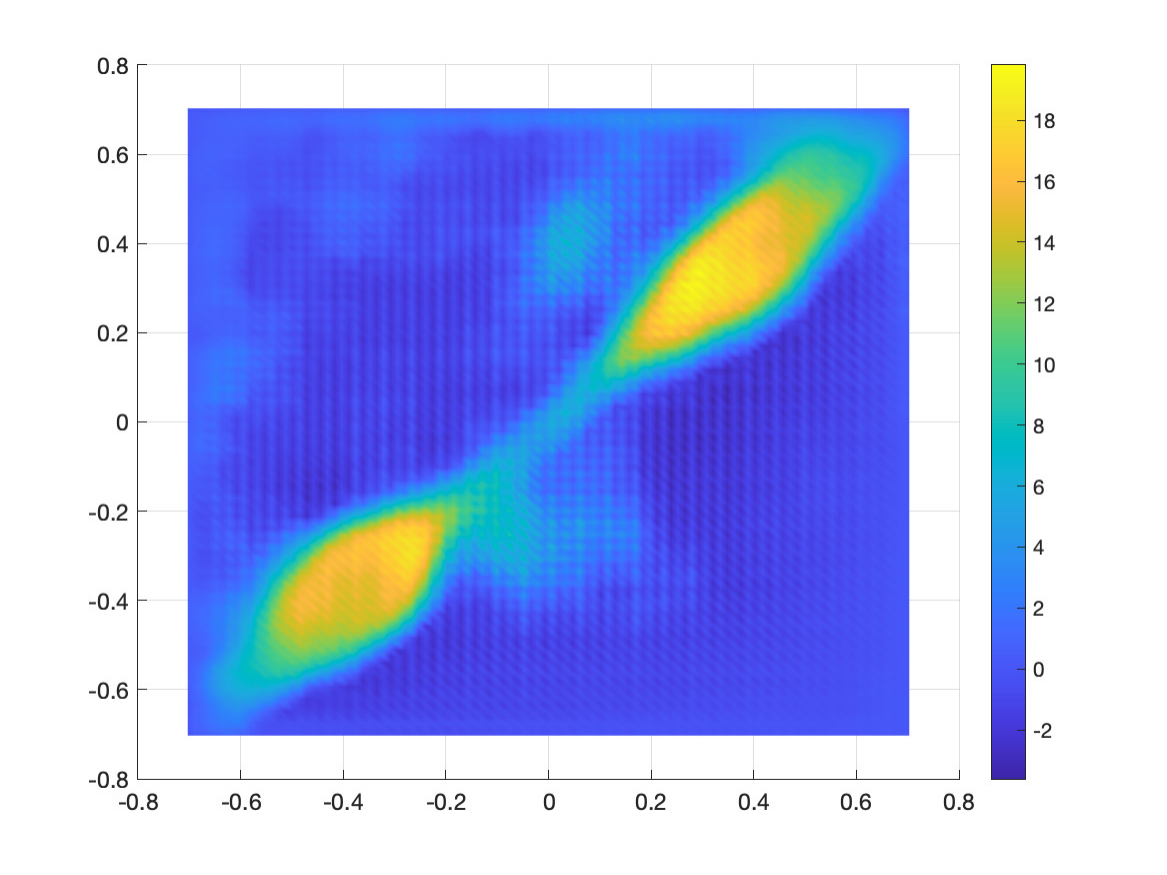}}
{\includegraphics[width=0.24\linewidth]{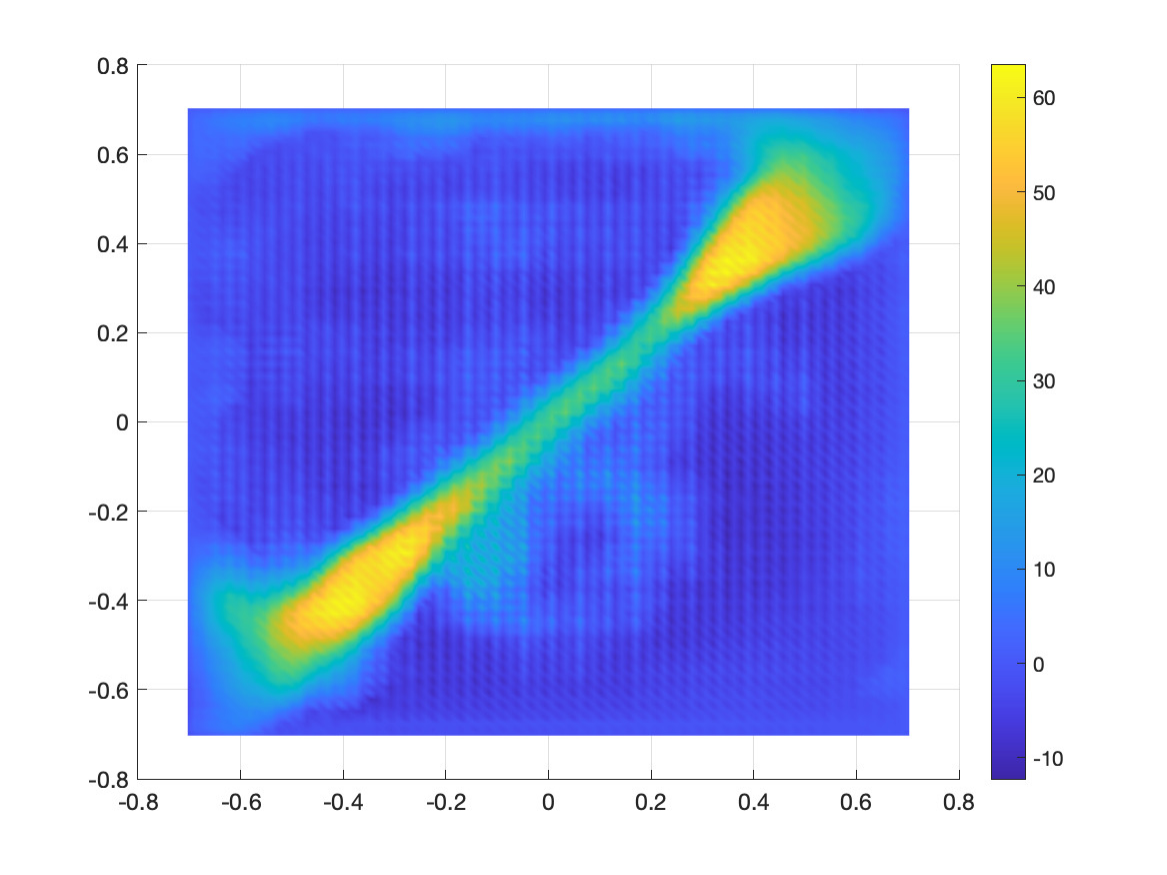}}\\

  \caption{Comparison of inverse Born solvers. We plot the ground truth in the first row, where $\|q\|_\infty=2,5,10,30$ from left to right. Given Born far field data $\{u_b^\infty(\hat{x}_i;\hat{\theta}_j): 1\le i \le N_{\rm obs}, \, 1\le j \le N_{\rm inc}\}$, we compare three methods for reconstructing the contrast. Row 2: process the  Born far field data $\{u_b^\infty(\hat{x}_i;\hat{\theta}_j): 1\le i \le N_{\rm obs}, \, 1\le j \le N_{\rm inc}\}$ to the Born processed data$\{ u(p_{m,n};c): 0\le m \le N_2-1,\, 1 \le n \le N_1\}$ and apply the low-rank regularized method of Section \ref{sec: low-rank inverse Born}. Row 3: process the  Born far field data $\{u_b^\infty(\hat{x}_i;\hat{\theta}_j): 1\le i \le N_{\rm obs}, \, 1\le j \le N_{\rm inc}\}$ to the Born processed data$\{ u(p_{m,n};c): 0\le m \le N_2-1,\, 1 \le n \le N_1\}$ and apply the rotation-equivariance aware neural network $\mbox{U}_2$. Row 4: apply a black-box neural network $U$ to reconstruct the contrast directly from the Born far field data $\{u_b^\infty(\hat{x}_i;\hat{\theta}_j): 1\le i \le N_{\rm obs}, \, 1\le j \le N_{\rm inc}\}$. }  \label{figure: High amplitude reconstruction}
\end{figure}

To further shed light on the difference between ULR and UU, we test a set of out-distribution contrasts in \Cref{figure: generalization}. 
It is clear that both ULR and UU outperform  the black-box neural network U.
Note that in  ULR and UU, the first data corrector  $\mbox{U}_1$ is the same, and the only difference between ULR and UU is  the inverse Born solver. Therefore the difference between the second column (ULR) and third column (UU) is due to how they process the imperfect Born data (i.e., the output of the data corrector). It turns out that the low-rank regularized method merits the additional noise filtering property: the unnecessary high-frequency noise in the imperfect Born data can be filtered out by the low-rank structure.

\subsection{Dataset expansion by low-rank structure}
Recall that the training dataset is based on piece-wise constant contrasts, the poor reconstruction of a smooth contrast is not surprising, cf. second column of \Cref{figure: generalization ability enhancement}. This new experiment in \Cref{figure: generalization ability enhancement} is concerned with smooth contrasts.  The first ground truth contrast is given by a superposition of random Gaussian scatterers $q(x)=R\sum_{j=1}^{25}r_j e^{-45(x_1-a_j-n_j)^2-60(x_2-b_j-m_j)^2}$ with $a_j=\frac{1}{4}[(j ~\mbox{mod } 5)]-0.5$, $b_j=-\frac{1}{4}\lfloor\frac{j}{5}\rfloor +0.5$,   $m_j,~n_j\sim U([-0.005,0.015])$, $r_i\sim U([-1,1])$, and $R$ is such that $\|q\|_\infty=0.7$. The second and third contrasts are given by $q(x)=0.6(1-x_1^2-x_2^2)\cos x_1\sin x_2$ and $q(x)=0.5\psi_{2,3,1}(x)$ where $\psi_{2,3,1}$ is a disk PSWF, respectively. To improve the generalization capability, we explore to expand  the original training dataset by adding three additional datasets, respectively.  
\begin{itemize}
    \item Dataset-(B) based on $20000$ Gaussian samples:  $$q(x,y)=R\sum_{i=1}^n r_i e^{-a_i(x-x_i)^2-b_i(y-y_i)^2}$$ where  $n\in \{1,2,3\}$ is random integer, $a_i,~b_i\sim  U([16,66])$,  $x_i,~y_i\sim  U([-0.4,0.4])$,  $r_i\sim U([-1,1])$ and  $R$ is a   random number  such that $\|q\|_\infty\sim U([0.5,0.8])$.
    \item Dataset-(C) based on $20000$ disk PSWFs samples: $$q(x)=R\sum_{m<5,n<5}\frac{\xi_{m,n,\ell}}{\chi_{m,n}^{1/2}}\psi_{m,n,\ell}(x),$$
    where $\xi_{m,n,\ell}$ is a random number that follows the standard Gaussian distribution, $\chi_{m,n}>0$ is the Sturm-Liouville eigenvalue in \eqref{sturm-liouvill}, and $R$ is a random number such that  $\|q\|_\infty\sim U(0.5,0.8)$.
    \item Dataset-(D) based on $40000$ disk PSWFs samples: 
    $$q(x)=R\sum_{m<10,n<10}\frac{\xi_{m,n,\ell}}{\chi_{m,n}^{1/2}}\psi_{m,n,\ell}(x),$$ where $\xi_{m,n,\ell}$, $\chi_{m,n}$, and $R$ are chosen exactly the same as in dataset-(C). The only difference in dataset-(C) and dataset-(D) is the number of disk PSWFs. We restrict ourself to $m<10$ and $n<10$ due to limited computational resource.
\end{itemize}

With reference to \Cref{figure: generalization ability enhancement}, we plot the reconstructions from left to right  based on:  dataset-(A) (original training dataset consisting of MNIST and circular samples),  dataset-(A)(B),  dataset-(A)(C), and dataset-(A)(D), respectively. It is observed that the reconstructions are rarely improved with dataset-(A)(B), and are largely improved with dataset-(A)(C). Moreover, the dataset-(A)(D) leads to improved reconstruction by adding more samples to the dataset-(A)(C).

\begin{figure}[htbp]
\includegraphics[width=0.24\linewidth]{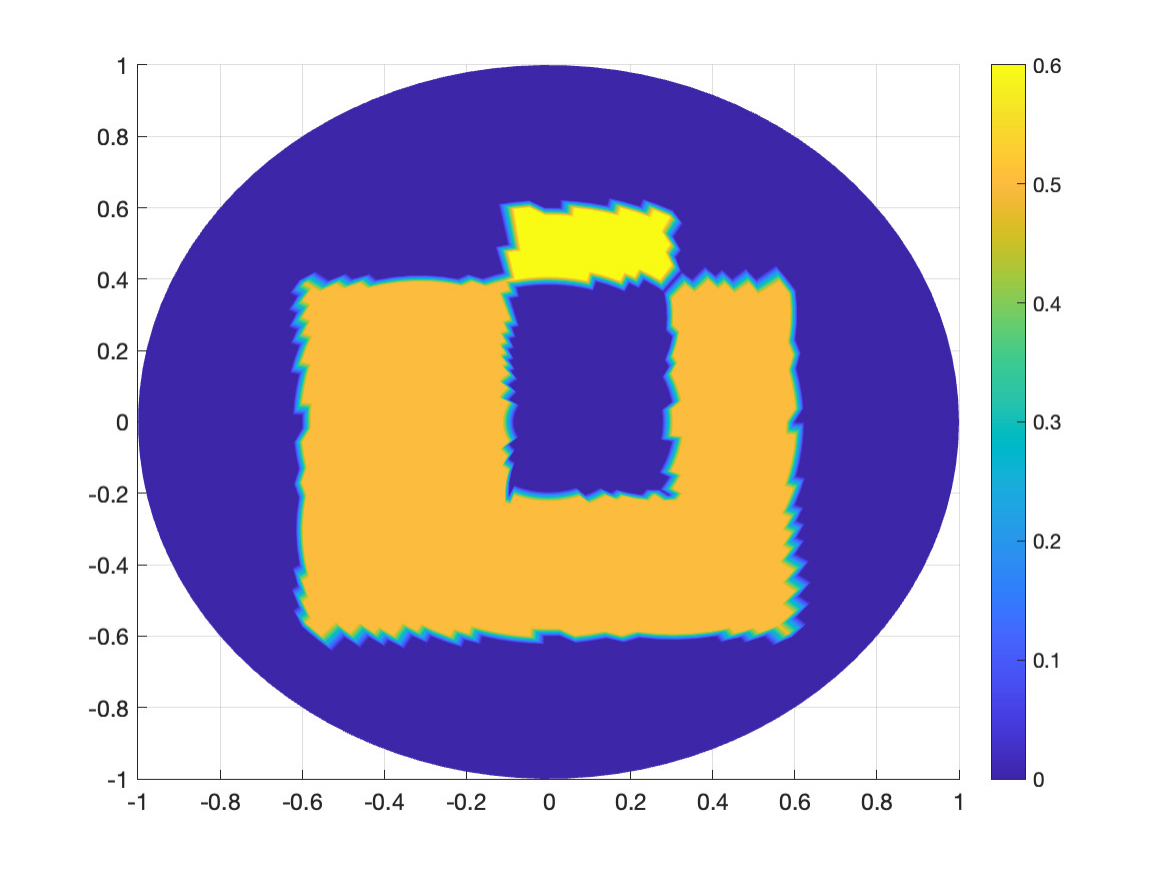}
\includegraphics[width=0.24\linewidth]{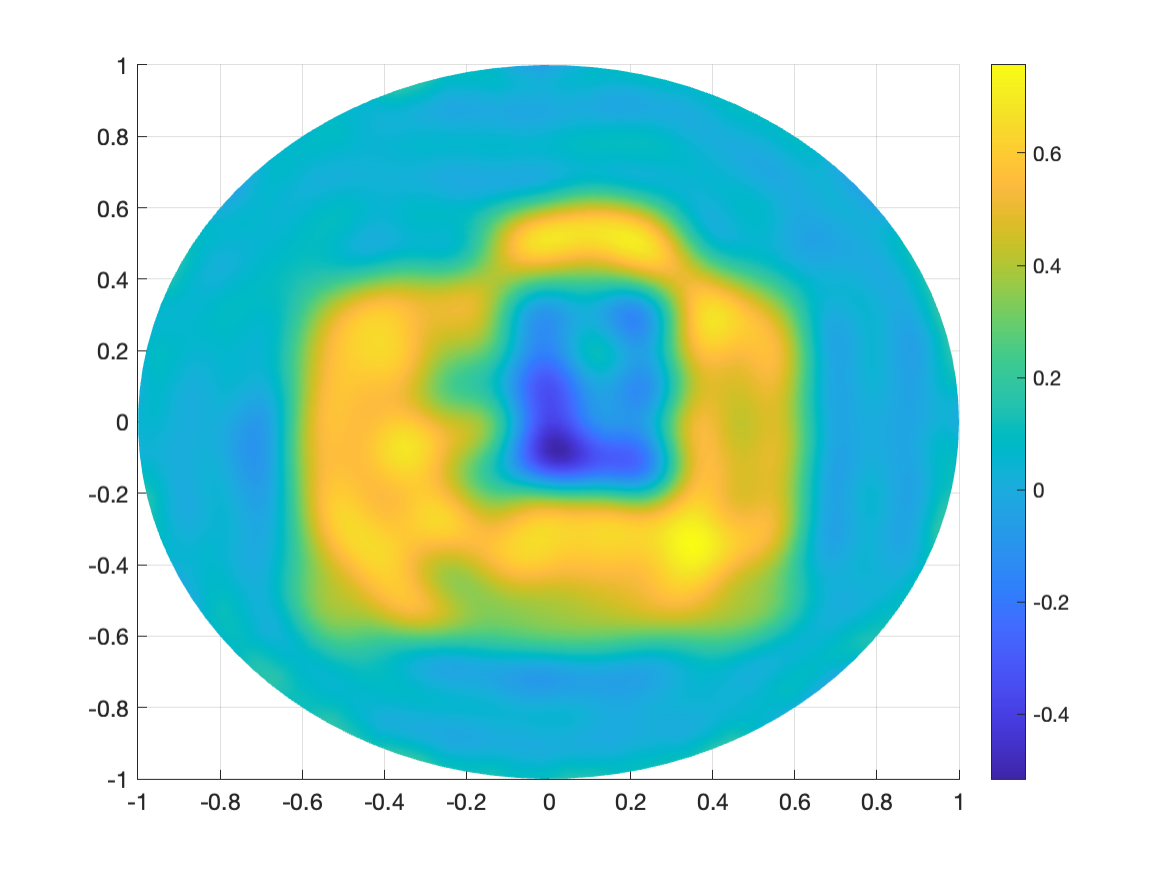}
\includegraphics[width=0.24\linewidth]{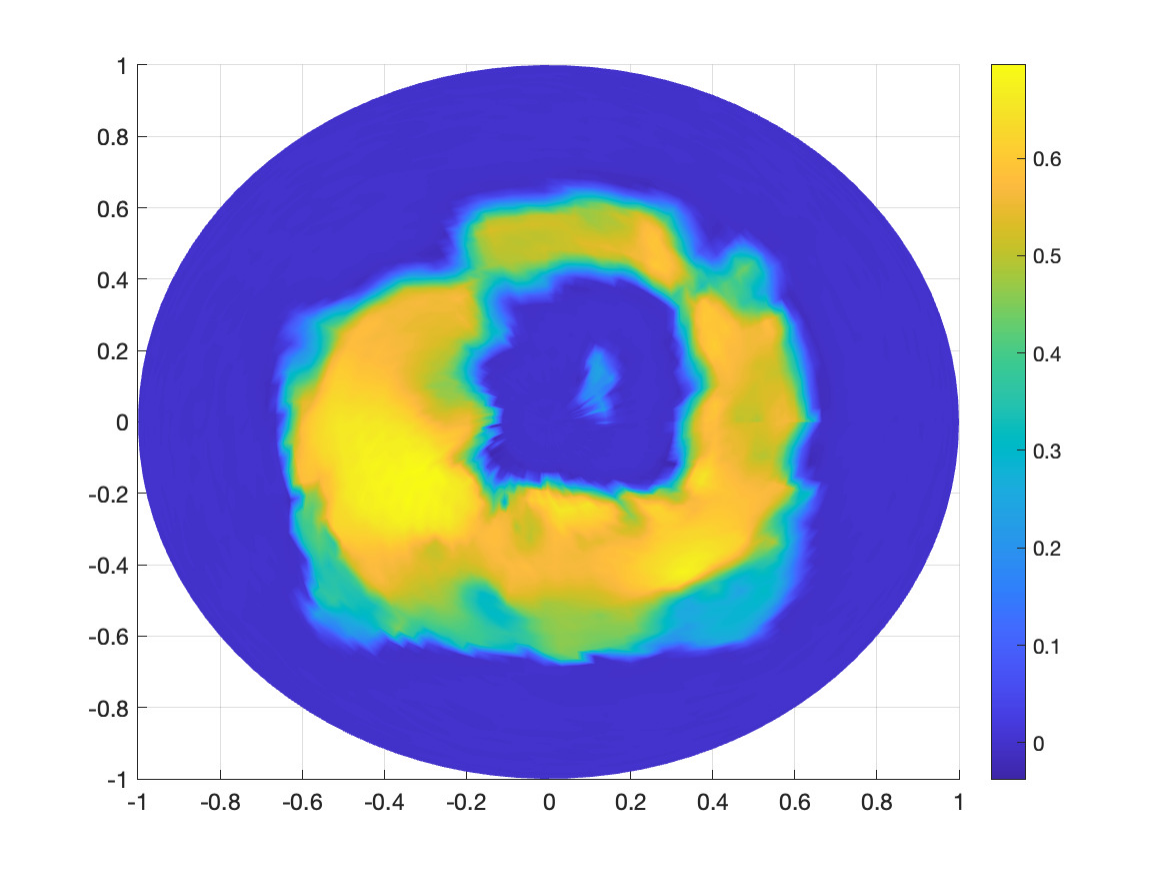}
\includegraphics[width=0.24\linewidth]{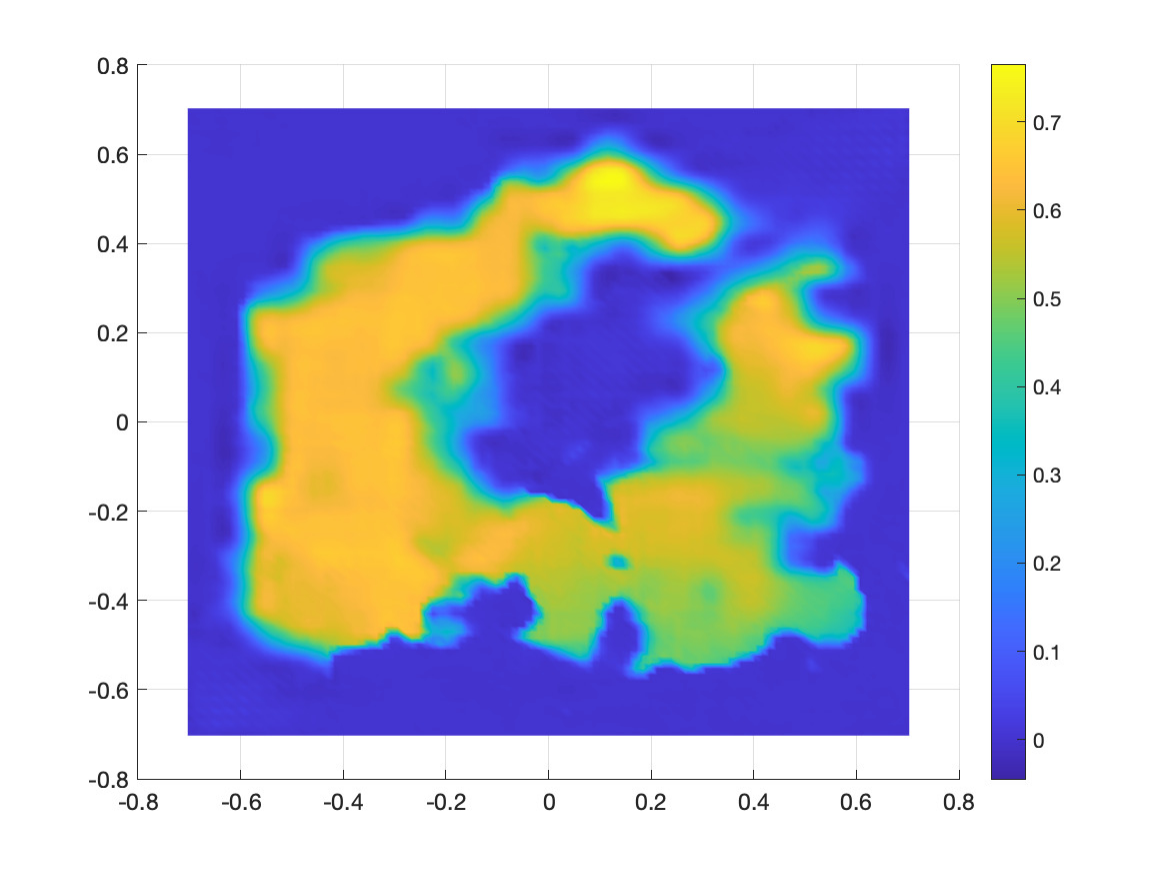}\\
\includegraphics[width=0.24\linewidth]{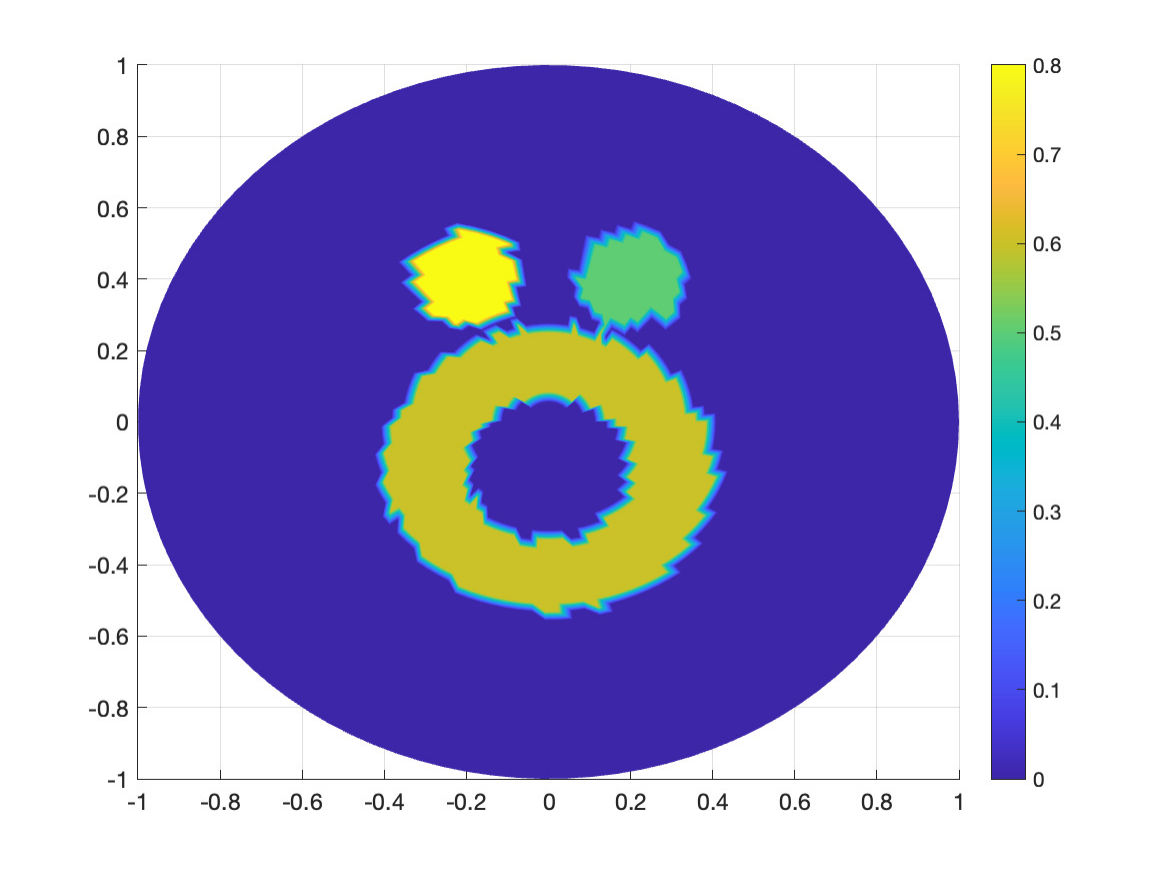}
\includegraphics[width=0.24\linewidth]{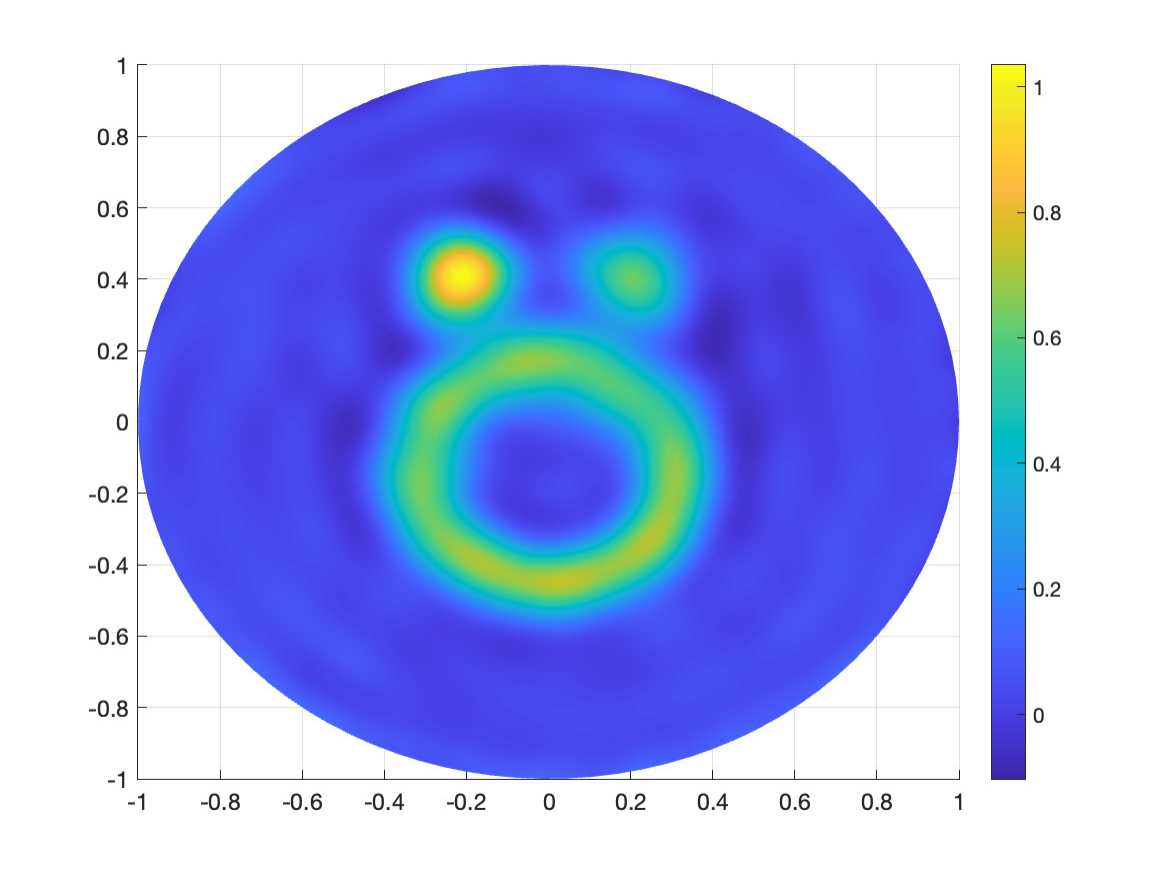}
\includegraphics[width=0.24\linewidth]{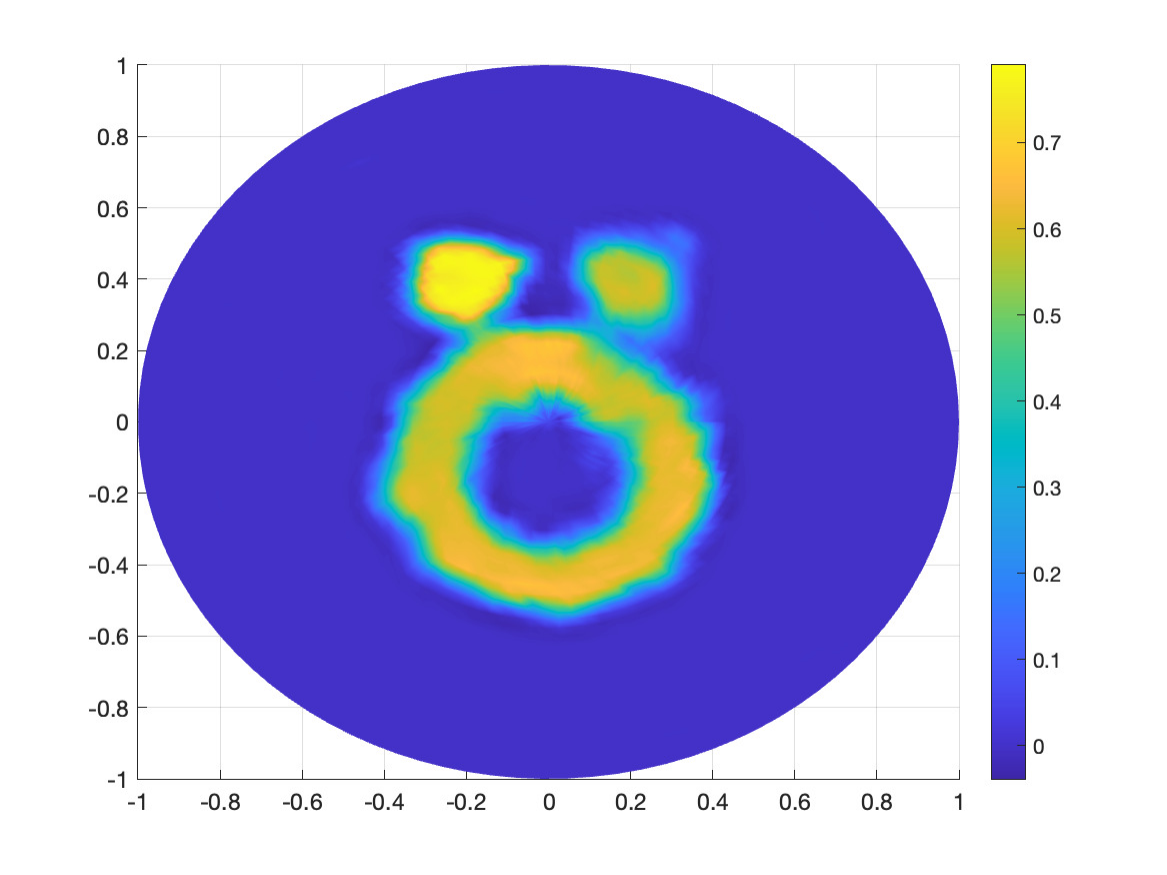}
\includegraphics[width=0.24\linewidth]{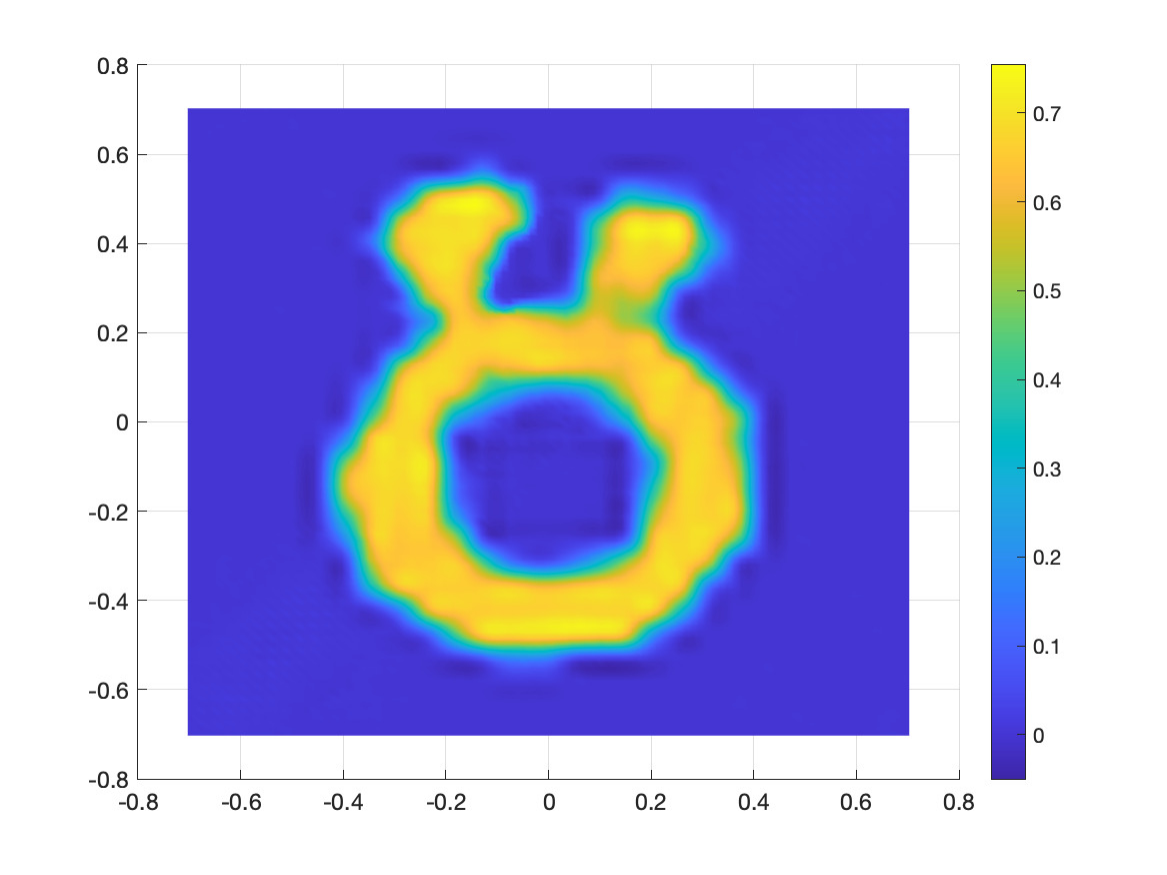}\\
\includegraphics[width=0.24\linewidth]{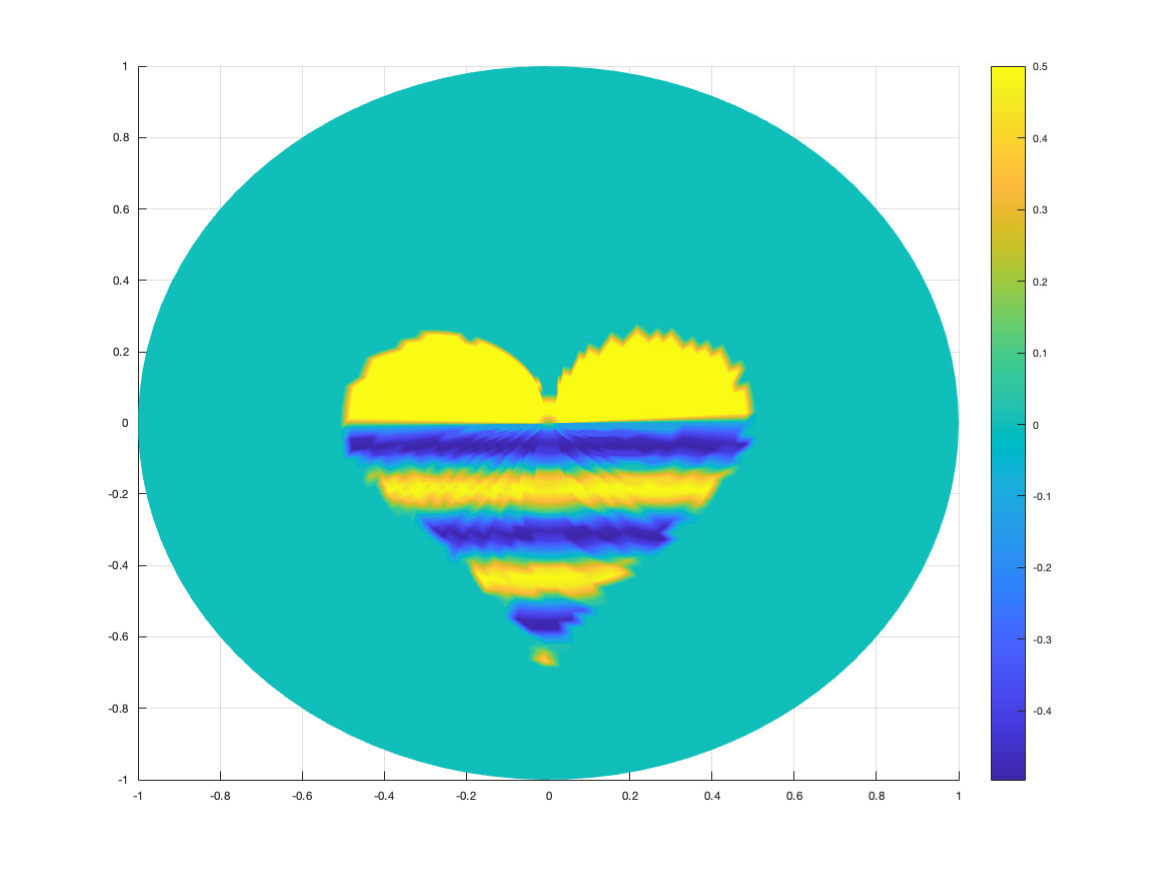}
\includegraphics[width=0.24\linewidth]{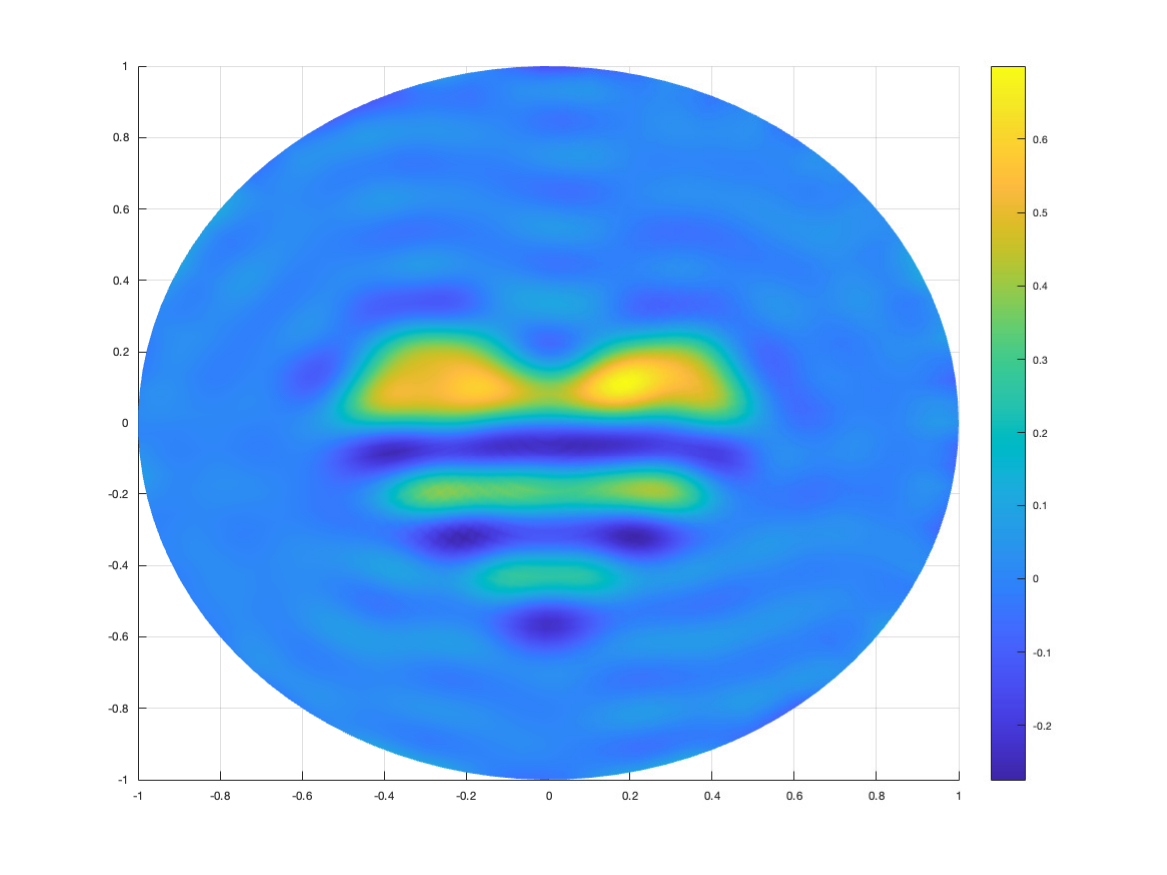}
\includegraphics[width=0.24\linewidth]{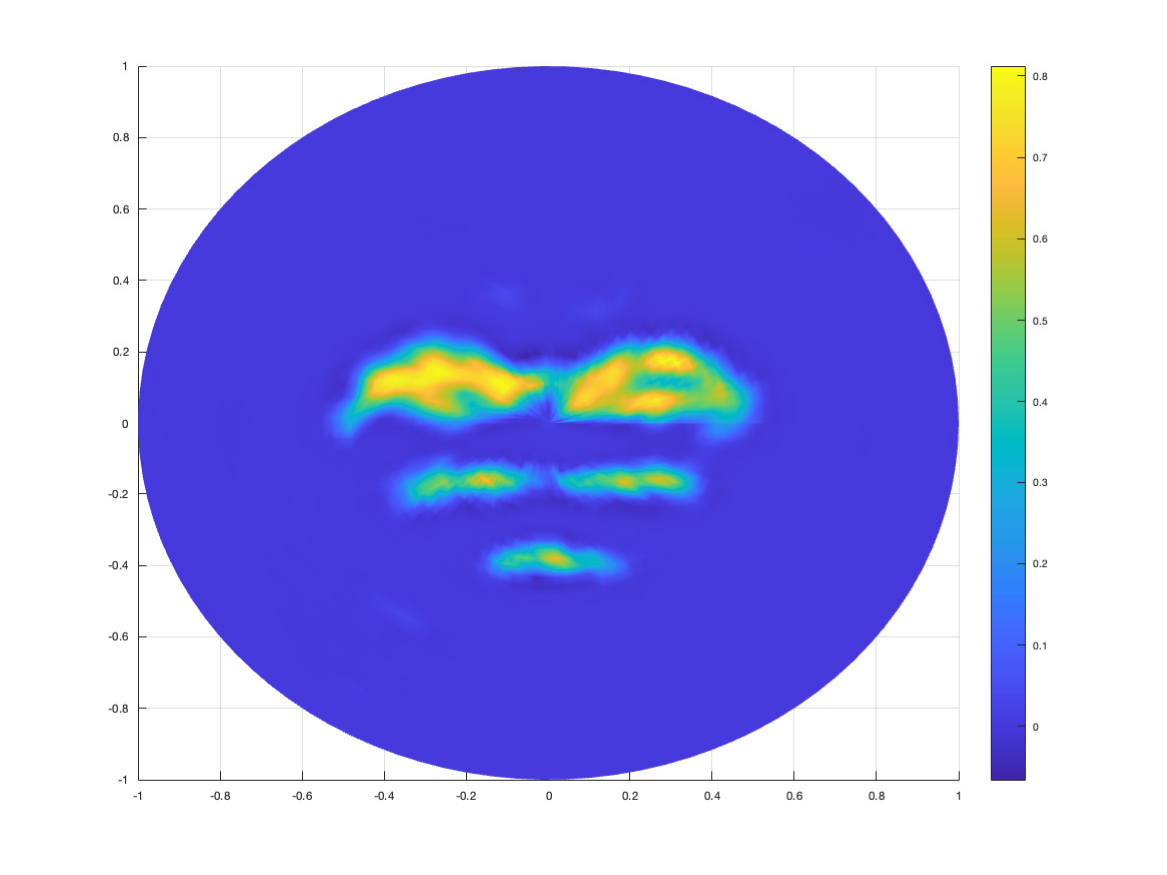}
\includegraphics[width=0.24\linewidth]{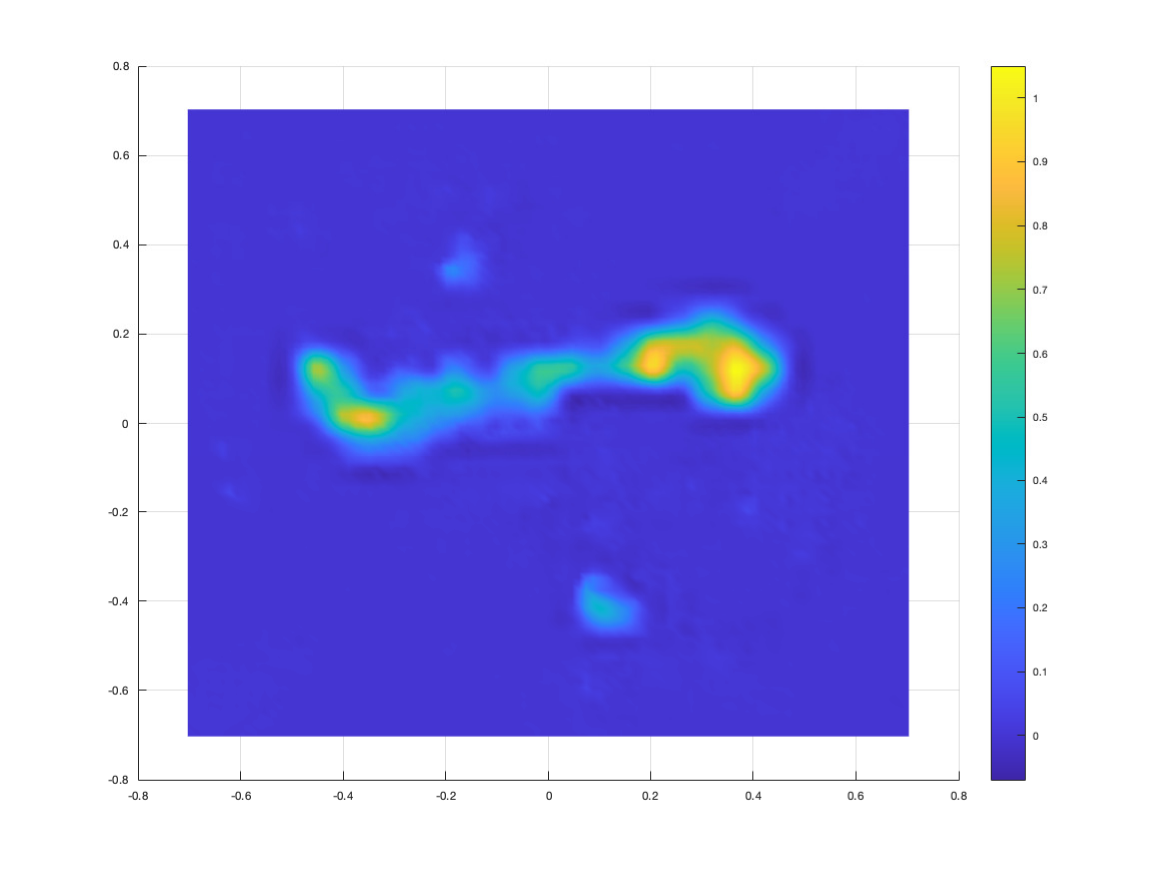}\\
\includegraphics[width=0.24\linewidth]{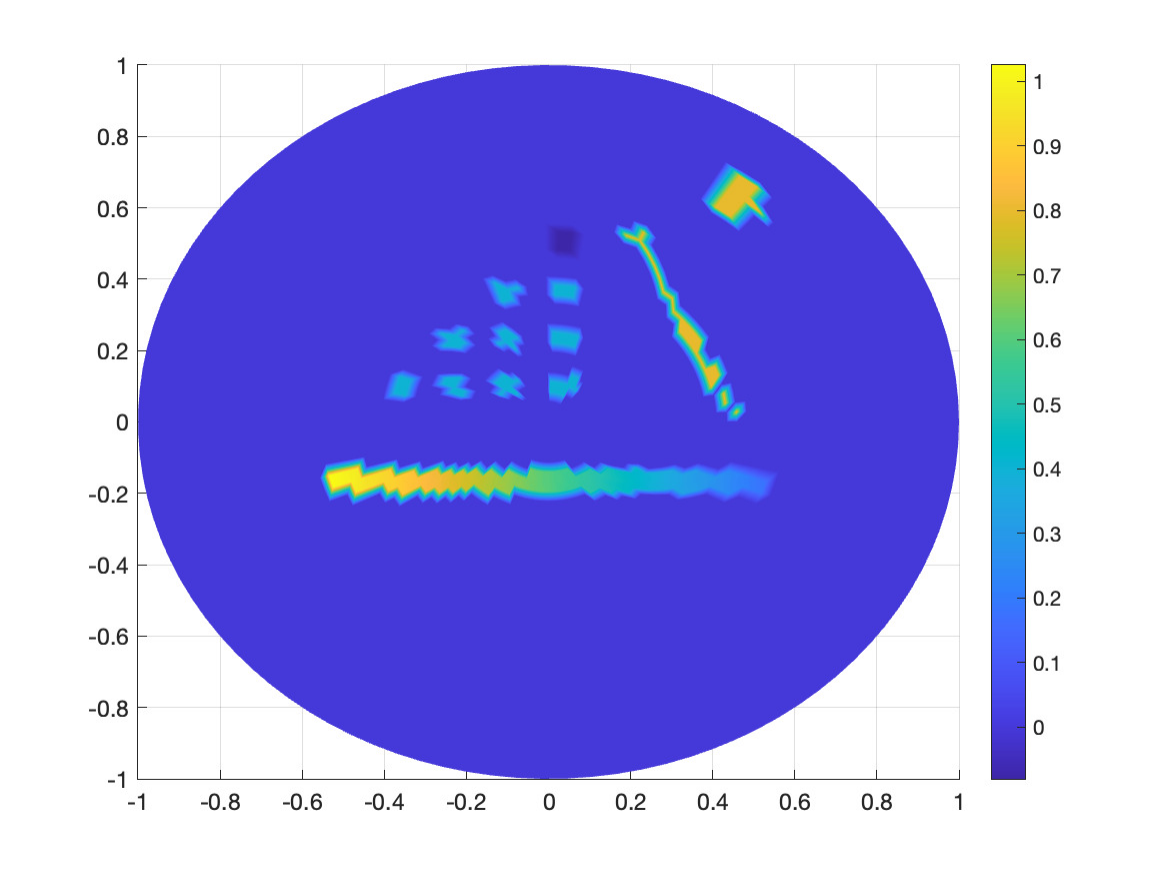}
\includegraphics[width=0.24\linewidth]{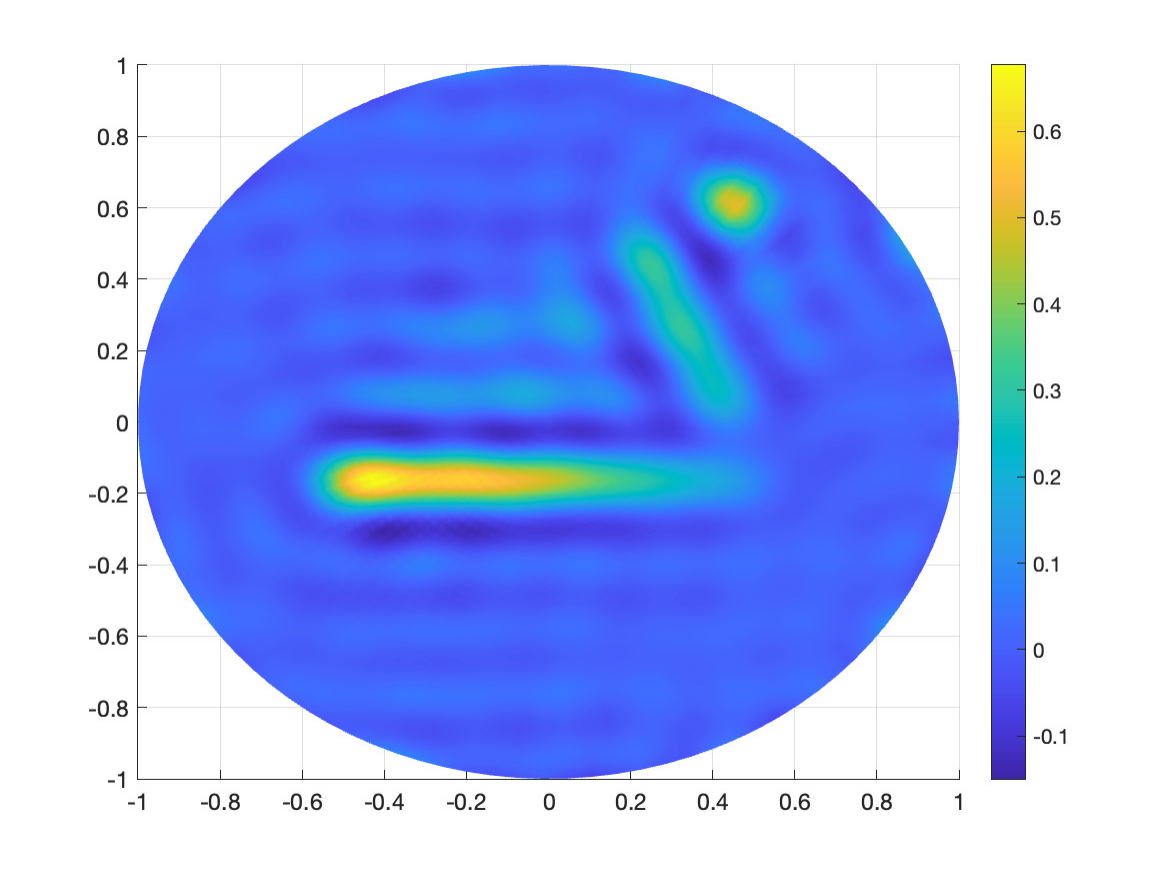}
\includegraphics[width=0.24\linewidth]{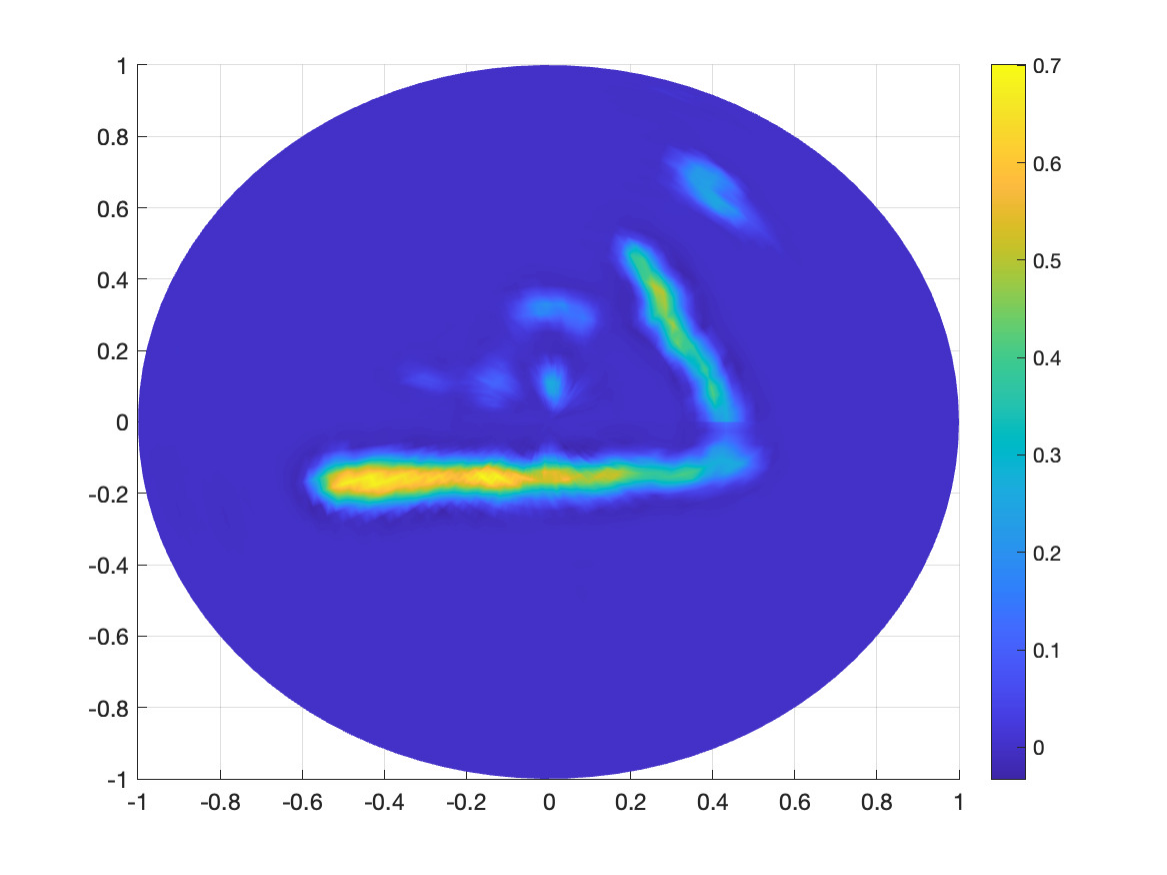}
\includegraphics[width=0.24\linewidth]{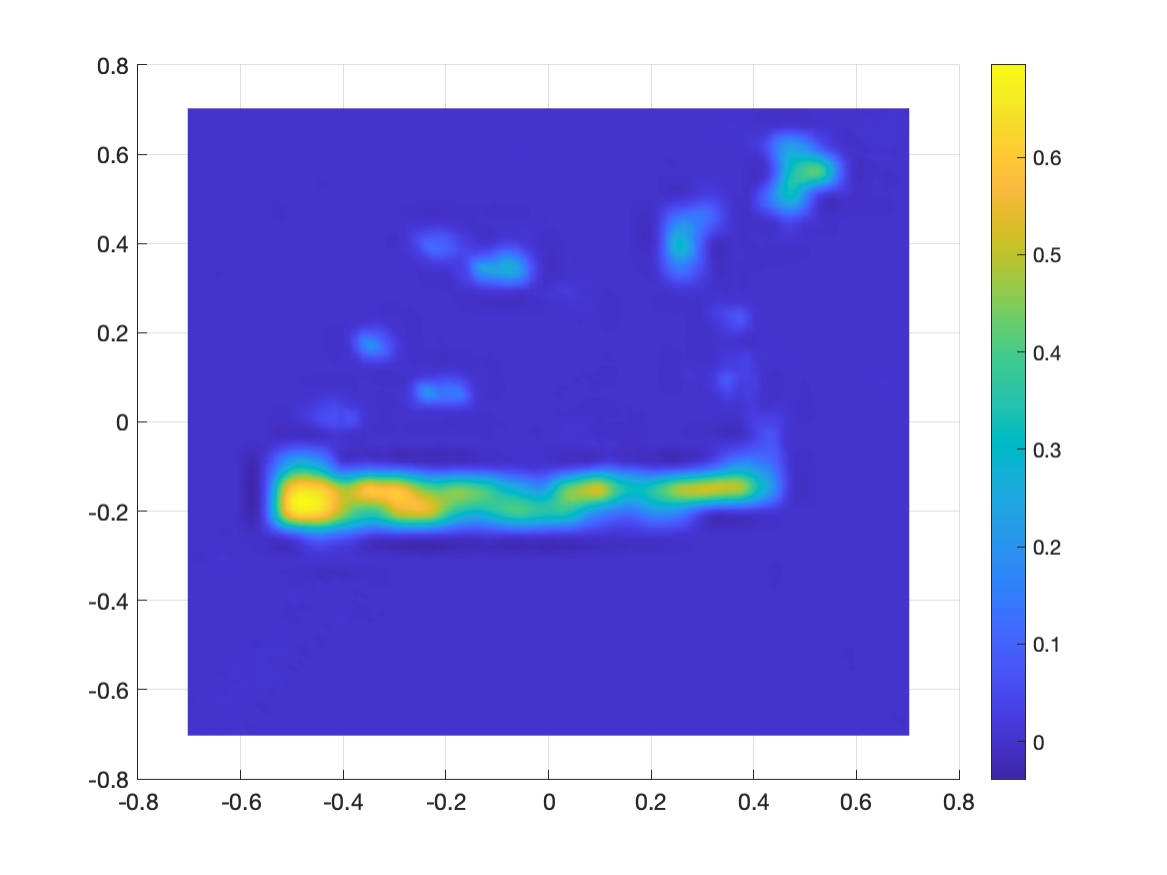}

  \caption{ Reconstruction of out-distribution contrasts. From left to right: ground truth, reconstructions by ULR, UU, and U, respectively. }  \label{figure: generalization}
\end{figure}

\begin{figure}[htbp]
  \centering
\includegraphics[width=0.19\linewidth]{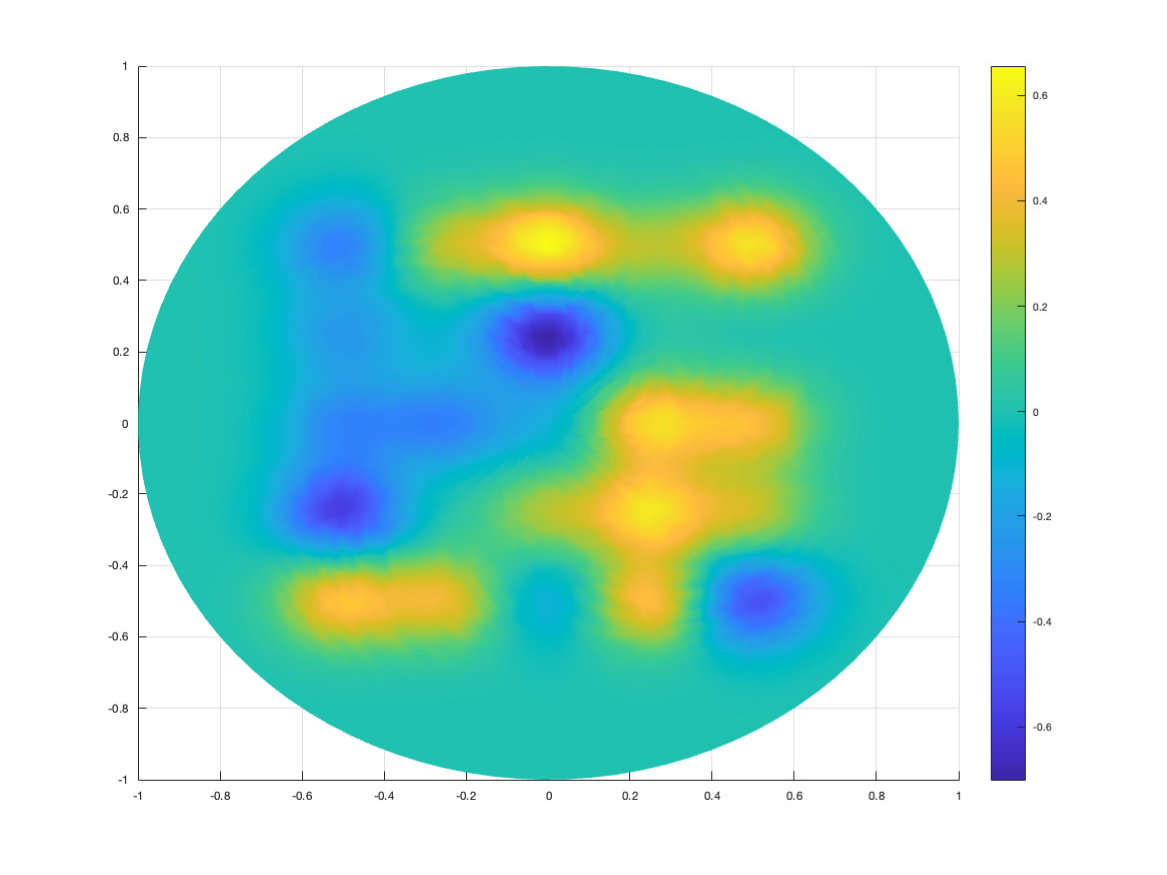}
\includegraphics[width=0.19\linewidth]{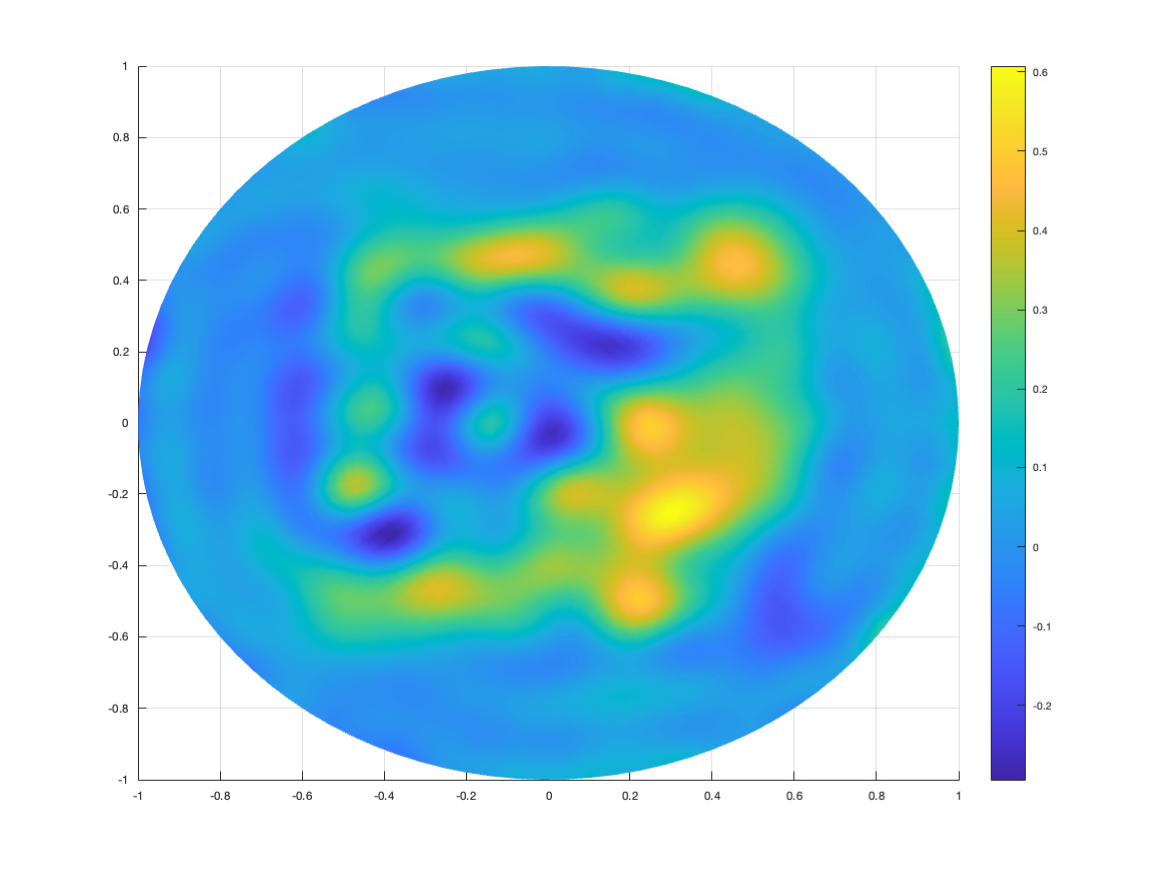}
\includegraphics[width=0.19\linewidth]{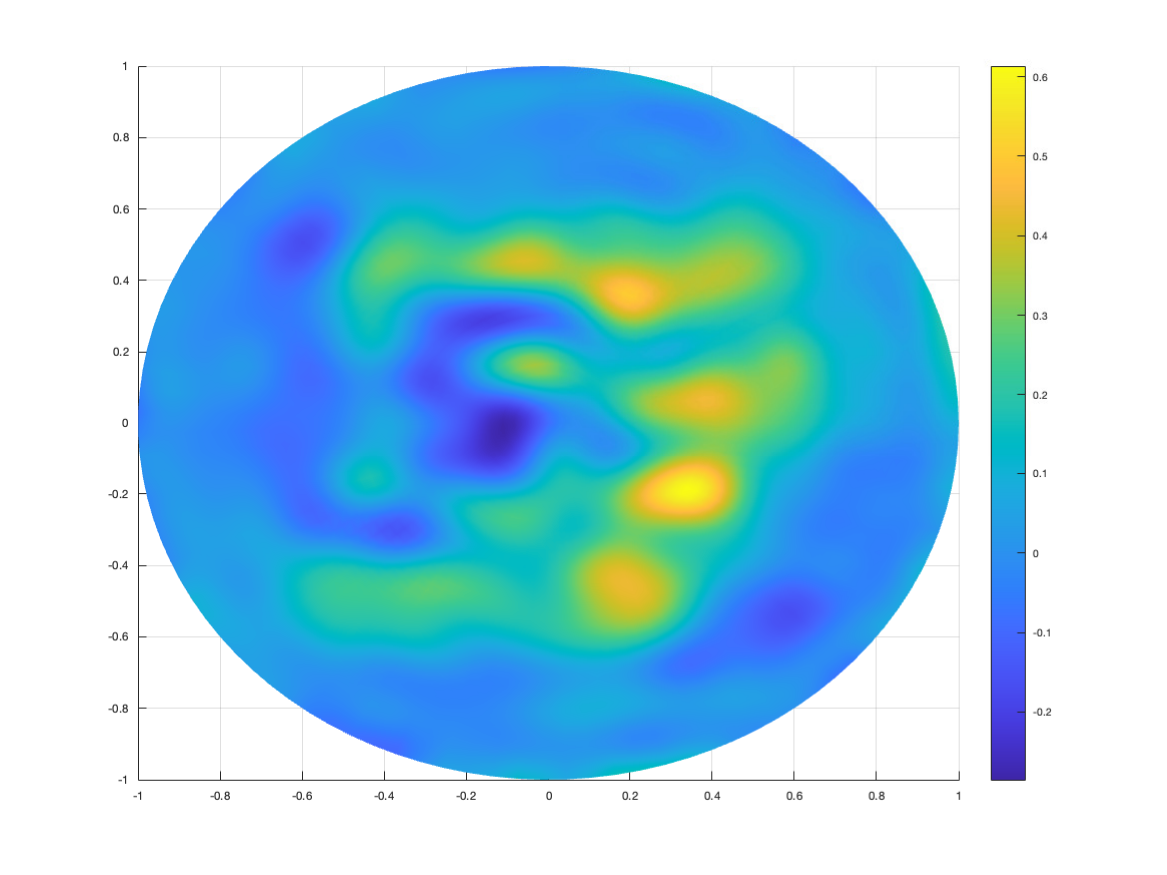}
\includegraphics[width=0.19\linewidth]{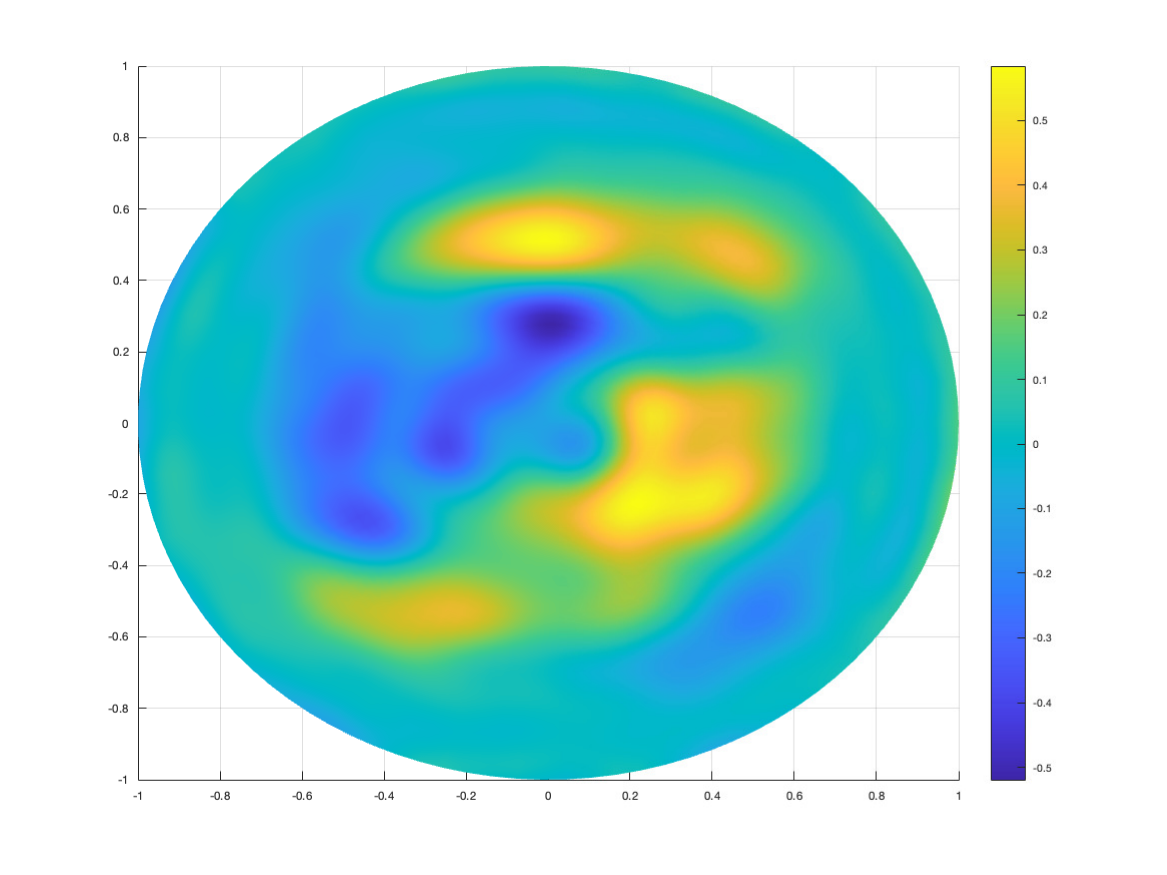}
\includegraphics[width=0.19\linewidth]{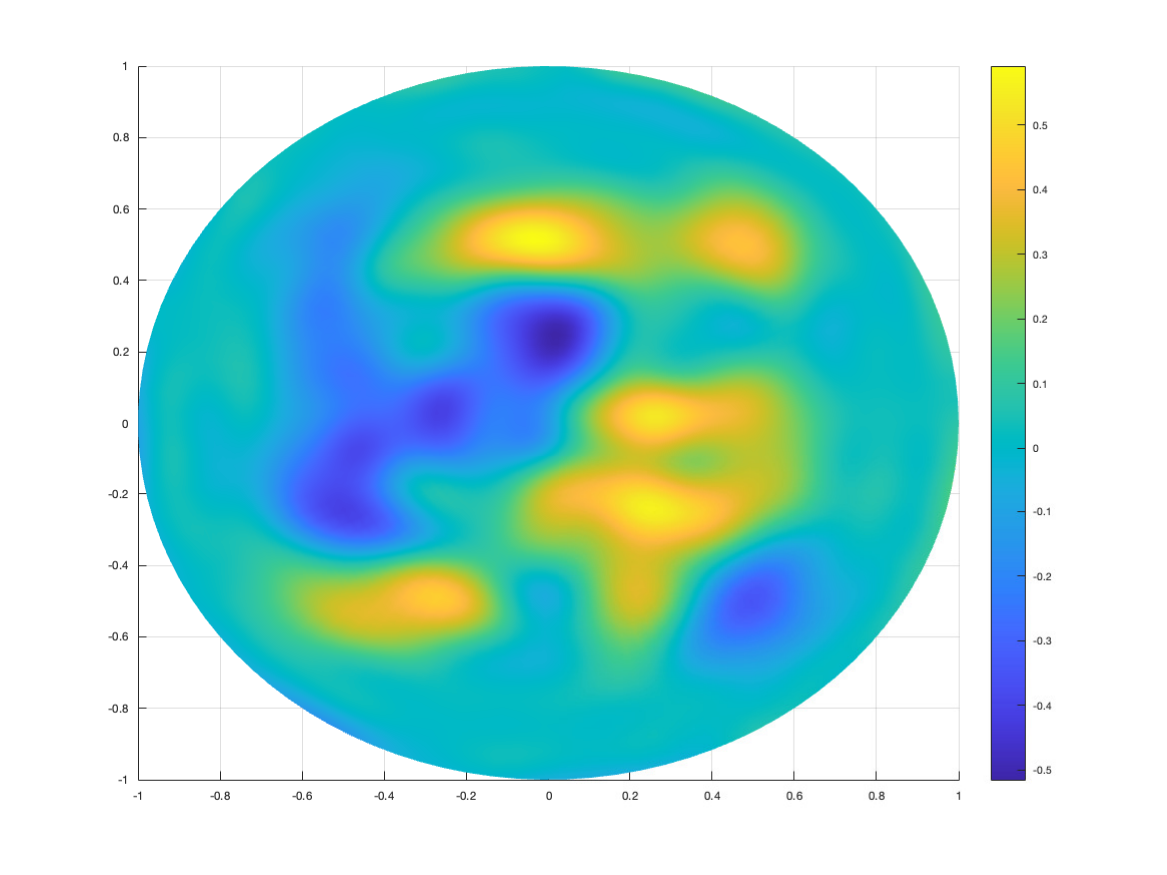}\\
\includegraphics[width=0.19\linewidth]{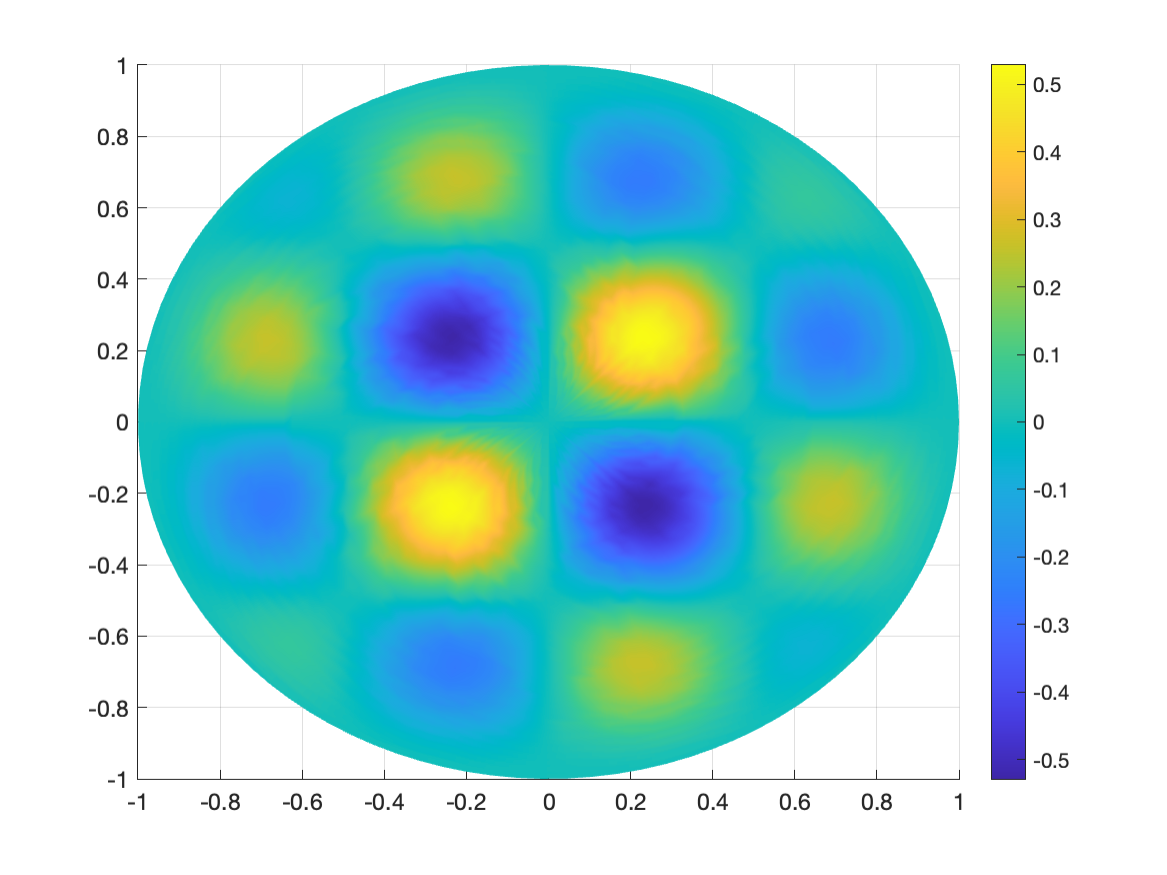}
\includegraphics[width=0.19\linewidth]{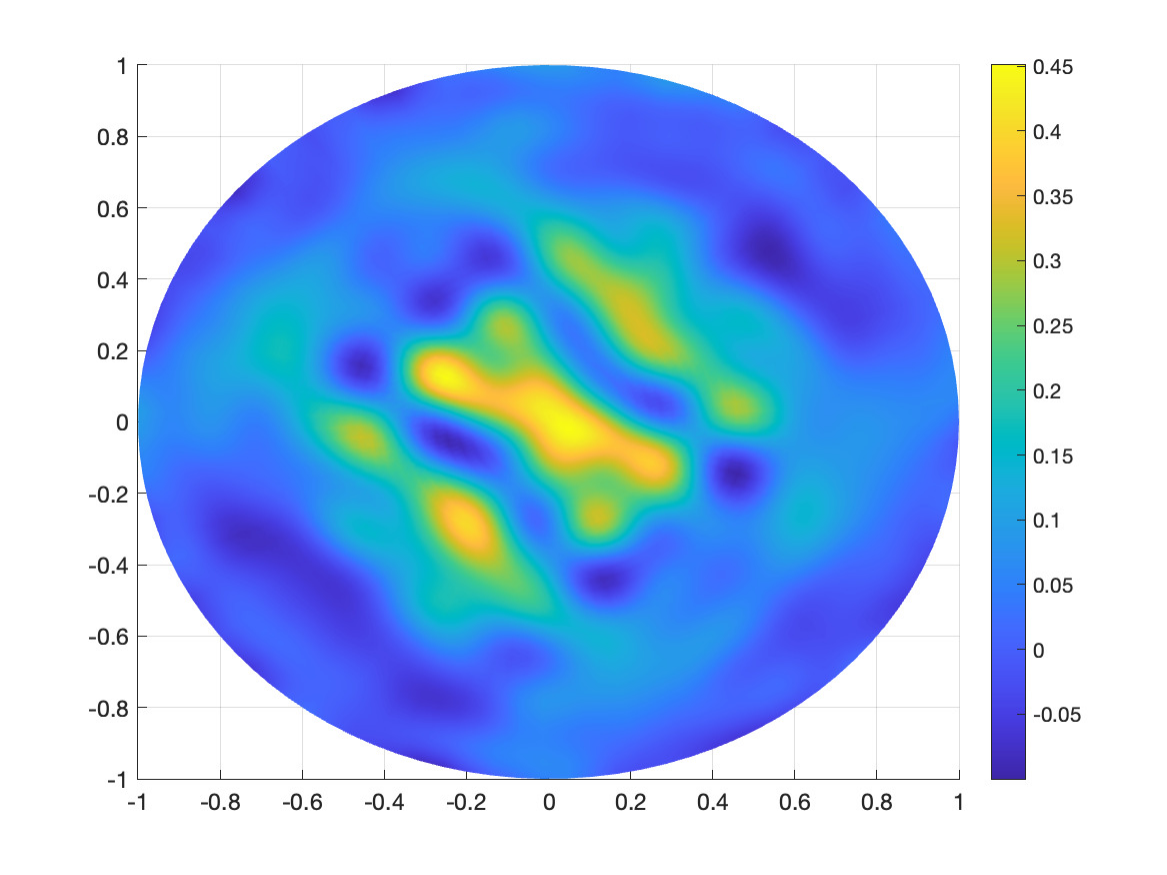}
\includegraphics[width=0.19\linewidth]{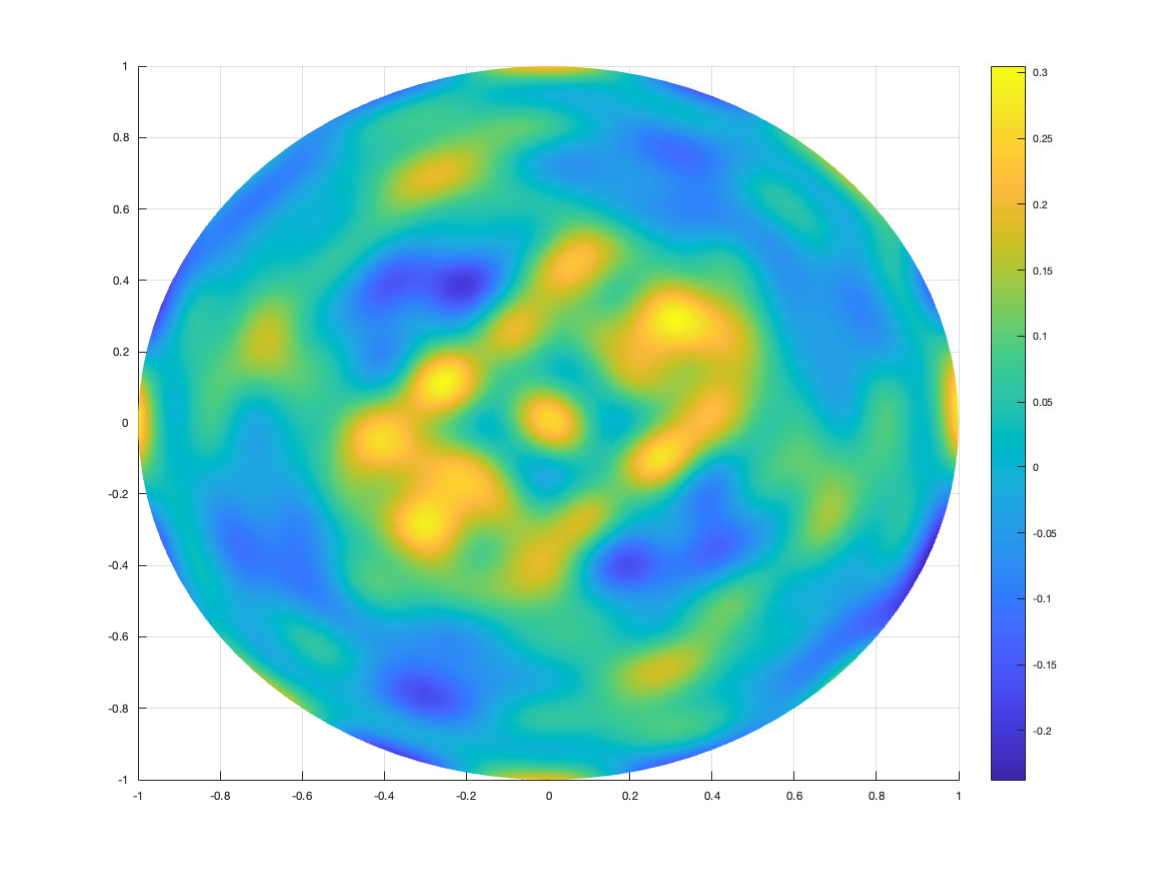}
\includegraphics[width=0.19\linewidth]{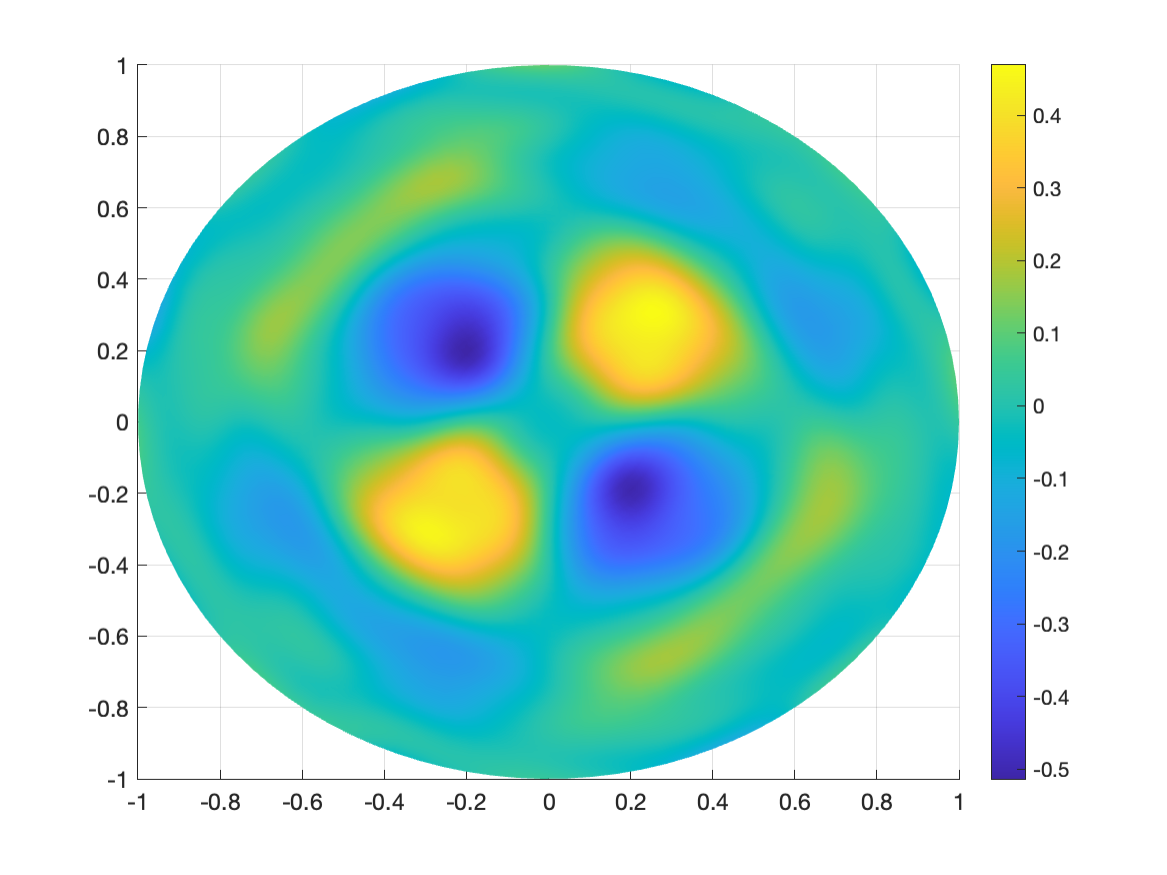}
\includegraphics[width=0.19\linewidth]{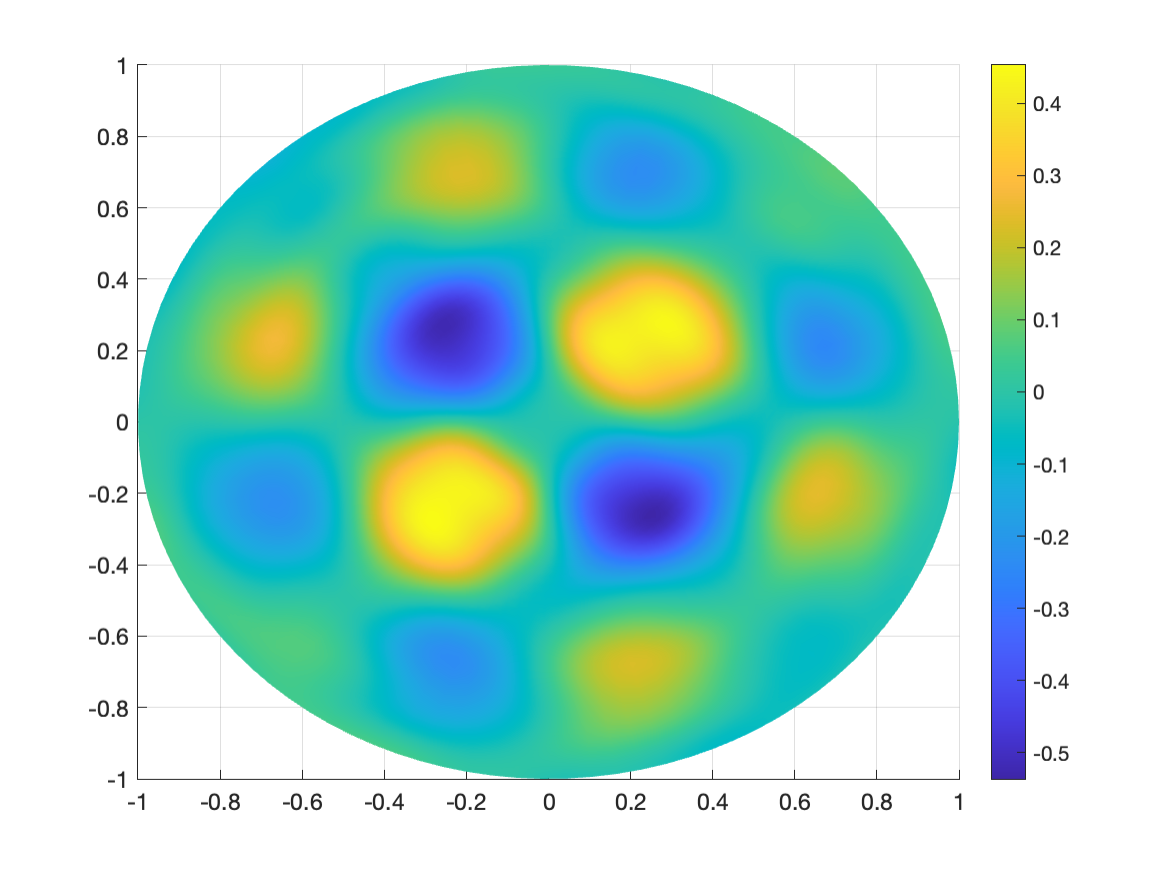}\\
\includegraphics[width=0.19\linewidth]{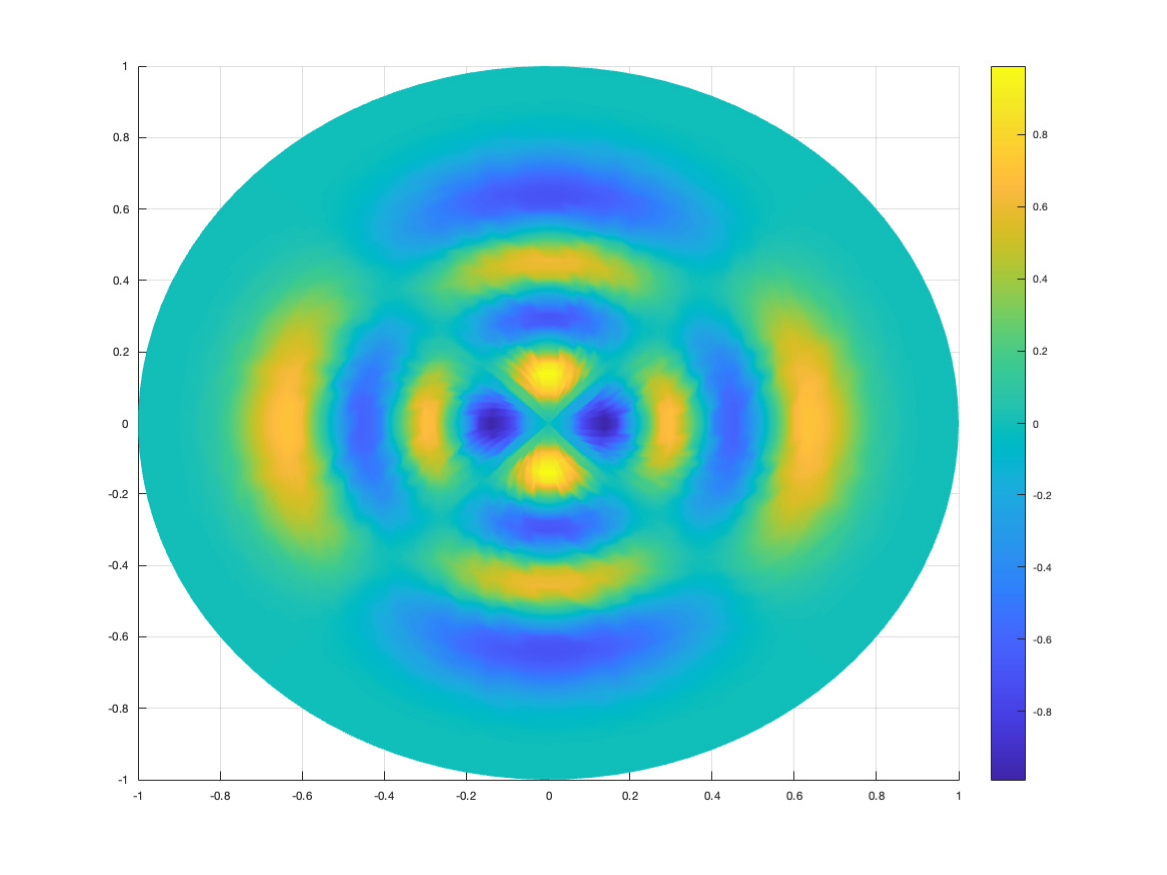}
\includegraphics[width=0.19\linewidth]{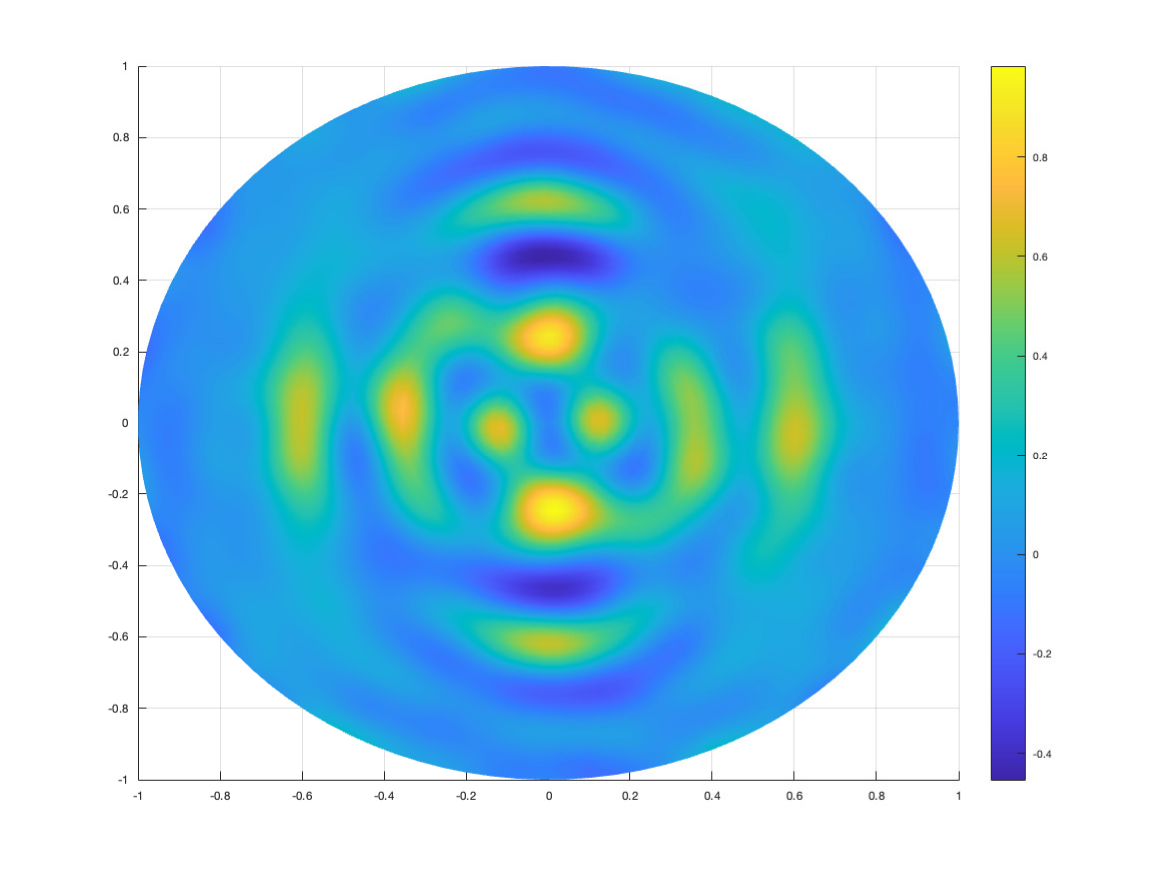}
\includegraphics[width=0.19\linewidth]{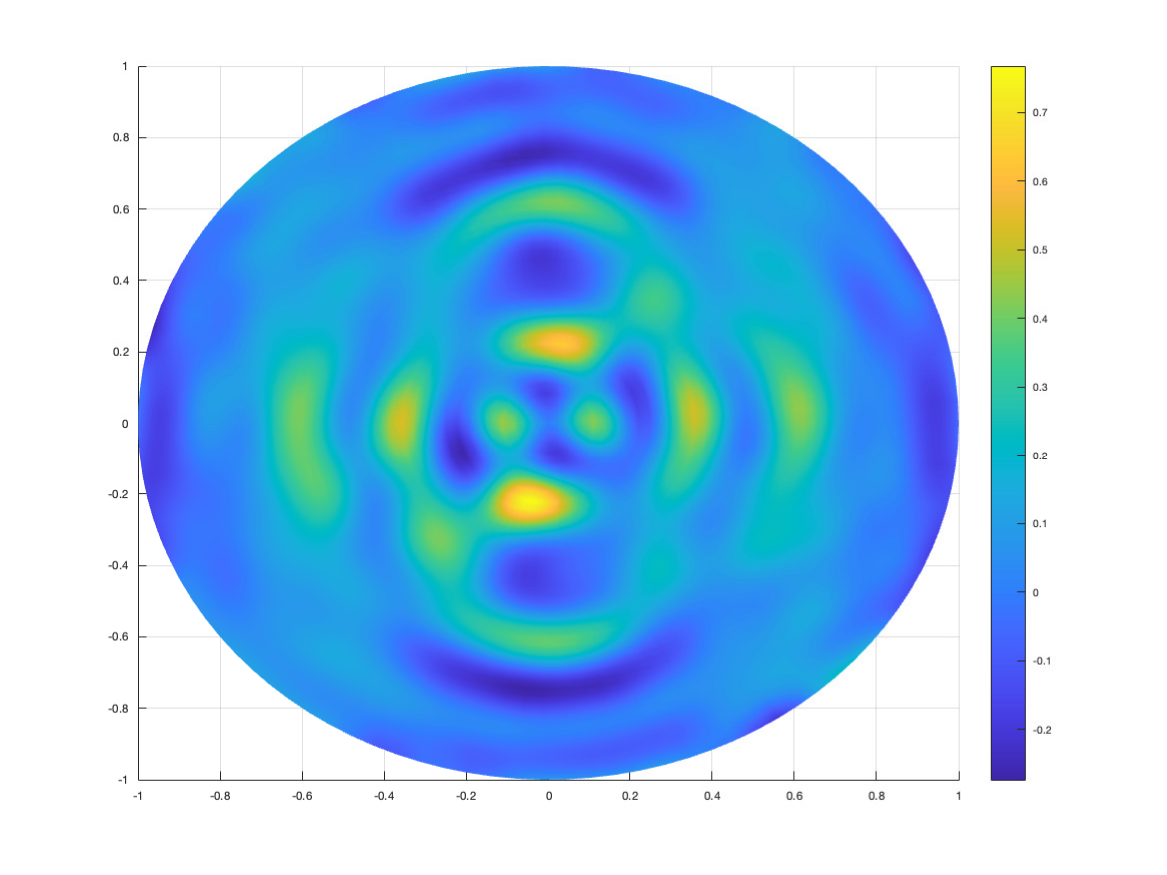}
\includegraphics[width=0.19\linewidth]{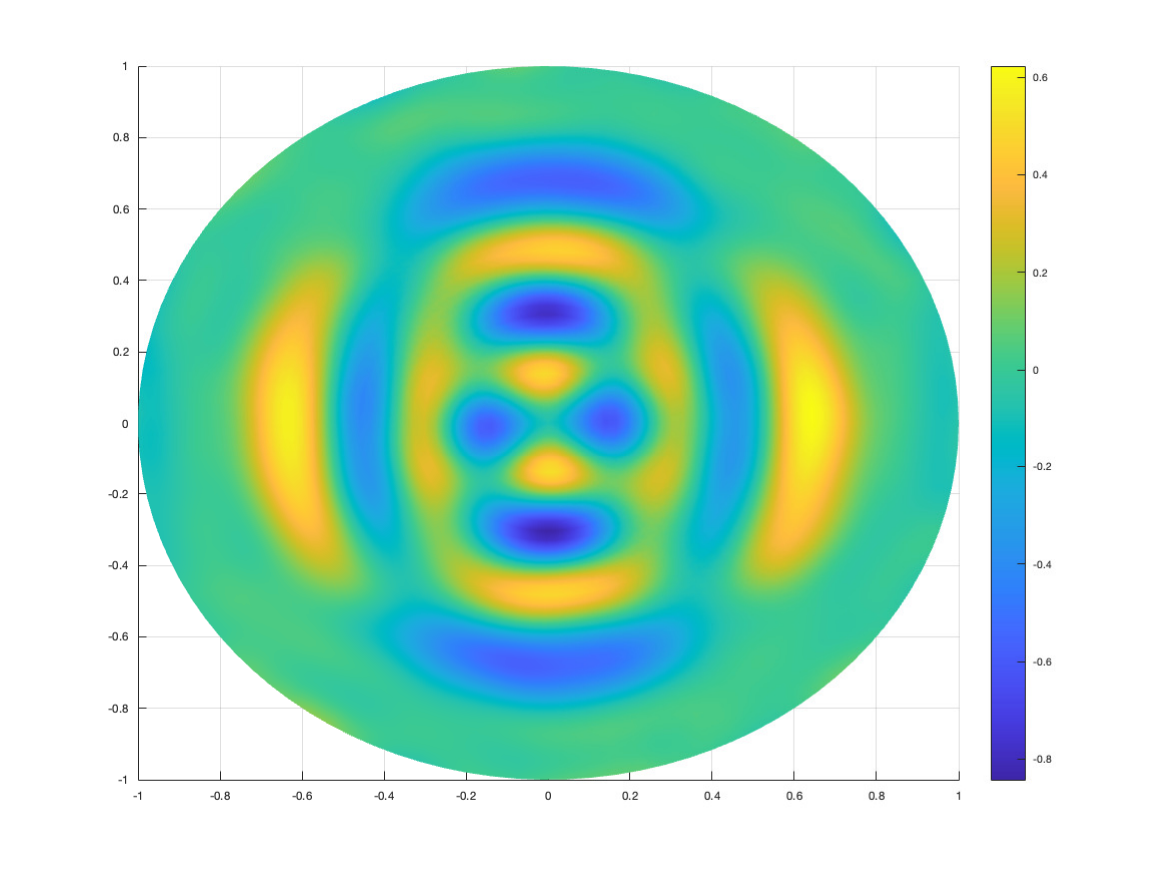}
\includegraphics[width=0.19\linewidth]{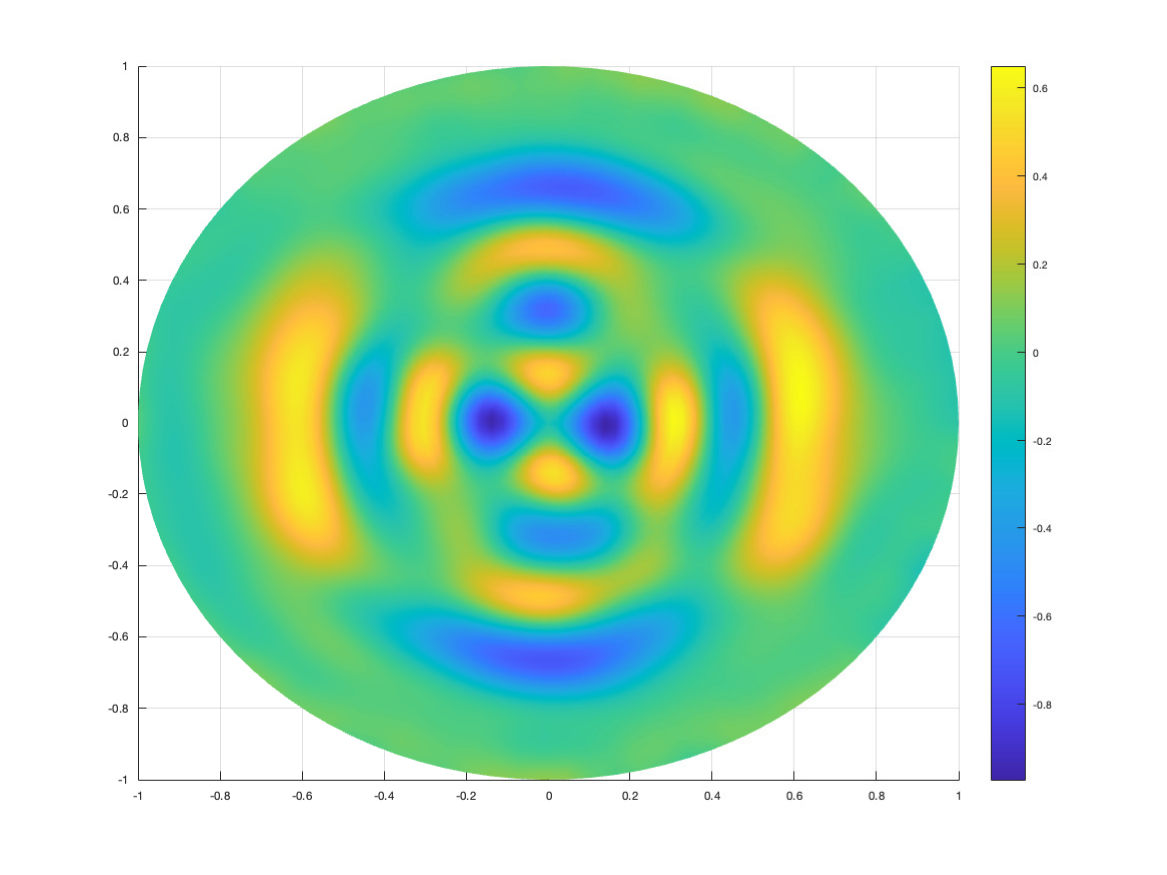}
  \caption{Reconstruction of another set of  out-distribution contrasts. The first column plots the ground truth. The reconstruction from left to right are based on:  dataset-(A) (original training dataset consisting of MNIST and circular samples),  dataset-(A)(B),  dataset-(A)(C), and dataset-(A)(D), respectively.    }
  \label{figure: generalization ability enhancement}
\end{figure}

\section{Extension to limited aperture problems} \label{sec: limited aperture}
In this section, we generalize our method to address limited aperture problems. The goal is to reconstruct the contrast from the limited aperture data
$$\{u^\infty(\hat{x};\hat{\theta}):\hat{x},\hat{\theta}\in \mathbb{S}_L\},$$
where \(\mathbb{S}_L:=\{x\in \mathbb{S}:\arg x\in [-\Theta,\Theta]\}\) (\(0<\Theta<\pi\)). Similar to the full aperture case, the limited aperture data (cf. third column of Figure \ref{figure: rotation-equivariance of limited aperture}) are processed according to  \eqref{eqn: far field to processed}, which yields the processed data \(u_L\) supported in a subset denoted by $D_L$ (cf. second column of Figure \ref{figure: rotation-equivariance of limited aperture}). This limited aperture problem is more challenging due to the fact that only limited data within the disk are available. To extend our strategy in the full aperture case to the limited aperture case, we propose to train another rotation-equivariance-aware neural network to directly correct \(u_L\) to the Born processed data \(u_b\), followed by an inverse Born solver using either a low-rank structure or a U-Net. It is evident that rotation-equivariance still holds for the limited data \(u_L\); that is, a rotation of the aperture \(\mathbb{S}_L\) and contrast corresponds to a rotation of the processed data $u_L$. To illustrate this, we plot in the first row of  \Cref{figure: rotation-equivariance of limited aperture}   the contrast number ``5", the imaginary part of its processed data, and the original limited far-field data, respectively. Here, \(\Theta=\frac{\pi}{2}\), and we set \(u^\infty(\hat{x};\hat{\theta})=0\) if \(\arg \hat{x}, \arg \hat{\theta}\notin [-\Theta,\Theta]\). When the contrast and aperture are rotated clockwise by \(\frac{3\pi}{2}\), we have that \(u^\infty(\hat{x};\hat{\theta})=0\) if \(\arg \hat{x}, \arg \hat{\theta}\notin [-(\Theta-\frac{\pi}{2}),\Theta-\frac{\pi}{2}]\), cf. bottom right of  \Cref{figure: rotation-equivariance of limited aperture}; correspondingly, the processed limited data (indicated by the dashed lines) follow the same rotation, cf. second column of  \Cref{figure: rotation-equivariance of limited aperture}.   Finally we make a remark that the far-field data can be expanded  using the reciprocity relation, however one can check that the processed data remain unchanged as the data processing already takes advantage of the reciprocity relation.

\begin{figure}[htbp]
\centering
 \includegraphics[width=0.30\textwidth]{figures/contrast0_1.pdf} 
 \hspace{0.8em}
\includegraphics[width=0.36\textwidth]{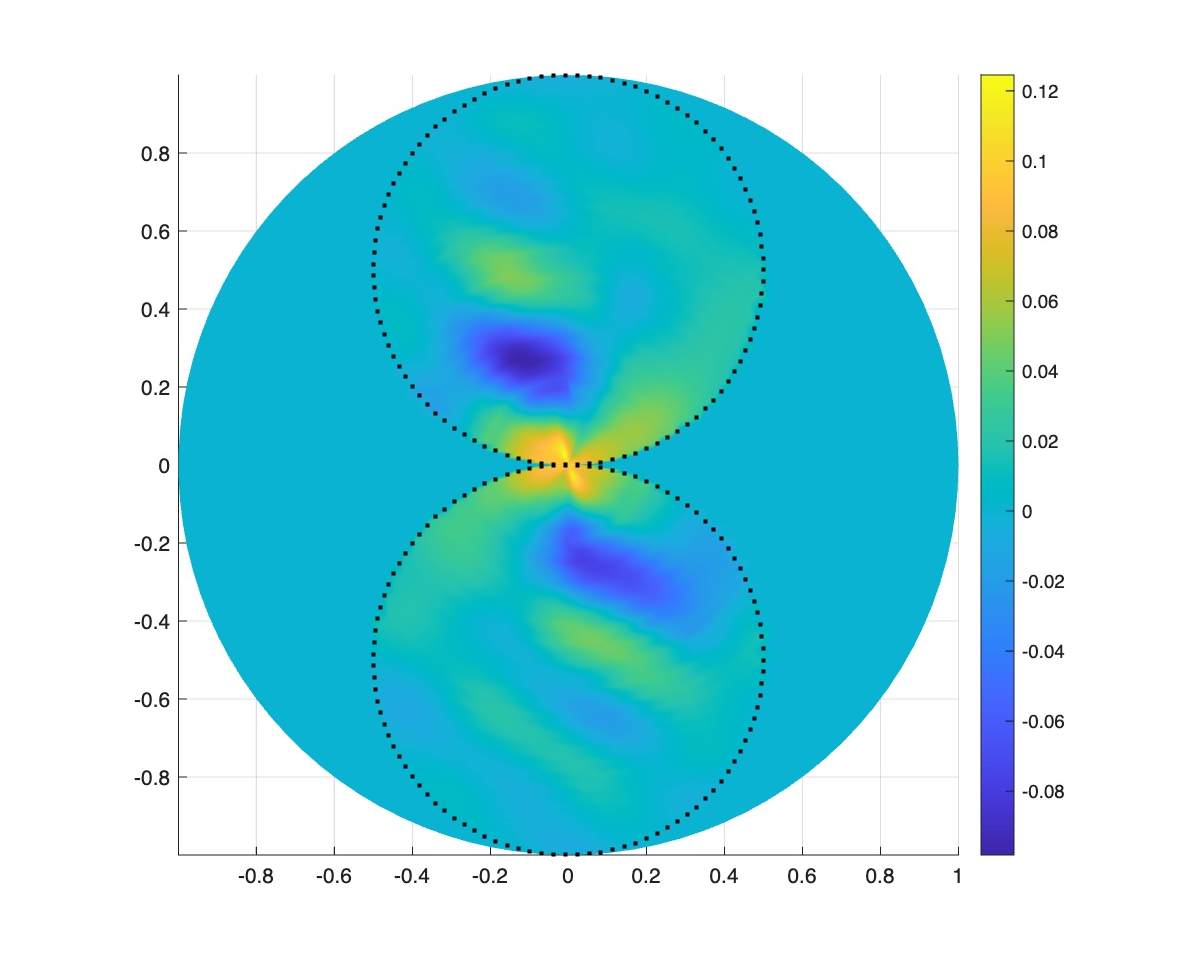} %
 \hspace{0.8em}
\raisebox{0.4cm}{\includegraphics[width=0.27\textwidth]{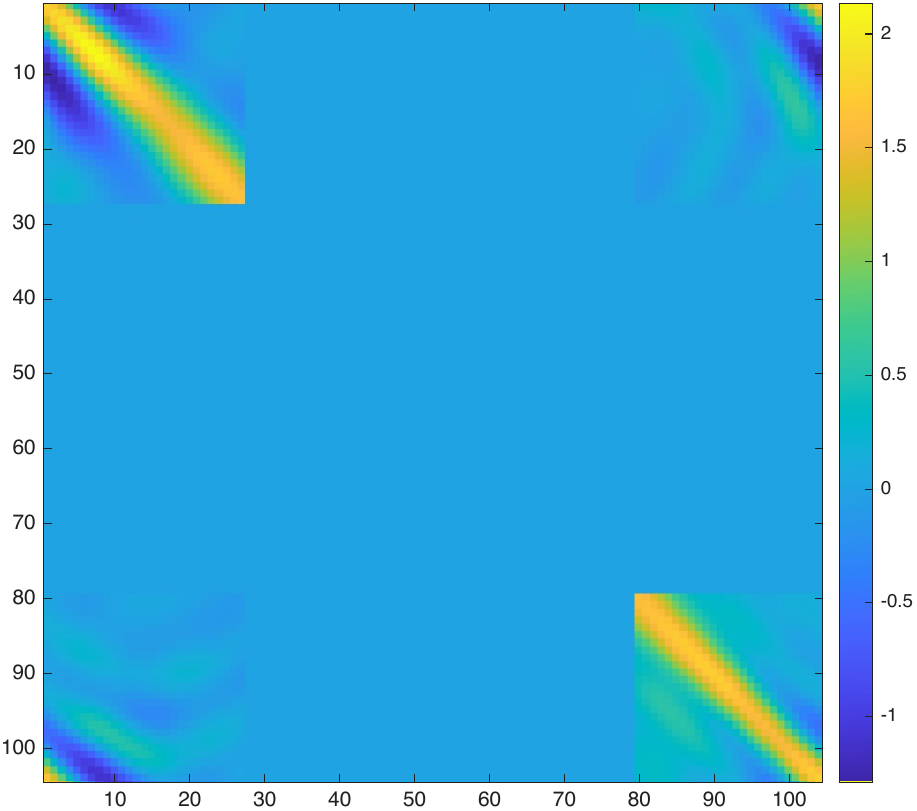}}\\
\includegraphics[width=0.30\textwidth]{figures/contrast_rot90_1.pdf} 
 \hspace{0.8em}
\includegraphics[width=0.36\textwidth]{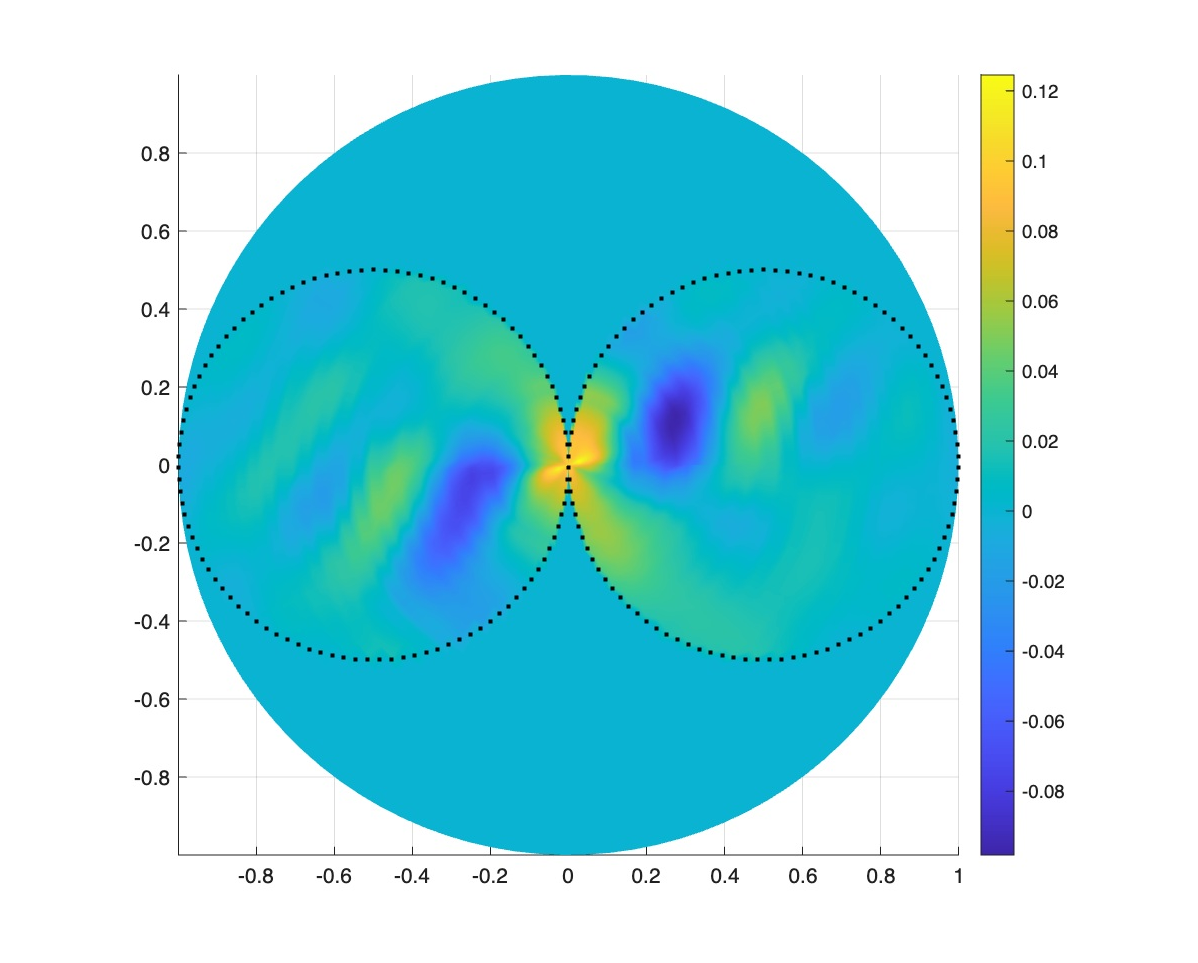} %
 \hspace{0.8em}
\raisebox{0.4cm}{\includegraphics[width=0.27\textwidth]{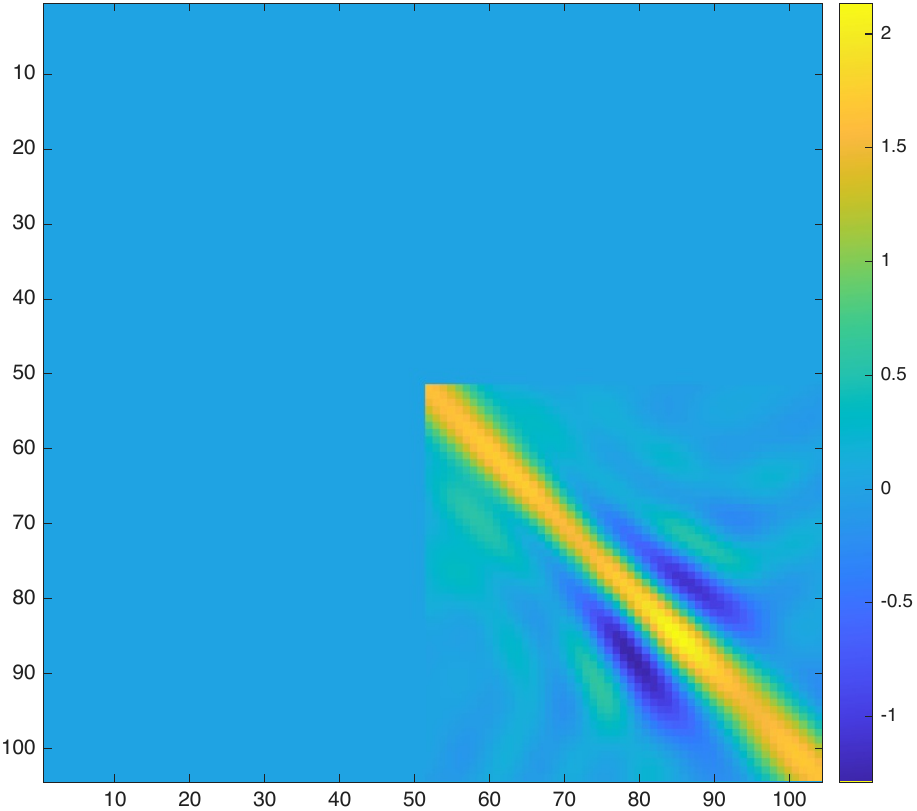}}
   \caption{Illustration of rotation-equivariance with limited aperture data.  First column: contrast $q$; second column, imaginary  part of the processed datum \eqref{eqn: far field to processed}; third column, imaginary part of the far field datum. In the first row, we plot the number ``5'' and the corresponding  data. In the second   row, we plot the    data for a $\frac{3\pi}{2}$ rotated contrast.  }  \label{figure: rotation-equivariance of limited aperture}
\end{figure}

\subsection{Neural network training}
The training follows the full aperture case, after expanding the limited processed data $u_L$ from the subset $D_L \subset B$   to the unit disk $B$ by zero. As an illustration, we plot one sample of the training data in \Cref{figure: training sample 3}(b); clearly this is a subset of the data in the full aperture case in \Cref{figure: training sample 1}(b).   To deal with the limited aperture case, we propose to test only one  dataset category (i.e., MNIST) in this study to impose strong a prior information. Similarly to the full aperture case,  we first  learn a U-net to map the limited processed data \(u_L\) to the full processed Born data \(u_b\), then combine it with the learned U-net for the inverse Born solver or the low-rank solver; this leads to again ULR   and UU. For comparison, we also train  a black-box U-net  to reconstruct the contrast directly from the limited far-field data. 
The aperture size is chosen as \(\Theta=\frac{\pi}{2}\), and a total of 30,000 samples are used for training and validation in the training dataset. We report that 10,000 samples are insufficient for the neural network training in our test.

\begin{figure}[htbp]
\centering 
 \hspace{-1em}
\subfloat[Ground truth $q$]{\includegraphics[width=0.3\linewidth]{figures/contrast-eps-converted-to.pdf}}
 \hspace{0.4em}\subfloat[$\Re u$]{
 \raisebox{0.1cm}{ 
{\includegraphics[width=0.25\linewidth]{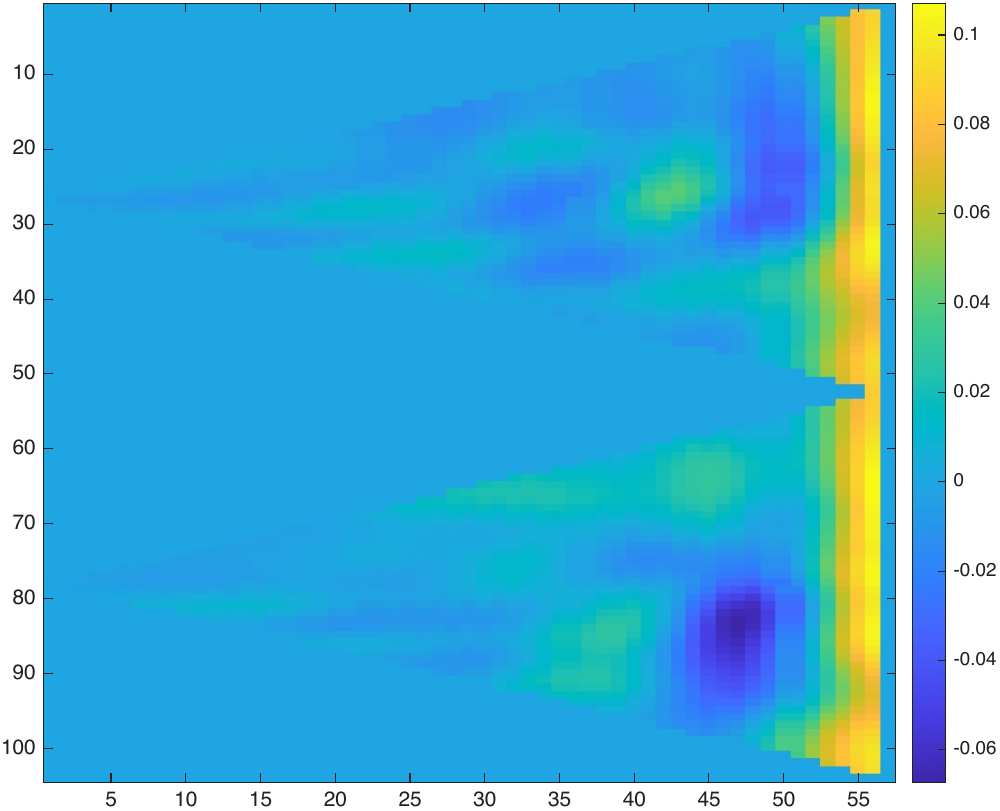}}}}
 \hspace{1.3em}
\subfloat[$\Im u$] {\raisebox{0.1cm}{ {\includegraphics[width=0.25\linewidth]{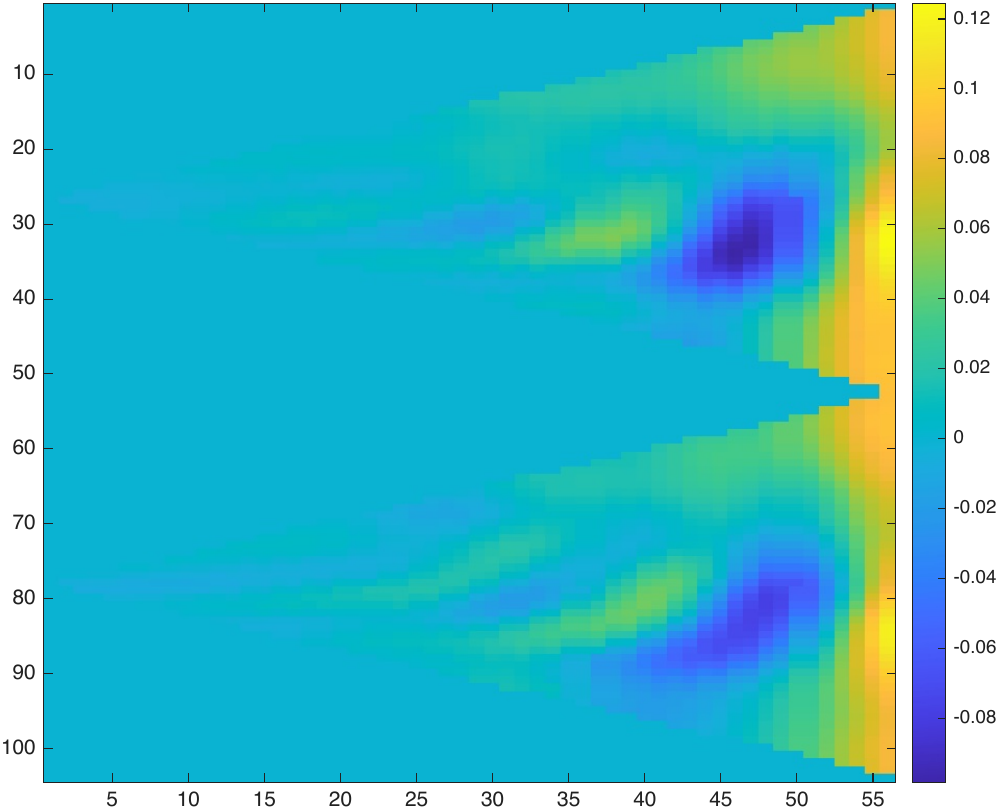}}}}
\\
\subfloat[New rectangular image of $q$]{\includegraphics[width=0.3\linewidth]{figures/contrast_polar-eps-converted-to.pdf}}
\subfloat[$\Re u_b$]{\includegraphics[width=0.3\linewidth]{figures/post_ub_real_polar-eps-converted-to.pdf}}
\subfloat[$\Im u_b$]{\includegraphics[width=0.3\linewidth]{figures/post_ub_imag_polar-eps-converted-to.pdf}}

  \caption{Illustration of  data  in the limited aperture case. Input of neural network $U_1$: (b) and (c); output of neural network $U_1$:(e) and (f).}  \label{figure: training sample 3}
\end{figure}

\subsection{Numerical experiments}
 The testing includes with in-distribution performance and out-distribution performance  with noisy data where noise level $\delta=0.2$.  

We first investigate the rotation-equivariance property. As shown in the first row of \Cref{figure: full rotation-equivariance limited aperture}, where the contrast is in-distribution, the reconstruction results of ULR and UU demonstrate the effectiveness of the data corrector. Specifically, ULR and UU outperform U in terms of reconstruction quality. In the second row, when we rotate both the contrast ``4'' and the aperture \([-\frac{\pi}{2},\frac{\pi}{2}]\) clockwise by \(\frac{\pi}{2}\), we find that ULR and UU can stably reconstruct the rotated contrast, whereas U yields unsatisfactory results. Furthermore,  we consider in the third row the case when only the contrast is rotated but the limited aperture stays the same, ULR and UU again outperform the black-box neural network. 

We also evaluate the out-of-distribution performance of these three methods by \Cref{figure: generalization limited aperture}. It is clear that the black-box neural network U  yields worse  reconstructions.  Comparing \Cref{figure: generalization} and \Cref{figure: generalization limited aperture}, it can be observed that the reconstruction of ULR and UU remain stable, albeit slightly inferior to those obtained with full aperture data; this is expected due to the additional challenge of the limited aperture problem.

\begin{figure}[htbp]
{\includegraphics[width=0.24\linewidth]{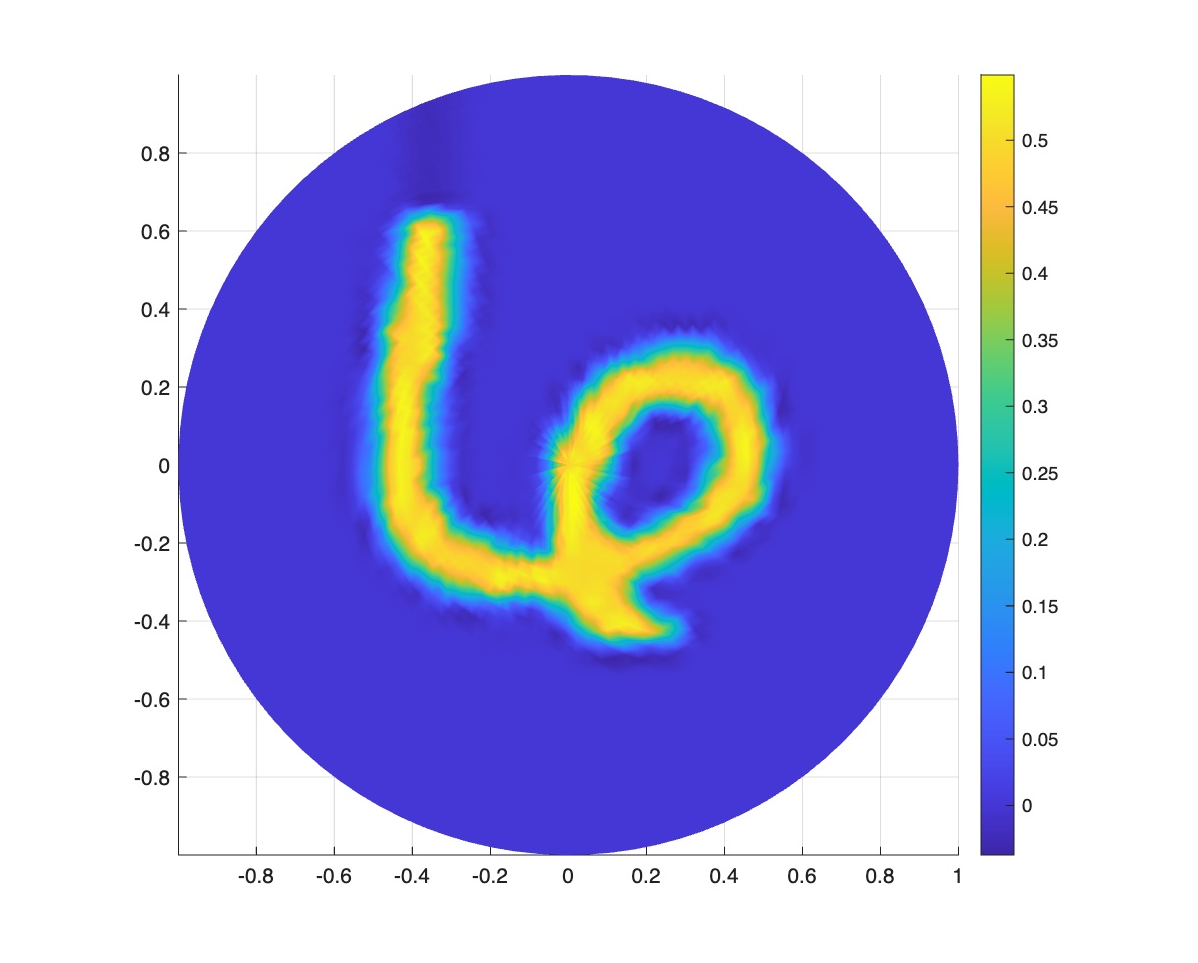}}
{\includegraphics[width=0.24\linewidth]{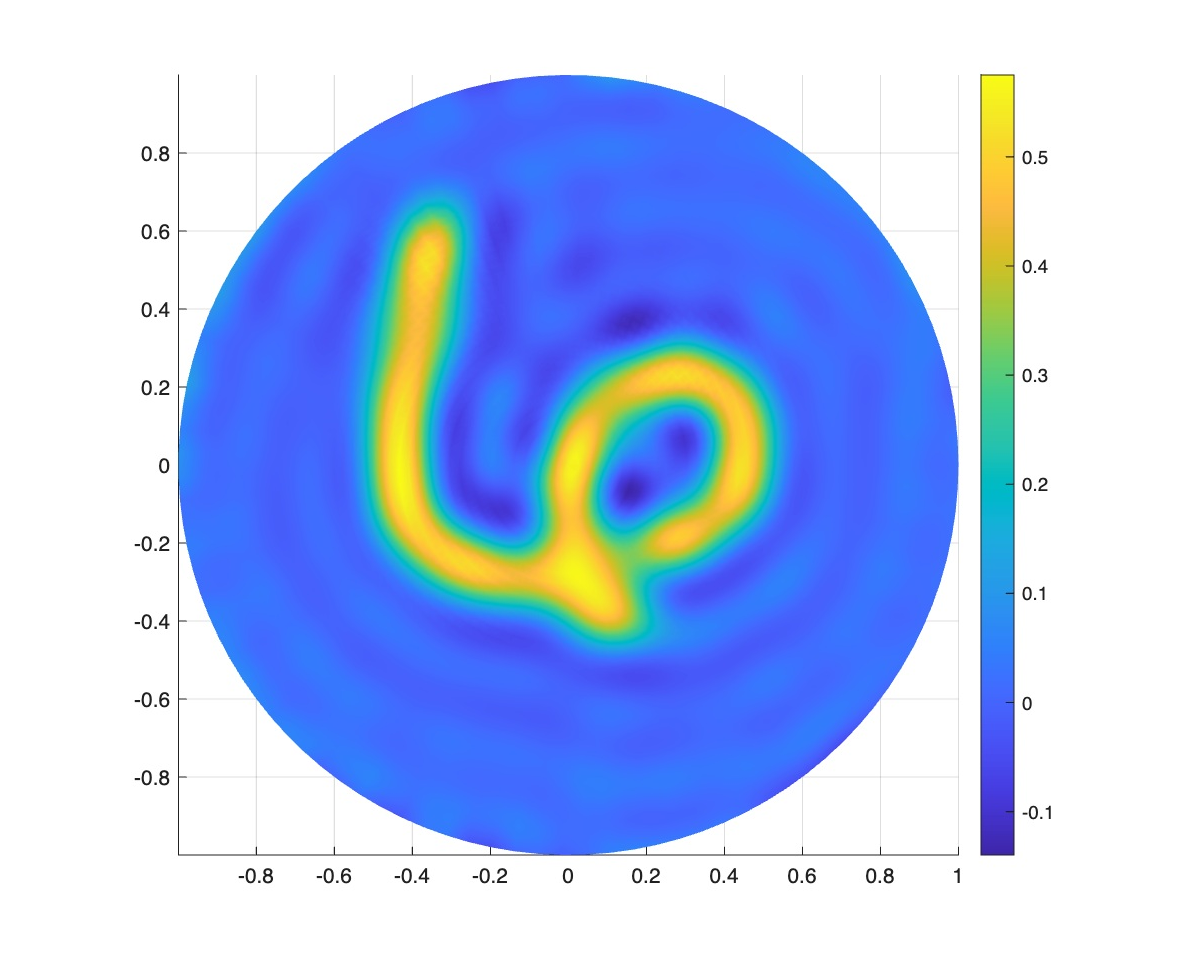}}
{\includegraphics[width=0.24\linewidth]{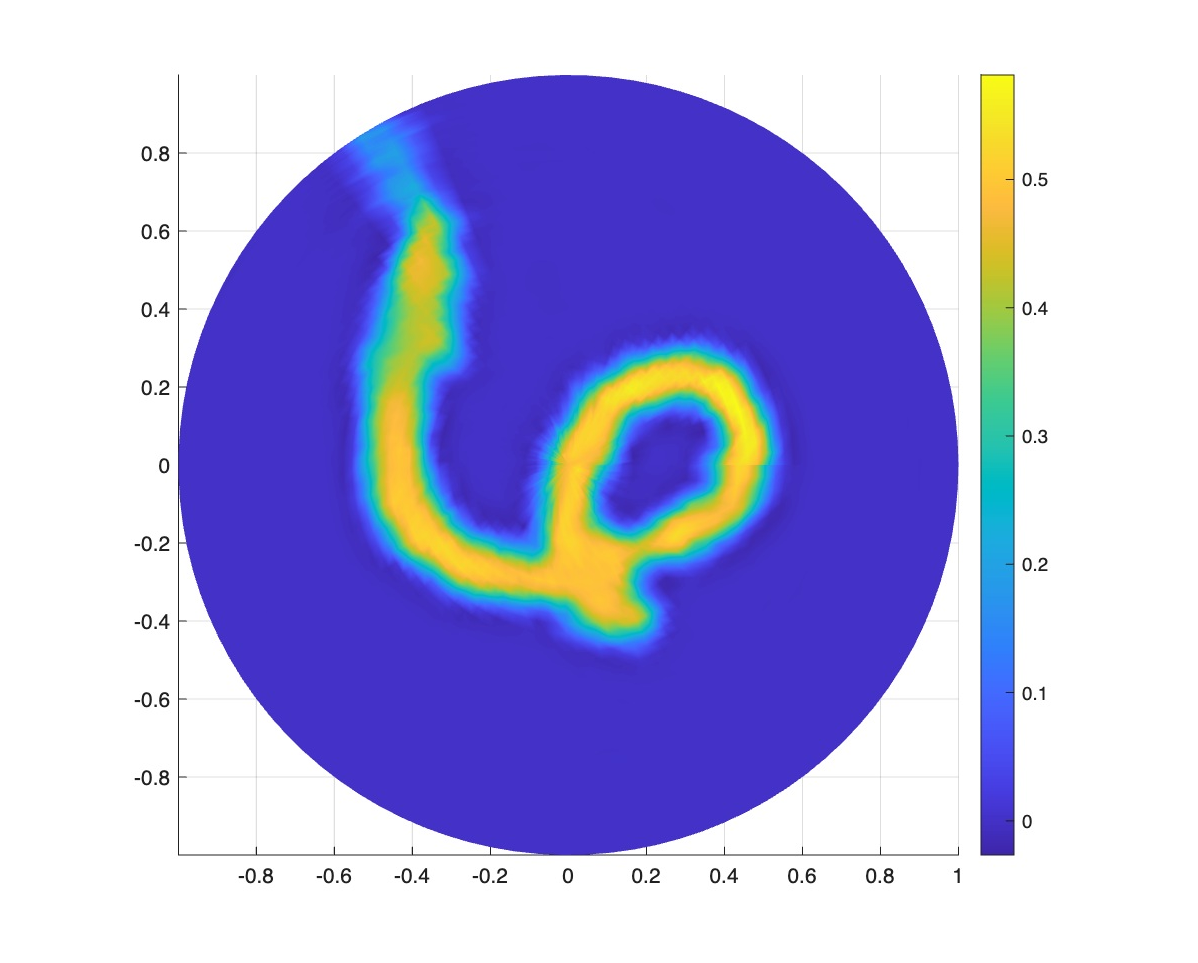}}
{\includegraphics[width=0.24\linewidth]{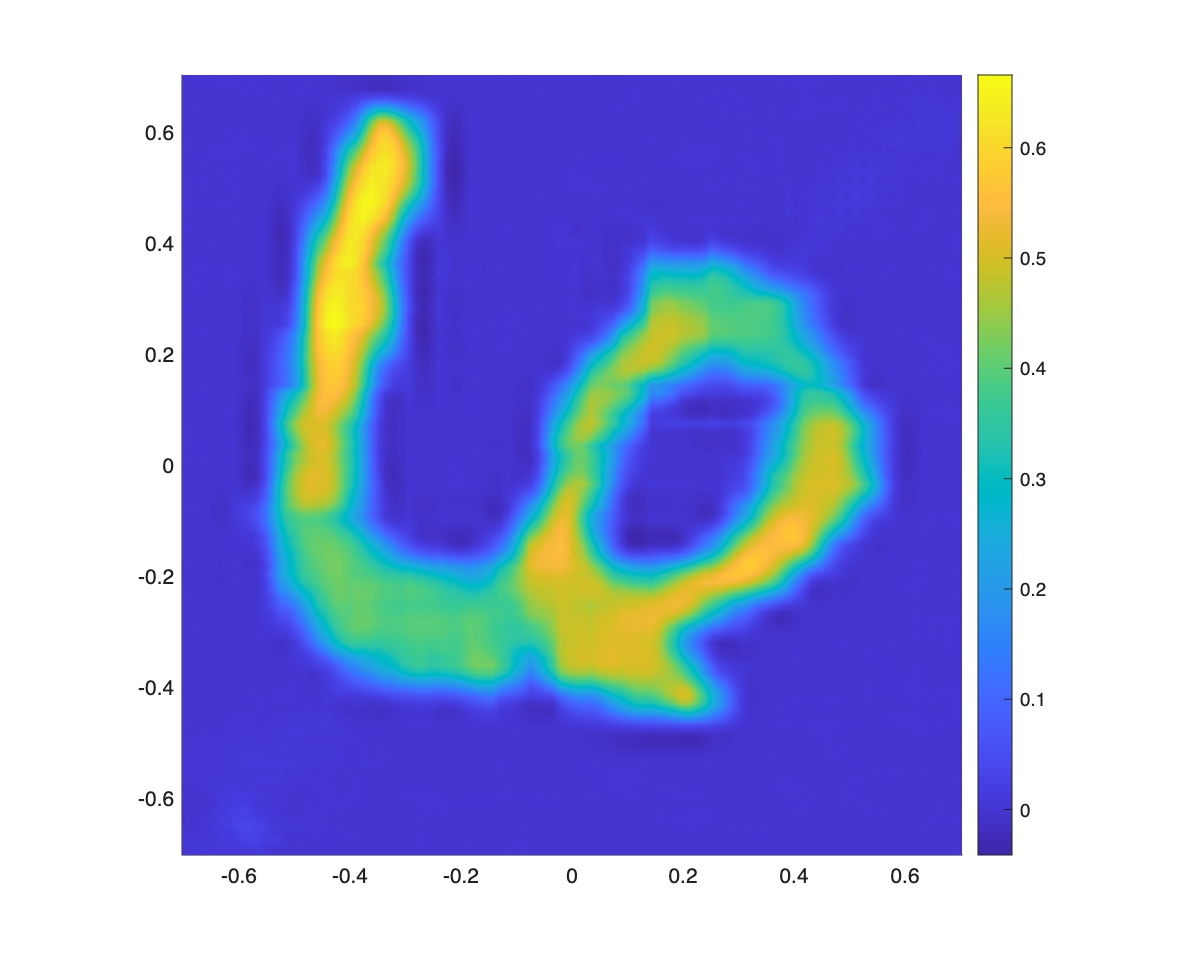}}\\
{\includegraphics[width=0.24\linewidth]{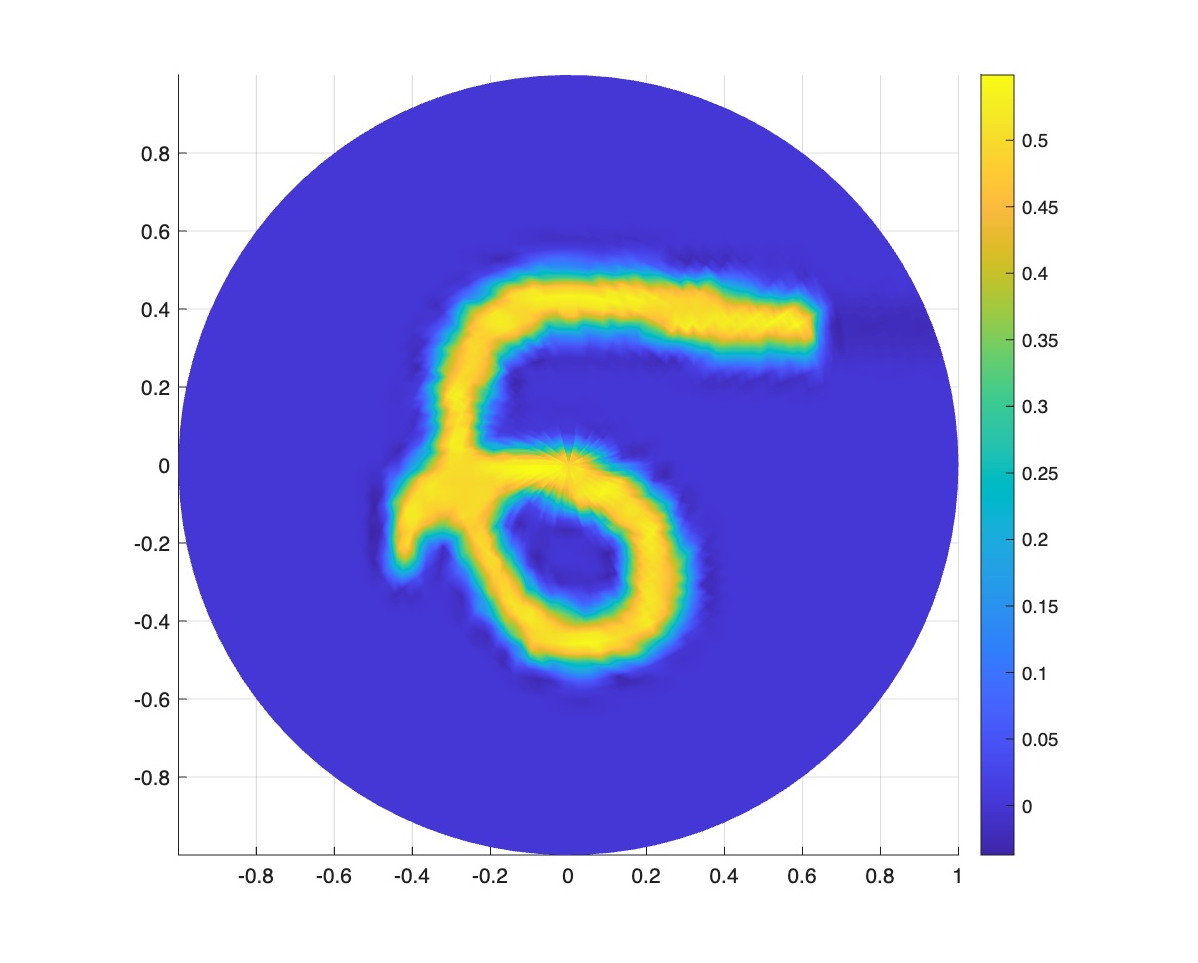}}
{\includegraphics[width=0.24\linewidth]{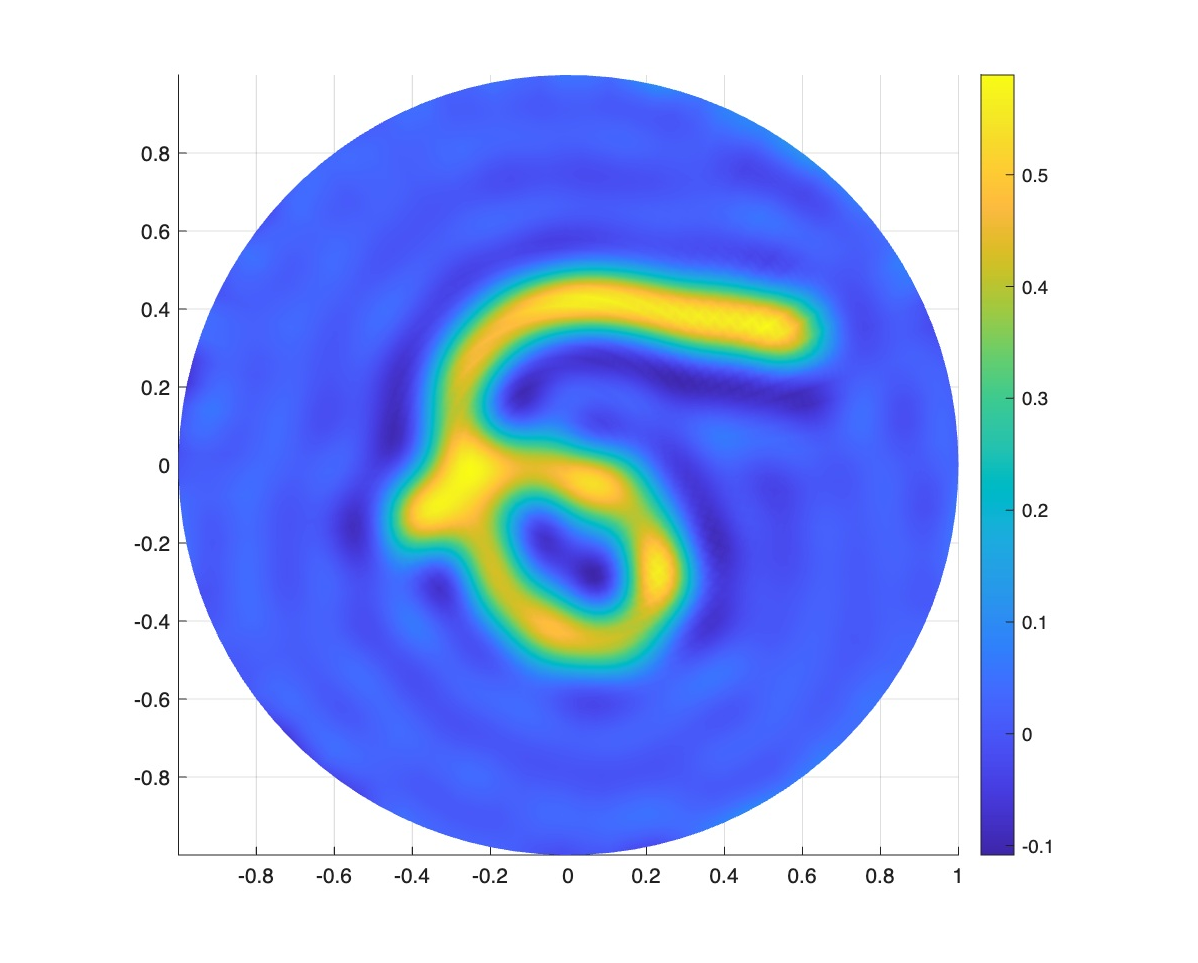}}
{\includegraphics[width=0.24\linewidth]{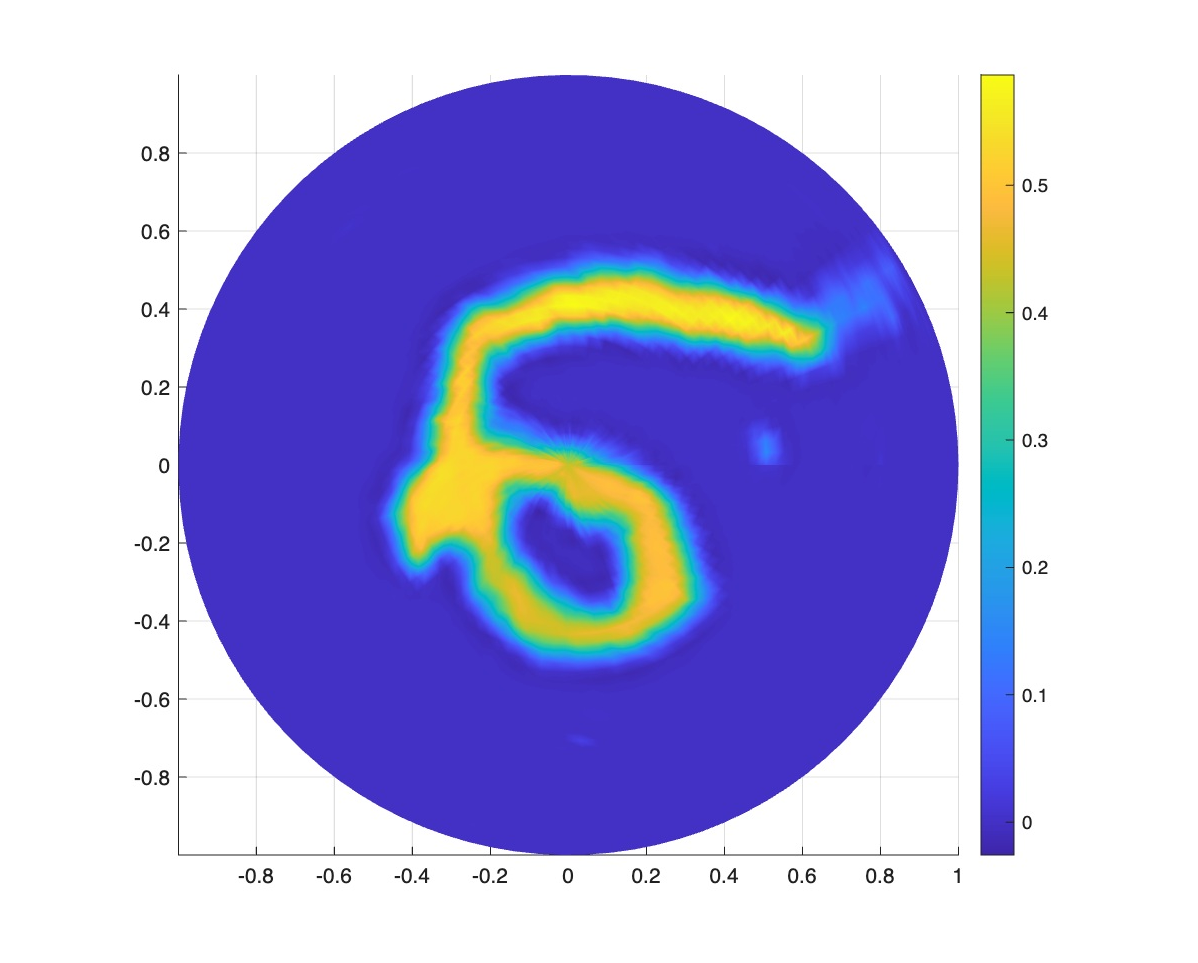}}
{\includegraphics[width=0.24\linewidth]{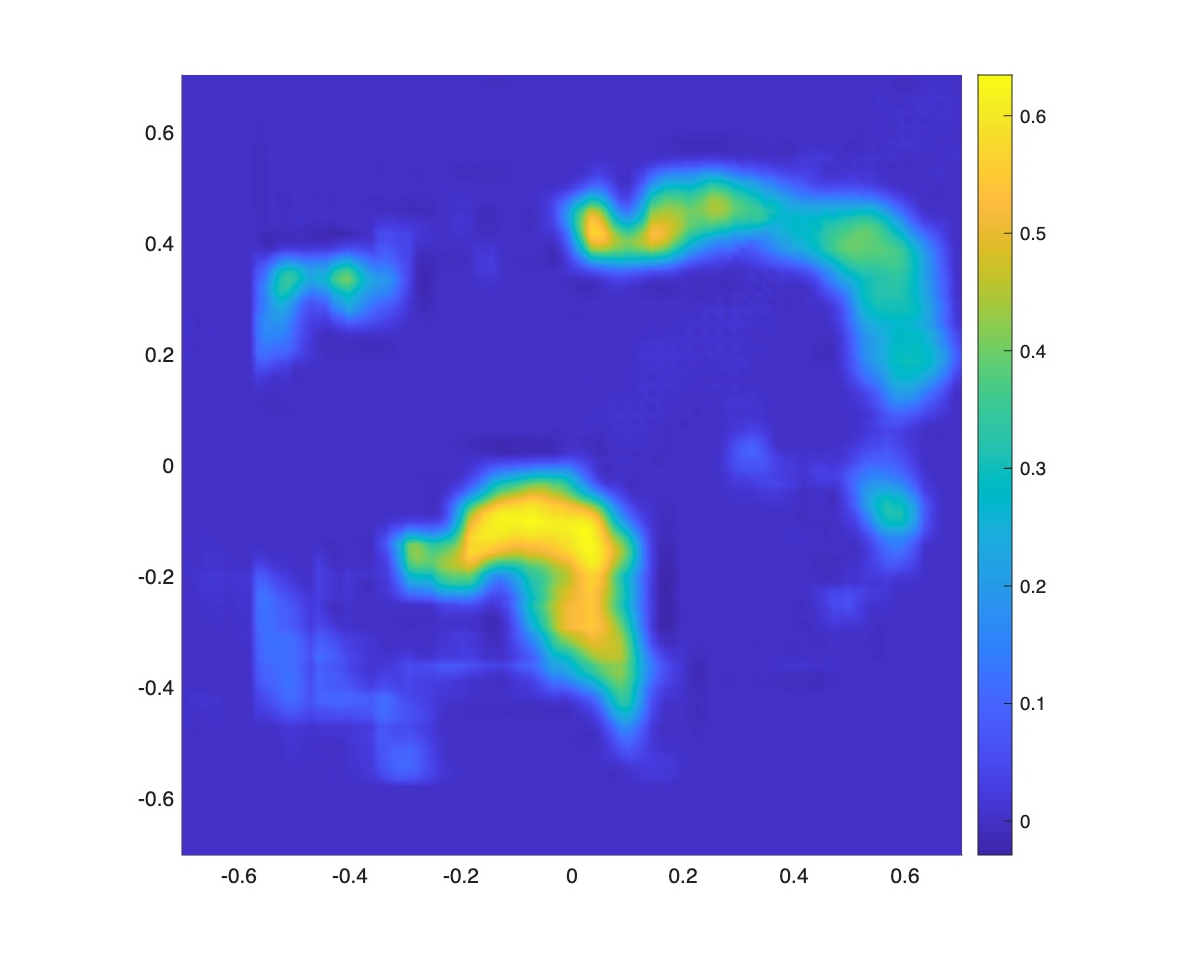}}
\\
{\includegraphics[width=0.24\linewidth]{figures/Groundtruth_4_rot.pdf}
 }
{\includegraphics[width=0.24\linewidth]{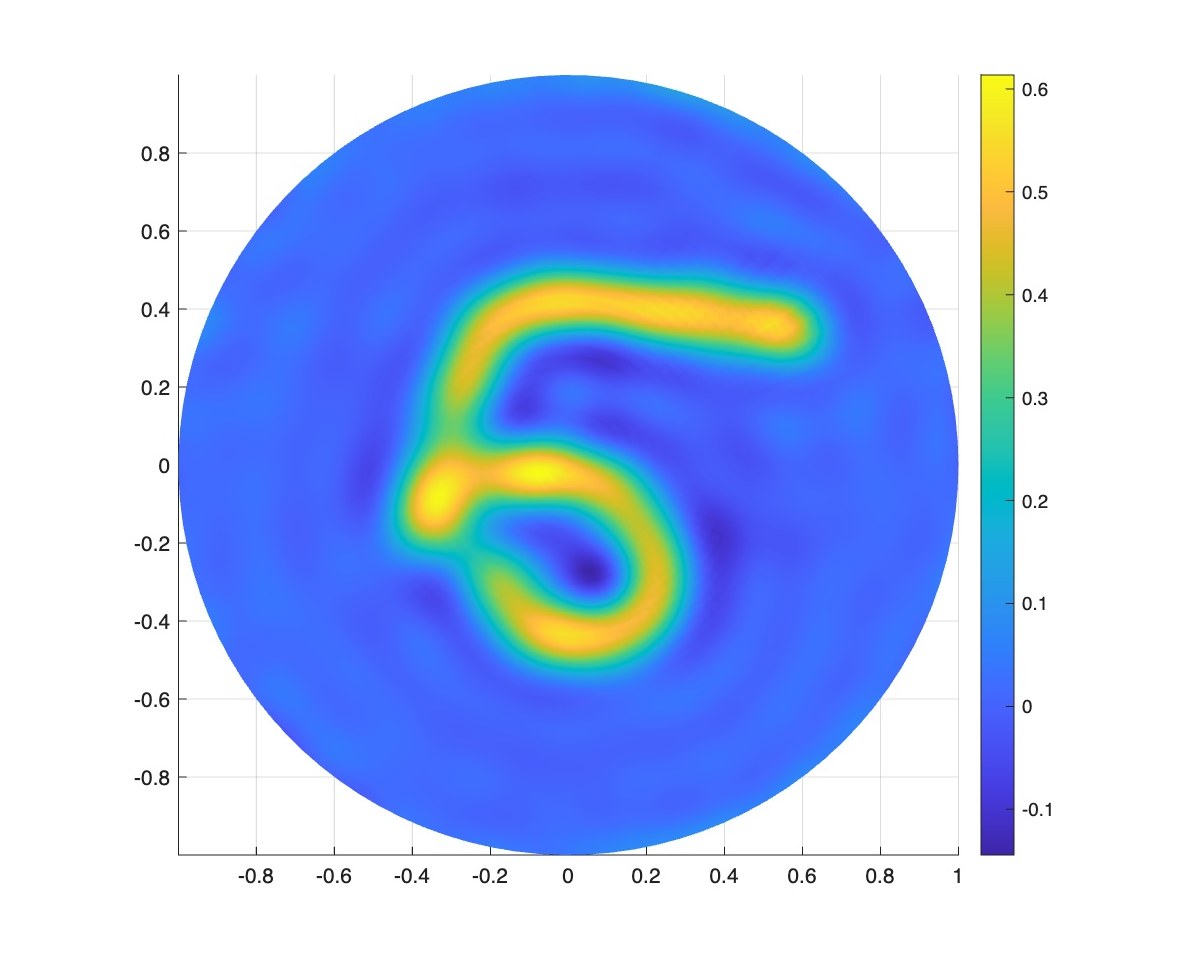}}
{\includegraphics[width=0.24\linewidth]{figures/4rot_eqvar_full_limited_UU.pdf}}
{\includegraphics[width=0.24\linewidth]{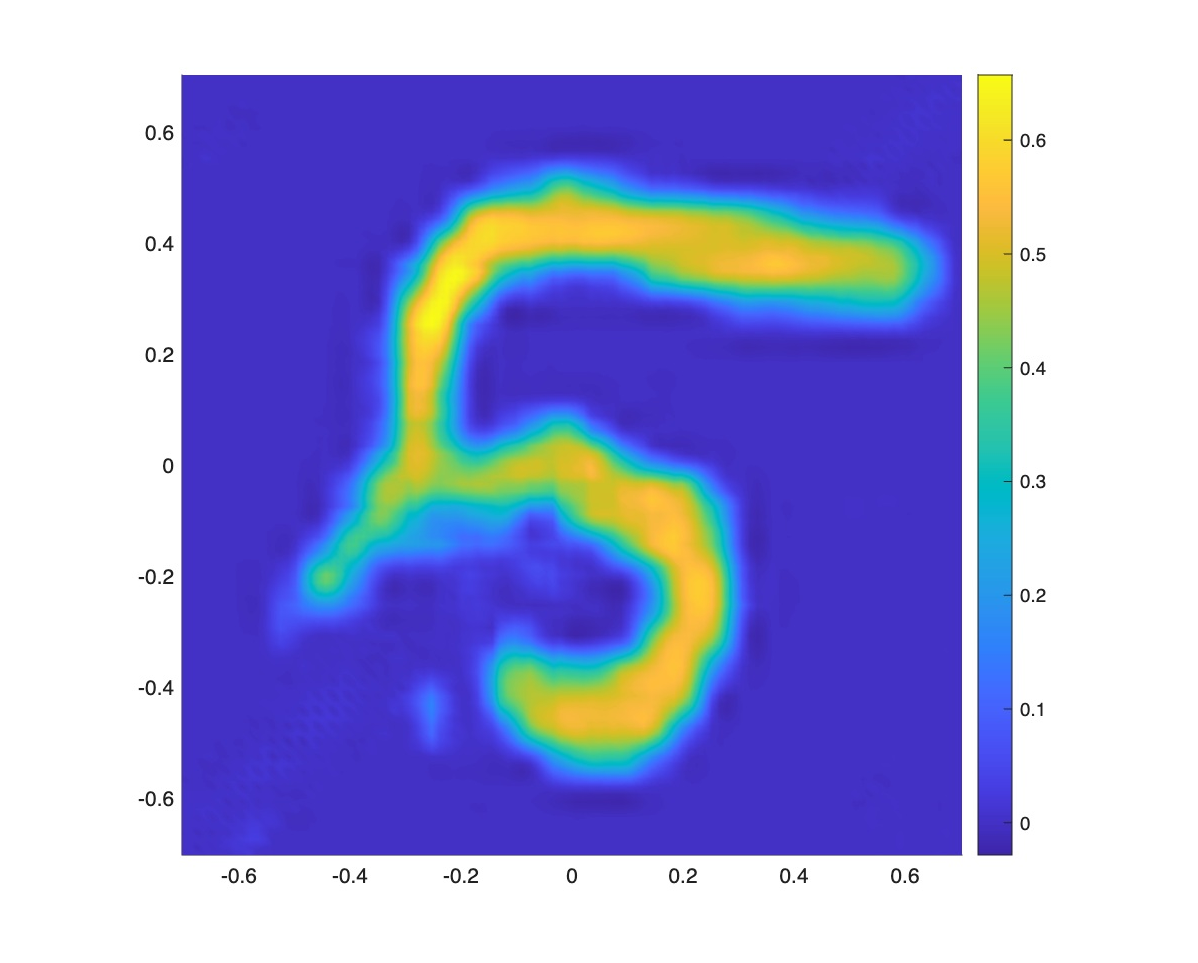}}

  \caption{Reconstruction of number ``4'' and its rotation. Left to right: ground truth, reconstructions by ULR, UU, and U, respectively. In the first row, we test an in-distribution contrast. In the second row, we test the case when  both the contrast ``4'' and the aperture \([-\frac{\pi}{2},\frac{\pi}{2}]\) rotate clockwise by \(\frac{\pi}{2}\). In the third row, we test  the case when only the contrast is rotated and the limited aperture remains the same. }  \label{figure: full rotation-equivariance limited aperture}
\end{figure}

\begin{figure}[htbp]
\includegraphics[width=0.24\linewidth]{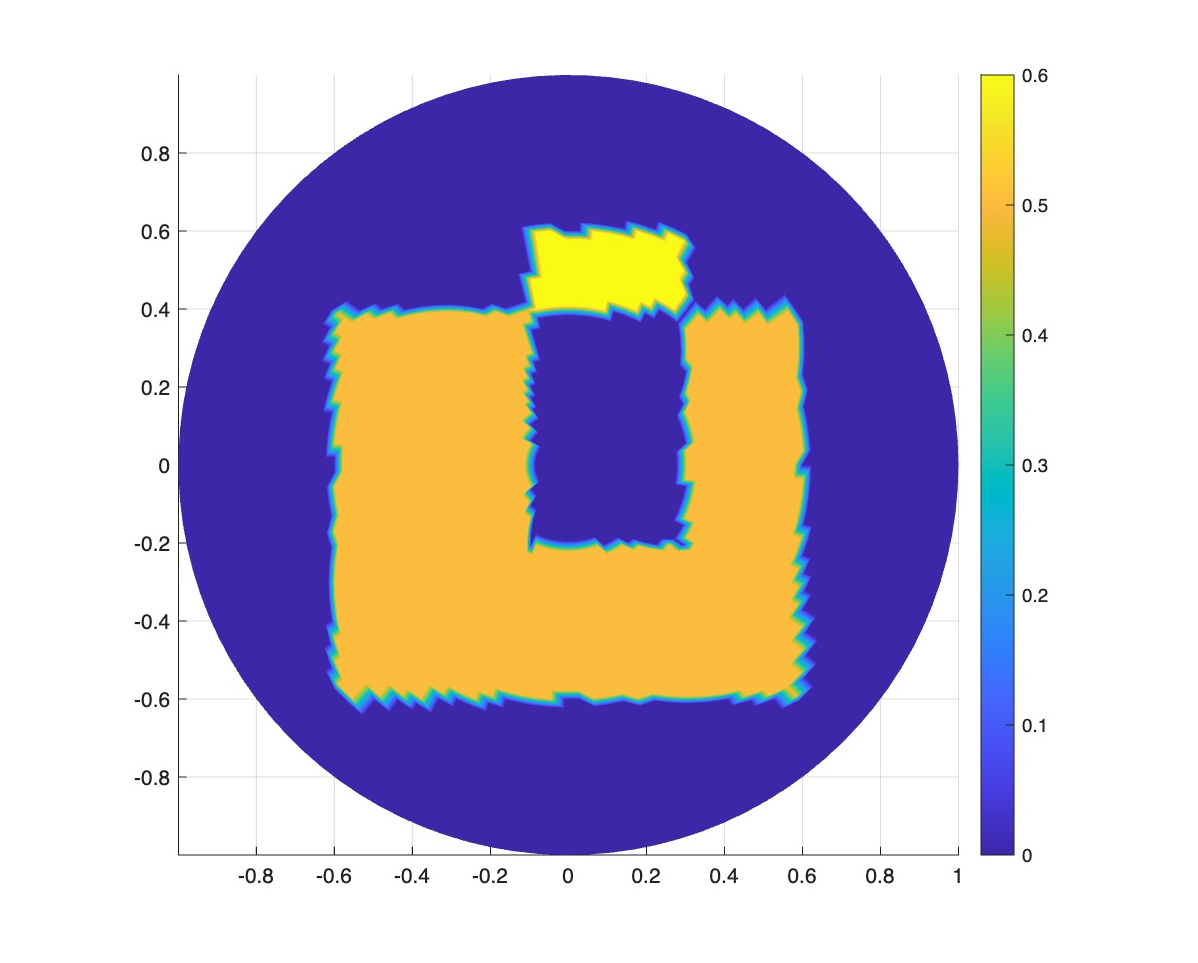}
\includegraphics[width=0.24\linewidth]{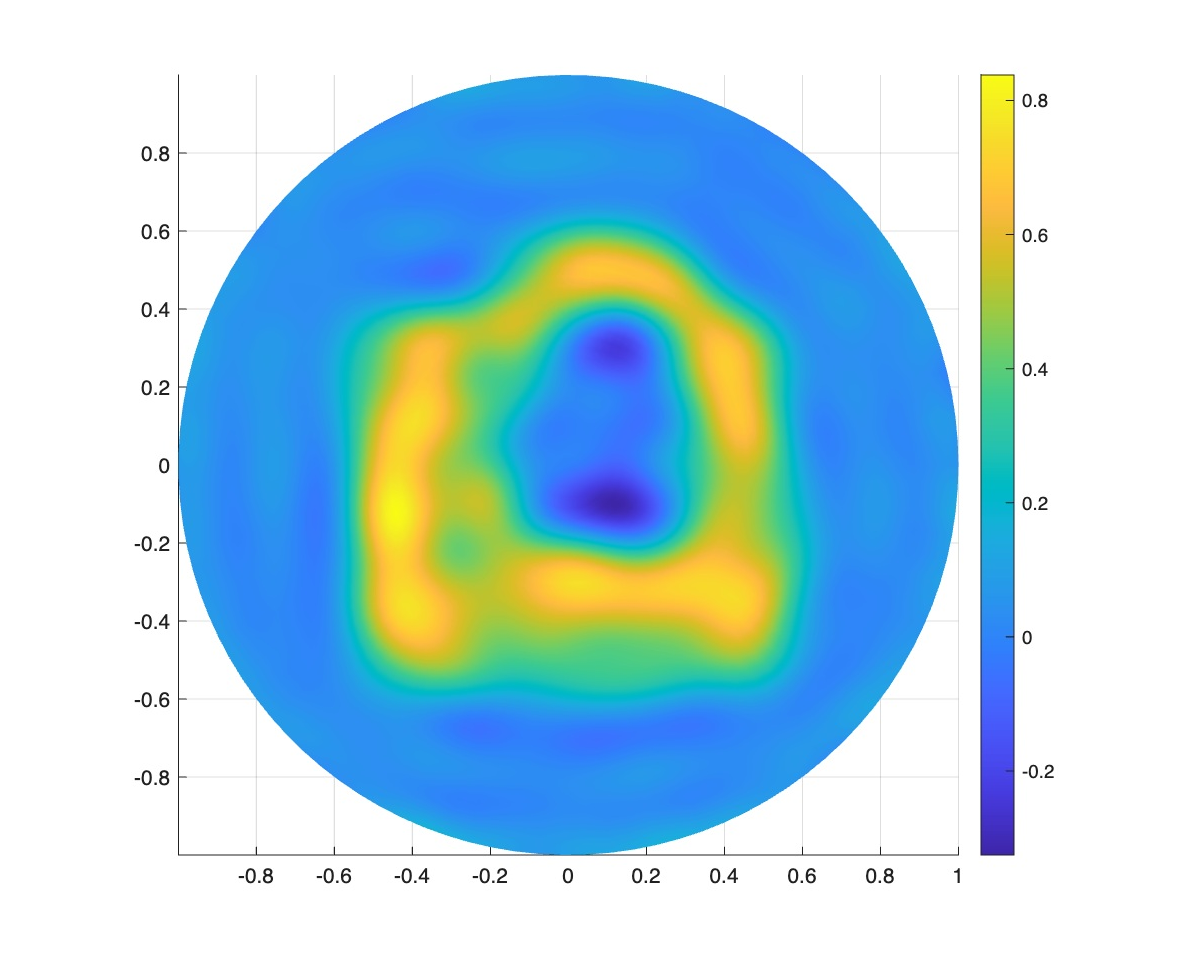}
\includegraphics[width=0.24\linewidth]{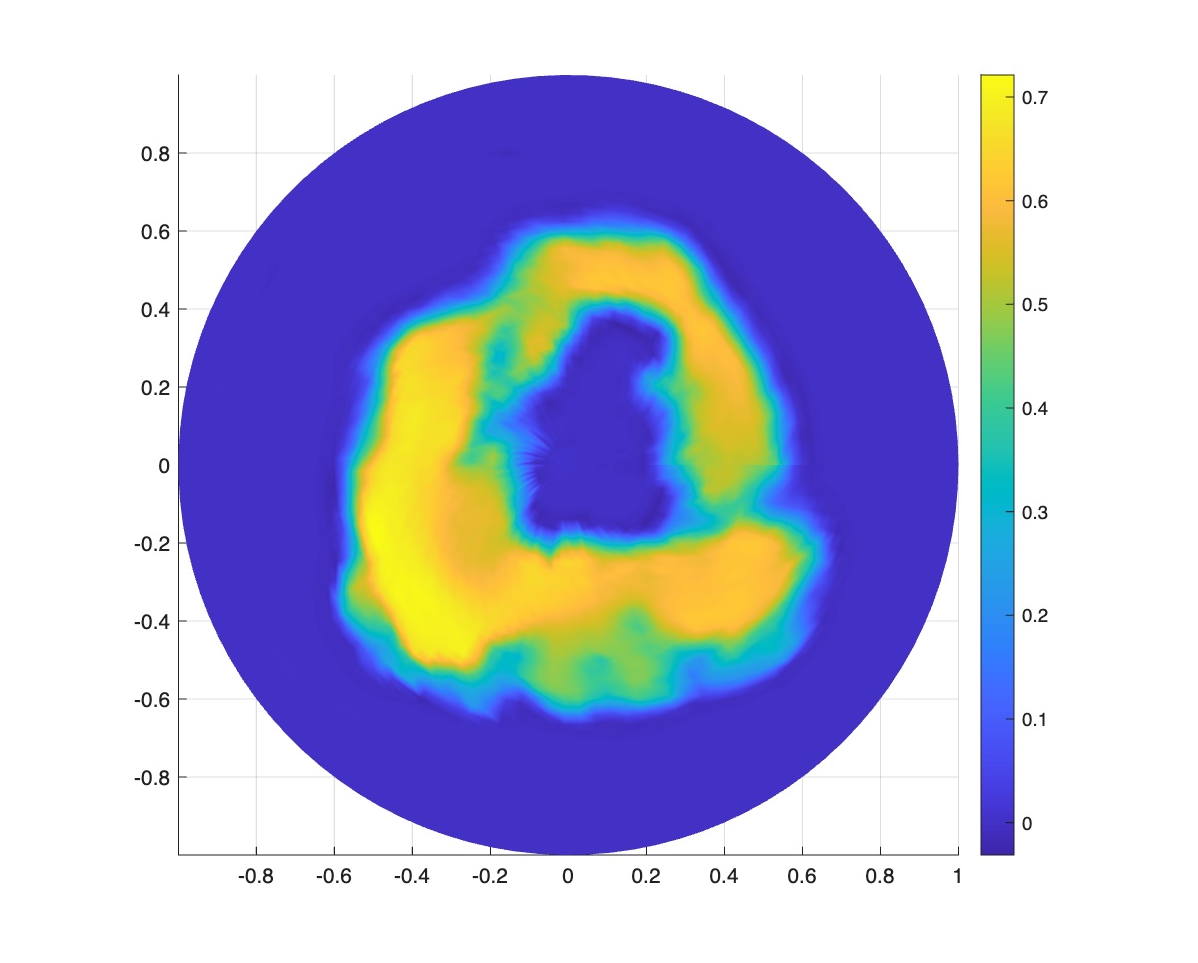}
\includegraphics[width=0.24\linewidth]{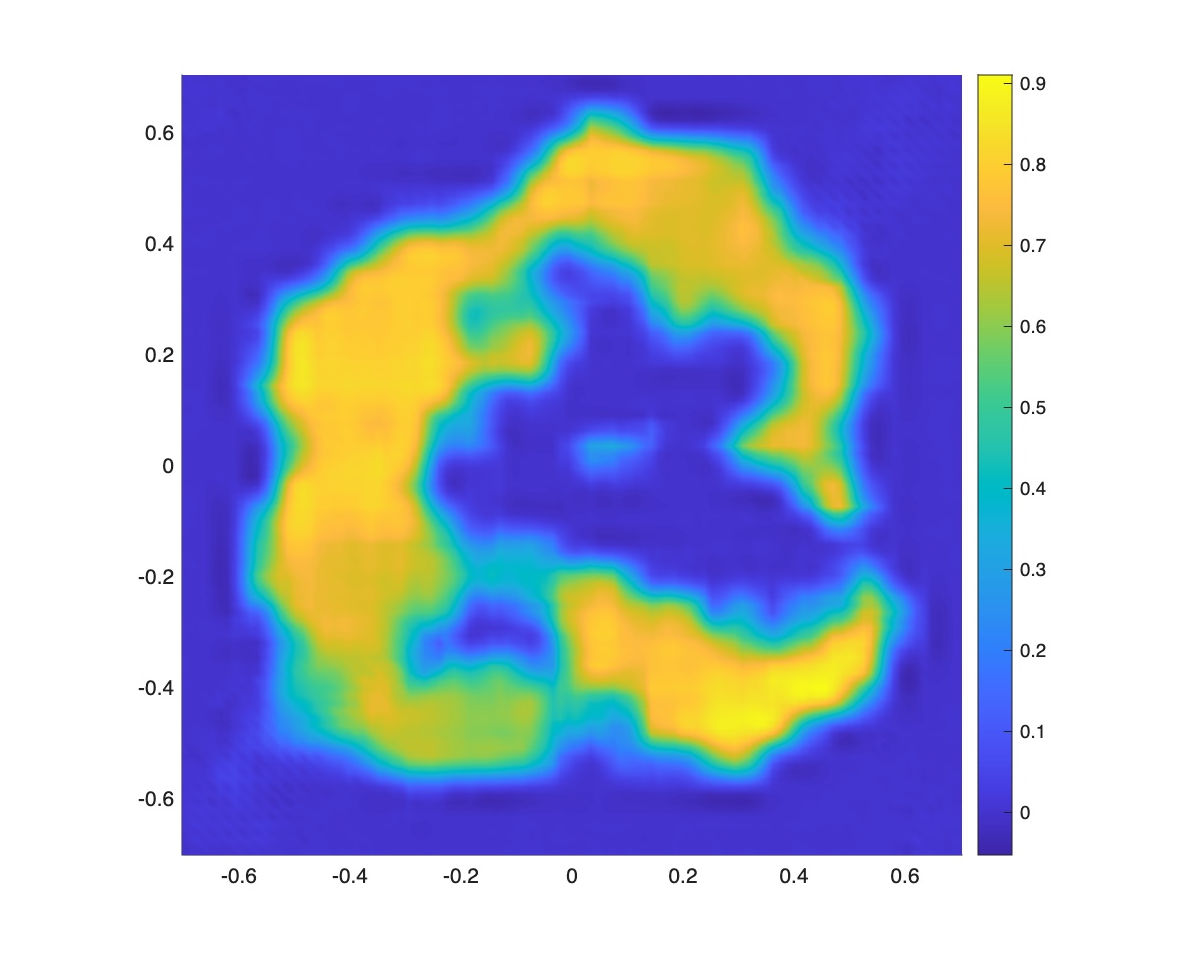}\\
\includegraphics[width=0.24\linewidth]{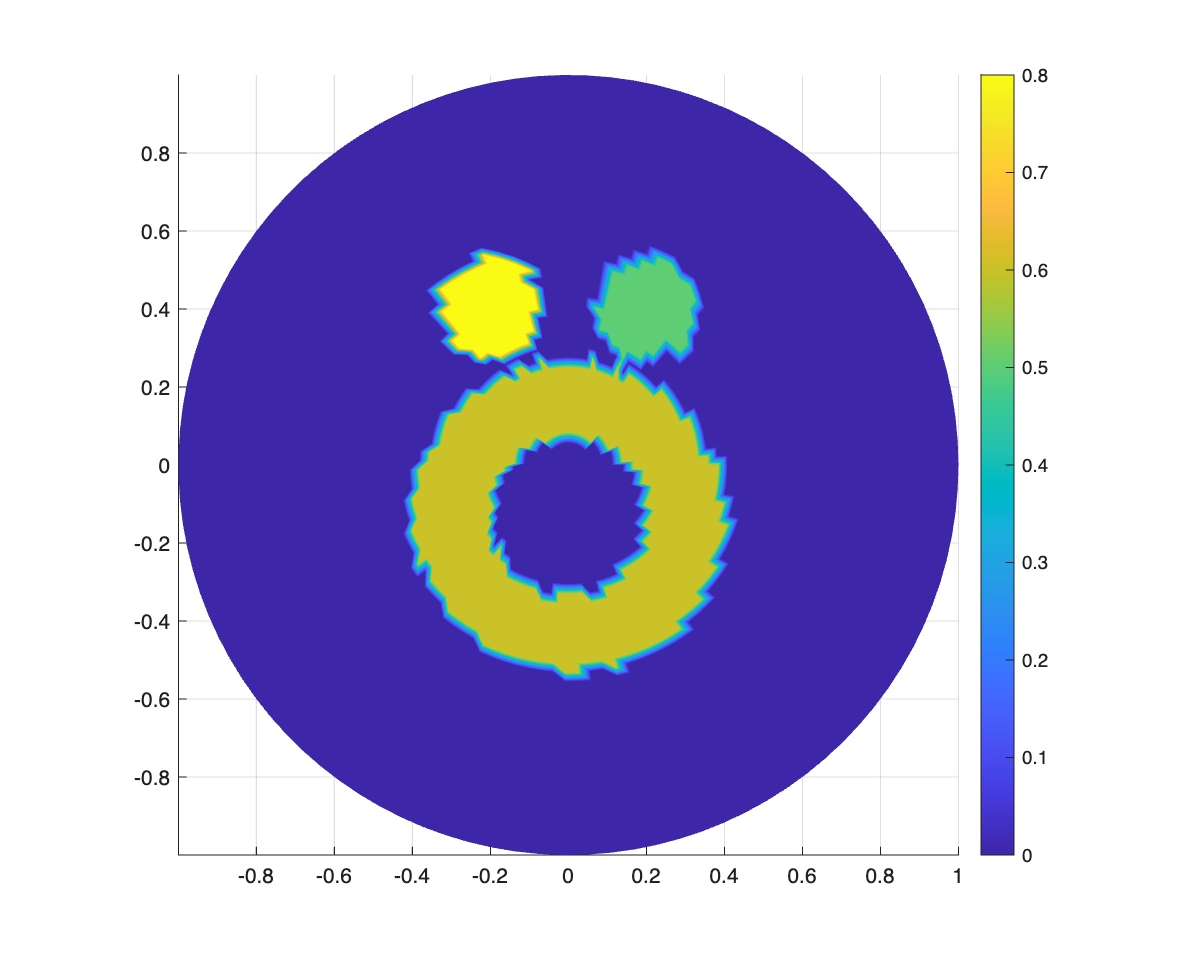}
\includegraphics[width=0.24\linewidth]{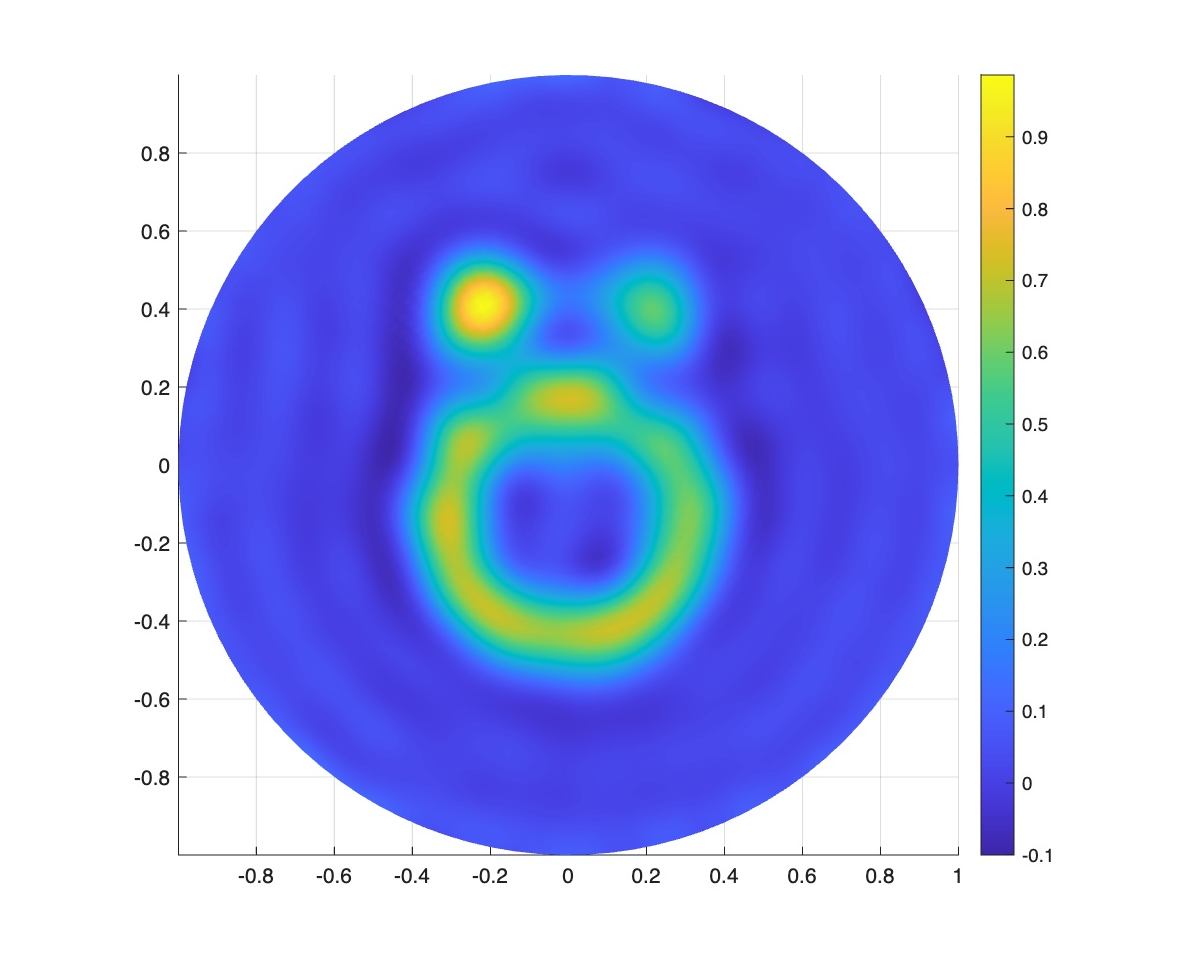}
\includegraphics[width=0.24\linewidth]{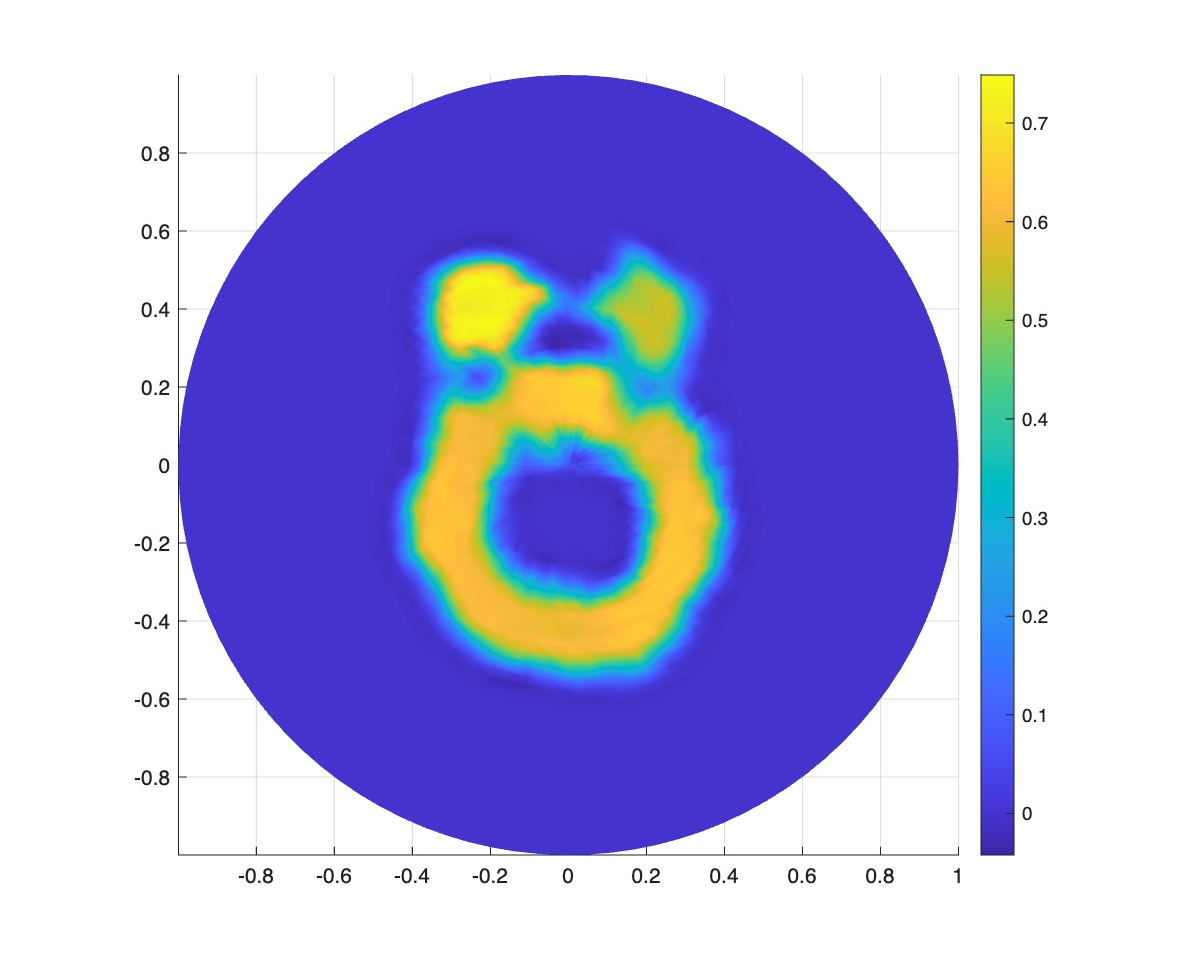}
\includegraphics[width=0.24\linewidth]{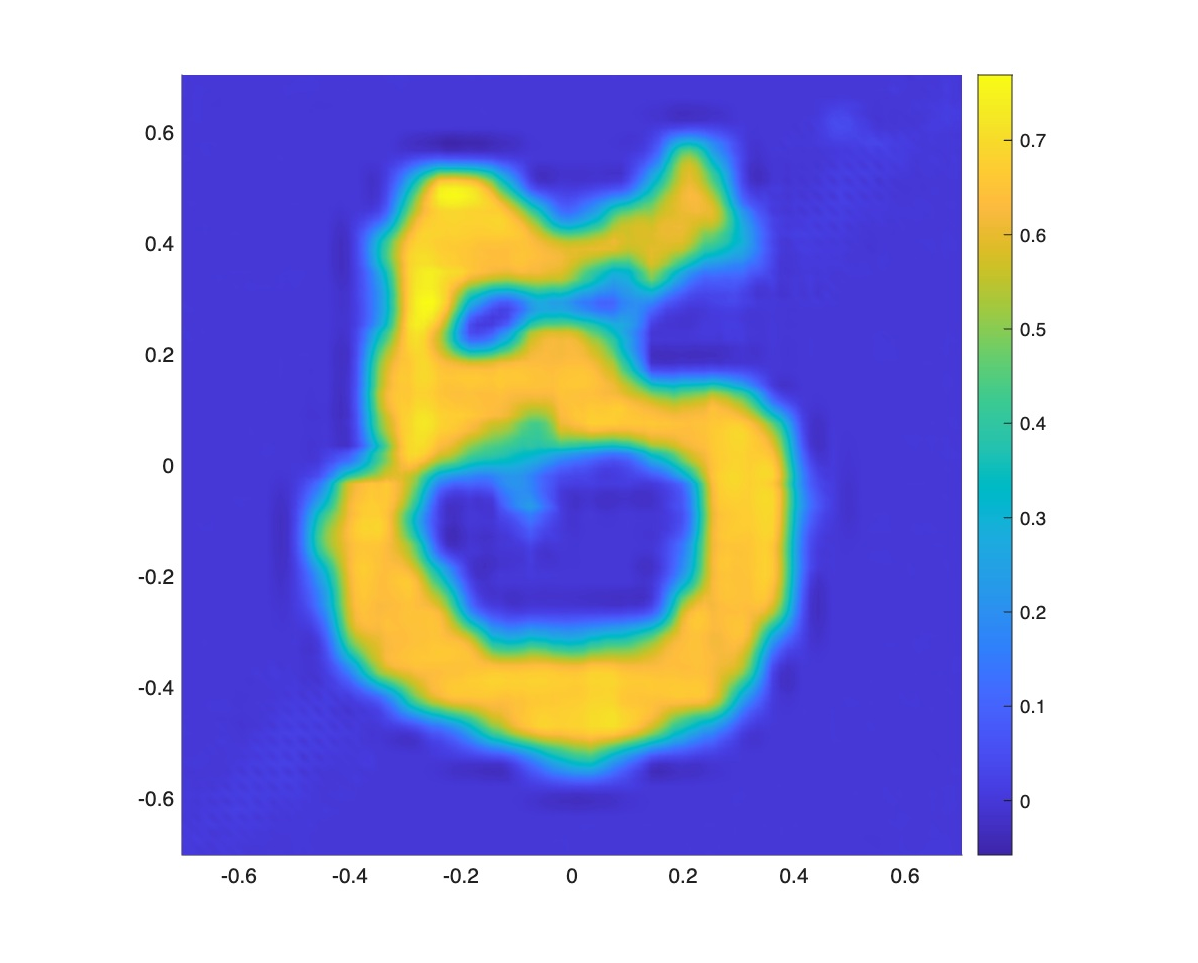}\\
\includegraphics[width=0.24\linewidth]{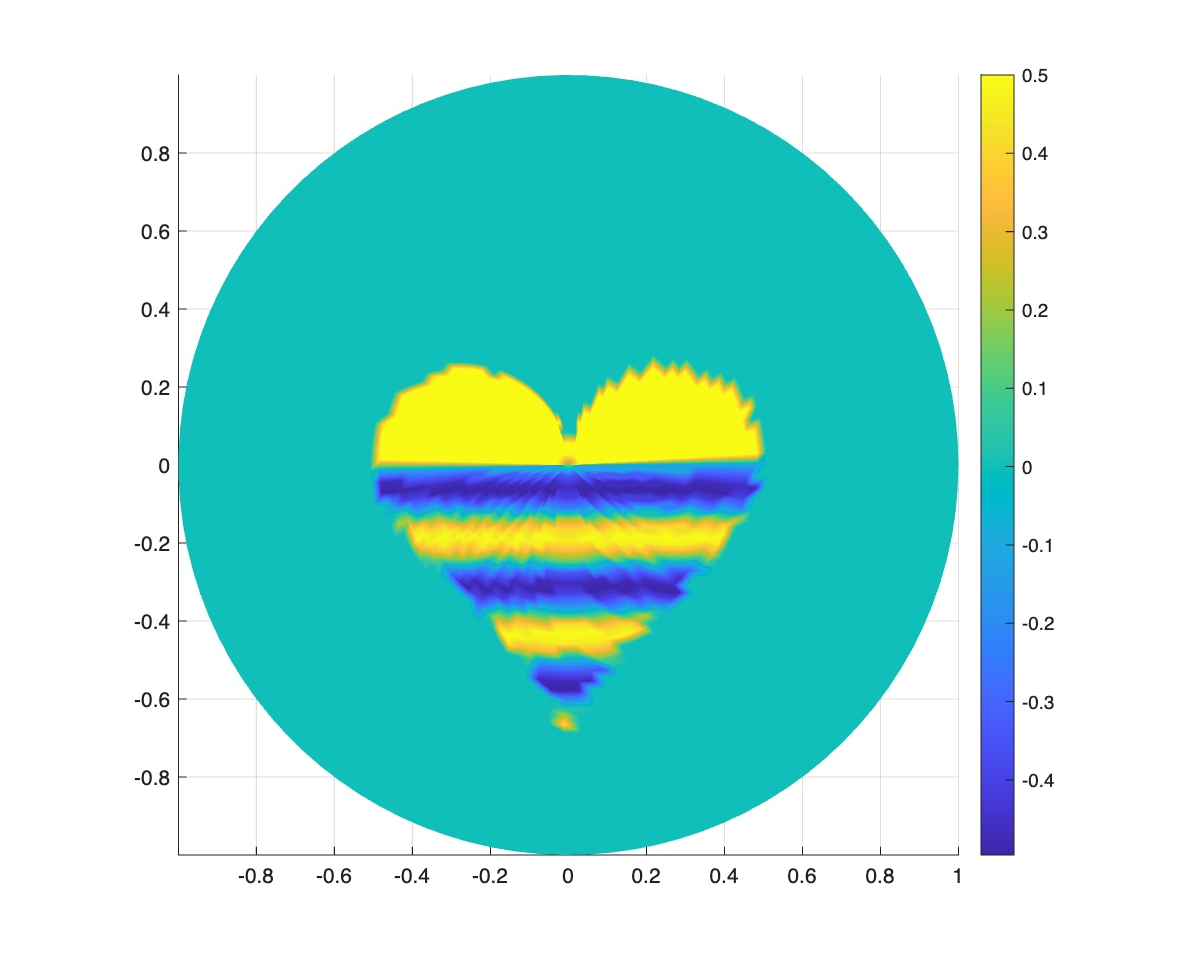}
\includegraphics[width=0.24\linewidth]{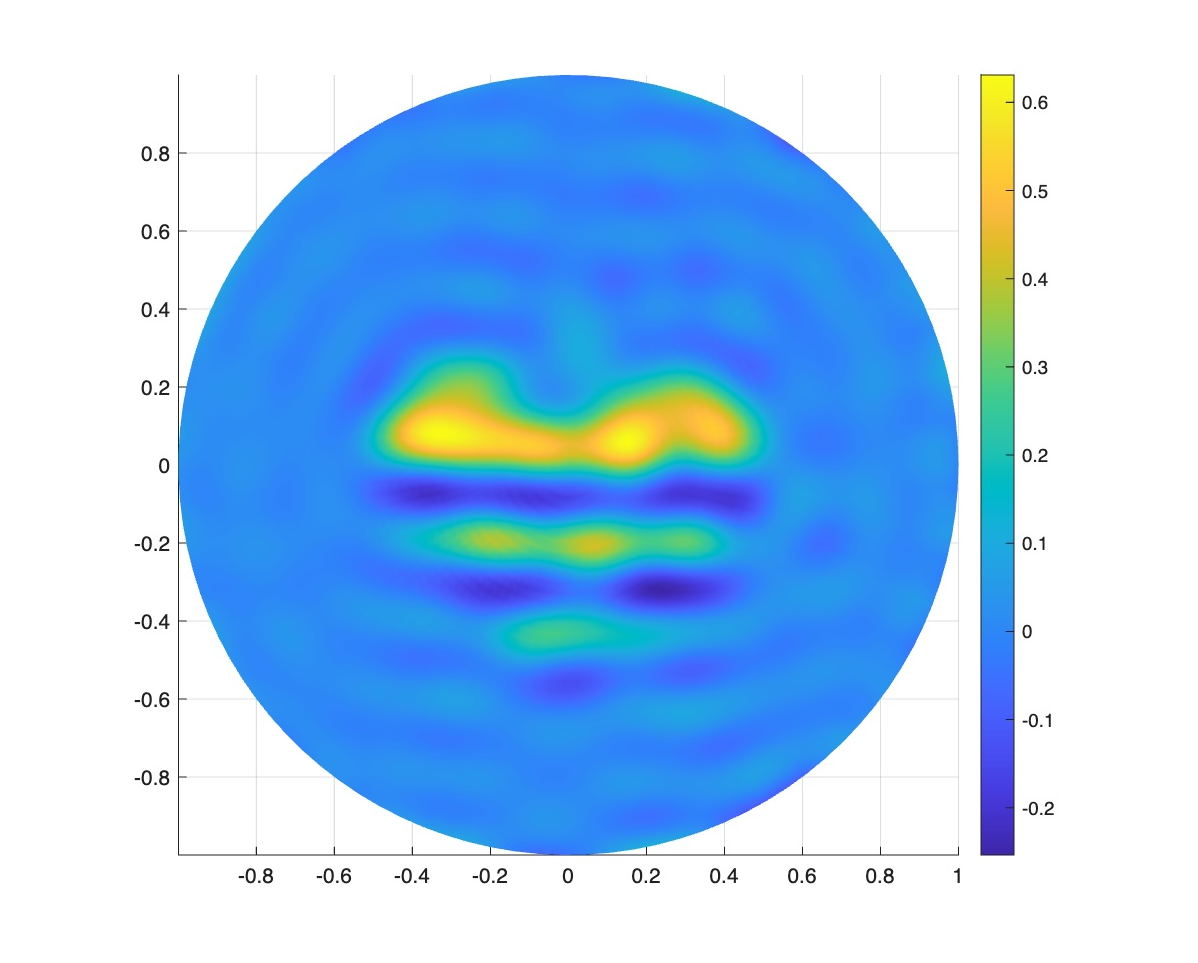}
\includegraphics[width=0.24\linewidth]{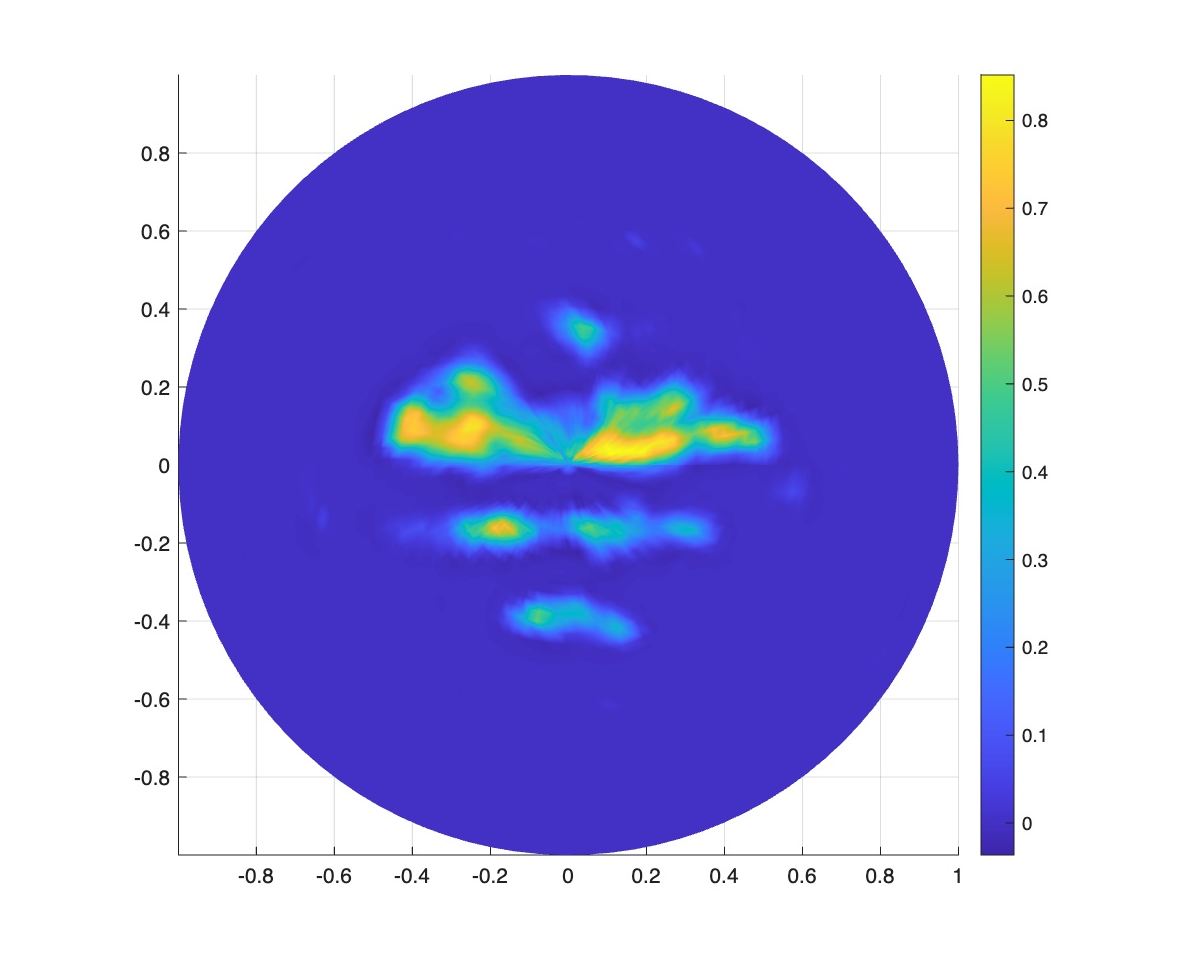}
\includegraphics[width=0.24\linewidth]{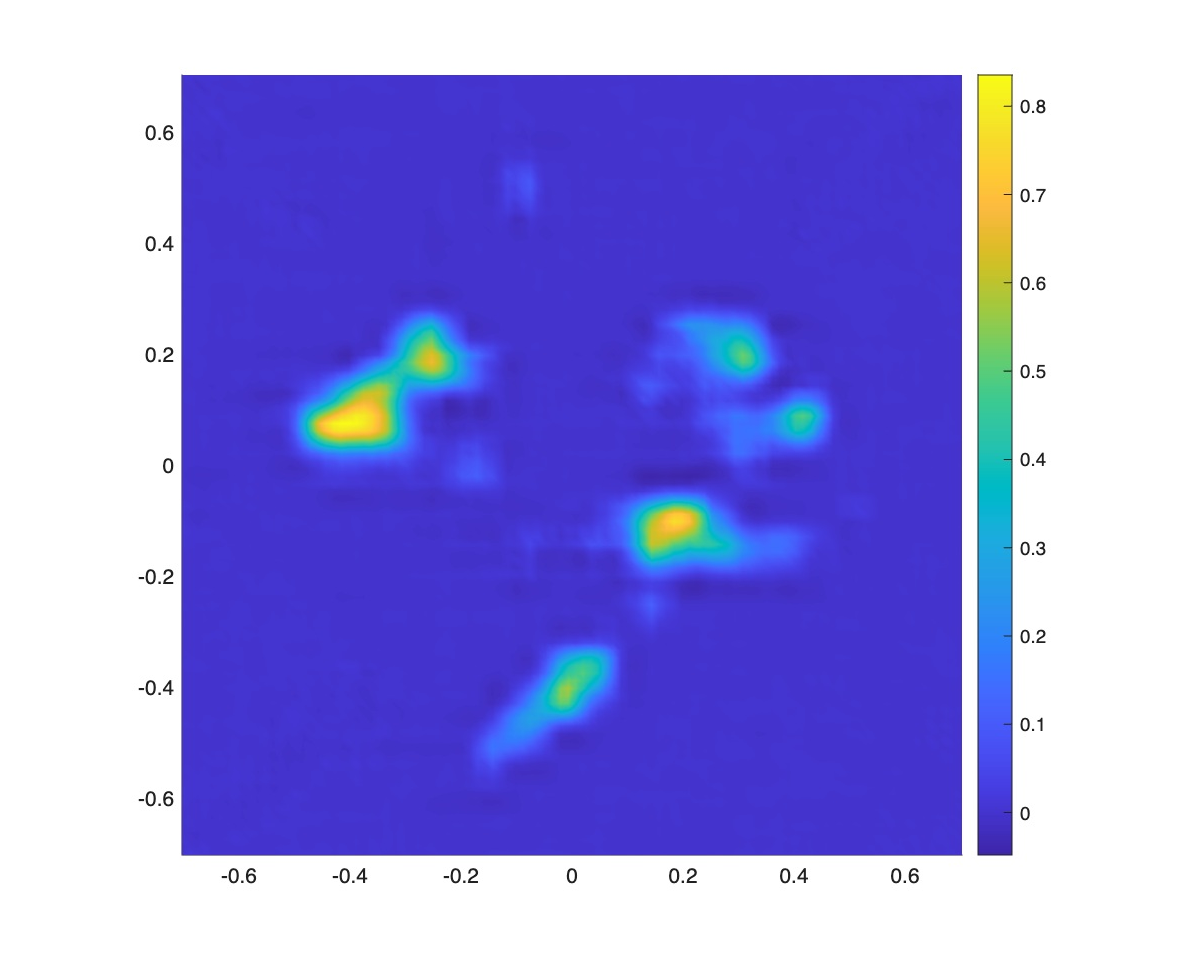}\\
\includegraphics[width=0.24\linewidth]{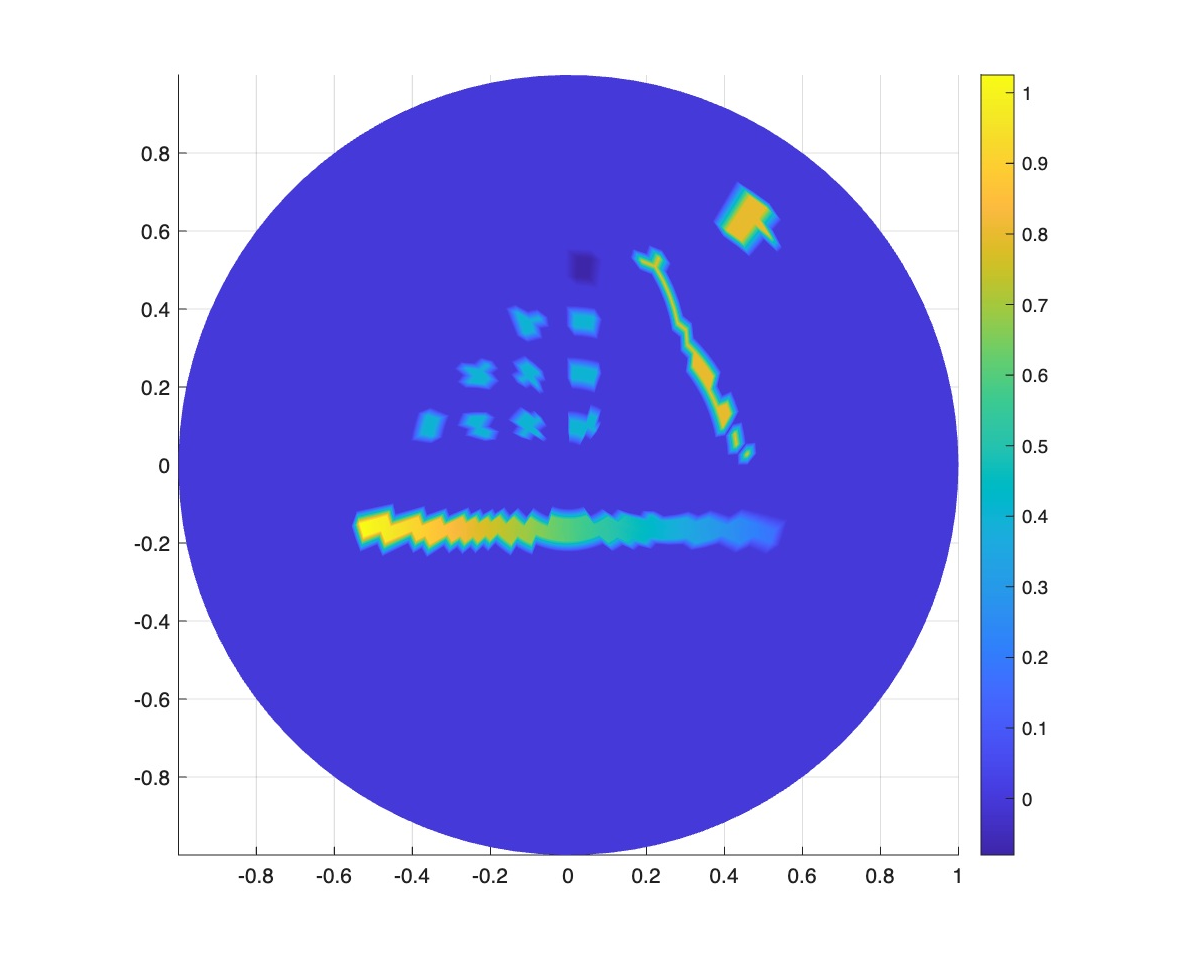}
\includegraphics[width=0.24\linewidth]{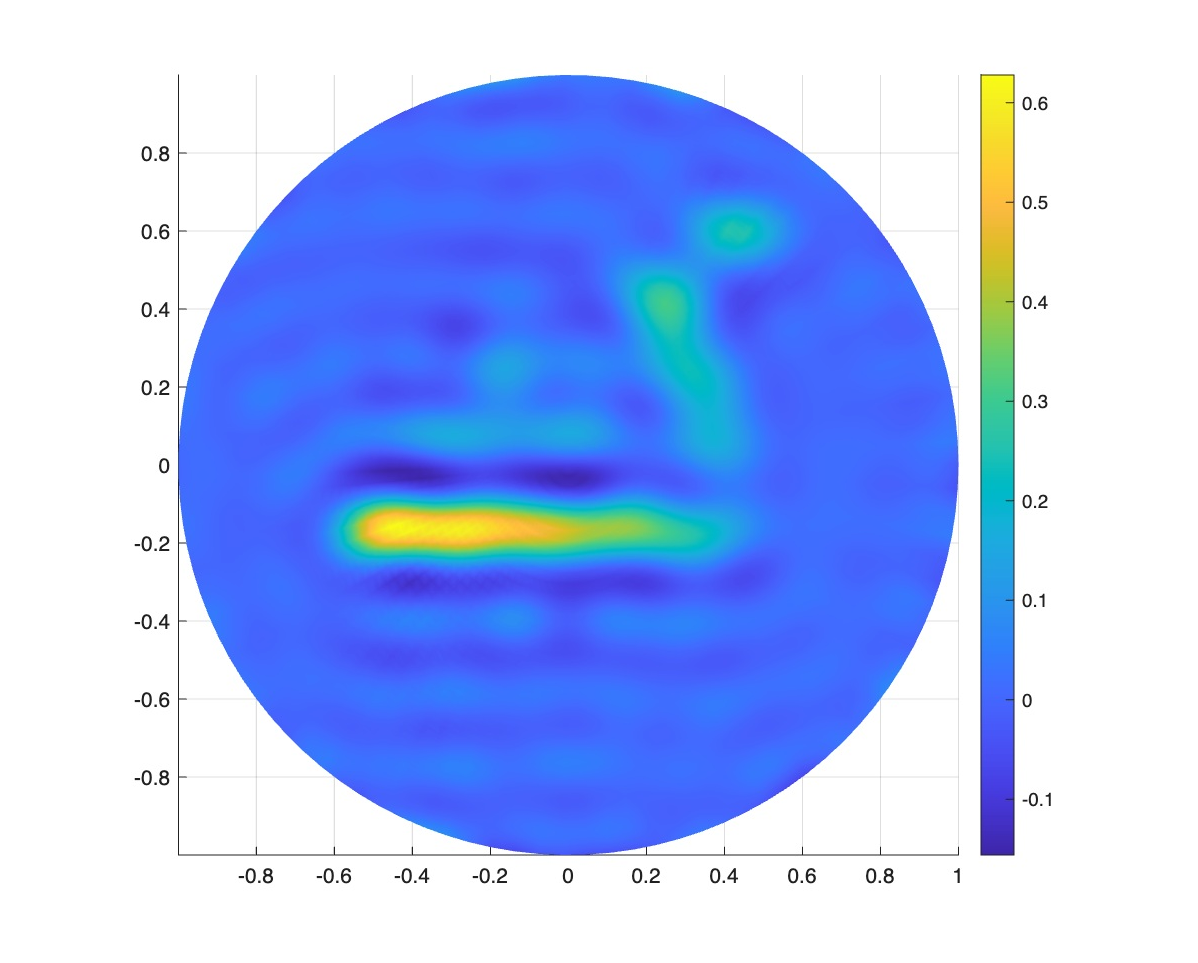}
\includegraphics[width=0.24\linewidth]{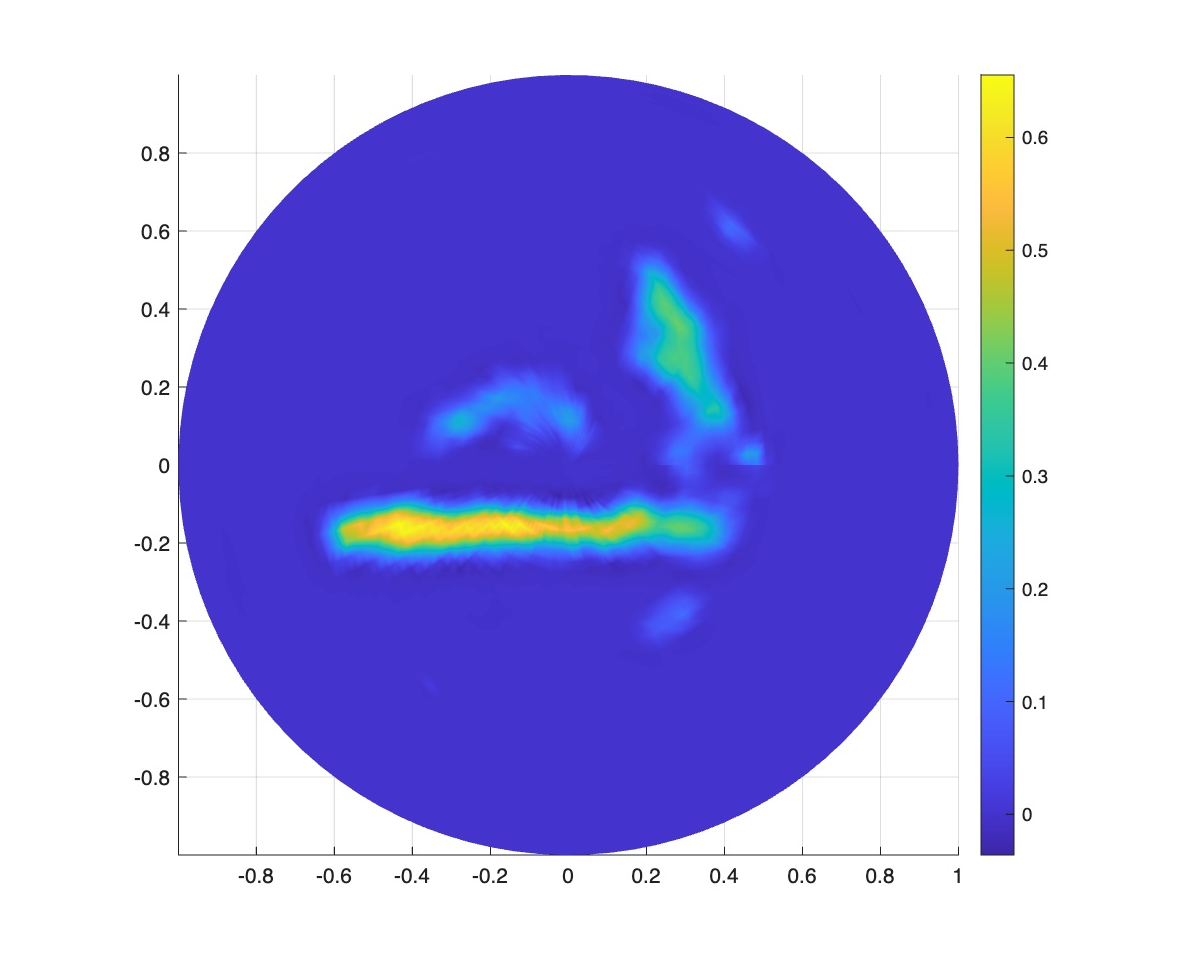}
\includegraphics[width=0.24\linewidth]{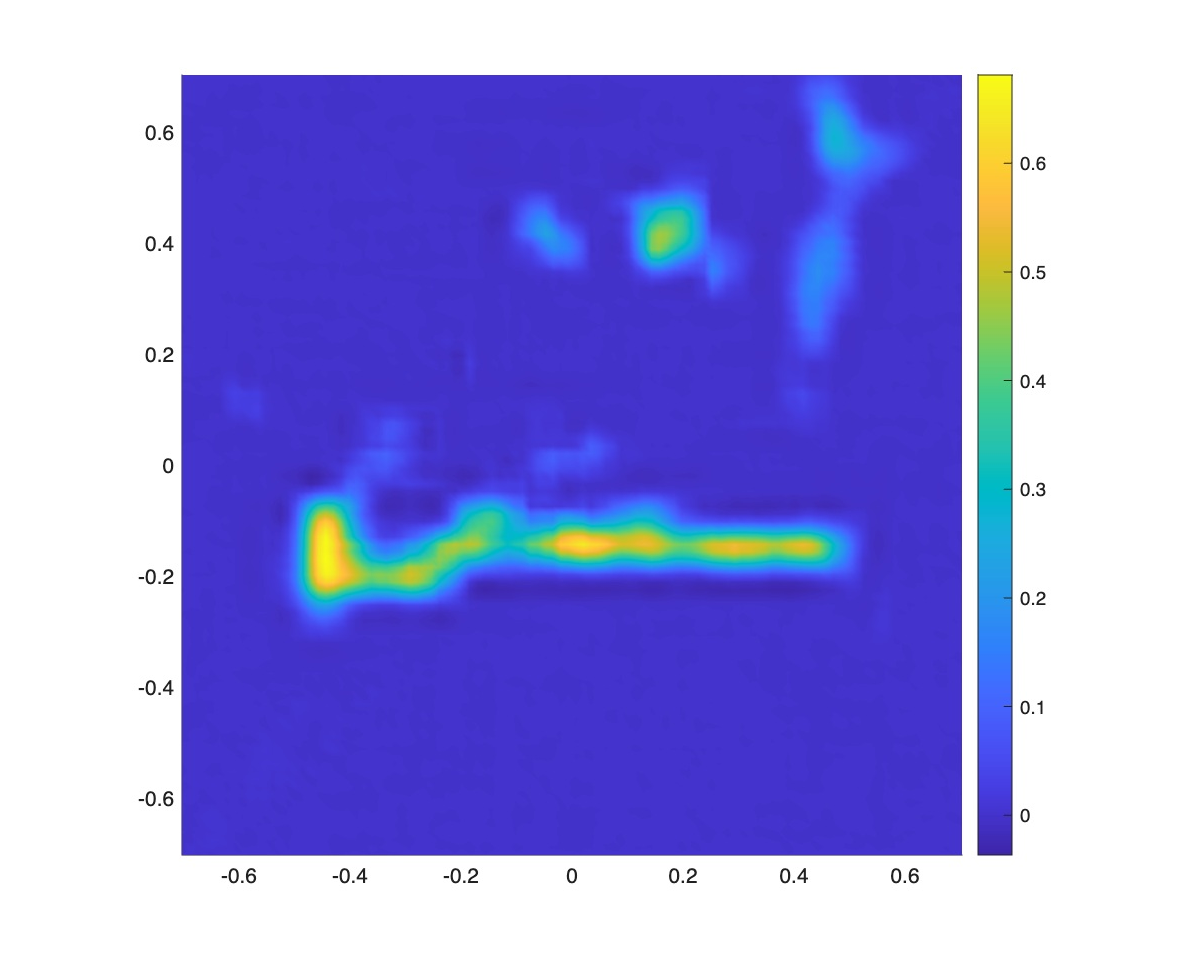}

  \caption{ Reconstruction of out-distributional contrasts with limited aperture data. From left to right: ground truth, reconstructions by ULR, UU, and U, respectively. }  \label{figure: generalization limited aperture}
  
\end{figure}

\bibliographystyle{abbrv}

\end{document}